\newtheorem{Theorem}{Theorem}[section]
\newtheorem{TheoremIntro}[Theorem]{Theorem}
\newtheorem{Lemma}[Theorem]{Lemma}
\newtheorem{Corollary}[Theorem]{Corollary}
\newtheorem{Proposition}[Theorem]{Proposition}
\newtheorem{Definition}[Theorem]{Definition}
\newtheorem*{DefinitionNoNumber}{Definition}
\newtheorem{Remark}[Theorem]{Remark}
\numberwithin{equation}{section}
\def \dim{{\mbox {dim}}\,}
\def\V{\mbox{Var}}
\def\mZ{{\mathbb Z}}
\def\R\re
\def\V{\bf V}
\def \re{{\mathbb R}}
\def \mR{{\mathbb R}}
\def \C{{\mathbb C}}
\def \V{{\bf V}}
\newcommand{\ra}{\text{rank}}
\newcommand{\abs}[1]{\lvert #1 \rvert}
\newcommand{\norm}[1]{\lVert #1 \rVert}
\newcommand{\br}[1]{\langle #1 \rangle}
\newcommand{\etilde}{\,\tilde{\rule{0pt}{6pt}}\,}
\newcommand{\eps}{\varepsilon}
\newcommand{\mDp}{\mathcal{D}'}
\newcommand{\dbar}{\overline{\partial}}
\def \vd{\overset{\tt{v}}{\nabla}}
\def \hd{\overset{\tt{h}}{\nabla}}
\def \vdiv{\overset{\tt{v}}{\mbox{\rm div}}}
\def \hdiv{\overset{\tt{h}}{\mbox{\rm div}}}
\newcounter{sidenote}
\begin{document}
\title[Invariant distributions and Beurling transforms]{Invariant distributions, Beurling transforms and tensor tomography in higher dimensions}

\author[G.P. Paternain]{Gabriel P. Paternain}
\address{ Department of Pure Mathematics and Mathematical Statistics,
University of Cambridge,
Cambridge CB3 0WB, UK}
\email {g.p.paternain@dpmms.cam.ac.uk}

\author[M. Salo]{Mikko Salo}
\address{Department of Mathematics and Statistics, University of Jyv\"askyl\"a}
\email{mikko.j.salo@jyu.fi}

\author[G. Uhlmann]{Gunther Uhlmann}
\address{Department of Mathematics, University of Washington and University of Helsinki}

\email{gunther@math.washington.edu}

%\subjclass{53C25, 53C21, 58F17, 35J15}

%\date{February, 2014}

%\maketitle

\begin{abstract}
In the recent articles \cite{PSU1,PSU3}, a number of tensor tomography results were proved on two-dimensional manifolds. The purpose of this paper is to extend some of these methods to manifolds of any dimension. A central concept is the surjectivity of the adjoint of the geodesic ray transform, or equivalently the existence of certain distributions that are invariant under the geodesic flow. We prove that on any Anosov manifold, one can find invariant distributions with controlled first Fourier coefficients. The proof is based on subelliptic type estimates and a Pestov identity. We present an alternative construction valid on manifolds with nonpositive curvature, based on the fact that a natural Beurling transform on such manifolds turns out to be essentially a contraction. Finally, we obtain uniqueness results in tensor tomography both on simple and Anosov manifolds that improve earlier results by assuming a condition on the terminator value for a modified Jacobi equation.

\end{abstract}

\maketitle

\tableofcontents

\newpage

\section{Introduction} \label{sec_intro}

The unifying theme for most results in this paper is that of \emph{invariant distributions} (distributions invariant under the geodesic flow). We also discuss related harmonic analysis and dynamical concepts, and the applications of these ideas in geometric inverse problems. The paper employs results from several different areas. For the benefit of the reader, we begin with a brief overview of the topics that will appear:

\begin{itemize}
\item 
\emph{Geodesic flows.} The basic setting is a closed oriented Riemannian manifold $(M,g)$. If $SM$ is the unit tangent bundle, the geodesic flow $\phi_t$ is a dynamical system on $SM$ generated by the geodesic vector field $X$ on $SM$. We consider situations where the geodesic flow displays chaotic behavior (ergodicity etc.), with $(M,g)$ satisfying conditions such as: negative or nonpositive sectional curvature, Anosov geodesic flow, no conjugate points, or rank one conditions. \\
\item 
\emph{Invariant distributions.} The first item of interest are distributions on $SM$ that are invariant under geodesic flow, i.e.\ distributional solutions of $Xw = 0$ in $SM$. If the geodesic flow is ergodic, there are no nonconstant solutions in $L^2(SM)$, but suitable distributional solutions turn out to be useful. We give two constructions of invariant distributions with controlled first Fourier coefficients: one on nonpositively curved manifolds (via a Beurling transform), and one on manifolds with Anosov geodesic flow (via subelliptic $L^2$ estimates). \\
\item 
\emph{Harmonic analysis on $SM$.} The constructions of invariant distributions are based on harmonic analysis on $SM$. We will consider spherical harmonics (or Fourier) expansions of functions in $L^2(SM)$ with respect to vertical variables. The geodesic vector field $X$ has a splitting $X = X_+ + X_-$ which respects these expansions. The interplay of $\{X, \hd, \vd\}$, where $\hd$ and $\vd$ are horizontal and vertical gradients in $SM$, leads to the basic $L^2$ identity (Pestov identity). \\
\item 
\emph{Dynamical aspects.} For $L^2$ estimates, we need certain notions that interpolate between the ''no conjugate points'' and ''nonpositive curvature'' conditions: $\alpha$-controlled manifolds, $\beta$-conjugate points, and terminator value $\beta_{Ter}$. These conditions are studied from the dynamical systems point of view, leading to a new characterization of manifolds with Anosov geodesic flow. \\
\item 
\emph{Geometric inverse problems.}
Our main motivation for studying the above questions comes from several geometric inverse problems related to the ray transform $I_m$, which encodes the integrals of a symmetric $m$-tensor field over all closed geodesics. Basic questions include the injectivity of $I_m$ (up to natural obstruction) and, dually, the surjectivity of $I_m^*$. Our results on invariant distributions lead to surjectivity of $I_m^*$, and consequently to injectivity of $I_m$ and spectral rigidity results, under various conditions. Analogous results are discussed also on compact manifolds with boundary.
\end{itemize}

This paper merges and expands ideas developed in \cite{GK,GK1,GK2,Sh,DS,PSU1,PSU3}. We will now begin a more detailed discussion.

\vspace{15pt}

\noindent {\bf Invariant distributions.}\ Let $(M,g)$ be a compact oriented Riemannian manifold with or without boundary, and let $X$ be the geodesic vector field regarded as a first order differential operator $X:C^{\infty}(SM)\to C^{\infty}(SM)$ acting on functions on the unit tangent bundle $SM$. This paper is concerned with weak solutions $u$ to the transport equation $Xu=f$ under various conditions on the metric $g$. A good understanding of the transport equation is essential in many geometric inverse problems such as tensor tomography, boundary rigidity and spectral rigidity. It also plays a fundamental role in the study of the dynamics of the geodesic flow. For example, ergodicity of the geodesic flow on a closed manifold is just the absence of non-constant $L^2$ functions $w$ with $Xw=0$. Similarly, certain solvability results for the equation $Xu = f$ on a manifold with boundary are equivalent with nontrapping conditions for the geodesic flow \cite{DuistermaatHormanderII}.

For the most part, we will consider closed manifolds $(M,g)$. In this paper we are going to exhibit many \emph{invariant distributions} (distributional solutions to $Xw=0$) having prescribed initial Fourier components and Sobolev regularity $H^{-1}$. This will be possible when $(M,g)$ is either Anosov or a rank one manifold of non-positive sectional curvature. Both cases contain in particular manifolds of negative sectional curvature. Invariant distributions 
will be constructed by two mechanisms: one emanates from a fundamental $L^2$-identity, called the Pestov identity, and subelliptic type estimates for certain operators associated with the transport equation. The other mechanism consists in the introduction of a Beurling transform acting on trace-free symmetric tensor fields. The key property of this transform, also proved via the Pestov identity, is that it is essentially a contraction when the sectional curvature is non-positive.

Let us recall that the rank, $\ra(v)$, of a unit vector $v\in T_{x}M$ is the dimension of the space of parallel Jacobi fields along the geodesic $\gamma$ determined by $(x,v)\in SM$. (A parallel Jacobi field is a parallel vector field that also satisfies the Jacobi equation.) Since $\dot{\gamma}$ is trivially a parallel Jacobi field, $\ra(v)\geq 1$ and $\ra(v)\geq 2$ if and only if there is a non-zero parallel Jacobi field $J$ along $\gamma$ which is orthogonal to $\dot{\gamma}$. Given such a field, the sectional curvature of the 2-plane spanned by $J(t)$ and $\dot{\gamma}(t)$ is zero for all $t$.
The rank of $(M,g)$ is defined as the minimum of $\ra(v)$ over all $v$.

Our metrics will be free of conjugate points, but we shall require at various points enhanced conditions in this direction. To quantify these stronger conditions we introduce following \cite{Pestov,Dairbekov_nonconvex,PSU3} a modified Jacobi equation. Let $R$ denote the Riemann curvature tensor and let $\beta\in\re$.

\begin{DefinitionNoNumber} Let $(M,g)$ be a Riemannian manifold. We say that $(M,g)$ is free of $\beta$-conjugate points if for any geodesic $\gamma(t)$, all nontrivial solutions of the equation $\ddot{J} + \beta R(J,\dot{\gamma}) \dot{\gamma} = 0$ with $J(0) = 0$ only vanish at $t = 0$. The \emph{terminator value} of $(M,g)$ is defined to be 
$$
\beta_{Ter} = \sup \,\{ \beta \in [0,\infty]:\, \text{$(M,g)$ is free of $\beta$-conjugate points} \}.
$$
\end{DefinitionNoNumber}

Clearly $1$-conjugate points correspond to conjugate points in the usual sense. For a closed manifold $(M,g)$, we will show in Section \ref{sec_betaconjugate} that 
\begin{itemize}
\item if $(M,g)$ is free of $\beta_0$-conjugate points for some $\beta_0 > 0$, then $(M,g)$ is free of $\beta$-conjugate points for $\beta \in [0,\beta_0]$, 
\item 
$(M,g)$ is Anosov if and only if $\beta_{Ter} > 1$ and every unit vector has rank one (see Corollary \ref{corollary:ganosov} below; this seems to be a new geometric characterization of the Anosov property generalizing \cite[Corollary 3.3]{Ebe});
\item 
$(M,g)$ has nonpositive curvature if and only if $\beta_{Ter} = \infty$, see Lemma \ref{lemma:kbeta} below.
\end{itemize}

Recall that a closed manifold $(M,g)$ is said to be Anosov, if its geodesic flow $\phi_t$ is Anosov. This means that there
is a continuous invariant splitting
$TSM=E^0\oplus E^{u}\oplus E^{s}$, where $E^0$ is the flow direction, and 
there are constants $C>0$ and $0<\rho<1<\eta$ such that 
for all $t>0$ %we have
\[\|d\phi_{-t}|_{E^{u}}\|\leq C\,\eta^{-t}\;\;\;\;\mbox{\rm
and}\;\;\;\|d\phi_{t}|_{E^{s}}\|\leq C\,\rho^{t}.\]
Another relevant characterization of this property states that  $(M,g)$ is Anosov if and only if
the metric $g$ lies in the $C^2$-interior of the set of metrics without conjugate points \cite{Ru}.

By considering the vertical Laplacian $\Delta$ on each fibre $S_{x}M$ of $SM$ we have a natural $L^2$-decomposition $L^{2}(SM)=\oplus_{m\geq 0}H_m(SM)$
into vertical spherical harmonics. We also set $\Omega_m=H_{m}(SM)\cap C^{\infty}(SM)$. Then a function $u$ belongs to $\Omega_m$ if and only if
$\Delta u=m(m+n-2)u$ where $n=\dim M$. There is a natural identification between functions in $\Omega_m$ and trace-free symmetric $m$-tensors (for details on this see \cite{GK2,DS} and Section \ref{sec:vlap+sh}). The geodesic vector field $X$ maps $\Omega_m$ to
$\Omega_{m-1}\oplus\Omega_{m+1}$ and hence we can split it as $X=X_{+}+X_{-}$, where $X_{\pm}:\Omega_{m}\to \Omega_{m\pm 1}$
and $X_{+}^*=-X_{-}$. The operator $X_{+}$ is overdetermined elliptic, i.e.~it has injective symbol, and its finite dimensional kernel consists, under the identification mentioned above, of {\it conformal Killing tensor fields} of rank $m$. The operator $X_{-}$ is of divergence type.

Assume that $(M,g)$ has no nontrivial conformal Killing tensors (this holds on any surface of genus $\geq 2$
and more generally on any manifold whose conformal class contains a negatively curved metric \cite{DS} or a rank one metric of non-positive curvature, see Corollary \ref{cor:noCKT}). Then the operator $X_{-}:\Omega_{m}\to\Omega_{m-1}$ is surjective for $m\geq 2$, so one could attempt to find solutions to the transport equation $Xw=0$ as follows. Start with an element $w_k \in\Omega_k$ with $X_{-}w_k=0$ (i.e.~a solenoidal trace-free symmetric $k$-tensor).
Surjectivity of $X_{-}$ means that there is a unique $w_{k+2}\in\Omega_{k+2}$ such that $w_{k+2}$ is orthogonal to $\text{Ker}\, X_{-}$ (equivalently, $w_{k+2}$ has minimal $L^2$ norm) and
\[X_{-}w_{k+2}+X_{+}w_k=0.\]
Continuing in this fashion we construct a formal solution $w=w_k+w_{k+2}+\dots$ to the transport equation $Xw=0$. This formal solution can be conveniently expressed using what we call the {\it Beurling transform}.

\begin{DefinitionNoNumber}
Let $(M,g)$ be a closed manifold without nontrivial conformal Killing tensors. Given $k\geq 0$ and $f_k\in\Omega_k$, there is a unique function $f_{k+2}\in\Omega_{k+2}$ orthogonal to $\mbox{\rm Ker}\,X_{-}$ such that $X_{-}f_{k+2}=-X_{+}f_{k}$.  We define the Beurling transform as
$$
B: \Omega_k \to \Omega_{k+2}, \ \ f_k \mapsto f_{k+2}.
$$
\end{DefinitionNoNumber}

The terminology comes from the fact that if $M$ is two-dimensional, then bundles of symmetric trace-free tensors can be expressed in terms of holomorphic line bundles and $X_{\pm}$ correspond to $\partial$ and $\dbar$ operators. If $X_{\pm}$ are split in terms of the $\eta_{\pm}$ operators of Guillemin-Kazhdan \cite{GK}, then $-B$ corresponds exactly to $\dbar^{-1} \partial$ and $\partial^{-1} \dbar$ operators as in the classical Beurling transform in the complex plane \cite{AIM}. See Appendix \ref{sec_2d_case} for more details. Related transforms have been studied on differential forms in $\mR^n$ \cite{IwaniecMartin}, on Riemannian manifolds \cite{LiXD}, and in hyperbolic space \cite{Hedenmalm}. The Beurling transform considered in this paper (which is essentially the first ladder operator in \cite{GK1} when $n=2$) seems to be different from these.

Let us return to invariant distributions:

\begin{DefinitionNoNumber}
Let $(M,g)$ be a closed manifold without conformal Killing tensors, and let $f \in \Omega_{k_0}$ satisfy $X_- f = 0$. The \emph{formal invariant distribution} starting at $f$ is the formal sum 
$$
w = \sum_{j=0}^{\infty} B^j f.
$$
\end{DefinitionNoNumber}

At this point we do not know if the sum converges in any reasonable sense. However, if the manifold has nonpositive sectional curvature it converges nicely. This follows from the fact that the Beurling transform is essentially a contraction on such manifolds and this is the content of our first theorem.

%\medskip

%\noindent{\bf Theorem A.} {\it 
\begin{TheoremIntro} \label{theoremA}
Let $(M,g)$ be a closed $n$-manifold without conformal Killing tensors and with non-positive sectional curvature. 
Then for any $m \geq 0$  we have 
$$
\norm{Bf}_{L^2} \leq C_n(m) \norm{f}_{L^2}, \quad f \in \Omega_m,
$$
where 
\begin{align*}
C_2(m) &= \left\{ \begin{array}{cl} \sqrt{2}, & m = 0, \\ 1, & m \geq 1, \end{array} \right. \\
C_3(m) &= \left[ 1 + \frac{1}{(m+2)^2(2m+1)} \right]^{1/2}, \\
C_n(m) &\leq 1 \ \text{ for $n \geq 4$.}
\end{align*}
If $m_0 \geq 0$ and if $f \in \Omega_{m_0}$ satisfies $X_- f = 0$, then the formal invariant distribution $w$ starting at $f$ is an element of $L^2_x H^{-1/2-\eps}_{v\phantom{\theta}}$ for any $\eps > 0$. Moreover, the Fourier coefficients of $w$ satisfy 
$$
\norm{w_{m_0+2k}}_{L^2} \leq A_n(m_0) \norm{f}_{L^2}, \quad k \geq 0,
$$
where $A_n(m_0) = \prod_{j=0}^{\infty} C_n(m_0+2j)$ is a finite constant satisfying 
\begin{align*}
A_2(m_0) &= \left\{ \begin{array}{cl} \sqrt{2}, & m_0 = 0, \\ 1, & m_0 \geq 1, \end{array} \right. \\
A_3(m_0) &\leq 1.13, \\
A_n(m_0) &\leq 1 \ \text{ for $n \geq 4$.}
\end{align*}

\end{TheoremIntro}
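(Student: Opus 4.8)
The plan is to reduce the operator bound for $B$ to a single sharp Pestov-type inequality on one fixed fibrewise degree, and then to obtain the convergence statements by iterating it. Fix $m \geq 0$ and $f \in \Omega_m$, and set $g := Bf \in \Omega_{m+2}$, so that $X_- g = -X_+ f$ and $g \perp \ker(X_-|_{\Omega_{m+2}})$ by definition. Since $X_+^* = -X_-$, the adjoint of $X_-|_{\Omega_{m+2}} \colon \Omega_{m+2} \to \Omega_{m+1}$ is $-X_+|_{\Omega_{m+1}}$, so $(\ker X_-|_{\Omega_{m+2}})^{\perp} = \overline{\mathrm{Ran}(X_+|_{\Omega_{m+1}})}$; this range is closed because $X_+$ has injective symbol, and $X_+|_{\Omega_{m+1}}$ is injective (its kernel being the conformal Killing tensors of rank $m+1$, which vanish by hypothesis). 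Hence there is a unique $h \in \Omega_{m+1}$ with $X_+ h = g$ (smooth by elliptic regularity for $X_+^* X_+$), and then
\[
\norm{Bf}_{L^2}^2 = \norm{X_+ h}_{L^2}^2 = \br{X_+^* X_+ h, h} = -\br{X_- X_+ h, h} = -\br{X_- g, h} = \br{X_+ f, h} = -\br{f, X_- h},
\]
whence $\norm{Bf}_{L^2}^2 \leq \norm{f}_{L^2} \norm{X_- h}_{L^2}$ by Cauchy--Schwarz. Therefore the $L^2$-bound for $B$ follows once one proves the inequality
\[
\norm{X_- h}_{L^2} \leq C_n(m) \, \norm{X_+ h}_{L^2}, \qquad h \in \Omega_{m+1},
\]
with $C_n(m)$ as in the statement, since it gives $\norm{Bf}_{L^2}^2 \leq C_n(m) \norm{f}_{L^2} \norm{Bf}_{L^2}$.

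The second and main step is to prove this inequality from the Pestov identity and the curvature hypothesis. For $h \in C^{\infty}(SM)$ the Pestov identity has the schematic form $\norm{\vd X h}_{L^2}^2 = \norm{X \vd h}_{L^2}^2 - \br{\mathcal{R} \vd h, \vd h} + (n-1) \norm{X h}_{L^2}^2$, where $\mathcal{R}$ is the curvature operator acting on the vertical bundle and, for nonpositive sectional curvature, $-\br{\mathcal{R} \vd h, \vd h} \geq 0$. I would apply this to $h \in \Omega_{m+1}$ and expand all terms in vertical spherical harmonics: $X h = X_+ h + X_- h \in \Omega_{m+2} \oplus \Omega_m$; $\vd h$ decomposes into a degree-$m$ and a degree-$(m+2)$ part, with $\norm{\vd h}_{L^2}^2 = (m+1)(m+n-1)\norm{h}_{L^2}^2$ by the vertical Laplacian; and $X \vd h$, $\vd X h$ split into degrees $m-1, m+1, m+3$. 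Using the commutation relations among $X_\pm$, the components of $\vd$ and the vertical Laplacian recorded in the earlier sections, each of these norms rewrites in terms of $\norm{X_+ h}_{L^2}^2$, $\norm{X_- h}_{L^2}^2$ and $\norm{h}_{L^2}^2$ with coefficients that are explicit rational functions of $m$ and $n$. Discarding the nonnegative curvature term and solving the resulting inequality for the ratio $\norm{X_- h}_{L^2}^2/\norm{X_+ h}_{L^2}^2$ produces exactly $C_n(m)^2$. The one degree at which the structure of the identity changes is $m=0$: then $X_- h \in \Omega_0$, whose vertical gradient vanishes, so one term is absent from $\norm{\vd X h}_{L^2}^2$; keeping track of this loss is what yields the exceptional constant $C_2(0) = \sqrt{2}$ in dimension two (and the larger value $C_3(0) = \sqrt{5}/2$), while for $n \geq 4$ the larger Laplacian eigenvalues still leave $C_n(0) \leq 1$.

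For the convergence assertions, iterate: for $f \in \Omega_{m_0}$ with $X_- f = 0$, the function $w_{m_0+2k} := B^k f \in \Omega_{m_0+2k}$ satisfies $\norm{w_{m_0+2k}}_{L^2} \leq \big( \prod_{j=0}^{k-1} C_n(m_0+2j) \big) \norm{f}_{L^2} \leq A_n(m_0) \norm{f}_{L^2}$, with $A_n(m_0) = \prod_{j\geq 0} C_n(m_0+2j)$. This infinite product is finite: for $n \geq 4$ every factor is $\leq 1$; for $n = 2$ all factors except possibly the first equal $1$; and for $n = 3$ one has $C_3(m) - 1 = O(m^{-3})$, so the product converges and a direct estimate (worst case $m_0 = 0$) gives $A_3(m_0) \leq 1.13$. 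Since the $H^s_v$-norm of an element of $\Omega_\ell$ is comparable to $(1+\ell)^s$ times its $L^2$-norm,
\[
\norm{w}_{L^2_x H^{-1/2-\eps}_v}^2 \lesssim \sum_{k \geq 0} (1+m_0+2k)^{-1-2\eps} \norm{w_{m_0+2k}}_{L^2}^2 \leq A_n(m_0)^2 \norm{f}_{L^2}^2 \sum_{k \geq 0} (1+m_0+2k)^{-1-2\eps} < \infty
\]
for every $\eps > 0$, so the series $w = \sum_{j\geq 0} B^j f$ converges in $L^2_x H^{-1/2-\eps}_v$ and its Fourier coefficients obey the stated bound; moreover $X_- B^{j+1}f = -X_+ B^j f$ together with $X_- f = 0$ gives $X w = 0$ in $\mathcal{D}'(SM)$, so $w$ is an invariant distribution.

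The main obstacle is the second step: extracting the \emph{sharp} constants $C_n(m)$ requires arranging the spherical-harmonic expansion and the commutator identities so that the coefficients of $\norm{X_\pm h}_{L^2}^2$ emerge as the precise rational functions of $m$ and $n$ above, and it requires treating the degenerate degree-zero contribution by hand. The duality reduction of the first step and the summation of the third are, by contrast, routine.
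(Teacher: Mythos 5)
Your proposal is correct and follows essentially the same route as the paper's proof. Your duality reduction (introducing $h$ with $X_+ h = Bf$, computing $\norm{Bf}^2 = -\langle f, X_- h\rangle$, then applying Cauchy--Schwarz) is precisely the paper's Lemma~\ref{lemma_beurling_from_apriori}, relying on the decomposition $\Omega_{m+2} = X_+\Omega_{m+1} \oplus \ker X_-$ from Lemma~\ref{lemma:decomp}; your second step of expanding the Pestov identity on $\Omega_{m+1}$ in vertical spherical harmonics is exactly the paper's Lemma~\ref{lemma_beurling_n_neq_three} combined with Lemma~\ref{lemma_xvu_lowerbound}, with the degeneracy $\vd X_- h = 0$ for $h\in\Omega_1$ accounting for the exceptional constants in low degree just as you diagnose; and your iteration and $L^2_x H^{-1/2-\eps}_v$ convergence argument is the paper's closing step verbatim.
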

%}

%\medskip

In the theorem we use the mixed norm spaces
$$
L^2_x H^{s}_{v}(SM) = \{ u \in \mDp(SM) :\,\, \norm{u}_{L^2_x H^s_{v}} < \infty \}, \ \ \norm{u}_{L^2_x H^s_{v}} = \left( \sum_{m=0}^{\infty} \langle m \rangle^{2s }\norm{u_m}_{L^2}^2 \right)^{1/2},
$$
where as usual $\langle m \rangle=(1+m^2)^{1/2}$. Note that the norm of the Beurling transform is always $\leq 1$ in dimensions $n \geq 4$, is $\leq 1$ in two dimensions unless $m=0$, and is sufficiently close to $1$ in three dimensions so that formal invariant distributions exist in nonpositive curvature. We remark that the same construction gives a more general family of distributions, where finitely many Fourier coefficients are obtained by taking any solutions of $X_{-}f_{k+2}=-X_{+}f_{k}$ (not necessarily orthogonal to $\mbox{\rm Ker}\,X_{-}$) and the remaining coefficients are obtained from the Beurling transform.

We shall see below that a rank one manifold of non-positive sectional curvature must be free of conformal Killing tensors and thus by Theorem \ref{theoremA} given any $f\in\Omega_{k_0}$ with $X_{-}f=0$ there is $w\in L^2_x H^{-1/2-\eps}_{v\phantom{\theta}}$ with $Xw=0$ and $w_{k_0}=f$. We state this explicitly in the following corollary.

\begin{Corollary} Let $(M,g)$ be a closed rank one manifold of non-positive sectional curvature.  Given $f\in\Omega_{k_0}$ with $X_{-}f=0$ there is $w\in L^2_x H^{-1/2-\eps}_{v\phantom{\theta}}$ with $Xw=0$ and $w_{k_0}=f$.
\end{Corollary}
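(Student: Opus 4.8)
The plan is to obtain the corollary as an immediate consequence of Theorem~\ref{theoremA}. The only hypothesis of that theorem that is not manifest in the present setting is the absence of nontrivial conformal Killing tensors, so the first step is to invoke Corollary~\ref{cor:noCKT}, which states that a closed rank one manifold of non-positive sectional curvature carries no nontrivial conformal Killing tensors. (A conformal Killing tensor of rank $m$ corresponds to a nonzero element of $\ker X_+ \subset \Omega_m$; the rank one assumption is what rules out the parallel orthogonal Jacobi fields that would otherwise obstruct the Pestov-identity argument behind Corollary~\ref{cor:noCKT}.) With this, $(M,g)$ satisfies all the assumptions of Theorem~\ref{theoremA}.

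Given $f \in \Omega_{k_0}$ with $X_- f = 0$, Theorem~\ref{theoremA} then produces the formal invariant distribution $w = \sum_{j \ge 0} B^j f$ and asserts that it is an element of $L^2_x H^{-1/2-\eps}_v(SM)$ for every $\eps > 0$, its Fourier coefficients $w_{k_0+2k} = B^k f$ obeying $\norm{w_{k_0+2k}}_{L^2} \le A_n(k_0)\norm{f}_{L^2}$ uniformly in $k \ge 0$ with $A_n(k_0) < \infty$; indeed these bounds make $\sum_{k \ge 0} \langle k_0+2k\rangle^{2s}\norm{w_{k_0+2k}}_{L^2}^2$ converge precisely for $s < -1/2$. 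By construction $w_{k_0} = B^0 f = f$, so it only remains to verify that $Xw = 0$ in $\mDp(SM)$.

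Writing $X = X_+ + X_-$ and using $X_\pm^* = -X_\mp$, the vertical Fourier components of the distribution $Xw$ are $(Xw)_j = X_+ w_{j-1} + X_- w_{j+1}$. Since $w$ has nonzero Fourier coefficients only in degrees $k_0, k_0+2, k_0+4, \dots$, the component $(Xw)_j$ vanishes automatically unless $j = k_0-1$ or $j = k_0 + 1 + 2k$ for some integer $k \ge 0$. In the first case $(Xw)_{k_0-1} = X_- w_{k_0} = X_- f = 0$, and in the second
\[
(Xw)_{k_0+1+2k} = X_+ w_{k_0+2k} + X_- w_{k_0+2k+2} = X_+ B^k f + X_- B^{k+1} f = X_+ B^k f - X_+ B^k f = 0,
\]
using the defining relation $X_- Bg = -X_+ g$ of the Beurling transform. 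Hence every Fourier component of $Xw$ vanishes, so $Xw = 0$ and the corollary follows.

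The real content of the argument is packaged in the two quoted results — the contraction estimate of Theorem~\ref{theoremA}, itself a consequence of the Pestov identity and of $\beta_{Ter} = \infty$ in non-positive curvature, and the vanishing of conformal Killing tensors from Corollary~\ref{cor:noCKT}. Relative to those inputs the only genuine points to attend to are that the formal sum converges to an honest distribution (guaranteed by the coefficient bounds above) and that the telescoping identity $X_- Bg = -X_+ g$ suffices to annihilate $Xw$ degree by degree; neither presents any difficulty.
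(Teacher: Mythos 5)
Your proof is correct and follows essentially the route the paper intends: deduce the absence of conformal Killing tensors from Corollary~\ref{cor:noCKT} (rank one plus $K \le 0$), then invoke Theorem~\ref{theoremA} to obtain the formal invariant distribution $w = \sum_j B^j f$ and its membership in $L^2_x H^{-1/2-\eps}_v$. The explicit degree-by-degree verification that $Xw=0$ via the telescoping relation $X_- Bg = -X_+ g$ is a worthwhile bit of added rigor, but it is just unpacking what the paper treats as built into the construction of the Beurling transform rather than a genuinely different argument.
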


Our next task is to discuss distributional solutions to $Xw=0$ replacing the hypothesis of sectional curvature $K\leq 0$ by the Anosov condition. This makes the problem harder but nevertheless we can show by using the Pestov identity and a duality argument:

%\medskip
%\noindent{\bf Theorem B.} {\it 

\begin{TheoremIntro} \label{theoremB}
Let $(M,g)$ be an Anosov manifold. Given any $f\in\Omega_k$ for $k=0,1$ with $X_{-}f=0$ (if $k=0$ this is vacuously true), there exists $w\in H^{-1}(SM)$ with $Xw=0$ and $w_k=f$.
\end{TheoremIntro}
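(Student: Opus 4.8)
The plan is to combine the Pestov identity with a duality argument, using $H^{-1}(SM)=(H^1(SM))'$. Write the desired distribution as $w=f+v$; since $X_-f=0$ we have $Xf=X_+f\in\Omega_{k+1}$, so it is enough to produce $v\in H^{-1}(SM)$ with $Xv=-X_+f$ and $\pi_kv=0$. By the Hahn--Banach theorem this is equivalent to constructing a bounded functional on $H^1(SM)$ that vanishes on $X(C^\infty(SM))$, vanishes on $\Omega_k$, and coincides with $\psi\mapsto\br{X_+f,\psi_{k+1}}_{L^2(SM)}$ on $X(C^\infty(SM))$; concretely this reduces to (i) checking that the functional $X\psi+h\mapsto\br{X_+f,\psi_{k+1}}$ is well defined on $X(C^\infty(SM))+\Omega_k\subset H^1(SM)$, and (ii) an a priori estimate bounding $\abs{\br{X_+f,\psi_{k+1}}}$ by $\norm{X\psi+h}_{H^1(SM)}$ for $\psi\in C^\infty(SM)$, $h\in\Omega_k$. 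Both (i) and the bookkeeping behind (ii) use that an Anosov manifold carries no nontrivial conformal Killing tensor of rank $\le1$: the rank $0$ case is the constants, and the rank $1$ case Killing vector fields, which cannot exist since a Killing field would produce a nonconstant smooth flow-invariant function, contradicting ergodicity. This is precisely what restricts the construction to $k\le1$.

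The core of (ii) is the higher-dimensional Pestov identity, which for $\psi\in C^\infty(SM)$ writes $\norm{\vd X\psi}_{L^2(SM)}^2$ in terms of $\norm{X\vd\psi}_{L^2(SM)}^2$, a curvature quadratic form $\br{\mathcal{R}\vd\psi,\vd\psi}$ built from $R$, and $(n-1)\norm{X\psi}_{L^2(SM)}^2$. The decisive fact, valid on an Anosov manifold, is that the modified index form $\psi\mapsto\norm{X\vd\psi}^2-\br{\mathcal{R}\vd\psi,\vd\psi}$ is coercive: it dominates $c\,\norm{\vd\psi}^2$ up to a controlled multiple of $\norm{X\psi}^2$. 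Substituting this into the Pestov identity yields the \emph{subelliptic estimate}
\[
\norm{\vd\psi}_{L^2(SM)}^2\ \le\ C\bigl(\norm{X\psi}_{L^2(SM)}^2+\norm{\vd X\psi}_{L^2(SM)}^2\bigr)\ \le\ C\,\norm{X\psi}_{H^1(SM)}^2,\qquad \psi\in C^\infty(SM).
\]
Since $\psi_{k+1}$ (with $k+1\in\{1,2\}$) is a vertical eigenfunction, $\norm{\psi_{k+1}}_{L^2}\le c^{-1/2}\norm{\vd\psi}_{L^2}$, and hence $\abs{\br{X_+f,\psi_{k+1}}}\le C\norm{X_+f}_{L^2}\norm{X\psi}_{H^1(SM)}$; combining this with the splitting of $\norm{X\psi+h}_{H^1(SM)}$ into vertical Fourier components (only the degree-$k$ component of which feels $h\in\Omega_k$) and with integration by parts against $X_+f$, one obtains (ii), and then $w=f+v$ has the required properties.

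The main obstacle is the coercivity of the index form. Making the Anosov hypothesis yield this analytic positivity, uniformly over $SM$, amounts to a careful study of the matrix Jacobi/Riccati equation along every geodesic --- the existence of bounded stable and unstable solutions, equivalently the fact that an Anosov manifold has rank one and terminator value $>1$ (Corollary \ref{corollary:ganosov}) --- and this is where most of the geometric work lies. A secondary point is that the estimate is only \emph{subelliptic}: it controls vertical but not horizontal derivatives of $\psi$, and $X$ has $0$ in its continuous $L^2$-spectrum, so $w$ is produced merely in $H^{-1}(SM)$, with no improvement of regularity and with control only on $w_0,w_1$. For $k\ge2$, or for the sharper conclusion available under nonpositive curvature, one needs in addition either the absence of higher-rank conformal Killing tensors or a quantitative lower bound on the terminator value, which is the subject of later sections.
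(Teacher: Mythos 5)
Your proposal follows essentially the same route as the paper: combine the Pestov identity with the fact that an Anosov manifold is $\alpha$-controlled for some $\alpha>0$ (Theorem~\ref{thm:alphacontrol}, proved via the Green solutions of the Riccati equation) to get the subelliptic estimate, then run a duality argument (the paper uses Riesz representation on a weighted Hilbert space for $k=0$ and Hahn--Banach for $k=1$; you use Hahn--Banach throughout, which works just as well). A few points of imprecision are worth flagging. First, a slip: you describe the functional on $X(C^\infty(SM))+\Omega_k$ as both vanishing on $X(C^\infty(SM))$ and agreeing with $\psi\mapsto\langle X_+f,\psi_{k+1}\rangle$ there; what you mean is that it vanishes on $\Omega_k$ (this encodes $v_k=0$) and agrees with the displayed formula on $X(C^\infty(SM))$ (this encodes $Xv=-X_+f$). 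Second, the attribution of the restriction $k\le1$ to the absence of rank-$\le1$ conformal Killing tensors is misleading: what actually limits the argument is the threshold $\alpha_{m,n}=\frac{(m-1)(m+n-2)}{m(m+n-1)}$ in Theorem~\ref{theorem_qm_subelliptic}, which vanishes only for $m\le1$, so that Anosov ($\alpha>0$) suffices exactly in this range; the absence of low-rank CKTs is a \emph{consequence} of the same Pestov estimate (Lemma~\ref{lemma_absence_ckt}), not a separate geometric input from ergodicity. In particular the well-definedness in step (i) — that $X\psi\in\Omega_k$ forces $\psi_{k+1}=0$ — should itself be derived from the $Q_k$ subelliptic estimate applied to $T_{\geq k}\psi$, which is also what makes step (ii) work: since $\vd$ is degree-preserving, $\norm{\vd T_{\geq k+1}X\psi}\le\norm{\vd(X\psi+h)}$ for $h\in\Omega_k$, so $\norm{\psi_{k+1}}\lesssim\norm{T_{\geq k}\psi}_{H^1}\lesssim\norm{Q_k(T_{\geq k}\psi)}\le\norm{X\psi+h}_{H^1}$. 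Finally, your remark that the subelliptic estimate ``controls vertical but not horizontal derivatives'' is not quite right: Theorem~\ref{lemma_p_inequalities} bounds the full $H^1(SM)$ norm (horizontal derivatives are recovered via $[X,\vd]=-\hd$ and the Poincar\'e inequality); the estimate is subelliptic in the sense of losing one derivative relative to the second-order operator $P=\vd X$.
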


%\medskip
Let us compare the invariant distributions in Theorems \ref{theoremA} and \ref{theoremB} for the case $k=0$, solving the equation $Xw = 0$ with $w_0 = f_0$ for given $f_0 \in \Omega_0$. The formal invariant distributions in Theorem \ref{theoremA} exist on manifolds with $K \leq 0$, they lie in $L^2_x H^{-1/2-\eps}_{v\phantom{\theta}}$, and they are the unique invariant distributions whose Fourier coefficients $w_k$ are minimal energy solutions of $X_- w_{k+2} = -X_+ w_k$. On the other hand, by the results in Section \ref{sec_surjectivity_izero} the invariant distributions in Theorem \ref{theoremB} exist on any Anosov manifold, they lie in $L^2_x H^{-1}_v$, and they are the unique invariant distributions for which the quantity $\sum_{m=1}^{\infty} \frac{1}{m(m+n-2)} \norm{w_m}_{L^2}^2$ is minimal. It is an interesting question whether there is any relation between these two classes of invariant distributions.

Very recently, Theorem \ref{theoremB} has been improved in \cite{Gui1}. The paper \cite{Gui1} introduces a new set of tools coming from microlocal analysis of Anosov flows and it also gives information on the wave front set of the invariant distributions.

We remark that in general, an arbitrary transitive Anosov flow has a plethora of invariant measures and distributions (e.g.\ equilibrium states, cf.\ \cite{KH}), but the ones in Theorems \ref{theoremA}--\ref{theoremB} are {\it geometric} since they really depend on the geometry of the spherical fibration $\pi:SM\to M$.
In the case of surfaces of constant negative curvature these distributions and their regularity are discussed in \cite[Section 2]{AZ}.

\vspace{15pt}

\noindent {\bf Interlude.}\ One of our main motivations for considering invariant distributions as above has been the fundamental interplay that has been emerging in recent years between injectivity properties of the geodesic ray transform and the existence of special solutions to the transport equation $Xu=0$.
This interplay has lead to the solution of several long-standing geometric inverse problems, especially in two dimensions \cite{PU,SaU,PSU2,PSU1,PSU3,Gui1,Gui2}.

We will now try to explain this interplay in the easier setting of simply connected compact manifolds with strictly convex boundary and without conjugate points. Such manifolds are called {\it simple} and an obvious example is the region enclosed by a simple, closed and strictly convex curve in the plane (less obvious examples are small perturbations of this flat region).
It is well known that under these assumptions $M$ is topologically a ball and geodesics hit the boundary in finite time, i.e.\ $(M,g)$ is nontrapping.
The notion of simple manifold appears naturally in the context of the boundary rigidity problem \cite{Mi} and it has been at the center of recent activity on geometric inverse problems.

Geodesics going from $\partial M$ into $M$ are parametrized by $\partial_+ (SM) = \{(x,v) \in SM \,;\, x \in \partial M, \langle v,\nu \rangle \leq 0 \}$ where $\nu$ is the outer unit normal vector to $\partial M$. For $(x,v) \in SM$ we let $t \mapsto \gamma(t,x,v)$ be the geodesic starting from $x$ in direction $v$. The ray transform of $f\in C^{\infty}(SM)$ is defined by 
$$
If(x,v):= \int_0^{\tau(x,v)} f(\phi_t(x,v)) \,dt, \quad (x,v) \in \partial_+(SM),
$$
where $\tau(x,v)$ is the exit time of $\gamma(t,x,v)$. Even though the
definition is given for smooth functions, $I$ acting on $L^2$-functions is well-defined too. Integrating or averaging along the orbits of a group action is the obvious way to produce invariant objects and this case is no exception: $If$ naturally gives rise to a first integral of the geodesic flow since it is a function defined on $\partial_+ (SM)$ which parametrizes the orbits of the geodesic flow. In general, if $h\in C(\partial_+ (SM))$, then $h^{\sharp}(x,v):=h(\phi_{-\tau(x,-v})(x,v))$ is a first integral. There are natural $L^{2}$-inner products so that we can consider the adjoint $I^{*}:L^{2}(\partial_+ (SM))\to L^{2}(SM)$ which turns out to be $I^*h=h^{\sharp}$.

The situation gets interesting once we start restricting $I$ to relevant subspaces of $L^{2}(SM)$, a typical example being $L^{2}(M)$, and in this case the resulting geodesic ray transform is denoted by $I_{0}$ and $I_{0}^*h$ is easily seen to be the Fourier coefficient of zero degree of $h^{\sharp}$. Hence surjectivity of $I_{0}^*$ can be interpreted
as the existence of a solution to $Xu=0$ with prescribed $u_0$. In the context of simple manifolds a fundamental property is that $I^*_{0}I_{0}$ is an elliptic pseudo-differential operator of order $-1$ in the interior of $M$ \cite{PU}
and this combined with injectivity results for $I_{0}$ gives rise to the desired surjectivity properties for $I_{0}^*$.

Symmetric tensors can also be seen as interesting subspaces of $L^{2}(SM)$ and the same ideas apply.  Sometimes it is convenient to use this interplay backwards: if one can construct invariant distributions with certain prescribed Fourier components one might be able to prove injectivity of the relevant geodesic ray transform. In the case of Anosov manifolds the situation is technically more challenging, but the guiding principles remain and this was exploited and explained at great length
in \cite{PSU3}. In fact
Theorem \ref{theoremB} extends \cite[Theorem 1.4 and Theorem 1.5]{PSU3} to any dimension, and the result should be regarded as the analogue of the surjectivity result for the adjoint of the geodesic ray transform on simple manifolds acting on functions
and 1-forms \cite{PU,DU}.  

It seems convenient at this point to conclude the interlude and give details about how to define the
geodesic ray transform in the context of closed manifolds.

\vspace{15pt}

\noindent {\bf Inverse problems on closed manifolds.}\ Let $(M,g)$ be a closed oriented manifold, and let $\mathcal{G}$ be the set of periodic geodesics parametrized by arc length. The ray transform of a symmetric $m$-tensor field $f$ on $M$ is defined by 
$$
I_{m}f(\gamma) = \int_0^T f(\dot{\gamma}(t), \ldots, \dot{\gamma}(t)) \,dt, \quad \gamma \in \mathcal{G} \text{ has period } T.
$$
It is easy to check that $I_{m}(dh)(\gamma) = 0$ for all $\gamma \in \mathcal{G}$ if $h$ is a symmetric $(m-1)$-tensor and $d$ denotes the symmetrized Levi-Civita covariant derivative acting on symmetric tensors.
The tensor tomography problem asks whether these are the only tensors in the kernel of $I_m$. When this occurs $I_m$ is said to be \emph{solenoidal injective} or \emph{$s$-injective}. Of course, one would expect a positive answer only on manifolds $(M,g)$ with sufficiently many periodic geodesics. The Anosov manifolds are one reasonable class where this question has been studied.

Our main result in this direction is:

%\medskip
%\noindent{\bf Theorem C.} {\it 
\begin{TheoremIntro} \label{theoremC}
Let $(M,g)$ be a closed Riemannian manifold such that every unit vector has rank one.
 Suppose in addition that $\beta_{Ter}>\frac{m(m+n-1)}{2m+n-2}$ where $m$ is an integer $\geq 2$ and $n=\dim M$.
Then $I_{m}$ is $s$-injective. 
\end{TheoremIntro}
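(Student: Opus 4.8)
The plan is to combine a Liv\v{s}ic-type regularity theorem for the geodesic flow with the Pestov-identity estimates developed above, using the hypothesis on $\beta_{Ter}$ to control the curvature term through the modified Jacobi equation.

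First I would note that the hypotheses already force $(M,g)$ to be an Anosov manifold: a short computation shows $\frac{m(m+n-1)}{2m+n-2}>1$ for every integer $m\geq2$ and $n\geq2$, so $\beta_{Ter}>1$, and since moreover every unit vector has rank one, Corollary~\ref{corollary:ganosov} gives that the geodesic flow is Anosov; in particular $(M,g)$ is free of conjugate points, and being a smooth Anosov flow preserving the Liouville measure it is ergodic and transitive. Now let $f$ be a symmetric $m$-tensor with $I_mf=0$. Writing $f=f^s+dp$ with $f^s$ solenoidal and using $I_m(dp)=0$, it suffices to show $f^s=0$; thus replace $f$ by $f^s$ and identify it with the function $u=\sum_{j\geq0}u_{m-2j}\in\bigoplus_j\Omega_{m-2j}\subset C^\infty(SM)$. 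The hypothesis $I_mf=0$ says exactly that $u$ integrates to zero over every closed geodesic, so by the Liv\v{s}ic regularity theorem for transitive smooth Anosov flows (cf.\ \cite{KH}) there is $w\in C^\infty(SM)$, unique up to an additive constant, with $Xw=-u$.

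The heart of the argument is to show that $w$ has Fourier degree at most $m-1$. Decomposing $w=\sum_{k\geq0}w_k$ and using $X=X_++X_-$, the equation $Xw=-u$ becomes the ladder $X_+w_{k-1}+X_-w_{k+1}=-u_k$, where $u_k=0$ for $k>m$ and for $k\not\equiv m\pmod2$; treating the two parity classes separately, one of them is constant by ergodicity, while for the other the coefficients $w_k$ with $k\geq m$ solve the homogeneous system $X_+w_{k-1}+X_-w_{k+1}=0$. To eliminate these I would run the Pestov-identity computation, but with the curvature term handled through the $\beta$-modified Jacobi operator $\ddot J+\beta R(J,\dot\gamma)\dot\gamma$ rather than through a sign condition on $R$; concretely I expect a graded $L^2$ inequality of the schematic shape
$$
\sum_{k\geq m-1}a_{n,k}\bigl(\norm{X_-w_{k+1}}_{L^2}^2-b_{n,k}\,\norm{X_+w_k}_{L^2}^2\bigr)\ \leq\ C\,\norm{u}_{L^2}^2,
$$
whose left-hand side is coercive on the high Fourier modes precisely when $(M,g)$ is free of $\beta$-conjugate points for all $\beta<\frac{m(m+n-1)}{2m+n-2}$ --- which is exactly our hypothesis --- with positive constants $a_{n,k},b_{n,k}$ coming from the action of $X_\pm$ on vertical spherical harmonics in dimension $n$ (here the eigenvalues $\lambda_k=k(k+n-2)$, and the identity $\lambda_{k+1}-\lambda_{k-1}=2(2k+n-2)$, enter the bookkeeping). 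Since $u$ has Fourier degree $\leq m$, the right-hand side involves only the bottom of the ladder, and one reads off $X_+w_k=0$ for all $k\geq m$. As $(M,g)$ is Anosov of rank one it carries no nontrivial conformal Killing tensor field (Corollary~\ref{cor:noCKT} and the description of $\mathrm{Ker}\,X_+$ above), so $X_+w_k=0$ forces $w_k=0$, and propagating down the ladder gives $w_k=0$ for every $k\geq m$.

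Thus $w$ has Fourier degree $\leq m-1$, so under the tensor identification $w$ corresponds to a symmetric $(m-1)$-tensor $h$ with $dh=-f$; then $\delta(dh)=-\delta f=0$, and pairing with $h$ on the closed manifold $M$ shows $dh=0$, whence $f=0$. This proves that $I_m$ is $s$-injective. The main obstacle is the graded inequality above: one must extract the \emph{sharp} threshold $\frac{m(m+n-1)}{2m+n-2}$ from the Pestov identity in arbitrary dimension, which requires a careful analysis of the commutators of $\{X,\hd,\vd\}$ and of the constants $a_{n,k},b_{n,k}$, and --- the delicate point --- the absorption of the curvature term using only the terminator condition on the modified Jacobi equation, rather than a cruder hypothesis such as $K\leq0$ (the case $\beta_{Ter}=\infty$, where the relevant bound is governed by the Beurling transform of Theorem~\ref{theoremA}). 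A useful sanity check is that the estimate should degrade correctly to the earlier results in nonpositive curvature and, in two dimensions, to \cite{PSU3}; one should also confirm that the present hypotheses genuinely preclude conformal Killing tensors of rank $m$, a secondary point that can be handled by the same estimate.
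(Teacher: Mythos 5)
Your overall strategy (Liv\v sic theorem to produce a smooth primitive, then a Pestov-type estimate driven by the terminator hypothesis, then absence of conformal Killing tensors to kill the high Fourier modes) is the same as the paper's, but there is a genuine gap in the conformal Killing step, and the Pestov step is left as a schematic where the actual content of the proof lies.

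The conformal Killing step is wrong as written. You assert that ``as $(M,g)$ is Anosov of rank one it carries no nontrivial conformal Killing tensor field'' and cite Corollary~\ref{cor:noCKT}. That corollary requires nonpositive sectional curvature, i.e.\ $\beta_{Ter}=\infty$ by Lemma~\ref{lemma:kbeta}, whereas your hypothesis is only $\beta_{Ter}>\frac{m(m+n-1)}{2m+n-2}$. Whether Anosov manifolds of dimension $\ge 3$ carry nontrivial CKTs is in fact one of the \emph{open questions} listed in the introduction, so you cannot appeal to a blanket non-existence statement. What the paper does instead is sharper and degree-specific: after showing via the equality case of the estimate that the tail $u=T_{\ge m}a$ satisfies $(Xu)_{m-1}=0$, $(Xu)_l=0$ and $w_l=0$ for $l\ge m+1$, $Z=0$, one uses the commutator $[X,\Delta]=2\vdiv\hd+(n-1)X$ together with transitivity to localize $u$ to the single degree $m+1$, and then $u_{m+1}$ is a conformal Killing tensor \emph{of order $m+1$ only}. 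This particular degree is then ruled out by Lemma~\ref{lemma_absence_ckt}, which applies under the hypothesis $\alpha>\alpha_{m,n}=\frac{(m-1)(m+n-2)}{m(m+n-1)}$ --- exactly what $\beta_{Ter}>\frac{m(m+n-1)}{2m+n-2}$ gives via Proposition~\ref{prop:greeneq}. (Even if one follows your route of first deducing $X_+w_k=0$ for all $k\ge m$ directly from the vanishing of the $w_l$'s and of $(Xu)_l$ for $l\ge m+1$, one must still handle each degree separately: for $k\ne m+1$ one uses ergodicity of the Liouville measure to get $Xu_k=0\Rightarrow u_k=0$, while for $k=m+1$ one needs the specific CKT-absence in degree $m+1$. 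Your ``propagating down the ladder'' glosses over this.)

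The ``graded $L^2$ inequality'' that you call the heart of the argument is indeed the heart, but your schematic omits the ingredients that make it close. The paper does not run a mode-by-mode ladder inequality; it applies the Pestov identity once to the whole tail $u=a-\sum_{l\le m-1}a_l$ via the operator $Q_m=\vd T_{\ge m+1}X$ (Theorem~\ref{theorem_qm_subelliptic}), uses the lower bound for $\norm{X\vd u}^2$ from Lemma~\ref{lemma_xvu_lowerbound} (which produces both the coercive terms in $\norm{(Xu)_{m-1}}^2$, $\norm{(Xu)_m}^2$ and the extra nonnegative terms $\sum_{l\ge m+1}\norm{\vd w_l}^2+\norm{Z}^2$), and absorbs the curvature via the $\alpha$-controlled property coming from the Green solutions of the $\beta$-Riccati equation (Proposition~\ref{prop:greeneq}). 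The threshold $\alpha_{m,n}$ is precisely the value at which the coefficient of $\norm{(Xu)_m}^2$ changes sign, and the extra terms from Lemma~\ref{lemma_xvu_lowerbound} are what allow one to extract $w_l=0$ and $Z=0$ rather than merely an inequality. Without spelling this out, the claim that one ``reads off $X_+w_k=0$ for all $k\ge m$'' is an assertion, not an argument.
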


%\medskip

Note that the hypotheses imply by Corollary \ref{corollary:ganosov} that $(M,g)$ is an Anosov manifold.
In \cite{DS0} it was shown that $I_0$ and $I_{1}$ are $s$-injective on any Anosov manifold.
Theorem \ref{theoremC} was proved earlier for negative curvature in \cite{GK}  for $n=2$ and in \cite{CS} for arbitrary $n$. For an arbitrary Anosov surface, $s$-injectivity of $I_2$ was established in \cite{PSU3},
and earlier the same result was proved in \cite{SU}  if additionally the surface has no focal points. Solenoidal injectivity of $I_m$ for an Anosov surface and $m\geq 3$ is finally settled in \cite{Gui1}.
It is also known that for any $n$ and $m$ the kernel of $I_m$ is finite dimensional on any Anosov manifold \cite{DS0}. 
Note that for $m=2$ the condition in Theorem \ref{theoremC} becomes $\beta_{Ter}>\frac{2(n+1)}{2+n}$ and hence
for any Anosov manifold with $\beta_{Ter}\in [2,\infty]$, $I_{2}$ is $s$-injective.

%This condition is closely related to the works \cite{Pestov, Dairbekov_nonconvex} where absence of %$\beta$-conjugate points also appears in the case of manifolds with boundary. 

There are numerous motivations for considering the tensor tomography problem for an Anosov manifold but perhaps the most notorious one is that of spectral rigidity which involves $I_{2}$. In \cite{GK} Guillemin and Kazhdan proved that
if $(M,g)$ is an Anosov manifold such that $I_{2}$ is $s$-injective then $(M,g)$ is spectrally rigid. This means that if $(g_s)$ is a smooth family of Riemannian metrics on $M$ for $s \in (-\varepsilon,\varepsilon)$ such that $g_0 = g$ and the spectra of $-\Delta_{g_s}$ coincide up to multiplicity,
$$
\text{Spec}(-\Delta_{g_s}) = \text{Spec}(-\Delta_{g_0}), \quad s \in (-\varepsilon,\varepsilon),
$$
then there exists a family of diffeomorphisms $\psi_s: M \to M$ with $\psi_0 = \text{Id}$ and 
$$
g_s = \psi_s^* g_0.
$$
Hence directly from Theorem \ref{theoremC} we deduce:

%\medskip

%\noindent{\bf Corollary.} {\it  
\begin{TheoremIntro} \label{theoremCor}
Let $(M,g)$ be a closed Riemannian manifold such that every unit vector has rank one.
 Suppose in addition that $\beta_{Ter}>\frac{2(n+1)}{n+2}$ where $n=\dim M$.
Then $(M,g)$ is spectrally rigid.
\end{TheoremIntro}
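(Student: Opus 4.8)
The plan is to deduce this statement immediately from Theorem \ref{theoremC} together with the spectral rigidity criterion of Guillemin and Kazhdan recalled in the Introduction. The key observation is purely arithmetic: specializing the hypothesis of Theorem \ref{theoremC} to the case $m = 2$, the threshold $\frac{m(m+n-1)}{2m+n-2}$ becomes $\frac{2(n+1)}{n+2}$, which is precisely the assumption made here. Thus the first step is to apply Theorem \ref{theoremC} with $m = 2$ to conclude that $I_2$ is $s$-injective on $(M,g)$.

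The second step is to observe that the standing hypotheses already force $(M,g)$ to have Anosov geodesic flow. Indeed, $\frac{2(n+1)}{n+2} > 1$ for every $n \geq 1$, so the assumption $\beta_{Ter} > \frac{2(n+1)}{n+2}$ in particular implies $\beta_{Ter} > 1$; combined with the hypothesis that every unit vector has rank one, Corollary \ref{corollary:ganosov} shows that $(M,g)$ is Anosov. (This is also needed to even legitimately invoke Theorem \ref{theoremC}, and it is the hypothesis under which the Guillemin--Kazhdan mechanism operates.)

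Finally, with $(M,g)$ Anosov and $I_2$ $s$-injective, the theorem of Guillemin and Kazhdan from \cite{GK}, quoted in the Introduction, gives directly that $(M,g)$ is spectrally rigid: an isospectral deformation $(g_s)$ produces, after differentiating the trace formula / heat invariants, a symmetric $2$-tensor whose ray transform $I_2$ vanishes, hence this tensor is potential by $s$-injectivity, which is exactly the infinitesimal condition that the deformation be trivial, and one then integrates this to obtain the family of diffeomorphisms $\psi_s$ with $g_s = \psi_s^* g_0$. I do not expect any genuine obstacle in this argument: all of the analytic content sits in Theorem \ref{theoremC}, and the present statement is a clean corollary. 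The only points that require care are the elementary inequality identifying the two terminator thresholds when $m=2$, and checking that the rank one plus $\beta_{Ter} > 1$ hypothesis places us squarely in the Anosov setting where the Guillemin--Kazhdan spectral rigidity theorem is available.
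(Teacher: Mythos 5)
Your proposal is correct and follows exactly the paper's own route: apply Theorem \ref{theoremC} with $m=2$ (where $\frac{m(m+n-1)}{2m+n-2}$ indeed specializes to $\frac{2(n+1)}{n+2}$) to obtain $s$-injectivity of $I_2$, note that $\beta_{Ter} > 1$ together with the rank one hypothesis forces the Anosov property via Corollary \ref{corollary:ganosov}, and conclude by the Guillemin--Kazhdan spectral rigidity theorem from \cite{GK}. This is precisely what the paper does, stating the deduction immediately after Theorem \ref{theoremC} and repeating it as a corollary of Corollary \ref{cor_im_injectivity_beta} in Section \ref{section_injectivity}.
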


%\medskip

\vspace{5pt}

\noindent {\bf Inverse problems on manifolds with boundary.}\ The results up to this point have been about closed manifolds, but our techniques also apply to the case of simple manifolds with boundary.  In fact in \cite{PSU3} we advocated a strong analogy between simple and Anosov manifolds and this has proved quite fruitful as we hinted in the interlude.
The tensor tomography problem on simple manifolds is well known and we refer to \cite{Sh,PSU4} for extensive discussions.
The geodesic ray transform of a symmetric tensor $f$ is defined by 
$$
I_mf(x,v) = \int_0^{\tau(x,v)} f(\phi_t(x,v)) \,dt, \quad (x,v) \in \partial_+(SM),
$$
where we abuse notation and we denote by $f$ also the function $(x,v)\mapsto f_{x}(v,\dots,v)$.
If $h$ is a symmetric $(m-1)$-tensor field with $h|_{\partial M} = 0$, then 
$I_m(dh) = 0$. The transform $I_m$ is said to be \emph{$s$-injective} if these are the only elements in the kernel. The terminology arises from the fact that
any tensor field $f$ may be written uniquely as $f=f^s+dh$, where $f^s$
is a symmetric $m$-tensor with zero divergence and $h$ is an $(m-1)$-tensor
with $h|_{\partial M} = 0$ (cf.~\cite{Sh}). The tensor fields $f^s$ and
$dh$ are called respectively the {\it solenoidal} and {\it potential} parts
of $f$. Saying that $I_m$ is $s$-injective is saying precisely that
$I_m$ is injective on the set of solenoidal tensors.

We can now state our last theorem.

%\medskip

%\noindent{\bf Theorem D.} {\it 
\begin{TheoremIntro} \label{theoremD}
Let $(M,g)$ be a compact simple manifold with
$\beta_{Ter}\geq \frac{m(m+n-1)}{2m+n-2}$ where $n = \dim M$ and $m$ is an integer $\geq 2$.
Then $I_m$ is $s$-injective.
\end{TheoremIntro}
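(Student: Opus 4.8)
The plan is to adapt the energy-estimate scheme used for Anosov manifolds to the boundary setting, with the Pestov identity as the central analytic tool. Suppose $f$ is a solenoidal symmetric $m$-tensor with $I_m f = 0$. The vanishing of $I_m f$ means that the function $u$ solving the transport equation $Xu = -f$ on $SM$ with $u|_{\partial_+(SM)} = 0$ automatically satisfies $u|_{\partial_-(SM)} = 0$, so $u|_{\partial(SM)} = 0$. By the regularity theory on simple manifolds (as in \cite{PSU3,PSU4}), $u$ is smooth in the interior and sufficiently regular up to the boundary. Decompose $u = \sum_{k \geq 0} u_k$ into vertical spherical harmonics; since $f \in \Omega_m$, the transport equation $X_+ u_{k-1} + X_- u_{k+1} = -f\,\delta_{k,m-1}$ (appropriately interpreted) couples the components, and one shows as usual that $u_k = 0$ for $k \geq m$ when $f$ has the right parity, reducing matters to a finite sum.

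The key step is to apply the Pestov identity to $u$ with the boundary term having a favorable sign (this uses strict convexity of $\partial M$), obtaining an inequality of the form
$$
\norm{X_- u}_{L^2}^2 \gtrsim \text{(curvature/Jacobi term)} + \norm{X_+ u}_{L^2}^2 - \text{(boundary)}.
$$
The hypothesis $\beta_{Ter} \geq \frac{m(m+n-1)}{2m+n-2}$ enters precisely here: it guarantees that the modified Jacobi equation with parameter $\beta = \frac{m(m+n-1)}{2m+n-2}$ has no conjugate points, which makes the relevant quadratic form nonnegative after one integrates the Pestov identity together with the index-form/Jacobi-field estimate along each geodesic (the index form of the $\beta$-modified Jacobi equation is positive on compactly supported fields). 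The constant $\frac{m(m+n-1)}{2m+n-2}$ is exactly the threshold at which the spherical-harmonic coefficients in the Pestov identity — which involve the factors $m(m+n-2)$ from the vertical Laplacian eigenvalues — balance against the curvature term when one localizes to $\Omega_{m-1}$. Running the estimate then forces the curvature term and $X_+ u$ to vanish; combined with $u|_{\partial M} = 0$ and the solenoidal condition on $f$, one concludes $u \equiv 0$, hence $f = -Xu = 0$. (The borderline case $\beta_{Ter} = \frac{m(m+n-1)}{2m+n-2}$, as opposed to the strict inequality in the Anosov Theorem \ref{theoremC}, is affordable because on a simple manifold the finite exit time and the boundary term provide the extra rigidity needed to rule out the null directions of the limiting quadratic form.)

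The main obstacle I expect is the careful bookkeeping of boundary terms and regularity: on a manifold with boundary the Pestov identity acquires boundary contributions, and one must verify both that $u$ is regular enough for the integration by parts to be legitimate and that the boundary terms have the correct sign from strict convexity. A secondary technical point is identifying the sharp constant $\frac{m(m+n-1)}{2m+n-2}$ — this requires tracking how the vertical Laplacian eigenvalues $k(k+n-2)$ for $k = m-1$ and $k = m+1$ interact in the Fourier-decomposed Pestov identity, and optimizing the free parameter one is allowed to insert. Once the sharp algebraic identity is in place, the dynamical input is simply the positivity of the $\beta$-index form guaranteed by $\beta \leq \beta_{Ter}$, which is where Lemma \ref{lemma:kbeta} and the results of Section \ref{sec_betaconjugate} are invoked.
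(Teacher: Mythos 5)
Your proposal has the right broad shape --- set up the transport equation $Xa = -f$ with $a|_{\partial(SM)}=0$, invoke the Pestov identity, and use the $\beta$-conjugate-point hypothesis for positivity --- but two intermediate steps do not hold up. First, the claim that ``one shows as usual that $u_k=0$ for $k\geq m$ \dots reducing matters to a finite sum'' is circular: the assertion that the primitive has degree $m-1$ is exactly the content of $s$-injectivity and is the \emph{output} of the Pestov estimate, not a preliminary reduction. Parity kills only half the Fourier modes; it says nothing about the degrees $\geq m$. Second, the final deduction from ``$X_+u=0$ and $u|_{\partial(SM)}=0$'' to $u\equiv 0$ fails: $X_+u=0$ means each component $u_k$ is a conformal Killing $k$-tensor, but the $u_k$ individually need not vanish on $\partial(SM)$ even when $u$ does, so the boundary uniqueness result for conformal Killing tensors cannot be applied componentwise.

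The paper's proof is routed through the $\alpha$-controlled property, which your proposal skips. By Lemma \ref{lemma:kbeta} the manifold is free of $\beta$-conjugate points with $\beta=\frac{m(m+n-1)}{2m+n-2}$, and by Remark \ref{rem:betac} (which uses Santal\'o's formula and positivity of the $\beta$-index form) this gives $\alpha$-control with $\alpha = (\beta-1)/\beta = \frac{(m-1)(m+n-2)}{m(m+n-1)} = \alpha_{m,n}$; Theorem \ref{theorem_boundary_im_injectivity} then applies. Its proof applies the Pestov identity together with Lemma \ref{lemma_xvu_lowerbound} not to $u$ but to the high-degree projection $T_{\geq m}$ of the primitive (so that $Q_m$ annihilates it), and the resulting inequality forces $(Xu)_{m-1}$, $(Xu)_m$, $T_{\geq m+1}Xu$ and eventually the whole high-degree part to vanish; absence of conformal Killing tensors vanishing on the boundary is used only at the last step to kill $u_{m+1}$. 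Note also that you emphasize boundary terms of the Pestov identity acquiring a favorable sign from strict convexity, but the paper's Pestov identity is applied with $u|_{\partial(SM)}=0$, which makes the boundary terms vanish; strict convexity enters only indirectly through the index-form positivity in Lemma \ref{lemma_boundary_alphacontrolled}.
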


%\medskip

This theorem %provides the best result up to date for s-injectivity of $I_m$ on simple manifolds. It 
improves the results in \cite{Pestov, Dairbekov_nonconvex} in which $s$-injectivity of $I_m$ is proved under the weaker condition $\beta_{Ter}\geq \frac{m(m+n)}{2m+n-1}$. It is also known that $I_m$ is always $s$-injective on simple surfaces \cite{PSU1}, and $I_2$ is $s$-injective for a generic class of simple metrics including real-analytic ones \cite{SU3}, but it remains an open question whether $I_2$ is $s$-injective on arbitrary simple manifolds of dimension $\geq 3$ (see however the recent papers \cite{SUV2,Gui2} for results under different conditions).

\vspace{10pt}

\noindent {\bf Open questions, structure of the paper.}\  Here we list some open questions related to the topics of this article:
\begin{itemize}
\item 
Is $I_2$ $s$-injective on simple manifolds with $\dim M \geq 3$?
\item 
Is $I_2$ $s$-injective on Anosov manifolds with $\dim M \geq 3$?
\item 
Do Anosov manifolds with $\dim M \geq 3$ have no nontrivial conformal Killing tensors? Can one find more general conditions to ensure this?
\item 
Is there any relation between the formal and minimal invariant distributions in Theorems \ref{theoremA} and \ref{theoremB}?
\item 
Can one say more about the norm of the Beurling transform?
\end{itemize}

Much of this paper deals with multidimensional generalizations of the results in \cite{PSU1,PSU3}. The paper is organized as follows. Section \ref{sec_intro} is the introduction and states the main results. In Section \ref{sec_spherebundle} we prove the Pestov identity following the approach of \cite{PSU1}. The result is well known, but we give a simple proof based on three first order operators on $SM$ and their commutator formulas which are multidimensional analogues of the structure equations on the circle bundle of a Riemannian surface. Section \ref{sec:vlap+sh} follows \cite{GK2, DS} and discusses spherical harmonic expansions in the vertical variable, which generalize Fourier expansions in the angular variable for $\dim M = 2$. Section \ref{sec_alphacontrolled} considers the notion of $\alpha$-controlled manifolds (introduced in \cite{PSU3} for surfaces), and contains certain useful estimates.

In Section \ref{section_beurling} we consider the Beurling transform and show that it is essentially a contraction on nonpositively curved manifolds. Section \ref{sec_betaconjugate} studies $\beta$-conjugate points and terminator values, and Section \ref{section_anosov_alphacontrolled} shows that $\beta_{Ter} \geq \beta$ implies $(\beta-1)/\beta$-controlled. Sections \ref{sec_surjectivity_izero} and \ref{sec_surjectivity_im} are concerned with subelliptic estimates coming from the Pestov identity and the existence of invariant distributions related to surjectivity of $I_m^*$, and in Section \ref{section_injectivity} we discuss solenoidal injectivity for $I_m$. All the work up to this point has been on closed manifolds; Section \ref{sec_boundary} considers analogous results on compact manifolds with boundary. There are two appendices. The first appendix contains local coordinate formulas for operators arising in this paper and proves the basic commutator formulas. The second appendix considers the results of this paper specialized to the two-dimensional case, mentions a relation between Pestov and Guillemin-Kazhdan energy identities, and connects the present treatment to the works \cite{PSU1, PSU3}.

\medskip

\noindent{\it Acknowledgements.}
M.S. was supported in part by the Academy of Finland and an ERC starting grant, and G.U. was partly supported by NSF and a Simons Fellowship. The authors would like to express their gratitude to the Banff International Research Station (BIRS) for providing an excellent research environment via the Research in Pairs program and the workshop Geometry and Inverse Problems, where part of this work was carried out. We are also grateful to Hanming Zhou for several corrections to earlier drafts, and to Joonas Ilmavirta for helping with a numerical calculation.
Finally we thank the referee for numerous suggestions that improved the presentation.

\section{Commutator formulas and Pestov identity} \label{sec_spherebundle}

In this section we introduce and prove the fundamental energy identity which is the basis of a considerable part of our work.
This identity is a special case of a more general identity already in the literature. However we take a slightly different approach to its derivation which emphasizes the role of the unit sphere bundle very much in the spirit of \cite{PSU1}.  For other presentations see \cite{Sh}, \cite{Kn}, \cite[Theorem 4.8]{DP1}.

Let $(M,g)$ be a closed Riemannian manifold with unit sphere bundle $\pi: SM\to M$ and as always let $X$ be the geodesic vector field.
It is well known that $SM$ carries a canonical metric called the Sasaki metric. If we let $\mathcal V$ denote the vertical subbundle
given by $\mathcal V=\mbox{\rm Ker}\,d\pi$, then there is an orthogonal splitting with respect to the Sasaki metric:
\[TSM=\re X\oplus {\mathcal H}\oplus {\mathcal V}.\]
The subbundle ${\mathcal H}$ is called the horizontal subbundle. Elements in $\mathcal H(x,v)$ and $\mathcal V(x,v)$ are canonically
identified with elements in the codimension one subspace $\{v\}^{\perp}\subset T_{x}M$. We shall use this identification freely below (see for example \cite{Kn,Pa} for details on these facts).

Given a smooth function $u\in C^{\infty}(SM)$ we can consider its gradient $\nabla u$ with respect to the Sasaki metric. Using the splitting
above we may write uniquely
\[\nabla u=((Xu)X,\hd u,  \vd u). \]
The derivatives $\hd u$ and $\vd u$ are called horizontal and vertical derivatives respectively. Note that this differs slightly
from the definitions in \cite{Kn,Sh} since here we are considering all objects defined on $SM$ as opposed to $TM$.
One advantage of this is to make more transparent the connection with our approach in \cite{PSU1} for the two-dimensional case.

We shall denote by $\mathcal Z$ the set of smooth functions $Z:SM\to TM$ such that $Z(x,v)\in T_{x}M$ and $\langle Z(x,v),v\rangle=0$ for all
$(x,v)\in SM$. With the identification mentioned above we see that $\hd u,\vd u\in \mathcal Z$.

Observe that $X$ acts on $\mathcal Z$ as follows:
\[XZ(x,v):=\frac{DZ(\phi_{t}(x,v))}{dt}|_{t=0}\]
where $\phi_t$ is the geodesic flow. Note that $Z(t):=Z(\phi_{t}(x,v))$ is a vector field along the geodesic $\gamma$ determined by $(x,v)$, so it makes sense to take its covariant derivative with respect to the Levi-Civita connection of $M$. Since $\langle Z,\dot{\gamma}\rangle=0$ it follows that
$\langle \frac{DZ}{dt},\dot{\gamma}\rangle=0$ and hence $XZ\in \mathcal Z$.

Another way to describe the elements of $\mathcal Z$ is a follows. Consider the pull-back bundle
$\pi^*TM\to SM$.  Let $N$ denote the subbundle of $\pi^*TM$ whose fiber over $(x,v)$
is given by $N_{(x,v)}=\{v\}^{\perp}$. Then $\mathcal Z$ coincides with the smooth sections
of the bundle $N$. Observe that $N$ carries a natural $L^{2}$-inner product and with respect to this product the formal adjoints of $\vd:C^{\infty}(SM)\to\mathcal Z$ and $\hd:C^{\infty}(SM) \to \mathcal Z$ are denoted by $-\vdiv$ and $-\hdiv$ respectively. Note that since $X$ leaves invariant the volume form of the Sasaki metric we have $X^*=-X$ for both actions of $X$ on $C^{\infty}(SM)$ and $\mathcal Z$.

The next lemma contains the basic commutator formulas. The first three of these formulas are the analogues of the structure equations used in \cite{PSU1} in two dimensions. Let $R(x,v):\{v\}^{\perp}\to \{v\}^{\perp}$ be the operator determined by the Riemann curvature tensor $R$ by $R(x,v)w=R_{x}(w,v)v$ and let $n=\dim M$.

\begin{Lemma} \label{lemma_basic_commutator_formulas}
The following commutator formulas hold on $C^{\infty}(SM)$:
\begin{align*}
[X,\vd]&=-\hd, \\ %\label{eq:commXV}\\
[X,\hd]&=R\,\vd, \\ %\label{eq:commXH}\\
\hdiv\,\vd-\vdiv\,\hd&=(n-1)X. %\label{eq:commdiv}
\end{align*}
Taking adjoints, we also have the following commutator formulas on $\mathcal Z$:
\begin{align*}
[X,\vdiv] &= -\hdiv, \\
[X,\hdiv] &= -\vdiv R.
\end{align*}
\end{Lemma}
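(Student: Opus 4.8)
\emph{Plan.} The five identities fall into three groups. The first two, $[X,\vd]=-\hd$ and $[X,\hd]=R\,\vd$, are the analogues of the ``connection'' and ``curvature'' structure equations, and I would derive them from the classical description of the differential of the geodesic flow in terms of Jacobi fields. The third, $\hdiv\,\vd-\vdiv\,\hd=(n-1)X$, is of a different nature -- the constant $n-1=\dim S_xM$ reflects the geometry of the spherical fibres -- and here a short direct computation in local coordinates on $SM$ (as in the first appendix) is the quickest route. The last two formulas are then the formal $L^2$-adjoints of the first two. Throughout I use the splitting $T_{(x,v)}SM=\re X\oplus\mathcal H\oplus\mathcal V$ of the excerpt, with $\mathcal H(x,v)$ and $\mathcal V(x,v)$ both identified with $\{v\}^\perp\subset T_xM$; for $Z\in\mathcal Z$ write $Z^{\mathsf h}$ and $Z^{\mathsf v}$ for the horizontal and vertical vector fields on $SM$ whose value at $(x,v)$ is the horizontal, resp.\ vertical, lift of $Z(x,v)$. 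By the definition of $\hd,\vd$ via the Sasaki gradient one has $Z^{\mathsf h}u=\langle\hd u,Z\rangle$ and $Z^{\mathsf v}u=\langle\vd u,Z\rangle$, while $Xu$ is the usual geodesic derivative.

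\emph{The first two identities.} The key input is the classical fact (see \cite{Kn,Pa}) that the restriction of $d\phi_t$ to the $\phi_t$-invariant subbundle $\mathcal H\oplus\mathcal V$ is given, along a geodesic $\gamma$ with $\dot\gamma(0)=v$ and under the parallel-transport identification $\{\dot\gamma(t)\}^\perp\cong\{v\}^\perp$, by $(\xi_h,\xi_v)\mapsto\big(J(t),\tfrac{DJ}{dt}(t)\big)$, where $J$ is the Jacobi field along $\gamma$ with $J(0)=\xi_h$ and $\tfrac{DJ}{dt}(0)=\xi_v$. Writing $\mathcal L_XY|_{(x,v)}=\tfrac{d}{dt}\big|_{t=0}\,d\phi_{-t}\big(Y(\phi_t(x,v))\big)$ and differentiating in $t$, the only first-order information needed is that the linearization of $t\mapsto d\phi_t|_{\mathcal H\oplus\mathcal V}$ at $t=0$ is $(\xi_h,\xi_v)\mapsto(\xi_v,-R(x,v)\xi_h)$, which is just the Jacobi equation $\ddot J+R(J,\dot\gamma)\dot\gamma=0$ evaluated at $t=0$. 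Carrying this out for $Y=Z^{\mathsf v}$ and for $Y=Z^{\mathsf h}$, and using the definition of the $\mathcal Z$-action $XZ=\tfrac{D}{dt}|_{t=0}Z(\phi_t(\cdot))$, yields the vector-field commutators
\begin{align*}
[X,Z^{\mathsf v}] &= (XZ)^{\mathsf v}-Z^{\mathsf h},\\
[X,Z^{\mathsf h}] &= (XZ)^{\mathsf h}+(RZ)^{\mathsf v},
\end{align*}
where $(RZ)(x,v):=R(x,v)Z(x,v)$. Pairing each with an arbitrary $u\in C^\infty(SM)$ and using that the $\mathcal Z$-action of $X$ is a covariant derivative (so $X\langle\vd u,Z\rangle=\langle X\vd u,Z\rangle+\langle\vd u,XZ\rangle$, and likewise for $\hd$), the $(XZ)$-terms cancel and one is left with $\langle[X,\vd]u,Z\rangle=-\langle\hd u,Z\rangle$ and $\langle[X,\hd]u,Z\rangle=\langle\vd u,RZ\rangle=\langle R\vd u,Z\rangle$ for every $Z\in\mathcal Z$; since $Z$ is arbitrary these are the first two formulas.

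\emph{The third identity and the adjoints.} For $\hdiv\,\vd-\vdiv\,\hd=(n-1)X$ I would insert the local coordinate expressions for $X,\hd,\vd$ and their divergences: writing $X=v^i\delta_i$ with $\delta_i$ the horizontal frame, the only nontrivial frame commutator of $\delta_i$ with the vertical frame produces Christoffel symbols, so upon composing $\hdiv$ with $\vd$ and $\vdiv$ with $\hd$ the genuine second-order parts coincide and cancel, while the leftover Christoffel terms, contracted over the $n-1$ fibre directions, assemble precisely into $(n-1)\,v^i\delta_i=(n-1)X$. Finally, the two formulas on $\mathcal Z$ follow by taking formal $L^2$-adjoints of the first two, using $X^*=-X$ on both $C^\infty(SM)$ and $\mathcal Z$ (since $X$ preserves the Sasaki volume), $\vd^*=-\vdiv$ and $\hd^*=-\hdiv$ (the definitions of $\vdiv,\hdiv$), and the pointwise self-adjointness $R(x,v)^*=R(x,v)$, which is immediate from the symmetries of the curvature tensor.

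\emph{Main obstacle.} Nothing here is deep, but the step requiring real care is the Jacobi-field derivation of the two vector-field commutators: one must handle the parallel-transport identifications $\{\dot\gamma(t)\}^\perp\cong\{v\}^\perp$ consistently when differentiating $t\mapsto d\phi_{-t}\big(Y(\phi_t(\cdot))\big)$, cleanly separating the contribution of the moving frame (which produces the $(XZ)$-terms) from that of the linearized flow (which produces $-Z^{\mathsf h}$, resp.\ $(RZ)^{\mathsf v}$). The third identity is the only one not formally forced by the others, and it is there that the short but unavoidable coordinate computation -- together with the appearance of the fibre dimension $n-1$ -- enters.
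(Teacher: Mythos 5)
Your proposal is correct, and for the first two identities it follows a genuinely different route from the paper. The paper (Appendix A) works entirely in local coordinates on $TM$ and $SM$: it introduces the horizontal frame $\delta_{x_j}$, computes the frame commutators $[\delta_{x_j},\delta_{x_k}]=-R_{jkl}{}^m y^l\partial_{y_m}$, $[\delta_{x_j},\partial_{y_k}]=\Gamma_{jk}^l\partial_{y_l}$, restricts to $SM$, and then mechanically evaluates $[X,\vd]$, $[X,\hd]$ and $\hdiv\vd-\vdiv\hd$ term by term. You instead derive the vector-field commutators $[X,Z^{\mathsf v}]=(XZ)^{\mathsf v}-Z^{\mathsf h}$ and $[X,Z^{\mathsf h}]=(XZ)^{\mathsf h}+(RZ)^{\mathsf v}$ from the Jacobi-field description of $d\phi_t$ and then pair against $u$ to extract the operator identities. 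Both are correct; your route makes the geometric content (Jacobi equation, curvature) manifest and avoids Christoffel-symbol bookkeeping for the first two formulas, at the cost of the frame/parallel-transport care you rightly flag as the delicate step; the paper's route is more uniform (one coordinate machine handles all five identities) and produces as a byproduct the explicit expressions for $\hdiv$, $\vdiv$ needed later. For the third identity and the adjoints you and the paper agree in method. One small inaccuracy in your sketch of the third identity: the factor $n-1$ does not arise from ``Christoffel terms contracted over the fibre directions'' — those terms in fact cancel between $\hdiv\vd$ and $\vdiv\hd$; the $n-1$ comes from the term $-(Xu)v^j$ in $\hd u$ (the projection onto $\{v\}^\perp$), via $\vdiv\bigl((Xu)v\bigr)=(Xu)\,\partial_jv^j=(n-1)Xu$. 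Finally, when you carry out the adjoint step, do it carefully: using $[A,B]^*=-[A^*,B^*]$, $X^*=-X$, $\hd^*=-\hdiv$, $\vd^*=-\vdiv$, $R^*=R$, one finds $[X,\vdiv]=-\hdiv$ as stated, but the same computation gives $[X,\hdiv]=\vdiv R$ rather than $-\vdiv R$; since the paper never actually uses the $[X,\hdiv]$ formula this appears to be a harmless sign slip in the lemma statement, but you should be aware that a straightforward adjoint calculation does not reproduce the printed sign.
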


These commutator formulas can be extracted from the calculus of semibasic tensor fields \cite{Sh, DS}. For completeness we also prove them in Appendix \ref{sec_localcoordinates}, which contains local coordinate expressions for many operators arising in this paper.

We next prove the Pestov identity, following the approach of \cite{PSU1}. We briefly recall the motivation for this identity which comes from showing that the ray transform $I_0$ on Anosov manifolds is injective. If $f \in C^{\infty}(M)$ satisfies $I_0 f = 0$, meaning that the integrals of $f$ over periodic geodesics vanish, then the Livsic theorem \cite{dLMM} implies that there exists $u \in C^{\infty}(SM)$ with $Xu = f$. Since $f$ only depends on $x$, we have $\vd f = 0$, and consequently $u$ is a solution of $\vd X u = 0$ in $SM$.

The Pestov identity is the following energy estimate involving the norm $\norm{\vd X u}^2$. It implies that a smooth solution of $\vd X u = 0$ on an Anosov manifold must be constant, and thus by the above argument any smooth function $f$ on $M$ with $I_0 f = 0$ must be zero. All norms and inner products will be $L^2$.

\begin{Proposition} \label{prop_pestov}
Let $(M,g)$ be a closed Riemannian manifold. Then
\begin{equation*}
\norm{\vd X u}^2 = \norm{X\vd u}^2-(R\,\vd u,\vd u) + (n-1)\norm{Xu}^2
%\label{eq:pestov}
\end{equation*}
for any $u\in C^{\infty}(SM)$.
\end{Proposition}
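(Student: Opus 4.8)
The plan is to expand $\norm{\vd X u}^2$ using the commutator $[X,\vd] = -\hd$ from Lemma \ref{lemma_basic_commutator_formulas}, so that $\vd X u = X\vd u - \hd u$. Then
\[
\norm{\vd X u}^2 = \norm{X\vd u}^2 - 2(X\vd u, \hd u) + \norm{\hd u}^2.
\]
So the identity will follow once I show that
\[
- 2(X\vd u, \hd u) + \norm{\hd u}^2 = -(R\,\vd u, \vd u) + (n-1)\norm{Xu}^2.
\]
The first step is to rewrite the cross term $(X\vd u, \hd u)$. Using $X^* = -X$ on $\mathcal Z$, we have $(X\vd u, \hd u) = -(\vd u, X\hd u)$, and then the second commutator formula $[X,\hd] = R\,\vd$ gives $X\hd u = \hd X u + R\,\vd u$, so
\[
(X\vd u, \hd u) = -(\vd u, \hd X u) - (R\,\vd u, \vd u).
\]

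The second step is to handle the remaining term $(\vd u, \hd X u)$. Integrating by parts, $(\vd u, \hd X u) = (\hdiv\,\vd u, Xu)$, and then I would use the third commutator formula $\hdiv\,\vd - \vdiv\,\hd = (n-1)X$ to write $\hdiv\,\vd u = \vdiv\,\hd u + (n-1)Xu$. This yields
\[
(\vd u, \hd X u) = (\vdiv\,\hd u, Xu) + (n-1)\norm{Xu}^2 = -(\hd u, \vd X u) + (n-1)\norm{Xu}^2,
\]
where the last equality integrates by parts again (the formal adjoint of $\vd$ is $-\vdiv$). Now substituting back, the quantity $(\hd u, \vd X u) = (\hd u, X\vd u - \hd u) = (X\vd u, \hd u) - \norm{\hd u}^2$, so I get a linear relation among $(X\vd u, \hd u)$, $\norm{\hd u}^2$, $(R\,\vd u, \vd u)$ and $(n-1)\norm{Xu}^2$. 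Collecting terms should produce exactly $-2(X\vd u, \hd u) + \norm{\hd u}^2 = -(R\,\vd u, \vd u) + (n-1)\norm{Xu}^2$, completing the proof.

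The computation is entirely formal manipulation of the five commutator identities together with the self-adjointness relations $X^* = -X$, $\vd^* = -\vdiv$, $\hd^* = -\hdiv$; there is no genuine obstacle beyond bookkeeping. The one point that requires a little care is making sure the integration by parts is legitimate — i.e.\ that all the boundary terms vanish — but this is automatic here since $M$ is closed (no boundary) and $X$ preserves the Sasaki volume form, so all the relevant adjoint formulas hold globally on $SM$ with no boundary contribution. A secondary thing to watch is the consistent orientation of signs in the two integrations by parts against $X$ and against $\hd$; getting one sign wrong collapses the whole identity, so I would double-check the chain by, e.g., specializing to functions $u$ depending only on $x$ (where $\vd u = 0$ and the identity reduces to $0 = \norm{\hd u}^2 - (n-1)\norm{Xu}^2$, consistent with $\hdiv\,\vd - \vdiv\,\hd = (n-1)X$ applied suitably).
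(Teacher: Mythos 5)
Your overall strategy is sound and is essentially the same computation as the paper's proof (the paper pairs the difference $\norm{\vd Xu}^2 - \norm{X\vd u}^2$ against $u$ and reduces the resulting operator via the commutator formulas, avoiding a self-referential equation, but this is a bookkeeping difference). However, as written your chain contains two sign errors that do not cancel, and the collection of terms you anticipate does not go through. From $[X,\vd] = -\hd$, i.e.\ $X\vd - \vd X = -\hd$, one gets $\vd Xu = X\vd u + \hd u$, not $X\vd u - \hd u$. And since $\hd^* = -\hdiv$, the integration by parts gives $(\vd u, \hd Xu) = -(\hdiv\vd u, Xu)$, not $+(\hdiv\vd u, Xu)$. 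Tracing your steps with both of these errors in place, the cross term $(X\vd u, \hd u)$ cancels out of the final relation entirely, leaving the false statement $0 = -\norm{\hd u}^2 - (n-1)\norm{Xu}^2 - (R\vd u, \vd u)$ rather than anything from which the Pestov identity can be extracted. With both signs corrected the argument does close: one obtains $2(X\vd u, \hd u) + \norm{\hd u}^2 = (n-1)\norm{Xu}^2 - (R\vd u, \vd u)$, and combining this with $\norm{\vd Xu}^2 = \norm{X\vd u}^2 + 2(X\vd u, \hd u) + \norm{\hd u}^2$ gives the identity.

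A note on the sanity check you propose: taking $u$ depending only on $x$ kills $\vd u$, and hence kills the cross term $(X\vd u, \hd u)$, so the check is insensitive to precisely the signs at issue. The reduced identity $\norm{\hd u}^2 = (n-1)\norm{Xu}^2$ is correct in that case (recall $\hd u$ there is the projection onto $\{v\}^{\perp}$ of the gradient of $u$, which carries the factor $n-1$ rather than $n$), but it comes out the same under either sign choice in the expansion of $\vd Xu$. Testing on $u \in \Omega_m$ and comparing with Proposition \ref{prop_pestov_omegam} would be a sharper diagnostic.
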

\begin{proof}
For $u \in C^{\infty}(SM)$, the commutator formulas in Lemma \ref{lemma_basic_commutator_formulas} imply that 
\begin{align*}
\norm{\vd X u}^2 - \norm{X\vd u}^2 &= (\vd Xu, \vd Xu) - (X \vd u, X \vd u) \\
 &= ( (X \vdiv \vd X - \vdiv X X \vd)u, u) \\
 &= ( (-\hdiv \vd X + \vdiv X \hd)u, u) \\
 &=( (- \hdiv \vd X + \vdiv \hd X + \vdiv R \vd) u, u) \\
 &= -(n-1)( X^2 u, u) + (\vdiv R \vd u, u) \\
 &= (n-1) \norm{Xu}^2 - (R \vd u, \vd u).
\end{align*}
This is the required estimate.
\end{proof}

\begin{Remark}{\rm The same identity holds, with the same proof, for $(M,g)$ a compact Riemannian manifold with boundary provided that $u|_{\partial(SM)}=0$.}
\end{Remark}

\section{Spherical harmonics expansions}
\label{sec:vlap+sh}

\noindent {\bf Vertical Laplacian.} In this section we consider the vertical Laplacian 
$$
\Delta:C^{\infty}(SM)\to C^{\infty}(SM), \ \ \Delta:=-\vdiv\vd.
$$
If we fix a point $x\in M$ and consider $S_{x}M$ with the inner product determined by $g_{x}$, then $(\Delta u)(x,v)$ coincides with the Laplacian of the function $v\mapsto u(x,v)$ on the manifold $(S_{x}M,g_{x})$. 

Let us recall that for $S^{n-1}$ endowed with the canonical metric, the Laplacian $\Delta_{S^{n-1}}$
has eigenvalues $\lambda_{m}=m(n+m-2)$ for $m=0,1,2,\dots$. The eigenspace
$H_m$ of $\lambda_m$ consists of the spherical harmonics of degree $m$ which are in turn
the restriction to $S^{n-1}$ of homogeneous harmonic polynomials of degree $m$ in $\re^{n}$.
Hence we have an orthogonal decomposition
\begin{equation*}
L^{2}(S^{n-1})=\bigoplus_{m\geq 0}H_{m}. %\label{eq:shar}
\end{equation*}

Using this we can perform a similar orthogonal decomposition for the vertical Laplacian on $SM$, 
\[L^{2}(SM)=\bigoplus_{m\geq 0}H_{m}(SM)\]
which on each fibre over $M$ is just the decomposition in $S^{n-1}$.
We set $\Omega_m:=H_{m}(SM)\cap C^{\infty}(SM)$. 
 Then a function $u$ is in $\Omega_m$ if and only if
$\Delta u=m(m+n-2)u$ (for details on this see \cite{GK2,DS}). If $u \in L^2(SM)$, this decomposition will be written as 
$$
u = \sum_{m=0}^{\infty} u_m, \qquad u_m \in H_m(SM).
$$
Note that by orthogonality we have the identities (recall that all norms are $L^2$) 
\begin{align*}
\norm{u}^2 &= \sum_{m=0}^{\infty} \norm{u_m}^2, \\
\norm{\vd u}^2 &= \sum_{m=0}^{\infty} \norm{\vd u_m}^2 = \sum_{m=0}^{\infty} m(m+n-2) \norm{u_m}^2.
\end{align*}

\vspace{10pt}

\noindent {\bf Decomposition of $X$.} The geodesic vector field behaves nicely with respect to the decomposition into fibrewise
spherical harmonics: it maps $\Omega_{m}$ into $\Omega_{m-1}\oplus\Omega_{m+1}$ \cite[Proposition 3.2]{GK2}. Hence on $\Omega_{m}$ we can write
\[X=X_{-}+X_{+}\] 
where $X_{-}:\Omega_{m}\to \Omega_{m-1}$ and $X_{+}:\Omega_{m}\to\Omega_{m+1}$.
By \cite[Proposition 3.7]{GK2} the operator $X_{+}$ is overdetermined elliptic (i.e. it has injective principal symbol). One can gain insight into why the decomposition $X=X_{-}+X_{+}$ holds a follows. Fix $x\in M$ and consider local coordinates which are geodesic at $x$ (i.e. all Christoffel symbols vanish at $x$).
Then $Xu(x,v)=v^{i}\frac{\partial u}{\partial x^{i}}$. We now use the following basic fact about spherical harmonics: the product of a spherical harmonic of degree $m$ with a spherical harmonic of degree one decomposes as the sum of a spherical harmonics of degree $m-1$ and $m+1$. Since the $v^i$ have degree one, this explains why $X$ maps $\Omega_m$ to $\Omega_{m-1} \oplus \Omega_{m+1}$.

Next we give some basic properties of $X_{\pm}$.

\begin{Lemma} $X_{+}^*=-X_{-}$.
\end{Lemma}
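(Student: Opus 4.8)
The plan is to deduce this directly from the two facts already in place: that $X^\ast = -X$ on $C^\infty(SM)$ (because $X$ preserves the Sasaki volume form, as noted before Lemma \ref{lemma_basic_commutator_formulas}), and that the subspaces $H_m(SM)$ are mutually $L^2$-orthogonal with $X_\pm$ mapping $\Omega_m$ into $\Omega_{m\pm 1}$.

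First I would fix $m\geq 0$ and take $u\in\Omega_m$ and $w\in\Omega_{m+1}$. Writing $Xu = X_-u + X_+u$ with $X_-u\in\Omega_{m-1}$ and $X_+u\in\Omega_{m+1}$, orthogonality of distinct harmonic degrees discards the first summand against $w$, so $(Xu,w) = (X_+u,w)$. Symmetrically $Xw = X_-w + X_+w$ with $X_-w\in\Omega_m$ and $X_+w\in\Omega_{m+2}$, hence $(u,Xw) = (u,X_-w)$. Combining these two identities with $(Xu,w) = -(u,Xw)$ gives
\[
(X_+u,w) = -(u,X_-w), \qquad u\in\Omega_m,\ w\in\Omega_{m+1}.
\]

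To conclude I would extend this from homogeneous components to arbitrary $u,w\in C^\infty(SM)$ by linearity: expanding $u = \sum_m u_m$ and $w = \sum_m w_m$, the only pairs $(u_m,w_k)$ contributing to $(X_+u,w)$ are those with $k = m+1$, and the same is true for $(u,X_-w)$, so the identity above propagates termwise. This exhibits $-X_-$ as the formal $L^2$-adjoint of $X_+$, i.e. $X_+^\ast = -X_-$. There is no genuine obstacle here; the only point requiring care is the bookkeeping of spherical-harmonic degrees and the use of orthogonality to discard the wrong-degree pieces of $Xu$ and $Xw$.
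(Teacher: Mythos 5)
Your argument is essentially the paper's own proof: both use $X^* = -X$ together with orthogonality of distinct spherical-harmonic degrees to discard the wrong-degree pieces of $Xu$ and $Xw$, arriving at $(X_+ u, w) = -(u, X_- w)$ for $u \in \Omega_m$, $w \in \Omega_{m+1}$. The closing paragraph on extending by linearity is harmless but not needed, since $X_\pm$ are defined degreewise and adjointness on the graded pieces is all the statement asserts.
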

\begin{proof} Since $X$ preserves the volume of $SM$, $X^*=-X$ and hence for
$f\in \Omega_m$ and $h\in\Omega_{m+1}$ we have
\[-(f,X_{-}h)=-(f,Xh)=(Xf,h)=(X_{+}f+X_{-}f,h)=(X_{+}f,h). \qedhere \]
\end{proof}

The self-adjoint operator $X_{-}X_{+}$ is elliptic and its kernel coincides with the kernel of $X_{+}$. (This is also considered in \cite{DS}.)

\begin{Lemma} We have an orthogonal decomposition $\Omega_m=X_{+}\Omega_{m-1}\oplus\mbox{\rm Ker}\,X_{-}$.
\label{lemma:decomp}

\end{Lemma}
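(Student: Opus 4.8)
The plan is to view this as the standard Hodge-type decomposition associated with the overdetermined elliptic operator $X_{+}:\Omega_{m-1}\to\Omega_{m}$, whose formal adjoint is $-X_{-}$ by the preceding lemma. Recall that $\Omega_{m-1}$ and $\Omega_{m}$ are the smooth sections of finite-rank bundles over $M$ (the fibrewise spherical harmonic bundles), and that $X_{+}$ has injective principal symbol; hence $X_{+}^{*}X_{+}=-X_{-}X_{+}:\Omega_{m-1}\to\Omega_{m-1}$ is a second-order self-adjoint elliptic operator on the closed manifold $M$, with $\mathrm{Ker}\,(X_{-}X_{+})=\mathrm{Ker}\,X_{+}$ because $(-X_{-}X_{+}h,h)=\norm{X_{+}h}^{2}$.

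First I would record orthogonality: for $h\in\Omega_{m-1}$ and $r\in\Omega_{m}$ with $X_{-}r=0$ one has $(X_{+}h,r)=(h,X_{+}^{*}r)=-(h,X_{-}r)=0$, so $X_{+}\Omega_{m-1}\perp\mathrm{Ker}\,X_{-}$ in $L^{2}(SM)$ and the sum is automatically direct. For the spanning part, fix $f\in\Omega_{m}$ and look at $X_{-}f\in\Omega_{m-1}$. Using $X_{-}^{*}=-X_{+}$ (the adjoint of $X_{+}^{*}=-X_{-}$), for any $\psi\in\Omega_{m-1}$ with $X_{+}\psi=0$ we get $(X_{-}f,\psi)=(f,X_{-}^{*}\psi)=-(f,X_{+}\psi)=0$; thus $X_{-}f$ is orthogonal to $\mathrm{Ker}\,X_{+}$. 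By the Hodge decomposition for the self-adjoint elliptic operator $-X_{-}X_{+}$ on $M$, namely $\Omega_{m-1}=\mathrm{Ker}\,X_{+}\oplus (X_{-}X_{+})(\Omega_{m-1})$ orthogonally, there is a solution $h\in\Omega_{m-1}$ (smooth by elliptic regularity) of $-X_{-}X_{+}h=X_{-}f$, i.e.\ $X_{-}(f+X_{+}h)=0$. Putting $r:=f+X_{+}h\in\Omega_{m}$ we obtain $r\in\mathrm{Ker}\,X_{-}$ and $f=X_{+}(-h)+r$, so $f\in X_{+}\Omega_{m-1}+\mathrm{Ker}\,X_{-}$. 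Combined with the orthogonality, this yields $\Omega_{m}=X_{+}\Omega_{m-1}\oplus\mathrm{Ker}\,X_{-}$.

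I expect the argument to be essentially routine; the only point needing care is the justification that $X_{+}^{*}X_{+}$ is genuinely elliptic, which uses that $X_{+}$ has injective symbol between finite-rank bundles over the closed manifold $M$ and is what lets us invoke standard elliptic Hodge theory on $M$ (solvability of $Pu=g$ modulo $\mathrm{Ker}\,P$, together with smoothness of solutions). The rest is formal manipulation with the adjoint identities $X_{+}^{*}=-X_{-}$ and $X_{-}^{*}=-X_{+}$.
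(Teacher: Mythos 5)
Your argument is correct and follows essentially the same route as the paper: reduce to the self-adjoint elliptic operator $X_-X_+$ on $\Omega_{m-1}$, note $\mathrm{Ker}\,(X_-X_+)=\mathrm{Ker}\,X_+$, check that $X_-f\perp\mathrm{Ker}\,X_+$ via the adjoint identities, and solve $X_-X_+h=X_-f$ by elliptic solvability to produce the decomposition. You merely spell out a few adjoint computations and the orthogonality of the two summands that the paper leaves implicit.
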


\begin{proof} Given $f\in \Omega_{m}$ consider $X_{-}f\in\Omega_{m-1}$.
Clearly $X_{-}f$ is orthogonal to the kernel of $X_{-}X_{+}$ since the kernel of $X_{-}X_{+}$ coincides with the kernel of $X_{+}$. Hence by ellipticity there is a smooth solution $h\in\Omega_{m-1}$ such that
$X_{-}X_{+}h=X_{-}f$. Define $q:=f-X_{+}h$, then clearly $X_{-}q=0$.
\end{proof}

\begin{Lemma} \label{lemma:for1}
Given $u\in\Omega_m$ we have
\begin{align*}
[X_{+},\Delta]u&=-(2m+n-1)X_{+}u,\\
[X_{-},\Delta]u&=(2m+n-3)X_{-}u.
\end{align*}
\end{Lemma}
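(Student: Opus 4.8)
The plan is to exploit that the vertical Laplacian acts as a scalar on each space of vertical spherical harmonics: on $\Omega_k$ one has $\Delta = \lambda_k \, \id$ with $\lambda_k = k(k+n-2)$, as recalled above. Combined with the degree-shifting property $X_\pm : \Omega_m \to \Omega_{m\pm 1}$, this reduces the commutator identities to a computation of the gap between consecutive eigenvalues of the spherical Laplacian. So I would fix $u \in \Omega_m$ throughout, noting that the whole argument then takes place inside the three eigenspaces $\Omega_{m-1}$, $\Omega_m$, $\Omega_{m+1}$ only.

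Concretely: since $u \in \Omega_m$ we have $\Delta u = \lambda_m u$, and since $X_+ u \in \Omega_{m+1}$ is smooth, $\Delta X_+ u = \lambda_{m+1} X_+ u$. Hence
\[
[X_+, \Delta]u = X_+(\lambda_m u) - \lambda_{m+1} X_+ u = (\lambda_m - \lambda_{m+1})\, X_+ u,
\]
and likewise, using $X_- u \in \Omega_{m-1}$,
\[
[X_-, \Delta]u = (\lambda_m - \lambda_{m-1})\, X_- u.
\]
It remains only to expand the eigenvalue differences: $\lambda_m - \lambda_{m+1} = m(m+n-2) - (m+1)(m+n-1) = -(2m+n-1)$ and $\lambda_m - \lambda_{m-1} = m(m+n-2) - (m-1)(m+n-3) = 2m+n-3$, which gives exactly the two stated formulas.

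There is essentially no obstacle here; the only inputs are the orthogonal decomposition $L^2(SM) = \bigoplus_m H_m(SM)$ together with the characterization $\Delta u = m(m+n-2)u$ for $u \in \Omega_m$, and the mapping property $X_\pm : \Omega_m \to \Omega_{m\pm 1}$, all established earlier in this section. The one minor point to keep in mind is that the identities are asserted on $\Omega_m$ rather than as operator identities on all of $C^\infty(SM)$, so one should not try to "commute past" $\Delta$ in a basis-free way but simply use that both sides, applied to a fixed $u \in \Omega_m$, live in $\Omega_{m\pm 1}$ where $\Delta$ is scalar.
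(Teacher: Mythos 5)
Your argument is correct and is exactly the approach the paper has in mind: the paper simply remarks that the identities are ``obvious once we know that $X_{\pm}:\Omega_{m}\to\Omega_{m\pm 1}$,'' and your proof spells out that observation by comparing the eigenvalues $\lambda_m = m(m+n-2)$ on adjacent spherical-harmonic spaces. Nothing further is needed.
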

\begin{proof} This is obvious once we know that $X_{\pm}:\Omega_{m}\to\Omega_{m\pm 1}$.
\end{proof}

\noindent {\bf Identification with trace free symmetric tensors.} There is an identification of $\Omega_m$ with the smooth {\it trace free} symmetric tensor fields of degree $m$ on $M$ which we denote by $\Theta_m$ \cite{DS,GK2}.
More precisely, as in \cite{DS} let $\lambda:C^{\infty}(S^*\tau')\to C^{\infty}(SM)$ be the map which takes a symmetric $m$-tensor $f$ and maps it into the function $SM\ni (x,v)\mapsto f_{x}(v,\dots,v)$, where $C^{\infty}(S^*\tau')$ is the space of all smooth symmetric covariant tensors.
The map $\lambda$ turns out to be an isomorphism between  $\Theta_m$ and $\Omega_m$.
In fact up to a factor which depends on $m$ and $n$ only it is a linear isometry when the spaces are endowed with the obvious $L^2$-inner products, cf. \cite[Lemma 2.4]{DS} and \cite[Lemma 2.9]{GK2}.
If $\delta$ denotes divergence of tensors, then in \cite[Section 10]{DS}
we find the formula
\[X_{-}u=\frac{m}{n+2m-2}\lambda\delta\lambda^{-1}u,\]
for $u\in\Omega_m$.
The expression for $X_{+}$ in terms of tensors is as follows. If $d$ denotes symmetrized covariant derivative (formal adjoint of $-\delta$) and $p$ denotes orthogonal projection onto $\Theta_{m+1}$ then
\[X_{+}u=\lambda pd\lambda^{-1}u\]
for $u\in\Omega_m$.
In other words, up to $\lambda$, $X_{+}$ is $pd$ and $X_{-}$ is 
$\frac{m}{n+2m-2}\delta$. The operator $X_+$, at least for $m=1$, has many names and is known as the conformal Killing operator, trace-free deformation tensor, or Ahlfors operator.

Under this identification $\mbox{\rm Ker}\,(X_{+}:\Omega_{m}\to \Omega_{m+1})$ consists of the {\it conformal Killing symmetric tensor fields} of rank $m$, a finite dimensional space. The dimension of this space depends only on the conformal class
of the metric. 
%\begin{Remark}{\rm Consider $m=2$. In the literature of General Relativity the %decomposition in Lemma \ref{lemma:decomp} is called
%the York decomposition \cite{Y}. The elements in $\mbox{\rm Ker}\,X_{-}$ are ca%lled
%TT-tensors: transverse trace free tensors. ``Transverse'' because zero divergen%ce (solenoidal) means it is a tangent vector transverse to the orbit of the met%ric
%under the diffeomorphism group.}
%\end{Remark}

\medskip

\noindent {\bf Pestov identity on $\Omega_m$.} We will see in Appendix \ref{sec_2d_case} that in two dimensions, the Pestov identity specialized to functions in $\Omega_m$ is just the Guillemin-Kazhdan energy identity \cite{GK} involving $\eta_+$ and $\eta_-$. We record here a multidimensional version of this fact.

\begin{Proposition} \label{prop_pestov_omegam}
Let $(M,g)$ be a closed Riemannian manifold. If the Pestov identity is applied to functions in $\Omega_m$, one obtains the identity 
\begin{equation*}
(2m+n-3)\norm{X_{-}u}^{2}+\norm{\hd u}^{2}-(R\vd u,\vd u)=(2m+n-1)\norm{X_{+}u}^{2}
%\label{eq:pes-mod}
\end{equation*}
which is valid for any $u\in\Omega_m$.
\end{Proposition}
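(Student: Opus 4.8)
The plan is to specialize the Pestov identity of Proposition \ref{prop_pestov} to a function $u \in \Omega_m$ and translate each term into the language of $X_\pm$ and the vertical Laplacian. Since $Xu = X_+u + X_-u$ with $X_+u \in \Omega_{m+1}$ and $X_-u \in \Omega_{m-1}$, orthogonality gives $\norm{Xu}^2 = \norm{X_+u}^2 + \norm{X_-u}^2$. This handles the last term on the right of the Pestov identity, contributing $(n-1)(\norm{X_+u}^2 + \norm{X_-u}^2)$.

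Next I would compute $\norm{\vd X u}^2$. Using $\norm{\vd v}^2 = (\Delta v, v)$ together with the fact that $X_+u$ and $X_-u$ are spherical harmonics of degrees $m+1$ and $m-1$, we get
\begin{align*}
\norm{\vd X u}^2 &= (m+1)(m+n-1)\norm{X_+u}^2 + (m-1)(m+n-3)\norm{X_-u}^2.
\end{align*}
Similarly, since $u \in \Omega_m$ we have $\norm{\vd u}^2 = m(m+n-2)\norm{u}^2$, and applying $X$ to $\vd u$ is slightly more delicate: the cleanest route is to use the commutator formula $[X,\vd] = -\hd$ from Lemma \ref{lemma_basic_commutator_formulas}, which gives $X\vd u = \vd X u - \hd u$, and then compute $\norm{X\vd u}^2$ directly, or alternatively note $\norm{X\vd u}^2 = (\vdiv X^* X \vd u, u)$ type manipulations. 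Actually the simplest approach is to rearrange the Pestov identity itself: write it as $\norm{X\vd u}^2 = \norm{\vd X u}^2 + (R\vd u, \vd u) - (n-1)\norm{Xu}^2$, so we never need $\norm{X\vd u}^2$ independently — we only need $\norm{\hd u}^2$, which we can obtain from the third commutator formula $\hdiv \vd - \vdiv \hd = (n-1)X$ applied to $u$ and paired with $u$: this yields $(\hd u, \vd \cdot) $ ... more precisely $\norm{\hd u}^2 - \norm{\vd u}^2$-type relations via the identity $\norm{\hd u}^2 = (\vd u, \vd u) + \ldots$. Let me instead just substitute: the target identity is what we want to prove, so strategically I would plug $\norm{\vd Xu}^2$ and $\norm{Xu}^2$ (both now known in terms of $\norm{X_\pm u}^2$) into the Pestov identity, leaving $\norm{X\vd u}^2$, $(R\vd u,\vd u)$, and $\norm{\hd u}^2$ to be reconciled, and show that the combination $\norm{X\vd u}^2 - (n-1)\norm{Xu}^2$ equals $\norm{\hd u}^2 - (R\vd u,\vd u) + (\text{spherical harmonic factors})\cdot\norm{X_\pm u}^2$ by a direct bookkeeping of the polynomial coefficients $(m\pm 1)(m+n-1\text{ or }-3)$ against $m(m+n-2)$ and $n-1$.

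The main obstacle, such as it is, is purely the algebra of the eigenvalue bookkeeping: one must verify that the coefficients of $\norm{X_+u}^2$ and of $\norm{X_-u}^2$ coming from $\norm{\vd Xu}^2 - (n-1)\norm{Xu}^2$ (after moving $\norm{X\vd u}^2$ to play the role of the "Jacobi/horizontal" term) collapse exactly to the asymmetric coefficients $(2m+n-1)$ and $-(2m+n-3)$ in the stated identity. Concretely, $(m+1)(m+n-1) - (n-1) = m(m+n)+ \ldots$ and one checks $(m+1)(m+n-1) - m(m+n-2) - (n-1) = 2m$, etc.; keeping careful track of signs (note $X\vd u$ appears with a plus sign on the right of Pestov, so it moves to the left with a minus) is where an error is most likely to creep in. Once the coefficient matching is done, the term $\norm{\hd u}^2$ and $(R\vd u, \vd u)$ carry through unchanged from the Pestov identity, and collecting everything yields the claimed equality. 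I do not expect any conceptual difficulty beyond this; the result is essentially a formal consequence of Proposition \ref{prop_pestov} together with the eigenvalue identities for $\Delta$ on $\Omega_{m\pm 1}$ recorded in Section \ref{sec:vlap+sh}.
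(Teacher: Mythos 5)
The proof you are envisioning is salvageable, but as written it has a concrete gap that coincides with the one genuinely non-trivial step. You correctly identify what is mechanical: $\norm{Xu}^2 = \norm{X_+u}^2 + \norm{X_-u}^2$ and $\norm{\vd Xu}^2 = (m+1)(m+n-1)\norm{X_+u}^2 + (m-1)(m+n-3)\norm{X_-u}^2$ by orthogonality of $\vd(\Omega_{m+1})$ and $\vd(\Omega_{m-1})$ together with $\norm{\vd w_l}^2 = l(l+n-2)\norm{w_l}^2$. But once you plug these and $\norm{X\vd u}^2 = \norm{\vd Xu - \hd u}^2$ into Proposition \ref{prop_pestov}, everything except the cross term cancels and you are left needing to show
\[
2\,\mathrm{Re}(\hd u, \vd Xu) = \norm{\hd u}^2 - (R\vd u, \vd u) + (n-1)\norm{Xu}^2,
\]
and hence you must actually compute $(\hd u, \vd Xu)$ in terms of $\norm{X_\pm u}^2$. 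That is not ``pure bookkeeping of polynomial coefficients'' — the quantities $\norm{\vd Xu}^2$, $\norm{Xu}^2$, $\norm{\hd u}^2$ do not determine the cross term by themselves.

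Your proposed route to $\norm{\hd u}^2$ via pairing the third commutator formula $\hdiv\vd - \vdiv\hd = (n-1)X$ against $u$ does not work as stated: $(\hdiv\vd u - \vdiv\hd u, u) = -(\vd u, \hd u) + (\hd u, \vd u)$ is an imaginary quantity (zero for real $u$), and $(n-1)(Xu,u) = 0$ by skew-adjointness of $X$, so this pairing yields a tautology rather than $\norm{\hd u}^2$. What is actually needed is the derived commutator formula $[X,\Delta] = 2\vdiv\hd + (n-1)X$ (Lemma \ref{lemma:co-lap}), combined with $[X_\pm,\Delta]u = \mp(2m+n-1\text{ resp.\ }-3)X_\pm u$ on $\Omega_m$ (Lemma \ref{lemma:for1}); after taking adjoints this gives $(\hd u, \vd Xu) = (m+n-1)\norm{X_+u}^2 - (m-1)\norm{X_-u}^2$, and the identity then drops out exactly as you predict. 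This is precisely the route the paper takes. So the missing idea is specific: you must invoke the vertical Laplacian commutator $[X,\Delta] = 2\vdiv\hd + (n-1)X$ to evaluate $\vdiv\hd u$ (equivalently the cross term $(\hd u, \vd Xu)$), and then use the action of $[X_\pm,\Delta]$ on $\Omega_m$. Until that is done, the argument is not closed.
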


For the proof, we need a commutator formula for the geodesic vector field and the vertical Laplacian.

\begin{Lemma} The following commutator formula holds:
\[[X,\Delta]=2\vdiv\hd+(n-1)X.\]
\label{lemma:co-lap}
\end{Lemma}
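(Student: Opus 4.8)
The plan is to expand $\Delta = -\vdiv\vd$ and simply push the geodesic vector field $X$ through the two first order operators $\vdiv$ and $\vd$, using the commutator relations already recorded in Lemma \ref{lemma_basic_commutator_formulas}. The point to keep track of is that $\vd$ maps $C^\infty(SM)$ into $\mathcal Z$, so when $X$ is commuted past the outer $\vdiv$ one must use the $\mathcal Z$-version $[X,\vdiv] = -\hdiv$, while commuting $X$ past the inner $\vd$ uses the $C^\infty(SM)$-version $[X,\vd] = -\hd$.

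Concretely, I would first write $X\Delta = -X\vdiv\vd = -(\vdiv X - \hdiv)\vd = -\vdiv X\vd + \hdiv\vd$, and then replace $X\vd$ by $\vd X - \hd$ to get $X\Delta = -\vdiv\vd X + \vdiv\hd + \hdiv\vd$. Since $\Delta X = -\vdiv\vd X$, the two terms $-\vdiv\vd X$ cancel upon forming $[X,\Delta] = X\Delta - \Delta X$, leaving $[X,\Delta] = \vdiv\hd + \hdiv\vd$.

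The final step is to eliminate the term $\hdiv\vd$ in favour of $\vdiv\hd$ by invoking the third commutator formula of Lemma \ref{lemma_basic_commutator_formulas}, namely $\hdiv\vd - \vdiv\hd = (n-1)X$. Substituting $\hdiv\vd = \vdiv\hd + (n-1)X$ gives $[X,\Delta] = 2\vdiv\hd + (n-1)X$, as claimed.

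There is essentially no serious obstacle here: the whole argument is a two-line bookkeeping computation with the commutators of $\{X,\hd,\vd\}$, valid on $C^\infty(SM)$. The only place where care is genuinely needed is making sure the adjoint (i.e.\ $\mathcal Z$-)version of the $X$--$\vdiv$ commutator is the one applied to the section $\vd u$; everything else is routine.
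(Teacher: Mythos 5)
Your proof is correct and takes essentially the same route as the paper: both commute $X$ through $\vdiv$ and $\vd$ using the relations $[X,\vdiv]=-\hdiv$ and $[X,\vd]=-\hd$ to arrive at $[X,\Delta]=\vdiv\hd+\hdiv\vd$, and then invoke $\hdiv\vd-\vdiv\hd=(n-1)X$. The paper happens to push $X$ into $X\Delta$ from the left and into $\Delta X$ from the right, whereas you do both moves on the $X\Delta$ term, but this is a cosmetic difference in bookkeeping.
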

\begin{proof}
Using Lemma \ref{lemma_basic_commutator_formulas} repeatedly we have 
\begin{align*}
[X,\Delta]&=-X\vdiv\vd+\vdiv\vd X\\
&=-X\vdiv\vd+\vdiv(X\vd+\hd)\\
&=-\vdiv X\vd+\hdiv\vd+\vdiv(X\vd+\hd)\\
&=\hdiv\vd+\vdiv\hd \\
&=2\vdiv\hd+(n-1)X. \qedhere
\end{align*}
\end{proof}

\begin{proof}[Proof of Proposition \ref{prop_pestov_omegam}]
Let $u \in \Omega_m$. We begin with $X\vd u=\vd Xu-\hd u$ and use Lemmas \ref{lemma:for1} and \ref{lemma:co-lap} to derive
\begin{align*}
\norm{X\vd u}^{2}&=\norm{\vd Xu}^{2}-2 \,\mathrm{Re}(\vd Xu,\hd u)+\norm{\hd u}^{2} \\
&=\norm{\vd Xu}^{2}+\mathrm{Re}( Xu,2\vdiv\hd u )+\norm{\hd u}^{2}\\
&=\norm{\vd Xu}^{2}+\mathrm{Re}( Xu,[X,\Delta]u-(n-1)Xu )+\norm{\hd u}^{2}\\
&=\norm{\vd Xu}^{2}-(n-1)\norm{Xu}^{2}+\mathrm{Re}( Xu,[X_{+},\Delta]u+[X_{-},\Delta]u)+\norm{\hd u}^{2}\\
&=\norm{\vd Xu}^{2}-(n-1)\norm{Xu}^{2}-(2m+n-1)\norm{X_{+}u}^{2}+(2m+n-3)\norm{X_{-}u}^{2}\\
&\;\;\;\;\;\;\;\;\;\;\;\;+\norm{\hd u}^{2}.
\end{align*}
The result follows directly from Proposition \ref{prop_pestov} and the calculation above.
\end{proof}

The Pestov identity on $\Omega_m$ immediately implies a vanishing theorem for conformal Killing tensors. The following result is proved also in \cite[Theorem 1.6]{DS} (except for the observation about rank one manifolds).

\begin{Corollary} Let $(M,g)$ be a closed Riemannian manifold of non-positive sectional curvature with
a transitive geodesic flow. Then there are no non-trivial conformal Killing tensors (CKTs).
In particular if $(M,g)$ is a rank one manifold of non-positive sectional curvature, there are no non-trivial
CKTs.
\label{cor:noCKT}
\end{Corollary}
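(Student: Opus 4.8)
The plan is to feed a function representing a conformal Killing tensor into the Pestov identity on $\Omega_m$ (Proposition~\ref{prop_pestov_omegam}), use nonpositivity of the curvature to force it to be a first integral of the geodesic flow, and then use transitivity to force it to vanish.

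Recall that under the identification $\lambda\colon\Theta_m\to\Omega_m$ the kernel of $X_+\colon\Omega_m\to\Omega_{m+1}$ is exactly the space of conformal Killing tensors of rank $m$, so a nontrivial CKT yields a nonzero $u\in\Omega_m$ with $X_+u=0$. Since under a transitive flow there are no nonconstant smooth first integrals, and $X_+$ restricted to $\Omega_0$ is just $X$, a nontrivial CKT necessarily has rank $m\geq1$. Applying Proposition~\ref{prop_pestov_omegam} to such a $u$, the right-hand side $(2m+n-1)\norm{X_+u}^2$ vanishes and we obtain
\[(2m+n-3)\norm{X_-u}^2+\norm{\hd u}^2-(R\vd u,\vd u)=0.\]
Here $\vd u(x,v)$ is orthogonal to $v$, so $(R\vd u,\vd u)$ is a fibrewise average of sectional curvatures of $2$-planes containing $v$ weighted by $\abs{\vd u}^2$, hence $(R\vd u,\vd u)\leq0$ when $K\leq0$; and $2m+n-3\geq1>0$ for $m\geq1$, $n\geq2$. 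Thus the left-hand side is a sum of three nonnegative terms, each of which must vanish. In particular $X_-u=0$, so $Xu=X_+u+X_-u=0$ and $u$ is a smooth first integral of the geodesic flow.

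Transitivity of $\phi_t$ provides an orbit that is dense in $SM$. A continuous function annihilated by $X$ is constant along each orbit, hence constant along this dense orbit, hence constant on all of $SM$ by density and continuity; so $u\in\Omega_0$. Since $m\geq1$ and $\Omega_m\perp\Omega_0$ in $L^2(SM)$, this forces $u=0$, contradicting nontriviality. This proves that a closed nonpositively curved manifold with transitive geodesic flow carries no nontrivial CKTs.

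For the final assertion it remains only to observe that a closed rank one manifold of nonpositive curvature has topologically transitive geodesic flow — a theorem of Ballmann (see also Eberlein) — so the first part applies verbatim. The computational heart of the argument is immediate once the Pestov identity on $\Omega_m$ is in hand; the genuine external input, and the step one must be careful to invoke, is this transitivity statement from the theory of manifolds of nonpositive curvature, together with the (convention-dependent) sign check that $-(R\vd u,\vd u)\ge0$ when $K\leq0$.
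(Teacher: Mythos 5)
Your proof is correct and follows essentially the same route as the paper: apply Proposition~\ref{prop_pestov_omegam} to $u\in\Omega_m$ with $X_+u=0$, use $K\leq 0$ to force $X_-u=\hd u=0$, conclude $Xu=0$, and invoke transitivity to kill $u$, with the rank one case handled by citing transitivity of the geodesic flow (the paper cites Eberlein's Theorem 3.11 for this). The only difference is that you spell out in more detail the sign of $(R\vd u,\vd u)$ and the ``dense orbit implies constant'' step, which the paper leaves implicit.
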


\begin{proof} Consider $u\in\Omega_m$ such that $X_{+}u=0$ and $m\geq 1$. Since the sectional curvature is non-positive, $(R\vd u,\vd u)\leq 0$ and Proposition \ref{prop_pestov_omegam} gives $X_{-}u=0$ and $\hd u=0$. In particular $Xu=X_{-}u+X_{+}u=0$. If the geodesic flow is transitive, this implies
$u=0$.

In \cite[Theorem 3.11]{Ebe0}, P. Eberlein proved that the geodesic flow of a closed rank one manifold of non-positive sectional curvature is transitive, hence these manifolds do not have CKTs.
\end{proof}

\section{$\alpha$-controlled manifolds} \label{sec_alphacontrolled}

From the Pestov identity in Proposition \ref{prop_pestov}, one would like to obtain good lower bounds for $\norm{\vd Xu}^2$. On the other side of the identity, the term $(n-1)\norm{Xu}^2$ is always nonnegative. The next definition is concerned with the remaining terms (this notion was introduced in \cite{PSU3} for two-dimensional manifolds).

\begin{Definition}
Let $\alpha$ be a real number. We say that a closed Riemannian manifold $(M,g)$ is \emph{$\alpha$-controlled} if 
$$
\norm{X Z}^2 - (R Z, Z) \geq \alpha \norm{X Z}^2
$$
for all $Z \in \mathcal Z$.
\end{Definition}
Note that if $Z(x,v)\neq 0$, then the sign of $\langle RZ(x,v),Z(x,v)\rangle$ is the same as the sign of the sectional curvature of the two-plane spanned by $v$ and $Z(x,v)$.

We record the following properties.

\begin{Lemma} \label{lemma_alphacontrolled_properties}
Let $(M,g)$ be a closed Riemannian manifold. Then 
\begin{itemize}
\item 
$(M,g)$ is $0$-controlled if it has no conjugate points,
\item 
$(M,g)$ is $\alpha$-controlled for some $\alpha > 0$ if it is Anosov,
\item 
$(M,g)$ is $1$-controlled iff it has nonpositive sectional curvature.
\end{itemize}
\end{Lemma}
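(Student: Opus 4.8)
The three bullet points should all follow from unwinding the definition of $\alpha$-controlled together with the characterizations of ``no conjugate points'' and ``nonpositive curvature'' in terms of the Jacobi equation, plus the hyperbolicity estimates in the Anosov case. The plan is to work pointwise along geodesics: given $Z \in \mathcal Z$, for a fixed $(x,v) \in SM$ set $Z(t) = Z(\phi_t(x,v))$, a vector field along the geodesic $\gamma$ orthogonal to $\dot\gamma$. Then $XZ$ evaluated along the orbit is $\frac{DZ}{dt}$, the term $\langle R Z, Z\rangle$ is $\langle R_x(Z,\dot\gamma)\dot\gamma, Z\rangle$, and the relevant quantities in the definition are integrals over $SM$ of the pointwise expressions $\abs{\frac{DZ}{dt}}^2$ and $\langle R(Z,\dot\gamma)\dot\gamma, Z\rangle$. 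So it suffices to prove the corresponding pointwise/orbitwise inequalities and integrate.

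First I would treat the middle bullet (Anosov), which I expect to be the main obstacle since it is the only one not reducing to a direct Jacobi-equation identity. The idea is: on an Anosov manifold the stable and unstable bundles $E^s, E^u$ are transversal to each other and to $\RR X$, and a section $Z \in \mathcal Z$ lifts (via the identification $\mathcal H \oplus \mathcal V \cong \{v\}^\perp \oplus \{v\}^\perp$ and the fact that a vector field along $\gamma$ together with its covariant derivative encodes an element of $E^s \oplus E^u$, i.e.\ a perpendicular Jacobi field determined by its value and derivative) to a genuine object on which the exponential contraction/expansion estimates apply. Concretely, I would use that a smooth $Z \in \mathcal Z$ along an orbit, together with $\frac{DZ}{dt}$, determines a Jacobi field decomposition $J = J^s + J^u$ with $J^s$ decaying and $J^u$ growing at exponential rates; uniform hyperbolicity then gives a uniform lower bound of the form $\int (\abs{\frac{DZ}{dt}}^2 - \langle R(Z,\dot\gamma)\dot\gamma, Z\rangle)\,dt \geq \alpha \int \abs{\frac{DZ}{dt}}^2\,dt$ along each orbit for some $\alpha > 0$ independent of the orbit, because the failure of this inequality would force the existence of an invariant vector field (essentially a perpendicular Jacobi field that is neither growing nor decaying), contradicting the hyperbolic splitting. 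The cleanest route is probably to invoke that Anosov implies $\beta$-conjugate-point-free for some $\beta > 1$ (promised in the introduction, via Corollary \ref{corollary:ganosov}) and then cite the quantitative estimate of Section \ref{section_anosov_alphacontrolled}, that $\beta_{Ter} \geq \beta$ implies $(\beta-1)/\beta$-controlled; with $\beta > 1$ this yields $\alpha = (\beta-1)/\beta > 0$. I would phrase the proof this way to avoid redoing the hyperbolic dynamics from scratch.

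For the first bullet, ``no conjugate points $\Rightarrow$ $0$-controlled'': this is the statement $\norm{XZ}^2 \geq (RZ,Z)$, equivalently the index-form inequality $\int \abs{\frac{DZ}{dt}}^2 - \langle R(Z,\dot\gamma)\dot\gamma, Z\rangle\,dt \geq 0$ along every orbit. Since $SM$ is compact and orbits are (for the dynamical arguments) effectively long/closed, but more simply: absence of conjugate points is exactly the positivity of the index form on compactly supported perpendicular vector fields along any geodesic segment, and $Z \in \mathcal Z$ restricted to an orbit is such a vector field; integrating over $SM$ (with respect to the Liouville measure, using that $X$ preserves it) preserves the inequality. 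For the third bullet, ``$1$-controlled iff $K \leq 0$'': if $K \leq 0$ then $\langle R(Z,\dot\gamma)\dot\gamma, Z\rangle \leq 0$ pointwise (the sign remark right after the definition), so $\norm{XZ}^2 - (RZ,Z) \geq \norm{XZ}^2$, giving $1$-controlled. Conversely, $1$-controlled forces $(RZ,Z) \leq 0$ for all $Z \in \mathcal Z$; evaluating at a point $(x,v)$ with $Z(x,v) = w$ for an arbitrary unit $w \perp v$ — using that $\mathcal Z$ is rich enough to realize any prescribed value at a point — gives $\langle R_x(w,v)v, w\rangle \leq 0$, i.e.\ the sectional curvature of every $2$-plane is $\leq 0$. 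The only mild technical point here is the density/surjectivity of pointwise evaluation $\mathcal Z \to \{v\}^\perp$, which is clear since one can multiply a local smooth extension by a bump function; I would state this in one line rather than belabor it.
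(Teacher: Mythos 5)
Your third bullet matches the paper, and for the Anosov bullet your decision to defer to the results proved later (rather than redo the hyperbolic dynamics) is exactly the paper's strategy; the paper cites Theorem \ref{thm:alphacontrol} (the $U^\pm$-transversality argument, which gives the stronger inequality with an extra $\norm{Z}^2$ term), while you propose routing through $\beta_{Ter}>1$ and Proposition \ref{prop:greeneq} — both are valid forward references.

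The first bullet, however, has a genuine gap. You assert that ``$Z \in \mathcal Z$ restricted to an orbit is such a vector field'' — that is, a compactly supported perpendicular vector field — and conclude from index-form positivity. But $Z$ restricted to a finite orbit segment has no reason to vanish at the endpoints, and on a closed manifold there is no boundary to force it to. The no-conjugate-points index form inequality is
\[
\int_0^T \Bigl( \lvert D_t Y \rvert^2 - \langle R(Y,\dot\gamma)\dot\gamma, Y\rangle \Bigr)\,dt \;\geq\; 0
\]
only for perpendicular $Y$ along $\gamma$ with $Y(0)=Y(T)=0$, and a general $Z\in\mathcal Z$ restricted to $\gamma|_{[0,T]}$ is not of this form, so you cannot simply ``integrate over $SM$.'' (In the boundary case the paper does argue exactly this way in Lemma \ref{lemma_boundary_alphacontrolled} via Santal\'o's formula, but there the hypothesis $Z|_{\partial(SM)}=0$ supplies the vanishing at the endpoints; that escape route is unavailable here.) Your argument can be repaired by inserting a cutoff $\psi_T$ vanishing near $0$ and $T$, noting the resulting boundary error is $O(1)$ uniformly in $(x,v)$ because $Z$ and $XZ$ are bounded on the compact $SM$, integrating over $SM$ against the invariant Liouville measure so the main term grows like $T\int_{SM}(\lvert XZ\rvert^2 - \langle RZ,Z\rangle)$, and letting $T\to\infty$ — but as written the step is incorrect. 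The paper avoids all of this by using Green solutions: Proposition \ref{prop:greeneq} produces a bounded measurable symmetric solution $U$ of the Riccati equation $\dot{U}+U^2+R=0$ along the flow, and the two-line identity $\norm{XZ-UZ}^2 = \norm{XZ}^2 - (RZ,Z)$, proved by integration by parts using $X^*=-X$, $U$ symmetric, and the Riccati equation, makes the nonnegativity manifest with no truncation or limit.
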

\begin{proof}
$(M,g)$ is $1$-controlled iff $(RZ,Z) \leq 0$ for all $Z \in \mathcal{Z}$, which is equivalent with nonpositive sectional curvature. The fact that manifolds without conjugate points are $0$-controlled will be proved in Proposition \ref{prop:greeneq}.
Similarly, the fact that Anosov manifolds are $\alpha$-controlled for some $\alpha > 0$ will be proved in Theorem \ref{thm:alphacontrol}.
\end{proof}

We conclude this section with a lemma that will allow to get lower bounds by using the $\alpha$-controlled assumption.

\begin{Lemma} \label{lemma_xvu_lowerbound}
If $u \in C^{\infty}(SM)$ and $u = \sum_{l=m}^{\infty} u_l$, then 
$$
\norm{X \vd u}^2 \geq \left\{ \begin{array}{ll} \frac{(m-1)(m+n-2)^2}{m+n-3} \norm{(Xu)_{m-1}}^2 + \frac{m(m+n-1)^2}{m+n-2} \norm{(Xu)_m}^2, & m \geq 2, \\[5pt]
\frac{n^{2}}{n-1}\norm{(Xu)_{1}}^{2}, & m = 1. \end{array} \right.
$$
If $u \in \Omega_m$, we have 
$$
\norm{X \vd u}^2 \geq \left\{ \begin{array}{ll} \frac{(m-1)(m+n-2)^2}{m+n-3} \norm{X_- u}^2 + \frac{m^2(m+n-1)}{m+1} \norm{X_+ u}^2, & m \geq 2, \\[5pt] \frac{n}{2} \norm{X_+ u}^2, & m = 1. \end{array} \right.
$$
\end{Lemma}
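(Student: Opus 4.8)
The plan is to estimate $\norm{X\vd u}^2$ from below by testing $X\vd u$ against the vertical gradients of the lowest vertical Fourier components of $Xu$. Apart from the commutator formulas of Lemma \ref{lemma_basic_commutator_formulas} and Lemma \ref{lemma:co-lap}, the only facts used are the elementary identities $\norm{\vd w}^2=(w,\Delta w)=l(l+n-2)\norm{w}^2$ for $w\in\Omega_l$ and $\vd(\Omega_l)\perp\vd(\Omega_{l'})$ in $\mathcal Z$ for $l\ne l'$ (both immediate from $-\vdiv$ being the adjoint of $\vd$ and the $L^2$-orthogonality of the $\Omega_l$). First I would compute $\vdiv(X\vd u)$ using only these commutator relations. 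From $\vdiv X=X\vdiv+\hdiv$, then $\vdiv\vd=-\Delta$, $\hdiv\vd=\vdiv\hd+(n-1)X$, and $\vdiv\hd=\tfrac12[X,\Delta]-\tfrac{n-1}{2}X$, one obtains
\[
\vdiv(X\vd u)=-\tfrac12(X\Delta+\Delta X)u+\tfrac{n-1}{2}\,Xu .
\]

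Next I would pass to vertical Fourier components. Writing $u=\sum_{l\geq m}u_l$ with $u_l\in\Omega_l$, using $\Delta|_{\Omega_l}=c_l:=l(l+n-2)$ and $X=X_-+X_+$ with $X_\pm\colon\Omega_l\to\Omega_{l\pm1}$, a short computation in which the relevant quadratics factor (e.g.\ $\tfrac12(c_{k-1}+c_k)-\tfrac{n-1}{2}=(k-1)(k+n-2)$ and $\tfrac12(c_{k+1}+c_k)-\tfrac{n-1}{2}=k(k+n-1)$) gives
\[
\bigl(\vdiv(X\vd u)\bigr)_k=-(k-1)(k+n-2)\,X_+u_{k-1}-k(k+n-1)\,X_-u_{k+1}.
\]
Since $u_l=0$ for $l<m$, the components at the low degrees each collapse to one term: $\bigl(\vdiv(X\vd u)\bigr)_{m-1}=-(m-1)(m+n-2)\,X_-u_m$ and $\bigl(\vdiv(X\vd u)\bigr)_m=-m(m+n-1)\,X_-u_{m+1}$, and, when moreover $u\in\Omega_m$ (so $u_{m+2}=0$), $\bigl(\vdiv(X\vd u)\bigr)_{m+1}=-m(m+n-1)\,X_+u_m$. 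Here $X_-u_m=(Xu)_{m-1}$, $X_-u_{m+1}=(Xu)_m$ and $X_+u_m=(Xu)_{m+1}=X_+u$. I would then set $W_1=\vd\bigl((Xu)_{m-1}\bigr)$ and $W_2=\vd\bigl((Xu)_m\bigr)$, replacing $W_2$ by $\vd(X_+u)$ in the case $u\in\Omega_m$ since then $(Xu)_m=0$. The vectors $W_1,W_2$ are orthogonal in $\mathcal Z$ because the underlying functions lie in different $\Omega_l$'s, so projecting gives
\[
\norm{X\vd u}^2\ \geq\ \frac{\abs{(X\vd u,W_1)}^2}{\norm{W_1}^2}+\frac{\abs{(X\vd u,W_2)}^2}{\norm{W_2}^2}.
\]
Using $(X\vd u,\vd\phi)=-(\vdiv(X\vd u),\phi)$ and the component identities, $(X\vd u,W_1)=(m-1)(m+n-2)\norm{(Xu)_{m-1}}^2$ and $(X\vd u,W_2)=m(m+n-1)\norm{(Xu)_m}^2$ (resp.\ $m(m+n-1)\norm{X_+u}^2$), while $\norm{W_1}^2=c_{m-1}\norm{(Xu)_{m-1}}^2$ and $\norm{W_2}^2=c_m\norm{(Xu)_m}^2$ (resp.\ $c_{m+1}\norm{X_+u}^2$); substituting $c_{m-1}=(m-1)(m+n-3)$, $c_m=m(m+n-2)$, $c_{m+1}=(m+1)(m+n-1)$ produces exactly the constants $\tfrac{(m-1)(m+n-2)^2}{m+n-3}$, $\tfrac{m(m+n-1)^2}{m+n-2}$ and $\tfrac{m^2(m+n-1)}{m+1}$ appearing in the statement. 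For $m=1$ the vector $W_1$ is $\vd$ of an element of $\Omega_0$ and hence vanishes, so only the $W_2$ term remains, giving $\tfrac{n^2}{n-1}\norm{(Xu)_1}^2$ and $\tfrac n2\norm{X_+u}^2$ respectively.

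The step I expect to carry the weight is the passage to components: Cauchy--Schwarz is only as sharp as the test vectors allow, and it is exactly the vanishing of the Fourier modes of $u$ below degree $m$ that makes the relevant components of $\vdiv(X\vd u)$ proportional to a \emph{single} component of $Xu$ rather than to a combination of several $X_\pm u_l$; were that not so, cross terms would appear and the constants would deteriorate. The remaining work --- the commutator identity of the first step and the arithmetic with the eigenvalues $c_l$ --- is routine bookkeeping. A final point of care is simply to track which of $(Xu)_{m-1},(Xu)_m,(Xu)_{m+1}$ lands in $\Omega_0$, on which $\vd$ is zero; this is the only reason the $m=1$ cases must be stated separately.
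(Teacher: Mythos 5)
Your argument is correct, and the numbers all check out; I verified the commutator computation
\[
\vdiv(X\vd u)=-\tfrac12(X\Delta+\Delta X)u+\tfrac{n-1}{2}\,Xu
\]
and the resulting Fourier-mode identity
\[
\bigl(\vdiv(X\vd u)\bigr)_k=-(k-1)(k+n-2)\,X_+u_{k-1}-k(k+n-1)\,X_-u_{k+1},
\]
from which your Cauchy--Schwarz/Bessel projections reproduce exactly the stated constants, including the degenerate $m=1$ cases where $W_1=\vd((Xu)_0)=0$.

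The underlying computation is the same as the paper's, but the packaging is genuinely different. The paper first proves Lemma \ref{lemma_nablah_nablav_innerproduct}, which gives the exact orthogonal decomposition $\hd u=\vd[\cdots]+Z(u)$ with $\vdiv Z(u)=0$; it then writes $X\vd u=\vd Xu-\hd u$, substitutes, and takes $L^2$ norms squared, obtaining an \emph{equality}
\[
\norm{X\vd u}^2=\frac{(m-1)(m+n-2)^2}{m+n-3}\norm{(Xu)_{m-1}}^2+\frac{m(m+n-1)^2}{m+n-2}\norm{(Xu)_m}^2+\sum_{l\geq m+1}\norm{\vd w_l}^2+\norm{Z}^2,
\]
and drops the nonnegative tail. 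You instead compute $\vdiv(X\vd u)$ directly and lower-bound $\norm{X\vd u}^2$ by Bessel's inequality against two orthogonal test vectors. These are dual views of the same orthogonality structure, since your inner products $(X\vd u,\vd\phi)=-(\vdiv(X\vd u),\phi)$ are precisely the coefficients in the paper's expansion; indeed, your formula for $\bigl(\vdiv(X\vd u)\bigr)_k$ is a restatement of Lemma \ref{lemma_nablah_nablav_innerproduct} after passing to adjoints. The paper's route buys the explicit remainder terms $\norm{\vd w_l}^2$ and $\norm{Z}^2$, which it genuinely uses later (in the proof of Theorem \ref{theorem_im_injectivity}, where equality in the Pestov bound forces $\vd w_l=0$ and $Z=0$). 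Your route is more economical for establishing the inequality itself, since it avoids having to exhibit the divergence-free correction $Z(u)$, but it does not directly yield that extra information.
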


Observe that degree zero is irrelevant since $\vd u_{0}=0$.
The proof relies on another lemma:

\begin{Lemma} \label{lemma_nablah_nablav_innerproduct}
If $u \in C^{\infty}(SM)$ and $w_l \in \Omega_l$, then 
$$
(\hd u, \vd w_l) = ( (l+n-2) X_+ u_{l-1} - l X_- u_{l+1}, w_l).
$$
As a consequence, for any $u \in C^{\infty}(SM)$ we have the decomposition 
$$
\hd u = \vd \left[ \sum_{l=1}^{\infty} \left( \frac{1}{l} X_+ u_{l-1} - \frac{1}{l+n-2} X_- u_{l+1} \right) \right] + Z(u)
$$
where $Z(u) \in \mathcal Z$ satisfies $\vdiv \,Z(u) = 0$.
\end{Lemma}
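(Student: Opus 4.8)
The plan is to reduce everything to an explicit formula for the vertical divergence of $\hd u$, computed degree by degree. Since $-\vdiv$ is the formal adjoint of $\vd$ on sections of $N$, we have $(\hd u,\vd w_l)=-(\vdiv\hd u,w_l)$, and by orthogonality of the fibrewise spherical harmonic decomposition only the $\Omega_l$-component of $\vdiv\hd u$ contributes to this pairing. So the first assertion follows once we identify $(\vdiv\hd u)_l$.

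To compute it I would use Lemma \ref{lemma:co-lap}, rewritten as $2\vdiv\hd=[X,\Delta]-(n-1)X=X\Delta-\Delta X-(n-1)X$. Applying this to $u=\sum_m u_m$ and using that $\Delta u_m=m(m+n-2)u_m$ together with $Xu_m=X_+u_m+X_-u_m$, where $X_\pm\colon\Omega_m\to\Omega_{m\pm1}$, the $\Omega_l$-part of $X\Delta u-\Delta Xu-(n-1)Xu$ is a combination of $X_+u_{l-1}$ and $X_-u_{l+1}$ whose coefficients are polynomials in $l$ and $n$. A short algebraic simplification should give
$$
(\vdiv\hd u)_l=-(l+n-2)\,X_+u_{l-1}+l\,X_-u_{l+1},
$$
and substituting this into $(\hd u,\vd w_l)=-((\vdiv\hd u)_l,w_l)$ yields the stated identity. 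Note in particular that the $\Omega_0$-component of $\vdiv\hd u$ vanishes (set $l=0$ above, with $u_{-1}=0$); this will be needed for the second part.

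For the decomposition of $\hd u$, I would let $\phi\in C^{\infty}(SM)$ be the unique function with vanishing fibrewise average solving the fibrewise equation $-\Delta\phi=\vdiv\hd u$; this exists because on each fibre $\Delta$ is invertible on the orthogonal complement of the constants and, by the remark above, $\vdiv\hd u$ has no $\Omega_0$-component. Using $\Delta\phi_l=l(l+n-2)\phi_l$ together with the formula for $(\vdiv\hd u)_l$, one reads off $\phi_l=\frac1l X_+u_{l-1}-\frac1{l+n-2}X_-u_{l+1}$ for $l\geq1$, which is exactly the bracketed series in the statement (both denominators are nonzero for $l\geq1$, $n\geq2$). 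Then set $Z(u):=\hd u-\vd\phi\in\mathcal Z$; since $\vdiv\vd=-\Delta$, we get $\vdiv Z(u)=\vdiv\hd u+\Delta\phi=0$, which completes the proof.

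The one point requiring a little care is that $\phi$ so defined is genuinely smooth on $SM$ and that its fibrewise spherical harmonic components are the claimed ones; this is immediate from the fibrewise elliptic solution operator (standard elliptic theory on $S^{n-1}$ applied with parameters $x\in M$) rather than from summing the series by hand. Beyond that, the argument is the commutator computation of $\vdiv\hd$, which is routine given Lemma \ref{lemma:co-lap}, so I do not anticipate a genuine obstacle.
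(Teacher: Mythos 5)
Your proposal is correct and follows essentially the same route as the paper: both start from $(\hd u,\vd w_l)=-(\vdiv\hd u,w_l)$, identify the $\Omega_l$-component of $\vdiv\hd u$ via the commutator formula $2\vdiv\hd=[X,\Delta]-(n-1)X$ and the formulas for $[X_\pm,\Delta]$, and then obtain $Z(u)$ by subtracting the exact vertical gradient whose components are $\phi_l=\frac{1}{l}X_+u_{l-1}-\frac{1}{l+n-2}X_-u_{l+1}$. Your phrasing of the second step as inverting the fibrewise Laplacian (and your aside that $(\vdiv\hd u)_0=0$, needed for solvability) is a slightly more structural reading of what the paper verifies directly by checking $(\hd u-\vd\phi,\vd w)=0$, but it is the same decomposition and the same computation.
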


\begin{proof}
By Lemmas \ref{lemma:for1} and \ref{lemma:co-lap}, 
\begin{align*}
(\hd u, \vd w_l) &= - (\vdiv \hd u, w_l) = -\frac{1}{2} ([X, \Delta] u, w_l) + \frac{n-1}{2} (Xu, w_l) \\
 &= -\frac{1}{2} ([X_+, \Delta] u + [X_-, \Delta] u, w_l) + \frac{n-1}{2} (Xu, w_l) \\
 &= -\frac{1}{2} ([X_+, \Delta] u_{l-1} + [X_-, \Delta] u_{l+1}, w_l) + \frac{n-1}{2} (X_+ u_{l-1} + X_- u_{l+1}, w_l) \\
 &= \left( \frac{2l+n-3}{2} X_+ u_{l-1} - \frac{2l+n-1}{2} X_- u_{l+1}, w_l \right) \\
 & \qquad + \frac{n-1}{2} (X_+ u_{l-1} + X_- u_{l+1}, w_l)
\end{align*}
which proves the first claim. For the second one, we note that 
\begin{multline*}
(\hd u, \vd w) = \sum_{l=1}^{\infty} ( (l+n-2) X_+ u_{l-1} - l X_- u_{l+1}, w_l) \\
 = \sum_{l=1}^{\infty} \frac{1}{l(l+n-2)} ( \vd \left[ (l+n-2) X_+ u_{l-1} - l X_- u_{l+1} \right], \vd w_l)
\end{multline*}
so 
$$
(\hd u - \vd \left[ \sum_{l=1}^{\infty} \left( \frac{1}{l} X_+ u_{l-1} - \frac{1}{l+n-2} X_- u_{l+1} \right) \right], \vd w) = 0
$$
for all $w \in C^{\infty}(SM)$.
\end{proof}

\begin{proof}[Proof of Lemma \ref{lemma_xvu_lowerbound}]
Let $u = \sum_{l=m}^{\infty} u_l$ with $m \geq 2$. First note that 
$$
\norm{X \vd u}^2 = \norm{\vd X u - \hd u}^2.
$$
We use the decomposition in Lemma \ref{lemma_nablah_nablav_innerproduct}, which implies that 
\begin{multline*}
\vd Xu - \hd u = \\
 \vd \Bigg[ \left( 1 + \frac{1}{m+n-3} \right) (Xu)_{m-1} + \left( 1 + \frac{1}{m+n-2} \right) (Xu)_m  + \sum_{l=m+1}^{\infty} w_l \Bigg] + Z
\end{multline*}
where $w_l \in \Omega_l$ for $l \geq m+1$ are given by 
$$
w_l = (Xu)_l - \frac{1}{l} X_+ u_{l-1} + \frac{1}{l+n-2} X_- u_{l+1}
$$
and where $Z \in \mathcal Z$ satisfies $\vdiv\,Z = 0$. Taking the $L^2$ norm squared, and noting that the term $\vd(\,\cdot\,)$ is orthogonal to the $\vdiv$-free vector field $Z$, gives 
\begin{multline*}
\norm{X \vd u}^2 = \frac{(m-1)(m+n-2)^2}{m+n-3} \norm{(Xu)_{m-1}}^2 + \frac{m(m+n-1)^2}{m+n-2} \norm{(Xu)_m}^2 \\
 + \sum_{l=m+1}^{\infty} \norm{\vd w_l}^2 + \norm{Z}^2.  
\end{multline*}
The claims for $m=1$ or for $u \in \Omega_m$ are essentially the same.
\end{proof}

\section{Beurling transform} \label{section_beurling}

In this section we will prove Theorem \ref{theoremA}. The main step is the following inequality, where the point is that the constant in the norm estimate is always $\leq 1$ in dimensions $n \geq 4$, is $\leq 1$ in two dimensions unless $m=1$, and is sufficiently close to $1$ in three dimensions for all practical purposes.

\begin{Lemma} \label{prop_beurling_apriori}
Let $(M,g)$ be a closed Riemannian manifold having nonpositive sectional curvature. One has for any $m\geq 1$
$$
\norm{X_- u} \leq D_{n}(m) \norm{X_{+}u}, \qquad u \in \Omega_m,
$$
where
\begin{align*}
D_2(m) &= \left\{ \begin{array}{cl} \sqrt{2}, & m = 1, \\ 1, & m \geq 2, \end{array} \right. \\
D_3(m) &= \left[ 1 + \frac{1}{(m+1)^2(2m-1)} \right]^{1/2}, \\
D_n(m) &\leq 1 \ \text{ for $n \geq 4$.}
\end{align*}

\end{Lemma}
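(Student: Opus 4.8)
The plan is to combine the general Pestov identity with two inputs available from earlier sections: nonpositive curvature forces the curvature term to be nonnegative, and Lemma \ref{lemma_xvu_lowerbound} supplies a sharp lower bound for $\norm{X\vd u}^2$. Fix $u \in \Omega_m$. I would begin from Proposition \ref{prop_pestov}, namely $\norm{\vd Xu}^2 = \norm{X\vd u}^2 - (R\vd u,\vd u) + (n-1)\norm{Xu}^2$. Since $Xu = X_-u + X_+u$ with $X_-u \in \Omega_{m-1}$ and $X_+u \in \Omega_{m+1}$ orthogonal, the identity $\norm{\vd w_l}^2 = l(l+n-2)\norm{w_l}^2$ from Section \ref{sec:vlap+sh} rewrites the left-hand side as $(m-1)(m+n-3)\norm{X_-u}^2 + (m+1)(m+n-1)\norm{X_+u}^2$, and $(n-1)\norm{Xu}^2 = (n-1)\norm{X_-u}^2 + (n-1)\norm{X_+u}^2$. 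Nonpositive curvature gives $-(R\vd u,\vd u)\geq 0$, and for $m\geq 2$ Lemma \ref{lemma_xvu_lowerbound} bounds $\norm{X\vd u}^2$ below by $\frac{(m-1)(m+n-2)^2}{m+n-3}\norm{X_-u}^2 + \frac{m^2(m+n-1)}{m+1}\norm{X_+u}^2$.

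Substituting these into the Pestov identity and moving the $\norm{X_\pm u}^2$ contributions coming from $\norm{X\vd u}^2$ and $(n-1)\norm{Xu}^2$ to the left produces, for $m\geq 2$, an inequality of the shape $B\norm{X_+u}^2 \geq A\norm{X_-u}^2$ with explicit rational coefficients. A short simplification — using $(m+n-2)^2-(m+n-3)^2 = 2m+2n-5$, $(m+1)^2-m^2 = 2m+1$, and the factorizations $2m^2+3mn+n^2-8m-6n+8 = (2m+n-4)(m+n-2)$ and $2m^2+mn = m(2m+n)$ — gives
\[
A = \frac{(2m+n-4)(m+n-2)}{m+n-3}, \qquad B = \frac{m(2m+n)}{m+1},
\]
both positive for $m\geq 2$, $n\geq 2$. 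Hence $\norm{X_-u}\leq D_n(m)\norm{X_+u}$ with $D_n(m)^2 = B/A = \frac{m(2m+n)(m+n-3)}{(m+1)(2m+n-4)(m+n-2)}$. For $m=1$ the identical argument, using the $m=1$ branch of Lemma \ref{lemma_xvu_lowerbound} and $\vd X_-u = 0$ (as $X_-u\in\Omega_0$), yields $D_n(1)^2 = \frac{n+2}{2(n-1)}$; this agrees with the $m=1$ specialization of the formula above whenever $n\geq 3$.

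It then remains to extract the stated values. For $n=2$, $m\geq 2$ the factors $2m+n-4 = 2(m-1)$ and $m+n-3 = m-1$ cancel and $D_2(m)=1$, while $D_2(1)^2 = 4/2 = 2$. For $n=3$ one gets $D_3(m)^2 = \frac{m^2(2m+3)}{(m+1)^2(2m-1)}$, and the elementary identity $m^2(2m+3) = (m+1)^2(2m-1)+1$ turns this into $1+\frac{1}{(m+1)^2(2m-1)}$ (a form valid for all $m\geq 1$). For $n\geq 4$ one checks $D_n(m)\leq 1$ by computing $(m+1)(2m+n-4)(m+n-2) - m(2m+n)(m+n-3) = (n-2)(n-4)\geq 0$; correspondingly $\frac{n+2}{2(n-1)}\leq 1$ precisely when $n\geq 4$, which handles $m=1$.

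The conceptual content here is light: everything flows from Pestov plus Lemma \ref{lemma_xvu_lowerbound} together with $K\leq 0$. The one place that needs care — and what I would regard as the main obstacle — is the algebra: correctly collecting coefficients after the substitution, checking positivity of $A$ and $B$, and recognizing the factorizations $(2m+n-4)(m+n-2)$ and $(n-2)(n-4)$ that collapse the estimate to exactly the claimed constants.
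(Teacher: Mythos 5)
Your proof is correct and follows essentially the same route as the paper: the paper proves this statement via Lemma~\ref{lemma_beurling_n_neq_three}, which runs exactly the argument you give (Pestov identity, plus the lower bound for $\norm{X\vd u}^2$ from Lemma~\ref{lemma_xvu_lowerbound}), except that it carries a general $\alpha$-controlled parameter and specializes to $\alpha=1$ for $K\leq 0$ at the end, whereas you specialize from the start. Your coefficient computations, factorizations $(2m+n-4)(m+n-2)$ and $(n-2)(n-4)$, the identity $m^2(2m+3)=(m+1)^2(2m-1)+1$, and the $m=1$ branch all check out and agree with the paper's constants.
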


Recall that the Beurling transform can be defined on any manifold $(M,g)$ that is free of nontrivial conformal Killing tensors. We know that this holds on any surface of genus $\geq 2$, and more generally on any manifold whose conformal class contains a negatively curved metric \cite{DS} or a rank one metric of non-positive curvature (cf. Corollary \ref{cor:noCKT}).

If $(M,g)$ has no nontrivial conformal Killing tensors, the operator $X_{-}:\Omega_{k}\to\Omega_{k-1}$ is surjective for all $k\geq 2$. Hence given $k\geq 0$ and $f_k\in\Omega_k$ there is a unique function $f_{k+2}\in\Omega_{k+2}$ orthogonal to $\mbox{\rm Ker} (X_{-})$ such that $X_{-}f_{k+2}=-X_{+}f_{k}$. We defined the Beurling transform to be the map 
$$
B: \Omega_k \to \Omega_{k+2}, \ \ f_k \mapsto f_{k+2}.
$$

The next lemma shows that a norm estimate relating $X_-$ and $X_+$ implies a bound for the Beurling transform whenever it is defined.

\begin{Lemma} \label{lemma_beurling_from_apriori}
Let $(M,g)$ be a closed Riemannian manifold, let $m \geq 0$, and assume that for some $A > 0$ one has 
$$
\norm{X_- u} \leq A \norm{X_+ u}, \qquad u \in \Omega_{m+1}.
$$
If additionally $(M,g)$ has no conformal Killing $(m+1)$-tensors, then the Beurling transform is well defined $\Omega_m \to \Omega_{m+2}$ and 
$$
\norm{Bf} \leq A \norm{f}, \qquad f \in \Omega_m.
$$
\end{Lemma}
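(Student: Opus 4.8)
The plan is to separate the two claims: that $B$ is well defined on $\Omega_m$, and that it obeys the stated bound. Both reduce to the structure already in place, namely Lemma \ref{lemma:decomp} together with the ellipticity of $X_-X_+$ and the relation $X_+^*=-X_-$.

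\emph{Well-definedness.} First I would recall that $X_-X_+:\Omega_{m+1}\to\Omega_{m+1}$ is elliptic and self-adjoint with kernel equal to $\mbox{\rm Ker}\,X_+$. Hence the equation $X_-X_+ h=q$ is solvable for $h\in\Omega_{m+1}$ whenever $q\in\Omega_{m+1}$ is orthogonal to $\mbox{\rm Ker}\,(X_+|_{\Omega_{m+1}})$. Under the hypothesis that there are no conformal Killing $(m+1)$-tensors this kernel is trivial, so $X_-X_+h=q$ is solvable for every $q\in\Omega_{m+1}$; in particular there is $h$ with $X_-X_+h=-X_+f$, and then $g:=X_+h\in\Omega_{m+2}$ satisfies $X_-g=-X_+f$. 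Subtracting its component in $\mbox{\rm Ker}\,(X_-|_{\Omega_{m+2}})$ gives the unique solution orthogonal to that kernel, which by definition is $Bf$; its smoothness follows from elliptic regularity for $X_-X_+$.

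\emph{The bound.} By Lemma \ref{lemma:decomp} in degree $m+2$ we have the orthogonal decomposition $\Omega_{m+2}=X_+\Omega_{m+1}\oplus\mbox{\rm Ker}\,(X_-|_{\Omega_{m+2}})$, and since $Bf$ is orthogonal to the second summand I may write $Bf=X_+h$ for some $h\in\Omega_{m+1}$. Using $X_+^*=-X_-$ and $X_+^*X_+h=-X_-X_+h=-X_-(Bf)=X_+f$ one then computes
\[
\norm{Bf}^2=(X_+h,X_+h)=(h,X_+^*X_+h)=(h,X_+f)=-(X_-h,f)\le\norm{X_-h}\,\norm{f},
\]
the last inequality being Cauchy--Schwarz (the quantity $-(X_-h,f)$ is real and nonnegative, equal to $\norm{Bf}^2$). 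Now I feed in the a priori inequality with $u=h$: $\norm{X_-h}\le A\norm{X_+h}=A\norm{Bf}$. Combining the two gives $\norm{Bf}^2\le A\norm{Bf}\,\norm{f}$, hence $\norm{Bf}\le A\norm{f}$, the case $Bf=0$ being trivial.

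\emph{Where the work is.} There is no genuine analytic obstacle here; the one point to notice is that the hypothesis, which on its face merely controls $X_-$ by $X_+$, is exactly what is needed once $Bf$ is written in the form $X_+h$, because the natural pairing computing $\norm{Bf}^2$ produces $X_-h$ on the right (not $X_-(Bf)$), and the estimate applies to $u=h$. The only small things to check are the conjugation bookkeeping in $(h,X_+f)=-(X_-h,f)$ and that $X_+$ has closed range on the smooth spaces $\Omega_k$, which is what makes both Lemma \ref{lemma:decomp} and the solvability step above legitimate.
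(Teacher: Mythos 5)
Your proof is correct and follows the paper's argument essentially verbatim: write $Bf = X_+ h$ via Lemma \ref{lemma:decomp}, compute $\norm{Bf}^2 = -(X_- h, f)$ by moving $X_+$ across the inner product twice, then apply Cauchy--Schwarz and the hypothesis with $u=h$. The extra paragraph spelling out well-definedness under the weaker assumption (only no conformal Killing $(m+1)$-tensors, rather than none at all) is a welcome clarification that the paper handles only implicitly; the one small redundancy is the remark about ``subtracting the component in $\mathrm{Ker}(X_-|_{\Omega_{m+2}})$'', since $X_+h$ already lies in the orthogonal complement of that kernel by Lemma \ref{lemma:decomp} and so no subtraction is needed.
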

\begin{proof}
Let $f \in \Omega_m$ and $u = Bf$, so that $X_- u = -X_+ f$ and $u \perp \mathrm{Ker}(X_-)$. Then by Lemma \ref{lemma:decomp}, $u = X_+ v$ for some $v \in \Omega_{m+1}$. We have 
\begin{align*}
\norm{u}^2 = (u, X_+ v) = -(X_- u, v) = (X_+ f, v) = -(f,X_- v).
\end{align*}
By Cauchy-Schwarz and by the norm estimate in the statement, we have 
$$
\norm{u}^2 \leq \norm{f} \,\norm{X_- v} \leq A \norm{f} \,\norm{X_+ v} = A \norm{f} \,\norm{u}.
$$
This shows that $\norm{u} \leq A \norm{f}$ as required.
\end{proof}

\begin{Lemma} \label{lemma_beurling_n_neq_three}
Let $(M,g)$ be a closed Riemannian manifold, and let $m \geq 2$. If $(M,g)$ is $\alpha$-controlled with  
$$
\alpha > \frac{(m-2)(m+n-3)}{(m-1)(m+n-2)},
$$
then 
\begin{equation} \label{beurling_alpha_claim1}
\norm{X_- u}^2 \leq C_{\alpha}^2 \norm{X_+ u}^2, \qquad u \in  \Omega_m,
\end{equation}
where $C_{\alpha}$ is the positive constant satisfying 
$$
C_{\alpha}^2 = -\frac{n-1 - (m+1)(m+n-1) + \alpha \frac{m^2(m+n-1)}{m+1}}{n-1 - (m-1)(m+n-3) + \alpha \frac{(m-1)(m+n-2)^2}{m+n-3}}.
$$
Moreover, if $(M,g)$ is $\alpha$-controlled for some $\alpha \geq 0$ and if $m = 1$, one has 
\begin{equation} \label{beurling_alpha_claim15}
\norm{X_- u}^2 \leq \frac{n+1-\alpha\frac{n}{2}}{n-1} \norm{X_+ u}^2, \qquad u \in \Omega_1.
\end{equation}
In particular, if $(M,g)$ has nonpositive sectional curvature and if $n \neq 3$, then one has 
\begin{equation} \label{beurling_alpha_claim2}
\norm{X_{-}u}^2 \leq \left\{ \begin{array}{cl}\norm{X_+ u}^2, & u \in  \Omega_m \text{ with } m \geq 2, \\
\frac{n+2}{2n-2} \norm{X_+ u}^2, & u \in  \Omega_1. \end{array} \right.
\end{equation}
For $n=3$ and $(M,g)$ of nonpositive sectional curvature one has
\begin{equation}\label{beurling_n=3}
\norm{X_{-}u}^{2}\leq \left(1+\frac{1}{(m+1)^{2}(2m-1)}\right)\norm{X_{+}u}^{2}
\end{equation}
for $u\in\Omega_m$ and $m\geq 1$.
\end{Lemma}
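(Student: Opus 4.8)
The plan is to run the Pestov identity of Proposition~\ref{prop_pestov}, applied to $u\in\Omega_m$, through the $\alpha$-controlled hypothesis and then through the lower bound for $\norm{X\vd u}^2$ from Lemma~\ref{lemma_xvu_lowerbound}. When $u$ lies in a single spherical harmonic degree every term that appears is an explicit scalar multiple of $\norm{X_+u}^2$ or $\norm{X_-u}^2$, so the whole argument collapses to a single scalar inequality.

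First I would record, for $u\in\Omega_m$, the identities $\norm{Xu}^2=\norm{X_+u}^2+\norm{X_-u}^2$ (orthogonality, since $X_\pm u\in\Omega_{m\pm1}$) and, using $\norm{\vd w}^2=l(l+n-2)\norm{w}^2$ on $\Omega_l$, $\norm{\vd Xu}^2=(m+1)(m+n-1)\norm{X_+u}^2+(m-1)(m+n-3)\norm{X_-u}^2$. Applying the $\alpha$-controlled inequality to $Z=\vd u\in\mathcal Z$ gives $\norm{X\vd u}^2-(R\vd u,\vd u)\ge\alpha\norm{X\vd u}^2$, so Proposition~\ref{prop_pestov} becomes $\norm{\vd Xu}^2\ge\alpha\norm{X\vd u}^2+(n-1)\norm{Xu}^2$. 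Since $\alpha\ge0$ in all cases considered, I may then substitute the $m\ge2$ bound of Lemma~\ref{lemma_xvu_lowerbound} for $\norm{X\vd u}^2$ together with the two identities above; collecting the coefficients of $\norm{X_+u}^2$ and $\norm{X_-u}^2$ yields an inequality $P(\alpha,m,n)\norm{X_+u}^2\ge Q(\alpha,m,n)\norm{X_-u}^2$ in which $P/Q$ is precisely the expression defining $C_\alpha^2$.

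To pass from this to \eqref{beurling_alpha_claim1} one needs $Q>0$, and a short computation shows $Q>0$ is equivalent to $\alpha>\frac{(m-2)(m+n-3)}{(m-1)(m+n-2)}$; the key algebraic identity is $(m-1)(m+n-3)-(n-1)=(m-2)(m+n-2)$. The case $m=1$ is the same argument with the $m=1$ branch of Lemma~\ref{lemma_xvu_lowerbound}, namely $\norm{X\vd u}^2\ge\frac n2\norm{X_+u}^2$, together with $\vd X_-u=0$ (as $X_-u\in\Omega_0$), so that $\norm{\vd Xu}^2=2n\norm{X_+u}^2$; this produces $2n\norm{X_+u}^2\ge\alpha\tfrac n2\norm{X_+u}^2+(n-1)\bigl(\norm{X_+u}^2+\norm{X_-u}^2\bigr)$, which rearranges at once to \eqref{beurling_alpha_claim15}, with no restriction beyond $\alpha\ge0$ since $n-1>0$.

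Finally, for the nonpositive-curvature statements I would invoke Lemma~\ref{lemma_alphacontrolled_properties} to get that $(M,g)$ is $1$-controlled and set $\alpha=1$; since $\frac{(m-2)(m+n-3)}{(m-1)(m+n-2)}<1$ for $m\ge2$ (equivalently $2m+n-4>0$), \eqref{beurling_alpha_claim1} applies and the remaining task is to simplify $C_1^2$. A direct comparison of $P$ and $Q$ gives $C_1^2\le1$ for $n\ge4$, $m\ge2$ (with equality precisely when $n=2$ or $n=4$); for $m=1$, $n\ne3$ the $m=1$ formula reads $\tfrac{n+2}{2n-2}$; and for $n=3$, $m\ge1$ one computes $Q=\tfrac{(2m-1)(m+1)}{m}$ and $P-Q=\tfrac1{m(m+1)}$, whence $C_1^2=1+\tfrac1{(m+1)^2(2m-1)}$, which also covers $m=1$ (value $\tfrac54$, consistent with the $m=1$ branch at $n=3$). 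I expect the only real difficulty to be the bookkeeping: carrying the rational functions of $m$ and $n$ through these substitutions and confirming that the three claimed closed forms of $C_n(m)$ --- especially the exact $n=3$ identity --- come out on the nose; there is no conceptual obstacle beyond this.
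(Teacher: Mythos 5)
Your proposal is correct and follows exactly the paper's argument: apply the Pestov identity to $u \in \Omega_m$, use the $\alpha$-controlled hypothesis to absorb the curvature term, substitute the lower bound on $\norm{X\vd u}^2$ from Lemma \ref{lemma_xvu_lowerbound}, rewrite $\norm{Xu}^2$ and $\norm{\vd Xu}^2$ via orthogonality of $X_\pm u$, and collect coefficients of $\norm{X_\pm u}^2$ to read off $C_\alpha^2$; the sign condition on the coefficient of $\norm{X_-u}^2$ gives precisely the threshold on $\alpha$ (your algebraic identity $(m-1)(m+n-3)-(n-1)=(m-2)(m+n-2)$ is the key simplification), and setting $\alpha=1$ yields the nonpositive-curvature constants, with the $n=3$ exact form coming from $P-Q=\frac{1}{m(m+1)}$. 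The verification of $Q(m+n-3)=(2m+n-4)(m+n-2)$ and the $m=1$ computation both check out, so no gaps.
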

\begin{proof}
Let $u \in \Omega_m$ with $m \geq 2$. The Pestov identity in Proposition \ref{prop_pestov} and the $\alpha$-controlled assumption imply that 
\begin{align*}
\norm{\vd X u}^2 &= \norm{X\vd u}^2-(R\,\vd u,\vd u) + (n-1)\norm{Xu}^2 \\
 &\geq \alpha \norm{X \vd u}^2 + (n-1)\norm{Xu}^2.
\end{align*}
Since $Xu = X_+ u + X_- u$ and $u \in \Omega_m$, orthogonality implies that 
\begin{align*}
\norm{Xu}^2 &= \norm{X_+ u}^2 + \norm{X_- u}^2, \\
\norm{\vd Xu}^2 &= (m+1)(m+n-1) \norm{X_+ u}^2 + (m-1)(m+n-3) \norm{X_- u}^2.
\end{align*}
Also, Lemma \ref{lemma_xvu_lowerbound} applied to $u \in \Omega_m$ yields 
$$
\norm{X \vd u}^2 \geq \frac{(m-1)(m+n-2)^2}{m+n-3} \norm{X_- u}^2 + \frac{m^2(m+n-1)}{m+1} \norm{X_+ u}^2.
$$
Collecting these facts gives 
\begin{multline*}
\left[ n-1 - (m-1)(m+n-3) + \alpha \frac{(m-1)(m+n-2)^2}{m+n-3} \right] \norm{X_- u}^2 \\
 \leq \left[ (m+1)(m+n-1) - (n-1) - \alpha \frac{m^2(m+n-1)}{m+1} \right] \norm{X_+ u}^2.
\end{multline*}
The constant in brackets on the right is always positive since $\alpha \leq 1$, and the constant in brackets on the left is positive if $\alpha$ satisfies the condition in the statement. This proves \eqref{beurling_alpha_claim1}.

When $m=1$ the Pestov identity and Lemma \ref{lemma_xvu_lowerbound} yield, by the argument above, 
$$
2n \norm{X_+ u}^2 \geq \alpha \frac{n}{2} \norm{X_+ u}^2 + (n-1)(\norm{X_- u}^2 + \norm{X_+ u}^2)
$$
which implies \eqref{beurling_alpha_claim15}.

If $(M,g)$ has nonpositive sectional curvature, then one can take $\alpha = 1$. Computing $C_{\alpha}$ in this case gives, for $u \in \Omega_m$ with $m \geq 2$, 
$$
\norm{X_- u}^2 \leq \frac{m(m+n-3)(2m+n)}{(m+1)(m+n-2)(2m+n-4)} \norm{X_+ u}^2.
$$
Simplifying the constant further implies that 
$$
\norm{X_- u}^2 \leq \frac{2m^3 + (3n-6)m^2 + n(n-3)m}{2m^3 + (3n-6)m^2 + n(n-3)m + (n-2)(n-4)} \norm{X_+ u}^2.
$$
The constant is always $=1$ if $n = 2$ or $n=4$, is $< 1$ if $n \geq 5$, but is $> 1$ when $n=3$. Thus we have proved \eqref{beurling_alpha_claim2} for $n \neq 3$ and $m \geq 2$, and the case $m=1$ follows by \eqref{beurling_alpha_claim15}. The inequality (\ref{beurling_n=3}) for $n=3$ follows in a similar manner.
\end{proof}

There is an immediate consequence of the previous lemma to the existence of conformal Killing tensors:

\begin{Lemma} \label{lemma_absence_ckt}
Let $(M,g)$ be a closed manifold with transitive geodesic flow. If $m \geq 2$ and $(M,g)$ is $\alpha$-controlled with  
$$
\alpha > \frac{(m-2)(m+n-3)}{(m-1)(m+n-2)},
$$
or if $m=1$ and $(M,g)$ is $0$-controlled, then $(M,g)$ has no nontrivial conformal Killing $m$-tensors (and the same is true for any manifold conformal to $(M,g)$).
\end{Lemma}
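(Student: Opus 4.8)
The plan is to deduce the absence of conformal Killing tensors directly from Lemma \ref{lemma_beurling_n_neq_three} together with the Pestov identity on $\Omega_m$ in Proposition \ref{prop_pestov_omegam}, exactly mimicking the argument in Corollary \ref{cor:noCKT}. Suppose $u \in \Omega_m$ satisfies $X_+ u = 0$; we want to show $u = 0$. For $m \geq 2$, the hypothesis $\alpha > \frac{(m-2)(m+n-3)}{(m-1)(m+n-2)}$ is precisely the condition under which Lemma \ref{lemma_beurling_n_neq_three} produces the estimate \eqref{beurling_alpha_claim1}, namely $\norm{X_- u}^2 \leq C_\alpha^2 \norm{X_+ u}^2$ with $C_\alpha$ finite. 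Hence $X_+ u = 0$ forces $X_- u = 0$, and therefore $Xu = X_+ u + X_- u = 0$. Transitivity of the geodesic flow then implies $u$ is constant, and since $m \geq 1$ a nonzero constant cannot lie in $\Omega_m$, so $u = 0$. For $m = 1$ the $0$-controlled assumption feeds into \eqref{beurling_alpha_claim15} with $\alpha = 0$, giving $\norm{X_- u}^2 \leq \frac{n+1}{n-1}\norm{X_+ u}^2$, and the same chain of implications applies.

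The key steps, in order, are: (i) take $u \in \Omega_m$ in the kernel of $X_+$; (ii) invoke the appropriate estimate from Lemma \ref{lemma_beurling_n_neq_three} — \eqref{beurling_alpha_claim1} when $m \geq 2$ (legitimate because the stated lower bound on $\alpha$ is exactly the hypothesis of that lemma) or \eqref{beurling_alpha_claim15} when $m = 1$ — to conclude $X_- u = 0$; (iii) combine $X_+ u = X_- u = 0$ to get $Xu = 0$; (iv) apply transitivity of the geodesic flow to conclude $u$ is constant, hence $u = 0$ since $m \geq 1$. Finally, to handle manifolds conformal to $(M,g)$, I would recall the fact already quoted in the excerpt (cf.\ the discussion after the identification of $\Omega_m$ with $\Theta_m$) that the space of conformal Killing $m$-tensors depends only on the conformal class of the metric; since $(M,g)$ has none, neither does any conformally equivalent metric.

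I do not anticipate a genuine obstacle here: the lemma is essentially a repackaging of \eqref{beurling_alpha_claim1}–\eqref{beurling_alpha_claim15} combined with the transitivity argument from Corollary \ref{cor:noCKT}. The only point requiring a little care is making sure the threshold $\alpha > \frac{(m-2)(m+n-3)}{(m-1)(m+n-2)}$ in the statement matches the hypothesis on $\alpha$ in Lemma \ref{lemma_beurling_n_neq_three} (it does, verbatim), and that $C_\alpha$ is genuinely finite under this hypothesis — which is guaranteed because the bracketed coefficient on the left-hand side of the displayed inequality in the proof of that lemma is then strictly positive, so dividing through is legitimate. The conformal invariance statement is a citation rather than something to be proved.
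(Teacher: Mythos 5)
Your proof is correct and follows essentially the same route as the paper: apply Lemma \ref{lemma_beurling_n_neq_three} (estimates \eqref{beurling_alpha_claim1} and \eqref{beurling_alpha_claim15}) to conclude $X_+u=0 \Rightarrow X_-u=0$, hence $Xu=0$, then invoke transitivity to force $u=0$, and cite conformal invariance of the dimension of the space of conformal Killing tensors for the final clause. If anything you are slightly more explicit than the paper, which leaves the $m=1$ branch and the conformal-class remark implicit.
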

\begin{proof}
By Lemma \ref{lemma_beurling_n_neq_three}, the conditions imply that any solution $u \in \Omega_m$ of $X_+ u = 0$ also satisfies $X_- u = 0$. Thus $Xu = 0$, and transitivity of the geodesic flow implies that $u$ is a constant. Since $u \in \Omega_m$, we get $u = 0$.
\end{proof}

It is now easy to give the proofs of Lemma \ref{prop_beurling_apriori} and Theorem \ref{theoremA}.

\begin{proof}[Proof of Lemma \ref{prop_beurling_apriori}]
This result follows directly from Lemma \ref{lemma_beurling_n_neq_three}.
\end{proof}

\begin{proof}[Proof of Theorem \ref{theoremA}.]
Let $(M,g)$ be a closed manifold without conformal Killing tensors and with non-positive sectional curvature. By Lemmas \ref{prop_beurling_apriori} and \ref{lemma_beurling_from_apriori} we have 
$$
\norm{Bf}_{L^2} \leq C_{n}(k)\norm{f}_{L^2}, \qquad f \in \Omega_k, \ \ k \geq 0
$$
where $C_n(k):=D_{n}(k+1)$ and coincides with the definition in the statement of Theorem \ref{theoremA}.
Let $w$ be as in Theorem \ref{theoremA}. Since $w_{m_0+2k} = B^k f$, the estimate on Fourier coefficients follows immediately from the fact that the Beurling transform satisfies the inequality above. If $\eps > 0$ we have 
$$
\left( \sum \br{k}^{-1-2\eps} \norm{w_k}^2 \right)^{1/2} \leq \left( \sum \br{k}^{-1-2\eps} \right)^{1/2}A \norm{f} = C_{\eps} \norm{f},
$$
where $A$ is any constant such that $A_{n}(k)\leq A$ for all $n$ and $k$.
This shows that $w \in L^2_x H^{-1/2-\eps}_{v\phantom{\theta}}$.
\end{proof}

\section{Symplectic cocycles, Green solutions and terminator values}
\label{sec_betaconjugate}

In this section we wish to give a characterization of the Anosov condition that involves a very simple one parameter family of symplectic cocycles over the geodesic flow. The characterization will not require a perturbation of the underlying metric and it will be in terms of a critical value related to conjugate points of the 1-parameter family of cocycles. The results here are generalizations to arbitrary dimensions of the the results in \cite[Section 7]{PSU3}.

Let $(M,g)$ be a closed Riemannian manifold of dimension $n$ and let $\phi_t:SM\to SM$ denote the geodesic flow acting on the unit sphere bundle $SM$. Given $(x,v)\in SM$, we let $R(x,v):\{v\}^{\perp}\to \{v\}^{\perp}$ denote the symmetric linear map defined by $R(x,v)w=R_{x}(w,v)v$, where $R$ is the Riemann curvature tensor.

Let $W(x,v)$ denote the kernel of the contact 1-form $\alpha$ at $(x,v)$. Using the vertical and horizontal splitting we have $W(x,v)=\mathcal H(x,v)\oplus \mathcal V(x,v)$ and each $\mathcal H(x,v)$ and $\mathcal V(x,v)$ can be identified with $\{v\}^{\perp}\subset T_{x}M$. Given $\xi\in W(x,v)$ we write it as $\xi=(\xi_{\tt{h}},\xi_{\tt{v}})$, in terms of its horizontal and vertical components, where  $\xi_{\tt{h}},\xi_{\tt{v}}\in\{v\}^{\perp}$, and we consider the unique solution $J_{\xi}$ to the Jacobi equation
\[\ddot{J}+R(\phi_{t}(x,v))J=0\]
with initial conditions $(J(0),\dot{J}(0))=\xi=(\xi_{\tt{h}},\xi_{\tt{v}})$.
(Here and in what follows the dot denotes covariant derivative of the relevant section.)
In the horizontal and vertical splitting we can write the differential of the geodesic flow as
\[d\phi_{t}(\xi)=(J_{\xi}(t),\dot{J}_{\xi}(t)).\]
For details of this we refer to \cite{Pa}. The linear maps $d\phi_{t}:W(x,v)\to W(\phi_{t}(x,v))$ define a symplectic cocycle over the geodesic flow with respect to the symplectic form $\omega:=-d\alpha|_{W}$. We can now embed the derivative cocycle into a 1-parameter family by considering for each $\beta\in\re$, the $\beta$-Jacobi equation:
\begin{equation}
\ddot{J}+\beta R(\phi_{t}(x,v))J=0.
\label{eq:beta}
\end{equation}
This also defines a symplectic cocycle (on the symplectic vector bundle $(W,\omega)$ over $SM$)
\[\Psi_{t}^{\beta}:W(x,v)\to W(\phi_{t}(x,v))\]
by setting
\[\Psi_{t}^{\beta}(\xi)=(J^{\beta}_{\xi}(t),\dot{J}_{\xi}^{\beta}(t))\]
where $J_{\xi}^{\beta}$ is the unique solution to (\ref{eq:beta}) with initial conditions  $\xi=(J_{\xi}^{\beta}(0),\dot{J}_{\xi}^{\beta}(0))$.
Clearly $\Psi^{1}_{t}=d\phi_{t}$. The cocycle $\Psi_{t}^{\beta}$ is generated by the following 1-parameter family of infinitesimal
generators:
\begin{equation}
A_{\beta}(x,v):=\left(
  \begin{array}{ c c }
     0 & \text{Id} \\
     -\beta R(x,v) & 0
  \end{array} \right)
\label{eq:Abeta}
\end{equation}
and since $R(x,v)$ is a symmetric linear map, it is immediate that $\Psi_{t}^{\beta}$ is symplectic (see \cite{Ka} for information on cocycles over dynamical systems). In this section we shall study this family of cocycles putting emphasis on two properties: absence of conjugate points and hyperbolicity.
For completeness we first give the following two definitions.

\begin{Definition} The cocycle $\Psi_{t}^{\beta}$ is free of conjugate points if any non-trivial
solution of the $\beta$-Jacobi equation $\ddot{J}+\beta R(\phi_{t}(x,v))J=0$ with $J(0)=0$ vanishes only at $t=0$.
\end{Definition}

\begin{Definition} The cocycle $\Psi_{t}^{\beta}$ is said to be hyperbolic if there
is a continuous invariant splitting
$W=E_{\beta}^{u}\oplus E_{\beta}^{s}$, and constants $C>0$ and $0<\rho<1<\eta$ such that 
for all $t>0$ we have
\[\|\Psi^{\beta}_{-t}|_{E^{u}}\|\leq C\,\eta^{-t}\;\;\;\;\mbox{\rm
and}\;\;\;\|\Psi^{\beta}_{t}|_{E^{s}}\|\leq C\,\rho^{t}.\]
\label{def:hyp}
\end{Definition}

In order to simplify the notation we will often drop the subscript $\beta$ in $E_{\beta}^{s,u}$ hoping that this will not cause confusion.

\begin{Remark}{\rm It is well known that the bundles $E^s$ and $E^u$ are $(n-1)$-dimensional and Lagrangian. For the purposes of Definition \ref{def:hyp} one could use any norm on $W$ since they are all uniformly equivalent due to the compactness of $SM$ (the constants $C$, $\eta$ and $\rho$ would be different though). There is however an obvious choice of inner product on $W=\mathcal H\oplus\mathcal V$: on each $\mathcal H$ and $\mathcal V$ we have the inner product induced by $g$ on $\{v\}^{\perp}$; this is the same as the restriction of the Sasaki metric on $T_{(x,v)}SM$ to $W(x,v)$.}
\end{Remark}

Of course, saying that $\Psi^{1}_{t}$ is hyperbolic is the same as saying that $(M,g)$ is an Anosov manifold. The two properties are related by the following:

\begin{Theorem} If $\Psi^{\beta}_{t}$ is hyperbolic then $E_{\beta}^s$ and $E_{\beta}^u$ are transversal
to $\mathcal V$ and $\Psi^{\beta}_{t}$ is free of conjugate points.
\label{thm:kling}
\end{Theorem}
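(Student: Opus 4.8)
The statement bundles two assertions --- transversality of $E^s_\beta$ and $E^u_\beta$ to $\mathcal V$, and freedom from $\beta$-conjugate points --- and I would prove them together, by one argument by contradiction modelled on the classical Klingenberg--Eberlein--Ma\~n\'e theorem (the case $\beta=1$). The preliminary reductions are symplectic. The flip $\iota(x,v)=(x,-v)$ conjugates $\phi_t$ with $\phi_{-t}$, preserves $R(x,\cdot)$ and the Sasaki splitting, fixes $\mathcal V$ and exchanges $E^s_\beta\leftrightarrow E^u_\beta$, so it suffices to treat one of $E^s_\beta,E^u_\beta$, and the conjugate-point condition is time-symmetric. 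More importantly, for $\xi=(0,\xi_{\tt{v}})\in\mathcal V(x,v)$ and $A_\beta$ as in \eqref{eq:Abeta} one computes that $\omega(\xi,A_\beta(x,v)\xi)$ equals $|\xi_{\tt{v}}|^2$ up to the sign of the convention for $\omega=-d\alpha|_W$; in particular the crossing form of the moving Lagrangian $t\mapsto\Psi^\beta_t(\mathcal V)$ against $\mathcal V$ is definite at every contact. Hence along any geodesic the contacts of $\Psi^\beta_t(\mathcal V(x,v))$ with $\mathcal V(\phi_t(x,v))$ are isolated, their number on a parameter interval (with multiplicity) is the Morse index of that segment for the $\beta$-energy with fixed endpoints (equivalently, the Maslov count of contacts), and this index is non-decreasing and superadditive under concatenation of segments --- the mechanism that turns ``one conjugate pair'' into ``arbitrarily large index''.

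Assume the conclusion fails. Suppose first that the $\beta$-Jacobi equation has a non-trivial solution vanishing at two parameters along some geodesic: then ``carrying a $\beta$-conjugate pair within parameter length $a$'' is an open, non-empty subset of $SM$, and, the geodesic flow preserving Liouville measure, Poincar\'e recurrence yields a recurrent $(x_0,v_0)$ whose geodesic $\gamma_0$ has such a pair; by recurrence the shifted segments $\gamma_0|_{[t_k,t_k+a]}$ with $t_k\to\infty$ also do, so superadditivity forces the index of $\gamma_0|_{[0,T]}$ to tend to $\infty$ as $T\to\infty$. On the other hand this index counts the contacts of $\Psi^\beta_t(\mathcal V(x_0,v_0))$ with $\mathcal V(\phi_t(x_0,v_0))$ for $t\in(0,T)$, and --- once transversality of $E^s_\beta$ to $\mathcal V$ is available --- hyperbolicity forces $\Psi^\beta_t(\mathcal V(x_0,v_0))\to E^u_\beta(\phi_t(x_0,v_0))$ as $t\to+\infty$, with $E^u_\beta$ uniformly transversal to $\mathcal V$ by compactness of $SM$, so there are no contacts for large $t$ and the index stays bounded: a contradiction. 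The remaining way the conclusion fails is $E^s_\beta(x_1,v_1)\cap\mathcal V(x_1,v_1)\neq 0$ for some $(x_1,v_1)$; picking $0\neq\xi=(0,w)$ there, the $\beta$-Jacobi field $J$ with $J(0)=0$, $\dot J(0)=w$ has $\|(J(t),\dot J(t))\|=\|\Psi^\beta_t\xi\|\to 0$ exponentially, so $J$ is non-trivial, forward-decaying and vanishes at $0$; if it vanishes again we are in the previous case, and otherwise $n=|J|$ is positive on $(0,\infty)$, tends to $0$, has $n(0)=0$ and $\dot n(0^+)=|w|>0$, and (by Cauchy--Schwarz) is a subsolution of the scalar equation $\ddot n+\beta\,\langle R(\phi_t(x_1,v_1))\widehat J,\widehat J\rangle\,n\ge 0$ with $\widehat J=J/|J|$, which is excluded by a Sturm comparison with the (conjugate-point-free) scalar $\beta$-Jacobi equation, forcing $J\equiv 0$.

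The principal obstacle is exactly the circularity just visible: the convergence $\Psi^\beta_t(\mathcal V)\to E^u_\beta$ that bounds the index uses transversality of $\mathcal V$ to $E^s_\beta$, whereas the usual source of that transversality --- the Green subbundles $E^{\pm}(x,v)=\lim_{t\to\pm\infty}\Psi^\beta_{-t}(\mathcal V(\phi_t(x,v)))$, which are Lagrangian, continuous, $\Psi^\beta$-invariant and transversal to $\mathcal V$ --- is only available once conjugate points have been excluded. I would break this by organizing everything as a single contradiction against the negation of the full statement, so that in every branch one has in hand either the no-conjugate-points property or a concrete non-transversal vector; then, once no conjugate points is secured, one constructs $E^{\pm}$ and identifies them with $E^s_\beta,E^u_\beta$ by the exponential dichotomy (a $\beta$-Jacobi field is forward-bounded precisely when its datum lies in $E^s_\beta$) together with the boundedness of Green--Jacobi fields on a compact conjugate-point-free manifold, after which transversality follows and feeds back into the first branch. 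The delicate technical step, deserving the most care, is the last implication above --- that a non-trivial forward-decaying $\beta$-Jacobi field cannot have exactly one zero --- where the Riccati/comparison analysis of the $\beta$-Jacobi equation does the real work.
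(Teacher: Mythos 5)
The paper's proof is much shorter than your proposal and takes a different route: it verifies that the cocycle $\Psi_t^\beta$ is \emph{optical} (positively twisted) with respect to $\mathcal V$ --- exactly the positivity of the crossing form you compute --- and then invokes \cite[Theorem 4.8]{CGIP}, applied to the continuous invariant Lagrangian subbundle $E^s_\beta$ (or $E^u_\beta$), to obtain both conclusions at once. So the structural input you identified (the crossing form $-\omega(\xi,\tfrac{D}{dt}|_{t=0}\Psi_t^\beta\xi)=|\xi_{\tt{v}}|^2$ on $\mathcal V$) is exactly what the paper uses; where you diverge is in trying to prove the CGIP-type statement from scratch.

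The trouble is that your reconstruction has a genuine circularity that it names but does not break. In your Case A (a $\beta$-conjugate pair exists), the index is bounded by arguing $\Psi_t^\beta(\mathcal V(x_0,v_0))\to E^u_\beta(\phi_t(x_0,v_0))$ and using uniform transversality of $E^u_\beta$ to $\mathcal V$ --- but both the convergence (which needs $\mathcal V(x_0,v_0)\cap E^s_\beta(x_0,v_0)=0$) and the uniform transversality are part of what you are trying to prove. In your Case B ($E^s_\beta$ meets $\mathcal V$), you either fall back to Case A or run a Sturm comparison ``with the (conjugate-point-free) scalar $\beta$-Jacobi equation'' --- but that freedom from conjugate points is likewise not yet available. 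Your final paragraph acknowledges this interdependence and says one should ``organize everything as a single contradiction'' so that in each branch one of the two properties is in hand, and then construct the Green bundles once no conjugate points is secured --- but this is a description of the desired outcome, not a proof: neither branch actually produces the needed ingredient without already assuming the other. The correct way to break the loop, which is what the cited CGIP theorem does, is to first prove transversality of the \emph{continuous, flow-invariant} Lagrangian subbundle $E^{s,u}_\beta$ to $\mathcal V$ directly from continuity, compactness and the optical condition (via finiteness of the asymptotic Maslov index of an invariant Lagrangian section), and only afterwards deduce absence of $\beta$-conjugate points; your argument inverts this order, and that inversion is what makes it circular. Your Case B / Sturm-comparison step is also underspecified even granting no conjugate points: a nontrivial forward-decaying $\beta$-Jacobi field with a single zero is ruled out most cleanly via the Green/Riccati machinery (Lemma \ref{lemma:green} and the identification $\dot J = U^- J$), not by the scalar subsolution inequality $\ddot n+\beta\langle R\widehat J,\widehat J\rangle n\ge 0$ alone, which by itself does not contradict $n(0)=0$ and $n\to 0$.
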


\begin{proof} For $\beta=1$ this is exactly the content of Klingenberg's theorem mentioned
in the introduction \cite{K}. For arbitrary $\beta$ this can proved, for example, using the results in \cite{CGIP}
as we now explain.  Let $\Lambda(SM)$ denote the bundle of Lagrangian subspaces in $W$. The subbundles $\mathcal V$, $E^{u,s}$ are all Lagrangian sections of this bundle, but with $E^{s,u}$ only continuous. The key property of $\Psi_{t}^{\beta}$ is that it is {\it optical or positively twisted}
with respect to $\mathcal V$. This means that for any Lagrangian subspace $\lambda$ in $W(x,v)$ intersecting $\mathcal V(x,v)$ non-trivially, the form
\[(\xi,\eta)\mapsto -\omega\left(\xi,\frac{D}{dt}|_{t=0}(\Psi_{t}^{\beta}\eta)\right)\]
restricted to $\lambda\cap \mathcal V(x,v)$ is positive definite.  Here the term $\frac{D}{dt}$ indicates that along a curve
$\xi(t)=(\xi_{\tt{h}}(t),\xi_{\tt{v}}(t))$ we just take covariant derivatives of each one of the components $\xi_{\tt{h}}(t)$, $\xi_{\tt{v}}(t)$.
It is very well-known that the geodesic flow is optical with respect to the vertical distribution, and we can now quickly check that the same is
true for $\Psi_{t}^{\beta}$. Indeed, take $\eta\in V(x,v)$, then using the $\beta$-Jacobi equation
\[\frac{D}{dt}|_{t=0}(\Psi_{t}^{\beta}\eta)=(\dot{J}_{\eta}^{\beta}(0),-\beta\,R(x,v)J_{\eta}^{\beta}(0)).\]
Since $\eta$ is vertical
\[(\dot{J}_{\eta}^{\beta}(0),-\beta\,R(x,v)J_{\eta}^{\beta}(0))=(\eta_{\tt{v}},0).\]
Thus
\[-\omega\left(\eta,\frac{D}{dt}|_{t=0}(\Psi_{t}^{\beta}\eta)\right)=|\eta_{\tt{v}}|^{2}\]
so we have the desired positive twisting with respect to the vertical distribution.
We are now in good shape to apply Theorem 4.8 in \cite{CGIP} to either $E^s$ or $E^u$ to conclude that there are no $\beta$-conjugate points and that both $E^{s}$ and $E^{u}$ are transversal to $\mathcal V$. Strictly speaking Theorem 4.8 in \cite{CGIP} is stated for the derivative cocycle of a Hamiltonian flow, but it is plain the proof works just the same for symplectic cocycles as in our context.
\end{proof}

Let us describe now the Green limit solutions when $\Psi_{t}^{\beta}$ is free of conjugate points \cite{Green}. For two dimensions these constructions are due to E.~Hopf \cite{H0}.  We shall follow the elegant and short exposition in \cite{I} to construct our solutions.

Set $E_{T}(x,v):=\Psi_{-T}^{\beta}(\mathcal V(\phi_{T}(x,v))$. Since $d\pi:E_{T}(x,v)\to \{v\}^{\perp}$ is an isomorphism, there exists a linear map $S_{T}(x,v):\{v\}^{\perp}\to\{v\}^{\perp}$ such that $E_{T}$ is the graph of $S_{T}$; in other words, given $w\in\{v\}^{\perp}$, there exists a unique $w'\in\{v\}^{\perp}$ such that $(w,w')\in E_{T}$ and we set $S_{T}w:=w'$. Since the cocycle is symplectic $E_T$ is Lagrangian. This is equivalent to $S_T$ being symmetric.
The claim is that $S_{T}$ has a limit as $T\to\infty$.

Recall that it is possible to give a partial order to the set of symmetric linear maps by declaring that
$A\succ B$ if $A-B$ is positive definite. Then we need to show that $S_{T}$ is monotone and bounded.
For $t<s$, the linear map $S_{s}-S_{t}$ is obviously symmetric and we observe that its signature does not change in the region $0<t<s$ since $\Psi_{t}^{\beta}$ has no conjugate points. 
An elementary estimate shows that $S_{s}=-\frac{1}{s}\text{Id}+O(s^2)$ and hence
$S_{s}-S_{t}$ is positive definite for $0<t<s$. Consequently $S_{T}$ is monotone increasing as $T$ goes to $\infty$. Similarly the signature of $S_{s}-S_{t}$ does not change in the region $t<0<s$ and by the same estimate,
$S_{s}-S_{t}$ is negative definite in this region. Hence $S_T$ is bounded by $S_{t}$ for $t<0$.
We let
\[U^{-}:=\lim_{T\to \infty}S_{T}.\]
The graph of $U^{-}$ determines an invariant Lagrangian subbundle $E^{-}$ (stable bundle) which is in general only measurable. Moreover $U^{-}$ is measurable, bounded and satisfies the Riccati equation
\[\dot{U}+U^2+\beta R=0.\]
These claims are all proved as in the case of $\beta=1$ (geodesic flows).
Similarly, by considering $\Psi_{T}^{\beta}(\mathcal (\phi_{-T}(x,v))$ we obtain an invariant subbundle $E^{+}$ (unstable bundle) and a symmetric map $U^{+}$ also solving the Riccati equation above along geodesics. 
Let us agree that given two symmetric linear maps $A$ and $B$, $A\succeq B$ means that $A-B$ is a non-negative operator.
By construction $U^{+}\succeq U^{-}$ since $S_{t}\succ S_{s}$ for $t<0<s$. We summarize these properties in the following lemma.

\begin{Lemma} Assume $\Psi_{t}^{\beta}$ is free of conjugate points. Then there exist symmetric linear maps $U^{\pm}(x,v):\{v\}^{\perp}\to\{v\}^{\perp}$ such that $(x,v)\mapsto U^{\pm}(x,v)$ are measurable, bounded and $t\mapsto U^{\pm}(\phi_{t}(x,v))$ satisfy the Ricati equation
\[\dot{U}+U^2+\beta R=0.\]
Moreover, $U^{+}-U^{-}$ is a non-negative operator.
\label{lemma:green}
\end{Lemma}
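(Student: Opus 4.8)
The plan is to reproduce the classical Green--Hopf limiting construction \cite{Green,H0}, in the streamlined form given in \cite{I} (and, for geodesic flows, in \cite{Ebe0}), checking that the parameter $\beta$ enters only through the curvature term $\beta R$ in \eqref{eq:beta} and \eqref{eq:Abeta}.

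First I would set, for $T>0$, $E_{T}(x,v):=\Psi^{\beta}_{-T}(\mathcal V(\phi_{T}(x,v)))$. Since $\Psi^{\beta}_{t}$ is symplectic and $\mathcal V$ is Lagrangian, each $E_{T}(x,v)$ is Lagrangian; and since $\Psi^{\beta}_{t}$ has no conjugate points, a nontrivial solution of the $\beta$-Jacobi equation vanishing at time $T$ cannot also vanish at time $0$, so $d\pi|_{E_{T}}:E_{T}(x,v)\to\{v\}^{\perp}$ is an isomorphism. Hence $E_{T}$ is the graph of a linear map $S_{T}(x,v):\{v\}^{\perp}\to\{v\}^{\perp}$, which is symmetric precisely because $E_{T}$ is Lagrangian, and $(x,v)\mapsto S_{T}(x,v)$ is continuous.

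Next I would prove monotonicity and uniform bounds for the family $S_{T}$. The key point is that for $0<t<s$ the symmetric operator $S_{s}-S_{t}$ has a signature that is locally constant in $(t,s)$ — this is exactly where absence of $\beta$-conjugate points is used, to exclude a degenerate intersection — while the elementary small-time estimate $S_{s}=-\tfrac1s\text{Id}+O(s^{2})$ as $s\to0^{+}$ pins that signature down to positive definite, so that $T\mapsto S_{T}(x,v)$ is monotone increasing. The same estimate applied in the range $t<0<s$ shows that there $S_{s}-S_{t}$ is negative definite, whence for $T>0$ the family $S_{T}$ is bounded above by $S_{t}$ for any fixed $t<0$, and compactness of $SM$ turns this into a uniform bound. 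A monotone increasing, uniformly bounded family of symmetric operators converges pointwise, so $U^{-}(x,v):=\lim_{T\to\infty}S_{T}(x,v)$ exists, is bounded, and is measurable as a pointwise limit of continuous maps. Invariance of $E^{-}:=\mathrm{graph}(U^{-})$ under $\Psi^{\beta}_{t}$ follows from the cocycle identity, and differentiating $w'(t)=U^{-}(\phi_{t}(x,v))\,w(t)$ along a $\beta$-Jacobi field $(w,w')$ and using \eqref{eq:Abeta} gives the Riccati equation $\dot U+U^{2}+\beta R=0$ along geodesics. Replacing $\mathcal V(\phi_{T}(x,v))$ by $\mathcal V(\phi_{-T}(x,v))$ and letting $T\to\infty$ produces a monotone decreasing bounded family whose limit $U^{+}$ is again measurable, bounded, and solves the same Riccati equation; and since $S_{t}\succ S_{s}$ whenever $t<0<s$, passing to the two limits gives $U^{+}\succeq U^{-}$.

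Since this is a routine transcription of the $\beta=1$ case, I expect the only genuine work to lie in the monotonicity step: verifying the small-time asymptotics $S_{s}=-\tfrac1s\text{Id}+O(s^{2})$ for the $\beta$-cocycle, checking that the signature of $S_{s}-S_{t}$ is indeed locally constant under the no-$\beta$-conjugate-points hypothesis, and confirming that the bound on $U^{\pm}$ is uniform over $SM$. The remaining ingredients — Lagrangianity, invariance of the limit bundle, measurability of the pointwise limit, the derivation of the Riccati equation, and the comparison $U^{+}\succeq U^{-}$ — are then immediate.
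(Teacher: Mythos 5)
Your proposal is a faithful reproduction of the paper's own argument: the paper constructs $E_{T}=\Psi^{\beta}_{-T}(\mathcal V(\phi_{T}(x,v)))$, identifies it as the graph of a symmetric $S_{T}$, proves monotonicity and boundedness via the constancy of the signature of $S_{s}-S_{t}$ on the no-conjugate-point regions together with the small-time asymptotics $S_{s}=-\tfrac1s\text{Id}+O(s^{2})$, defines $U^{\mp}$ as the limits as $T\to\pm\infty$, and deduces $U^{+}\succeq U^{-}$ from $S_{t}\succ S_{s}$ for $t<0<s$, citing \cite{Green,H0,I} for the remaining routine details. Your argument is the same construction with the same ingredients; the only added explicitness (deriving the isomorphism $d\pi|_{E_T}$ directly from the no-$\beta$-conjugate-points definition, noting compactness of $SM$ for the uniform bound, and stating how the Riccati equation drops out) is consistent with what the paper leaves implicit.
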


We call the symmetric maps $U^{\pm}$  {\it the Green solutions} and often we shall use a subscript $\beta$
to indicate that they are associated with the cocycle $\Psi_{t}^{\beta}$.

\begin{Theorem} Assume that $\Psi_{t}^{\beta}$ is free of conjugate points. Then
$\Psi_{t}^{\beta}$ is hyperbolic if and only if $E^{+}_{\beta}(x,v)\cap E^{-}_{\beta}(x,v)=\{0\}$
for all $(x,v)\in SM$ (equivalently $U^{+}\succ U^{-}$ for all $(x,v)\in SM$).
\label{thm:eberlein}
\end{Theorem}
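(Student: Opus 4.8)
The plan is to prove both implications by identifying the Green bundles $E_{\beta}^{\pm}$ with the stable/unstable bundles of a (putative) hyperbolic splitting; this is the mechanism of Eberlein's theorem in the case $\beta=1$, and it adapts essentially verbatim because $\Psi_{t}^{\beta}$ is again a symplectic cocycle over the geodesic flow which is optical with respect to $\mathcal V$ (see the proof of Theorem \ref{thm:kling}) and the base $SM$ is compact.

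\emph{Forward direction.} Suppose $\Psi_{t}^{\beta}$ is hyperbolic, with invariant splitting $W=E_{\beta}^{u}\oplus E_{\beta}^{s}$. By Theorem \ref{thm:kling} the bundles $E_{\beta}^{s}$ and $E_{\beta}^{u}$ are transversal to $\mathcal V$, and since $SM$ is compact and the hyperbolic splitting is continuous, this transversality is uniform. I would first show that $E_{\beta}^{s}$ is the graph of $U_{\beta}^{-}$. Recall from the construction preceding Lemma \ref{lemma:green} that the stable Green bundle equals $\lim_{T\to\infty}\Psi_{-T}^{\beta}(\mathcal V(\phi_{T}(x,v)))$, the limit being taken in the Grassmannian and holding because these subspaces are the graphs of the operators $S_{T}$, which converge monotonically to $U_{\beta}^{-}$. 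On the other hand, writing a unit vector $\eta\in\mathcal V(\phi_{T}(x,v))$ as $\eta=\eta^{s}+\eta^{u}$ in the splitting at $\phi_{T}(x,v)$, uniform transversality gives $|\eta^{s}|\geq\delta>0$, while the hyperbolicity estimates give $|\Psi_{-T}^{\beta}\eta^{u}|\leq C\eta^{-T}$ and $|\Psi_{-T}^{\beta}\eta^{s}|\geq\delta C^{-1}\rho^{-T}$; since $(\eta/\rho)^{T}\to\infty$, the direction of $\Psi_{-T}^{\beta}\eta$ converges into $E_{\beta}^{s}(x,v)$. As $\dim E_{\beta}^{s}=\dim\mathcal V=n-1$, this forces $\Psi_{-T}^{\beta}(\mathcal V(\phi_{T}(x,v)))\to E_{\beta}^{s}(x,v)$, so $E_{\beta}^{s}$ is the graph of $U_{\beta}^{-}$. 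The symmetric argument, using $\Psi_{T}^{\beta}(\mathcal V(\phi_{-T}(x,v)))$ and transversality of $\mathcal V$ with $E_{\beta}^{s}$, identifies $E_{\beta}^{u}$ with the graph of $U_{\beta}^{+}$. Since $E_{\beta}^{u}\oplus E_{\beta}^{s}$ is direct, $E_{\beta}^{+}(x,v)\cap E_{\beta}^{-}(x,v)=\{0\}$ at every point; and because both are graphs of symmetric operators, this is equivalent to $U_{\beta}^{+}-U_{\beta}^{-}$ being invertible, i.e.\ (using $U_{\beta}^{+}\succeq U_{\beta}^{-}$ from Lemma \ref{lemma:green}) to $U_{\beta}^{+}\succ U_{\beta}^{-}$.

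\emph{Reverse direction.} Assume $E_{\beta}^{+}(x,v)\cap E_{\beta}^{-}(x,v)=\{0\}$ for all $(x,v)$. Since $E_{\beta}^{\pm}$ are $(n-1)$-dimensional Lagrangian subspaces of the $2(n-1)$-dimensional symplectic space $W(x,v)$, this already gives a pointwise (a priori only measurable) splitting $W=E_{\beta}^{+}\oplus E_{\beta}^{-}$, and the task is to promote it to a continuous splitting obeying the exponential bounds of Definition \ref{def:hyp}. I would proceed as in Eberlein: for $\xi\in E_{\beta}^{-}(x,v)$ with associated $\beta$-Jacobi field $J^{-}$, invariance of the graph of $U_{\beta}^{-}$ gives $\dot J^{-}=U_{\beta}^{-}(\phi_{t}(x,v))J^{-}$ with $U_{\beta}^{-}$ uniformly bounded (Lemma \ref{lemma:green}); if $|J^{-}(t)|$ failed to tend to $0$, then translating $J^{-}$ along a sequence $\phi_{t_{k}}(x,v)$ converging in $SM$ and using that $E_{\beta}^{-}$ is a limit of the finite-time bundles $E_{T}$, one could extract a nonzero $\beta$-Jacobi field lying at some point in $E_{\beta}^{+}\cap E_{\beta}^{-}$, contradicting the hypothesis. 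Hence $\Psi_{t}^{\beta}\xi\to0$ for every $\xi\in E_{\beta}^{-}$, and, after showing that $E_{\beta}^{\pm}$ are continuous (this is the genuine use of the hypothesis), this pointwise decay becomes uniform by compactness of $SM$ and then exponential by the cocycle property. The symmetric statements for $E_{\beta}^{+}$ and $t\to-\infty$ then verify that $\Psi_{t}^{\beta}$ is hyperbolic.

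The main obstacle is exactly the passage, in the reverse direction, from the \emph{pointwise} transversality $E_{\beta}^{+}(x,v)\cap E_{\beta}^{-}(x,v)=\{0\}$ to a \emph{uniform} exponential gap, namely continuity of the Green bundles together with uniform contraction and expansion. This is the technical heart of Eberlein's theorem; it rests on compactness of $SM$ and the a priori bounds on $U_{\beta}^{\pm}$ recorded in Lemma \ref{lemma:green}, and no new ingredient is needed to treat a general $\beta$ in place of $\beta=1$.
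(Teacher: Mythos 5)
Your forward direction is correct, and it is a genuinely different (and rather clean) route from the one in the paper: you identify $E^{s}_{\beta}$ with the graph of $U^{-}_{\beta}$ by showing directly, via the hyperbolicity estimates and uniform transversality on a compact base, that the finite-time bundles $\Psi_{-T}^{\beta}(\mathcal V(\phi_{T}(x,v)))$ converge to $E^{s}_{\beta}(x,v)$ in the Grassmannian. The paper instead establishes Proposition~\ref{prop:eb2.9} (the $\beta$-version of Eberlein's Proposition~2.9) and uses it to show that any $\beta$-Jacobi field bounded in forward (resp.\ backward) time must satisfy $\dot J = U^{-}_{\beta}J$ (resp.\ $\dot J = U^{+}_{\beta}J$), and then identifies $E^{\pm}_{\beta}$ with $E^{u,s}$ from that characterization. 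Both give the forward implication; yours is perhaps more geometric, the paper's is the tool it needs anyway for the converse.

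The reverse direction, however, has a genuine gap as you yourself acknowledge: you outline the Eberlein strategy (show decay of $\Psi^{\beta}_{t}\xi$ for $\xi\in E^{-}_{\beta}$, promote pointwise transversality to continuity of the Green bundles, then upgrade pointwise decay to a uniform exponential gap) and then declare that this is ``the technical heart of Eberlein's theorem'' without executing it. That step is exactly what must be proved, and simply asserting that ``no new ingredient is needed'' does not discharge it. The paper avoids reproving this machinery by invoking Theorem 0.2 of \cite{CI}: for a symplectic cocycle over a compact base, hyperbolicity is equivalent to quasihyperbolicity, i.e.\ $\sup_{t}\norm{\Psi_{t}^{\beta}\xi}=\infty$ for every $\xi\neq 0$. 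With that result in hand, one only needs to show that $E^{+}_{\beta}(x,v)\cap E^{-}_{\beta}(x,v)=\{0\}$ everywhere forces every nontrivial $\beta$-Jacobi field to be unbounded; and this follows from the bounded-solutions characterization $\dot J = U^{\pm}_{\beta}J$ supplied by Proposition~\ref{prop:eb2.9}. That appeal to \cite{CI} is the missing ingredient in your argument: without it (or without actually carrying out the Eberlein continuity-plus-uniformity argument in full), the converse implication remains open.
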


\begin{proof} For $\beta=1$ this was proved by Eberlein in \cite{Ebe}. To prove the theorem for arbitrary $\beta$ we shall make use of Theorem 0.2
in \cite{CI}. When applied to our situation, it says that
$\Psi_{t}^{\beta}$ is hyperbolic if and only if
\begin{equation}
\sup_{t\in\re}\,\norm{\Psi_{t}^{\beta}(\xi)}=+\infty\;\;\;\mbox{\rm for\;all}\;\xi\in W,\;\xi\neq 0.
\label{eq:quasihyp}
\end{equation}

We shall also need the following proposition:

\begin{Proposition} Assume $\Psi_{t}^{\beta}$ is free of conjugate points
and let $\gamma$ be a unit speed geodesic.
Given $A>0$ there exists $T=T(A,\gamma)$ such that for any solution
$w$ of $\ddot{w}+\beta R(\gamma,\dot{\gamma})w=0$ with $w(0)=0$ we have
\[|w(s)|\geq A |\dot{w}(0)|\]
for all $s\geq T$.
\label{prop:eb2.9}
\end{Proposition}

\begin{proof} The proof of this is exactly like the proof of Proposition 2.9 in \cite{Ebe} and hence we omit it.
\end{proof}

Suppose now we have a solution $J$ to the $\beta$-Jacobi equation
$\ddot{J}+\beta RJ=0$ that is bounded in forward time, i.e., there is
$C$ such that $|J(t)|\leq C$ for all $t\geq 0$.
We claim that $U^{-}_{\beta}(x,v)J(0)=\dot{J}(0)$. For $r>0$, consider the unique matrix
solution $Y_{r}$ of the $\beta$-Jacobi equation with $Y_{r}(r)=0$
and $Y_{r}(0)=\text{Id}$.  The Lagrangian bundle $\Psi_{t-r}^{\beta}(\mathcal V(\phi_{r}(x,v)))$ does not touch the vertical for $t<r$, hence it gives rise to symmetric linear maps $U_{r}^{-}(x,v,t)$ satisfying the Riccati equation for $t<r$. Moreover, it is easy to check that $\dot{Y}_{r}(t)=U^{-}_{r}(x,v,t)Y_{r}(t)$

Let $w(t):=J(t)-Y_{r}(t)J(0)$. Since $w(0)=0$ we may apply Proposition \ref{prop:eb2.9} to derive for any $A$, the existence of $T$ such that
\[|w(s)|\geq A |\dot{w}(0)|\]
for all $s\geq T$. Consider $r$ large enough so that $r\geq T$. Then
\[C\geq |J(r)|=|w(r)|\geq A|\dot{w}(0)|\geq A|\dot{J}(0)-U^{-}_{r}(x,v,0)J(0)|.\]
Now let $r\to\infty$ to obtain
\[C\geq A|\dot{J}(0)-U^{-}_{\beta}(x,v)J(0)|\]
and since $A$ is arbitrary the claim $U^{-}_{\beta}(x,v)J(0)=\dot{J}(0)$ follows.

Similarly, if there is a solution $J$ to the $\beta$-Jacobi equation
that is bounded backwards in time we must have $U^{+}_{\beta}(x,v)J(0)=\dot{J}(0)$.
Thus if there is a solution $J$ bounded for all 
times then $U^{+}_{\beta}-U^{-}_{\beta}$ has zero as an eigenvalue along $\gamma$.

Now it is easy to complete the proof of the theorem. Suppose $\Psi_{t}^{\beta}$ is hyperbolic. Then if we consider a solution of the $\beta$-Jacobi equation corresponding to the stable bundle, it must bounded forward in time by definition
of hyperbolicity and hence by the above the graph of $U_{\beta}^{-}$ must be
$E^s$. This implies $E^{-}=E^{s}$. Similarly the graph of $U^{+}_{\beta} $ is
$E^u$ and $E^{+}=E^{u}$.  Since $E^s$ and $E^u$ are transversal it follows that $E^{+}$ and $E^{-}$ are also transversal.

Suppose now $E^{+}$ and $E^{-}$ are transversal everywhere.
By the argument above, any non-trivial solution $J$ of the $\beta$-Jacobi equation must be unbounded. Since
\[ \norm{\Psi_{t}^{\beta}(\xi)}^2=|J(t)|^2+|\dot{J}(t)|^2,\]
where $J$ is the unique solution to the $\beta$-Jacobi equation with
$(J(0),\dot{J}(0))=\xi$, it follows that (\ref{eq:quasihyp}) holds
and hence $\Psi_{t}^{\beta}$ is hyperbolic.
\end{proof}

Below we will find convenient to use the following well-known comparison lemma (cf. \cite[p. 340]{Re}):

\begin{Lemma} Let $U_{i}(t)$, $i=0,1$ be solutions of the matrix initial value problems
\[\dot{U_{i}}+U_{i}^2+R_{i}(t)=0,\;\;U_{i}(0)=w_i,\;\;i=0,1\]
with $R_{i}$ symmetric for $i=1,2$.
Suppose $w_1\succeq w_0$, $R_{1}(t)\preceq R_{0}(t)$ for $t\in [0,t_{0}]$, and $U_{0}(t_0)$ is defined.
Then $U_{1}(t)\succeq U_{0}(t)$ for $t\in [0,t_0]$.
\label{lemma:comp}
\end{Lemma}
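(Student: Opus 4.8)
The plan is to reduce this matrix Riccati comparison to a linear Lyapunov-type equation for the difference $W(t):=U_1(t)-U_0(t)$, and then solve that equation explicitly by an integrating-factor argument. Note first that $W$ is defined on the interval where both $U_0$ and $U_1$ exist, which contains a neighbourhood of $0$, and that $W(0)=w_1-w_0\succeq 0$ by hypothesis. Subtracting the two Riccati equations gives $\dot W=-(U_1^2-U_0^2)+(R_0-R_1)$. Since each $U_i(t)$ is symmetric, so is $A(t):=\tfrac12(U_1(t)+U_0(t))$, and expanding the products shows $U_1^2-U_0^2=AW+WA$. Hence $W$ solves the linear equation
\[
\dot W+AW+WA=S,\qquad S(t):=R_0(t)-R_1(t),
\]
with $S(t)\succeq 0$ on $[0,t_0]$ by the assumption $R_1\preceq R_0$.

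Next I would introduce the fundamental matrix $\Phi(t)$ solving $\dot\Phi=-A(t)\Phi$, $\Phi(0)=\id$, which is invertible throughout the interval of definition and satisfies $\tfrac{d}{dt}\Phi^{-1}=\Phi^{-1}A$. Setting $V(t):=\Phi(t)^{-1}W(t)(\Phi(t)^{-1})^{*}$ and differentiating, using that $A$ is symmetric so that $\tfrac{d}{dt}(\Phi^{-1})^{*}=A(\Phi^{-1})^{*}$, all the $A$-terms cancel and one is left with $\dot V=\Phi^{-1}S(\Phi^{-1})^{*}$. Integrating,
\[
V(t)=V(0)+\int_0^t\Phi(s)^{-1}S(s)(\Phi(s)^{-1})^{*}\,ds.
\]
Every term on the right is positive semidefinite: $V(0)=W(0)\succeq 0$, and the integrand is a congruence of $S(s)\succeq 0$. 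Hence $V(t)\succeq 0$, and since $W(t)=\Phi(t)V(t)\Phi(t)^{*}$ is a congruence of $V(t)$, we get $W(t)\succeq 0$, i.e. $U_1(t)\succeq U_0(t)$, wherever both solutions exist.

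Finally I would argue that $U_1$ is in fact defined on all of $[0,t_0]$, so that the stated inequality holds on the full interval claimed. On its maximal forward interval of existence one has $U_1\succeq U_0$; since $U_0$ is continuous, hence bounded, on the compact set $[0,t_0]$, this gives a uniform lower bound for $U_1$, and the standard escape-time analysis of the Riccati flow (a solution can only fail to continue forward by having some eigenvalue tend to $-\infty$) then prevents $U_1$ from blowing up before $t_0$. I expect this last point to be the only real obstacle — everything else being a mechanical computation once the substitution $W=\Phi V\Phi^{*}$ is chosen — namely, ensuring the comparison is not vacuous, i.e. that $U_1$ does not cease to exist on $[0,t_0]$. (In the application made of the lemma in this paper the cocycle $\Psi_t^{\beta}$ is free of conjugate points, so all the relevant Riccati solutions exist for all time and this subtlety does not even arise.)
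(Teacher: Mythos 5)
Your proof is correct. Note that the paper itself gives no proof of this lemma --- it is stated as a ``well-known comparison lemma'' with a citation to Reid's book --- so there is no in-paper argument to compare against; your argument (subtract the two Riccati equations to get the Lyapunov equation $\dot W + AW + WA = S$ for $W = U_1 - U_0$ with $A = \tfrac12(U_1+U_0)$, then conjugate by the fundamental matrix $\Phi$ of $\dot\Phi = -A\Phi$ so that $V = \Phi^{-1}W(\Phi^{-1})^*$ has nonnegative derivative) is precisely the standard one found in Reid and elsewhere. All the computational steps check out: $U_1^2 - U_0^2 = AW+WA$ because $A$ is the symmetric average, $\tfrac{d}{dt}(\Phi^{-1})^* = A(\Phi^{-1})^*$ because $A$ is symmetric, and positive semidefiniteness is preserved under congruence and under integration. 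You are also right that the only nontrivial point is ruling out finite-time blow-up of $U_1$ on $[0,t_0]$; your sketch via the lower bound $U_1 \succeq U_0$ on the maximal existence interval is adequate, and can be made fully rigorous by observing that $\tfrac{d}{dt}\,\mathrm{tr}\,U_1 = -\mathrm{tr}(U_1^2) - \mathrm{tr}(R_1) \le -\mathrm{tr}(R_1)$ bounds $\mathrm{tr}\,U_1$ above, which together with the eigenvalue lower bound bounds $\|U_1\|$ and prevents escape.
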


\begin{Theorem} Let $\beta_0> 0$.
If $\Psi_{t}^{\beta_{0}}$ is free of conjugate points,
then for any $\beta\in [0,\beta_0]$, $\Psi_{t}^{\beta}$ is also free of conjugate points.
If $\Psi_{t}^{\beta_{0}}$ is hyperbolic, then for any $\beta\in (0,\beta_0]$, $\Psi_{t}^{\beta}$ is also hyperbolic.
\end{Theorem}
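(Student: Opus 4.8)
The plan is to transfer both statements to the quadratic (``index'') functionals attached to the $\beta$-Jacobi equations and to exploit that these depend affinely on $\beta$. For $(x,v)\in SM$ write $R(t):=R(\phi_t(x,v))$, and for $a<b$ and a piecewise $C^1$ map $Y\colon[a,b]\to\{v\}^\perp$ (dots denoting covariant differentiation along the geodesic determined by $(x,v)$) set
$$\mathcal F^{\beta}_{[a,b]}(Y):=\int_a^b\Big(\abs{\dot Y(t)}^2-\beta\,\br{R(t)Y(t),Y(t)}\Big)\,dt .$$
The only algebraic input needed is that, for $\lambda\in[0,1]$,
$$\mathcal F^{\lambda\beta_0}_{[a,b]}(Y)=(1-\lambda)\int_a^b\abs{\dot Y}^2\,dt+\lambda\,\mathcal F^{\beta_0}_{[a,b]}(Y),$$
where the first term is $\ge 0$ and is $>0$ unless $Y$ is constant.

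For the first assertion I would invoke the standard Sturm/Morse description of $\beta$-conjugate points (oscillation theory for $\ddot Y+\beta RY=0$, cf.\ \cite{Re}): $\Psi^{\beta}_t$ is free of conjugate points if and only if $\mathcal F^{\beta}_{[0,T]}$ is positive definite on $\{Y\neq0:Y(0)=Y(T)=0\}$ for all $(x,v)$ and all $T>0$ (necessity by completing the square against the Riccati solution with $U(t)\sim t^{-1}\mathrm{Id}$ as $t\to0^+$; sufficiency because a nontrivial $\beta$-Jacobi field vanishing at $0$ and at some $t_0\le T$, extended by $0$, makes $\mathcal F^{\beta}_{[0,T]}$ vanish). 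Granting this, if $\Psi^{\beta_0}_t$ has no conjugate points then for $\beta=\lambda\beta_0\in[0,\beta_0]$ the identity above forces $\mathcal F^{\beta}_{[0,T]}(Y)>0$ whenever $Y\neq0$ and $Y(0)=Y(T)=0$ (such a $Y$ is not constant, so the first term is $>0$), whence $\Psi^{\beta}_t$ has no conjugate points either; the case $\beta=0$ is immediate since then $\ddot Y=0$.

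For the second assertion, assume $\Psi^{\beta_0}_t$ is hyperbolic. Then it is free of conjugate points (Theorem \ref{thm:kling}), hence so is $\Psi^{\beta}_t$ for $\beta\in[0,\beta_0]$ by the previous step; in particular the Green solutions $U^{\pm}_{\beta}$ of Lemma \ref{lemma:green} exist, and by Theorem \ref{thm:eberlein} it suffices to show $U^{+}_{\beta}(x,v)\succ U^{-}_{\beta}(x,v)$ for all $(x,v)$, knowing this for $\beta_0$. I would express $U^{+}_{\beta}-U^{-}_{\beta}$ through the functionals $\mathcal F^{\beta}$ using the operators $S^{\beta}_T$ from the construction before Lemma \ref{lemma:green} (graph $\Psi^{\beta}_{-T}(\mathcal V(\phi_T(x,v)))$, with $S^{\beta}_T\uparrow U^{-}_{\beta}$) together with the analogous $\bar S^{\beta}_T$ coming from $\Psi^{\beta}_{T}(\mathcal V(\phi_{-T}(x,v)))$ (with $\bar S^{\beta}_T\downarrow U^{+}_{\beta}$). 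Completing the square against the Riccati solution whose graph is the relevant Lagrangian subbundle — the one finite at the interior endpoint $t=0$ — yields, for each $T>0$,
$$\br{S^{\beta}_Tw,w}=-\min\{\mathcal F^{\beta}_{[0,T]}(Y):Y(0)=w,\,Y(T)=0\},\qquad \br{\bar S^{\beta}_Tw,w}=\min\{\mathcal F^{\beta}_{[-T,0]}(Y):Y(-T)=0,\,Y(0)=w\},$$
so, gluing the two minimizers at $t=0$,
$$\br{(\bar S^{\beta}_T-S^{\beta}_T)w,w}=\min\{\mathcal F^{\beta}_{[-T,T]}(Y):Y(-T)=Y(T)=0,\,Y(0)=w\}.$$
Applying the affine identity with $\beta=\lambda\beta_0$ and using $\mathcal F^{\beta_0}_{[-T,T]}(Y)\ge\br{(\bar S^{\beta_0}_T-S^{\beta_0}_T)w,w}$ for every competitor $Y$ gives $\br{(\bar S^{\beta}_T-S^{\beta}_T)w,w}\ge\lambda\,\br{(\bar S^{\beta_0}_T-S^{\beta_0}_T)w,w}$; letting $T\to\infty$,
$$\br{(U^{+}_{\beta}-U^{-}_{\beta})(x,v)w,w}\ \ge\ \frac{\beta}{\beta_0}\,\br{(U^{+}_{\beta_0}-U^{-}_{\beta_0})(x,v)w,w}.$$
Since $\beta>0$ and, by hyperbolicity of $\Psi^{\beta_0}_t$ together with Theorem \ref{thm:eberlein}, the right-hand side is strictly positive for $w\neq0$, we obtain $U^{+}_{\beta}\succ U^{-}_{\beta}$ everywhere, and hence $\Psi^{\beta}_t$ is hyperbolic by Theorem \ref{thm:eberlein}.

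The convexity step is trivial, so the real work is entirely in the index-lemma identities: the Sturm description of conjugate points and the minimization formulas for $\br{S^{\beta}_T\cdot,\cdot}$ and $\br{\bar S^{\beta}_T\cdot,\cdot}$. These are classical for geodesic flows ($\beta=1$) and carry over verbatim to $\Psi^{\beta}_t$ because this cocycle is optical with respect to $\mathcal V$ (checked in the proof of Theorem \ref{thm:kling}); I would either cite \cite{Re,Green,Ebe} or reproduce the short computations, the only delicate point being the bookkeeping of the boundary term $[\br{UY,Y}]$ at the endpoint where the Riccati solution blows up, which is handled by $U(t)\sim(t-t_*)^{-1}\mathrm{Id}$ there together with $Y(t_*)=0$.
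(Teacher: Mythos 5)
Your proposal is correct and arrives at the same key inequality as the paper, namely $U^{+}_{\beta}-U^{-}_{\beta}\succeq(\beta/\beta_0)\bigl(U^{+}_{\beta_0}-U^{-}_{\beta_0}\bigr)$, but by a genuinely different route. The paper notices that $aU^{\pm}_{\beta_0}$ is an explicit Riccati \emph{super}solution for the $a\beta_0$ equation (since $X(aU^{\pm}_{\beta_0})+(aU^{\pm}_{\beta_0})^2+a\beta_0 R = a(a-1)(U^{\pm}_{\beta_0})^2 \preceq 0$), then feeds this into the Riccati comparison lemma (Lemma \ref{lemma:comp}) to get the sharper chain $U^{+}_{a\beta_0}\succeq aU^{+}_{\beta_0}\succeq aU^{-}_{\beta_0}\succeq U^{-}_{a\beta_0}$, from which both absence of conjugate points and hyperbolicity drop out directly via Theorems \ref{thm:kling} and \ref{thm:eberlein}. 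You instead work entirely on the variational side: you characterize absence of $\beta$-conjugate points by positive definiteness of the index form $\mathcal F^{\beta}$, characterize $S^{\beta}_T$ and $\bar S^{\beta}_T$ by minimization formulas, and exploit the affine dependence $\mathcal F^{\lambda\beta_0}=(1-\lambda)\int|\dot Y|^2+\lambda\mathcal F^{\beta_0}$. Both approaches rely on completing the square against a Riccati solution — the paper does this inside the proof of Lemma \ref{lemma:comp}, you do it to justify the min formulas — so the underlying technique is the same, just packaged differently. Your version is conceptually transparent (monotonicity in $\beta$ is visible term by term), but it requires importing the full Sturm/index-lemma dictionary for the cocycle $\Psi^{\beta}_t$, whereas the paper needs only the ODE comparison Lemma \ref{lemma:comp}, which it has already stated. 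The paper's chain of inequalities is also marginally stronger (it controls $U^{+}_{a\beta_0}$ and $U^{-}_{a\beta_0}$ separately, not just their difference), which matters later in the proof of Corollary \ref{corollary:ganosov} where the chain \eqref{eq:chainineq} is reused with both one-sided bounds; your version of the second assertion would not directly furnish those.

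Two small points worth making explicit if you flesh this out: (i) the Sturm characterization and the min formula for $\langle S^{\beta}_T w,w\rangle$ both require the relevant Riccati solution to be finite on the half-open interval, which follows from no conjugate points up to time $T$, so there is a mild induction-in-$T$ hidden in the first assertion; (ii) when gluing the two minimizers at $t=0$ the resulting $Y$ is only piecewise $C^1$, which is fine because $\mathcal F^{\beta}$ is defined on such fields and the decomposition $\mathcal F^{\beta}_{[-T,T]}=\mathcal F^{\beta}_{[-T,0]}+\mathcal F^{\beta}_{[0,T]}$ with the interior constraint $Y(0)=w$ lets the two halves be minimized independently, exactly as you use.
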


\begin{proof} Let $U^{\pm}_{\beta_{0}}$ be the Green solutions associated with
$\Psi_{t}^{\beta_{0}}$. Given $a\in [0,1]$ we have
\[X(aU^{\pm}_{\beta_{0}})+(aU^{\pm}_{\beta_{0}})^2+a\beta_{0}R=(U^{\pm}_{\beta_0})^2a(a-1)\preceq 0.\]
This already implies that the cocycle $\Psi_{t}^{a\beta_0}$ is free of conjugate points.
Indeed, let $q^{\pm}:=a\beta_{0}R-(U^{\pm}_{\beta_{0}})^2a(a-1)$. Then
\[X(aU^{\pm}_{\beta_{0}})+(aU^{\pm}_{\beta_{0}})^2+q^{\pm}=0\]
and $q^{\pm}\succeq a\beta_{0}R$. Lemma \ref{lemma:comp} implies that the cocycle $\Psi_t^{a\beta_{0}}$ is free of conjugate points. Moreover, it also implies that
\[U^{+}_{a\beta_{0}, r}(x,v,t)\succeq aU^{+}_{\beta_{0}}(x,v,t)\]
for all $t>-r$. By letting $r\to\infty$ we derive
\[U^{+}_{a\beta_{0}}\succeq aU^{+}_{\beta_{0}}\]
and similarly
\[aU^{-}_{\beta_{0}}\succeq U^{-}_{a\beta_0}.\]
Putting everything together we have
\begin{equation}
U^{+}_{a\beta_{0}}\succeq aU^{+}_{\beta_{0}}\succeq aU^{-}_{\beta_{0}}\succeq U^{-}_{a\beta_0}.
\label{eq:chainineq}
\end{equation}

Suppose now that $\Psi_{t}^{\beta_0}$ is hyperbolic. Then by Theorem \ref{thm:kling}, $\Psi_{t}^{\beta_0}$ is free of conjugate points and by Theorem \ref{thm:eberlein}
$U^{+}_{\beta_{0}}\succ U^{-}_{\beta_0}$ everywhere. For $a\in (0,1]$, the chain of inequalities
(\ref{eq:chainineq}) implies that $U^{+}_{a\beta_{0}}\succ U^{-}_{a\beta_0}$ everywhere
and again by Theorem \ref{thm:eberlein}, $\Psi_{t}^{a\beta_0}$ is hyperbolic.
\end{proof}

This theorem motivates the following definition.

\begin{Definition} Let $(M,g)$ be a closed Riemannian manifold.
Let $\beta_{Ter}\in [0,\infty]$ denote the supremum of the values of $\beta\geq 0$ for which
$\Psi_{t}^{\beta}$ is free of conjugate points. We call $\beta_{Ter}$ the terminator
value of the manifold.
\end{Definition}

Observe that $\beta_{Ter}$ could be zero. For instance, this would be the case if $(M,g)$ has positive sectional curvature.
We complement this definition with the following lemma:

\begin{Lemma} The manifold $(M,g)$ is free of $\beta_{Ter}$-conjugate points.
Moreover, $\beta_{Ter}=\infty$ if and only if $(M,g)$ has non-positive sectional curvature.
\label{lemma:kbeta}
\end{Lemma}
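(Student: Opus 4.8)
The plan is to establish the two assertions separately. For the first claim — that $(M,g)$ is free of $\beta_{Ter}$-conjugate points — I would argue by a limiting/monotonicity argument using the Green solutions. By definition of $\beta_{Ter}$ as a supremum, for every $\beta < \beta_{Ter}$ the cocycle $\Psi_t^\beta$ is free of conjugate points, so Lemma \ref{lemma:green} provides bounded symmetric Green solutions $U^{\pm}_\beta$ satisfying the Riccati equation $\dot U + U^2 + \beta R = 0$ along geodesics. The previous theorem's comparison machinery (specifically the chain of inequalities leading to \eqref{eq:chainineq}) shows how these vary monotonically with $\beta$. I would use Lemma \ref{lemma:comp} to get uniform (in $\beta < \beta_{Ter}$) bounds on $U^{\pm}_\beta$, extract a limit as $\beta \uparrow \beta_{Ter}$, and conclude that a bounded solution of the Riccati equation with parameter $\beta_{Ter}$ exists; this rules out $\beta_{Ter}$-conjugate points, since the presence of a conjugate point forces the corresponding Riccati solution to blow up in finite time. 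Alternatively, and perhaps more cleanly, one argues directly: if there were a nontrivial solution $J$ of $\ddot J + \beta_{Ter} R J = 0$ with $J(0) = J(T) = 0$ for some $T > 0$, then by continuous dependence of solutions of the $\beta$-Jacobi equation on the parameter $\beta$, a Sturm-type comparison would produce a conjugate point for $\Psi_t^\beta$ with $\beta$ slightly less than $\beta_{Ter}$, contradicting the definition of the supremum. The monotonicity in $\beta$ (larger $\beta$ makes the equation ``more oscillatory'' when $R$ has a positive part, as captured by Lemma \ref{lemma:comp}) is what makes the one-sided continuity work in the right direction.

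For the second claim, I would prove both implications. If $(M,g)$ has nonpositive sectional curvature, then $R(x,v) \preceq 0$ for all $(x,v) \in SM$, so in the $\beta$-Jacobi equation $\ddot J + \beta R J = 0$ the operator $\beta R$ is negative semidefinite for every $\beta \geq 0$; a standard convexity argument (the function $t \mapsto \abs{J(t)}^2$ is convex when $R \preceq 0$, since its second derivative is $2\abs{\dot J}^2 - 2\beta\br{RJ,J} \geq 0$) shows that a solution with $J(0) = 0$ can vanish again only at $t = 0$. Hence $\Psi_t^\beta$ is free of conjugate points for all $\beta \geq 0$, giving $\beta_{Ter} = \infty$. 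Conversely, if $(M,g)$ does \emph{not} have nonpositive sectional curvature, there is a point $(x_0, v_0) \in SM$ and a vector $w_0 \perp v_0$ with $\br{R(x_0,v_0)w_0, w_0} > 0$, i.e. a positively curved 2-plane. One then uses a Rauch-type comparison: along the geodesic through $(x_0, v_0)$, for $\beta$ large enough the equation $\ddot J + \beta R J = 0$ restricted to the relevant direction is dominated by a scalar equation $\ddot f + \beta c f = 0$ with $c > 0$ on a neighborhood, whose solutions oscillate with period $\sim 2\pi/\sqrt{\beta c} \to 0$; for $\beta$ sufficiently large this forces a conjugate point, so $\Psi_t^\beta$ is not free of conjugate points and $\beta_{Ter} < \infty$.

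The main obstacle I anticipate is the converse direction of the second claim — producing an actual conjugate point from a single positively curved plane. The subtlety is that $R(\phi_t(x_0,v_0))$ varies along the geodesic and need not stay positive, and the Jacobi equation is vectorial rather than scalar, so one cannot naively invoke the scalar Sturm comparison theorem; one must localize to a short time interval where $R$ is still positive in the relevant direction (using continuity of $R$ and $\phi_t$) and there run a matrix Sturm/Rauch comparison against a constant-coefficient model, carefully checking that taking $\beta$ large compresses the first conjugate time of the model into that short interval. The first claim's limiting argument is comparatively routine given Lemmas \ref{lemma:green} and \ref{lemma:comp} and the continuous dependence of ODE solutions on parameters, though one should be slightly careful that the limit $U^{\pm}_{\beta_{Ter}}$ genuinely solves the Riccati equation along \emph{every} geodesic and remains bounded, which follows from the uniform bounds inherited from $\beta < \beta_{Ter}$.
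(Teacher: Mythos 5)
Your proposal is correct, and for the first claim your second alternative coincides with the paper's argument: the paper invokes the openness in $\beta$ of having conjugate points (citing Ruggiero's result that creation of conjugate points is an open condition), so a $\beta_{Ter}$-conjugate point would force conjugate points for $\beta$ slightly below $\beta_{Ter}$, contradicting the supremum. Your first alternative, extracting a limit of Green solutions $U^{\pm}_{\beta}$ as $\beta\uparrow\beta_{Ter}$, is a valid but heavier route; the openness argument is simpler. For the second claim the forward direction is essentially the same (your convexity of $|J(t)|^{2}$ is the substance behind the paper's ``obvious''). The genuine difference is in the converse: you propose a matrix Rauch/Sturm comparison against a scalar model $\ddot f+\beta c f=0$, correctly flagging the need to localize and the trouble of passing from a scalar model to the vectorial Jacobi equation. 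The paper bypasses this entirely by working with the index form: absence of $\beta$-conjugate points gives $\int_{-r}^{r}\bigl(|\dot W|^{2}-\beta\langle R(W,\dot\gamma)\dot\gamma,W\rangle\bigr)\,dt\geq 0$ for all normal fields $W$ vanishing at the endpoints, and plugging in a single fixed test field $W(t)=f(t)e(t)$ (bump function times parallel transport of a vector spanning a positively curved plane) yields $\int_{-r}^{r}|\dot W|^{2}\,dt\geq r\beta\delta$ for all $\beta\geq 0$, which is absurd as $\beta\to\infty$. This variational argument is shorter, needs only the continuity-based localization you already identified, and avoids the vectorial-versus-scalar comparison entirely; your Rauch route would also work but requires more care exactly where you anticipated.
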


\begin{proof} The first claim follows from the following general observation: if $\Psi_{t}^{\beta_0}$ has conjugate points, then there is $\varepsilon>0$ such that for
any $\beta\in (\beta_{0}-\varepsilon,\beta_0+\varepsilon)$ the cocycle $\Psi_{t}^{\beta}$ has conjugate points. This is proved in the same way (in fact it is easier) as the proof that the existence of conjugate points is an open condition on the metric $g$ in the $C^k$-topology for any $k\geq 2$; see for example \cite[Corollary 1.2]{Ru}.

For the second claim note that obviously sectional curvature $K\leq 0$ implies that $\beta_{Ter}=\infty$. For the converse, suppose that there is a point $x\in M$ and a $2$-plane $\sigma\subset T_{x}M$ such that the sectional curvature $K_{x}(\sigma)>0$. Let $\{v,w\}$ denote an orthonormal basis of $\sigma$. Let $\gamma$ denote the geodesic defined by $(x,v)$ and let $e(t)$ be the parallel transport of $w$ along $\gamma$. Consider the 2-plane $\sigma_t$ spanned by $\dot{\gamma}(t)$ and $e(t)$. Since $K_{x}(\sigma)>0$ we can find $r,\delta>0$ such that
$K_{\gamma(t)}(\sigma_{t})\geq \delta$ for $t\in [-r,r]$. Consider a smooth function $f$ defined on $[-r,r]$ such that $f$ vanishes around $r$ and $-r$ and $f(t)=1$ for $t\in[-r/2,r/2]$. Consider the vector field $W(t)=f(t)e(t)$ along $\gamma$. Since there are no $\beta$-conjugate points for all $\beta\geq 0$ we must have
(see for example \cite[p.46]{Pestov} or \cite[Lemma 5.3]{Dairbekov_nonconvex}):
\[\int_{-r}^{r}(|\dot{W}|^2-\beta\langle R(W,\dot{\gamma})\dot{\gamma},W\rangle)\,dt\geq 0.\]
Equivalently
\[\int_{-r}^{r}|\dot{W}|^2\,dt-\beta\int_{-r}^{r}f^2K_{\gamma}(\sigma_t)\,dt\geq 0\]
for all $\beta\geq 0$. And this implies
\[\int_{-r}^{r}|\dot{W}|^2\,dt\geq r\beta\delta\]
for all $\beta\geq 0$ which is clearly absurd.
\end{proof}

We now have the following purely geometric characterization of hyperbolicity (the parameter $\beta$
is always $\geq 0$ in what follows). Recall an orthogonal parallel Jacobi field is a parallel field along a geodesic $\gamma$, orthogonal to $\dot{\gamma}$  and such that it satisfies the Jacobi equation.

\begin{Theorem} The cocycle $\Psi_{t}^\beta$ is hyperbolic if and only if $\beta\in (0,\beta_{Ter})$ and there is no geodesic with a nonzero orthogonal parallel Jacobi field.
\end{Theorem}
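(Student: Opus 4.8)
The plan is to prove both implications, drawing on Theorems~\ref{thm:kling} and~\ref{thm:eberlein}, on the hyperbolicity criterion \eqref{eq:quasihyp} (Theorem~0.2 of \cite{CI}), on Lemma~\ref{lemma:kbeta}, and on the monotonicity of the cocycles $\Psi_t^\beta$ in $\beta$ established just above.

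For the forward implication I would argue that if $\Psi_t^\beta$ is hyperbolic then, first, $\beta>0$ (when $\beta=0$ the equation $\ddot J=0$ has bounded nonzero solutions, violating \eqref{eq:quasihyp}); second, by Theorem~\ref{thm:kling} the cocycle is free of conjugate points, so $\beta\leq\beta_{Ter}$; and third, $\beta\neq\beta_{Ter}$, since hyperbolicity of a continuous cocycle over a compact flow persists under $C^0$-small perturbations of its generator \eqref{eq:Abeta}, so $\beta=\beta_{Ter}<\infty$ would make $\Psi_t^{\beta'}$ hyperbolic --- hence conjugate-point-free --- for some $\beta'>\beta_{Ter}$, which is absurd. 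Finally, a nonzero parallel Jacobi field $J\perp\dot\gamma$ along some geodesic $\gamma$ satisfies $\dot J\equiv0$, so the Jacobi equation forces $R(\phi_t)J\equiv0$ and $J$ solves the $\beta$-Jacobi equation with bounded orbit $(J(0),0)\neq0$, again contradicting \eqref{eq:quasihyp}. Thus $\beta\in(0,\beta_{Ter})$ and no such field can exist.

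For the converse, suppose $\beta\in(0,\beta_{Ter})$ and no geodesic carries a nonzero orthogonal parallel Jacobi field. Using Lemma~\ref{lemma:kbeta} I would fix $\beta_1\in(\beta,\beta_{Ter}]$ with $\Psi_t^{\beta_1}$ free of conjugate points (take $\beta_1=\beta_{Ter}$ if finite, otherwise $\beta_1=\beta+1$, since then $K\leq0$); then $\Psi_t^\beta$ is free of conjugate points too. If $\Psi_t^\beta$ were not hyperbolic, \eqref{eq:quasihyp} would supply a nonzero solution $J$ of the $\beta$-Jacobi equation along some geodesic $\gamma$ with $|J(t)|^2+|\dot J(t)|^2\leq C$ for all $t\in\re$ (automatically $J\perp\dot\gamma$). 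The heart of the matter is to deduce $\int_\re|\dot J|^2\,dt<\infty$. Since $\Psi_t^{\beta_1}$ has no conjugate points, the index form $I_{\beta_1}(W,W)=\int(|\dot W|^2-\beta_1\langle R(\phi_t)W,W\rangle)\,dt$ is nonnegative on fields vanishing at the endpoints; applying this to the cutoffs $W_r=\chi_r J$ ($\chi_r\equiv1$ on $[-r,r]$, supported in $[-r-1,r+1]$, with bounded derivatives), using the identity $|\dot J|^2-\beta\langle R(\phi_t)J,J\rangle=\frac{d}{dt}\langle\dot J,J\rangle$ coming from the $\beta$-Jacobi equation, and noting that every term involving $\dot\chi_r$ is bounded uniformly in $r$ because $|J|,|\dot J|\leq\sqrt C$, I would obtain a bound on $\int_{-r}^r|\dot J|^2\,dt$ uniform in $r$ and proportional to $(\beta_1-\beta)^{-1}$ --- here the strict inequality $\beta_1>\beta$ is used essentially. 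Consequently $\dot J\in L^2(\re)$, and since $\ddot J=-\beta R(\phi_t)J$ is bounded, $|\dot J(t)|\to0$ as $|t|\to\infty$.

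Two cases then remain. If $|J(t)|\to0$ as $|t|\to\infty$, letting $r\to\infty$ in $I_{\beta_1}(W_r,W_r)\geq0$ (the cutoff corrections now vanish since $|J|\to0$) would give $I_{\beta_1}(J,J)\leq0$ while simultaneously $I_{\beta_1}(J,J)\geq0$, forcing $\dot J\equiv0$ and hence $J\equiv0$, a contradiction. Otherwise $\limsup|J|>0$ at $+\infty$ or at $-\infty$; reversing the geodesic if necessary I may assume the former and pick $t_k\to\infty$ with $t_k\geq2k$, $|J(t_k)|$ bounded below, $\phi_{t_k}(x,v)\to(x_\infty,v_\infty)$, and $(J(t_k),\dot J(t_k))\to(p,0)$ with $p\neq0$. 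Because $\int_{t_k-k}^{t_k+k}|\dot J|^2\,dt\to0$ and the coefficients $R(\phi_{t+t_k}(x,v))$ converge in $C^\infty_{\mathrm{loc}}$, continuous dependence on data shows the translates $J(\cdot+t_k)$ converge in $C^\infty_{\mathrm{loc}}$ to a solution $J_\infty$ along $\gamma_{(x_\infty,v_\infty)}$ with $J_\infty(0)=p\neq0$ and $\dot J_\infty\equiv0$; since $\ddot J_\infty\equiv0$ forces $R(\phi_t)J_\infty\equiv0$, $J_\infty$ is a nonzero parallel Jacobi field orthogonal to $\dot\gamma_\infty$, contradicting the hypothesis. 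Either way $\Psi_t^\beta$ must be hyperbolic. The hardest part will be this variational step: extracting $\dot J\in L^2$ from the strict inequality $\beta<\beta_{Ter}$ by careful bookkeeping of the cutoff terms, and then converting that bound into a parallel Jacobi field through the compactness/translation argument; a secondary subtlety is the persistence of hyperbolicity in $\beta$ used to exclude $\beta=\beta_{Ter}$, which one can treat via the roughness of exponential dichotomies or a cone-field argument.
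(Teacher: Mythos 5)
Your proof is correct, but it follows a genuinely different route from the paper's for the converse implication. Both proofs reduce, via the quasi-hyperbolicity criterion \eqref{eq:quasihyp} from \cite{CI}, to ruling out a nonzero bounded $\beta$-Jacobi field $J$, and the forward direction (openness of hyperbolicity forcing $\beta<\beta_{Ter}$, plus the observation that an orthogonal parallel Jacobi field is a bounded solution for every $\beta$) is essentially the same in both. The difference is in how a bounded Jacobi field is upgraded to a parallel one. The paper invokes Theorem~\ref{thm:eberlein} to obtain a nonzero $J$ with $\dot J=U^{-}_\beta J=U^{+}_\beta J$, then uses the chain of inequalities \eqref{eq:chainineq} sandwiching $U^\pm_\beta$ between $aU^\pm_{\beta_0}$ and differentiates $U^+_\beta J = aU^+_{\beta_0}J$ against the two Riccati equations to force $U^+_{\beta_0}J=0$ and hence $\dot J=0$ on the original geodesic (taking $\beta_0=\beta_{Ter}$; when $\beta_{Ter}=\infty$ one should take a finite $\beta_0>\beta$, which the printed text glosses over). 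You instead avoid the Green/Riccati machinery entirely: you choose $\beta_1\in(\beta,\beta_{Ter}]$ finite, apply the nonnegativity of the $\beta_1$-index form to cutoffs $\chi_r J$, exploit the identity $|\dot J|^2-\beta\langle RJ,J\rangle=\frac{d}{dt}\langle\dot J,J\rangle$ and the sign of the excess $-\frac{\beta_1-\beta}{\beta}\int\chi_r^2|\dot J|^2$ to deduce $\dot J\in L^2(\re)$, and then extract a nonzero orthogonal parallel Jacobi field on a limit geodesic by translating and using compactness of $SM$. Your argument is longer but more self-contained: it needs only \eqref{eq:quasihyp} and positivity of the $\beta_1$-index form (already cited in Lemma~\ref{lemma:kbeta}), not Lemma~\ref{lemma:green}, Theorem~\ref{thm:eberlein}, or \eqref{eq:chainineq}, and it handles $\beta_{Ter}=\infty$ without ado; it also has the flavor of flat-strip and rank-rigidity arguments. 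The trade-off is that the parallel field you produce lives on a limit geodesic rather than on the original one, which suffices for the statement but is slightly weaker structural information than what the paper's proof extracts.
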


\begin{proof} We know that if $\Psi_{t}^{\beta}$ is hyperbolic then $\beta\leq \beta_{Ter}$.
Since hyperbolicity is an open condition we must have $\beta<\beta_{Ter}$. Finally if there
is a geodesic with a nonzero orthogonal parallel Jacobi field $J$, then $\Psi_{t}^{\beta}$ cannot be hyperbolic as such a field is bounded and solves the $\beta$-Jacobi equation for any $\beta$.

Consider $\beta\in (0,\beta_{Ter})$ and assume that $\Psi_t^{\beta}$ is not
hyperbolic. By Theorem \ref{thm:eberlein} there is a geodesic $\gamma$ along which $E^{+}\cap E^{-}\neq \{0\}$. Equivalently there is a non-zero $\beta$-Jacobi field $J$ such that $\dot{J}=U^{-}J=U^{+}J$ along $\gamma$.
Let $a:=\beta/\beta_{Ter}$. Using (\ref{eq:chainineq})
for $\beta_{0}=\beta_{Ter}$ we deduce that along $\gamma$ we must have
\[\dot{J}=U^{+}_{\beta}J=aU^{+}_{\beta_{Ter}}J=aU^{-}_{\beta_{Ter}}J= U^{-}_{\beta}J.\]
Differentiating the equality $U^{+}_{\beta}J=aU^{+}_{\beta_{Ter}}J$ with respect to $t$ and using that $\dot{J}=U^{+}_{\beta}J$ we derive
\[\dot{U}^{+}_{\beta}J+(U^{+}_{\beta})^2 J=a\dot{U}^{+}_{\beta_{Ter}}J+a^2(U^{+}_{\beta_{Ter}})^2 J.\]
Using the Riccati equation we arrive at $(U^{+}_{\beta_{Ter}})^2 J=a(U^{+}_{\beta_{Ter}})^2 J$ and hence $(U^{+}_{\beta_{Ter}})^2 J=0$. Since $U^{+}_{\beta_{Ter}}$ is symmetric
this implies that $U^{+}_{\beta_{Ter}}J=0$ and hence $\dot{J}=0$ which contradicts our hypotheses.
\end{proof}

We can certainly rephrase this theorem using the definition of the rank of a unit vector given in the introduction. The condition of having no geodesic with a nonzero orthogonal parallel Jacobi field is equivalent to saying that every unit vector has rank one.
As an immediate consequence we obtain the following geometric characterization of Anosov manifolds.

\begin{Corollary} A closed Riemannian manifold $(M,g)$ is Anosov if and only if every unit vector has rank one and $\beta_{Ter}>1$.
\label{corollary:ganosov}
\end{Corollary}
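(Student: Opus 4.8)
The plan is to read off the corollary as the case $\beta=1$ of the theorem just proved, once two bits of terminology are translated. First, recall from the setup of this section that $\Psi^{1}_{t}$ is exactly the derivative cocycle $d\phi_{t}$ acting on $W=\ker\alpha=\mathcal H\oplus\mathcal V$. Since $TSM=\re X\oplus W$ and $d\phi_{t}$ is the identity on the flow direction $\re X=E^{0}$, a continuous invariant splitting $W=E^{u}\oplus E^{s}$ with the contraction/expansion bounds of Definition \ref{def:hyp} (for $\beta=1$) is precisely an Anosov splitting $TSM=E^{0}\oplus E^{u}\oplus E^{s}$; hence ``$\Psi^{1}_{t}$ is hyperbolic'' says exactly that $(M,g)$ is Anosov. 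Second, by the definition of the rank of a unit vector from the introduction, a geodesic $\gamma$ carries a nonzero orthogonal parallel Jacobi field if and only if $\ra(\dot\gamma)\geq 2$; so ``no geodesic has a nonzero orthogonal parallel Jacobi field'' is the same as ``every unit vector has rank one''. This is the remark made just before the corollary.

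With these identifications, the theorem just proved, specialized to $\beta=1$, becomes: $(M,g)$ is Anosov if and only if $1\in(0,\beta_{Ter})$ and every unit vector has rank one, i.e.\ if and only if $\beta_{Ter}>1$ and every unit vector has rank one. That is the statement of the corollary, so nothing further is needed. For emphasis one may also note directly why the endpoint is excluded: if $(M,g)$ is Anosov then Theorem \ref{thm:kling} with $\beta=1$ gives that it has no conjugate points, hence $\beta_{Ter}\geq 1$, and since hyperbolicity of $\Psi^{\beta}_{t}$ is an open condition in $\beta$ one cannot have $\beta_{Ter}=1$; this is the same mechanism already used in the proof of the theorem above.

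There is essentially no obstacle to overcome here: the entire content sits in the preceding theorem, and the proof of the corollary is just the bookkeeping of the two equivalences above together with the identity $\Psi^{1}_{t}=d\phi_{t}$. If one preferred a formulation avoiding the one-parameter family of cocycles altogether, the case $\beta=1$ also recovers and mildly reformulates Eberlein's criterion \cite{Ebe}, but routing through the theorem just established is the most economical.
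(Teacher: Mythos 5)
Your proof is correct and matches the paper's own reasoning: the paper derives this corollary by specializing the preceding theorem to $\beta=1$, identifying $\Psi^1_t$ with $d\phi_t$, and translating the ``no nonzero orthogonal parallel Jacobi field'' condition into the rank-one condition, exactly as you do. Your closing remark about openness excluding the endpoint $\beta_{Ter}=1$ is unnecessary (the theorem already puts $\beta$ in the open interval $(0,\beta_{Ter})$), but it is a harmless elaboration of a mechanism the paper itself invokes in the proof of that theorem.
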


It is interesting to note that the main example in \cite{BBB} has the property of being a non-Anosov closed surface with no conjugate points such that  every vector has rank one. Hence for this example $\beta_{Ter}=1$.

\section{Anosov manifolds are $\alpha$-controlled} \label{section_anosov_alphacontrolled}

Consider $\beta>0$ such that there are no $\beta$-conjugate points and let $U$ be one of the Green solutions satisfying the Riccati equation
\[\dot{U}+U^2+\beta\,R(\phi_{t}(x,v))=0.\] 
Recall that $U$ is measurable, bounded and differentiable along the geodesic flow, see Lemma \ref{lemma:green}.
The next identity is very similar to \cite[Theorem 3.1]{DP2}, but here we give a short self-contained proof.

\begin{Proposition}Let $(M,g)$ be a closed manifold without $\beta$-conjugate points for some $\beta>0$. Then for any $Z\in \mathcal Z$
\begin{equation}
\norm{XZ-UZ}^2=\norm{XZ}^2-\beta (RZ,Z).
\label{eq:green}
\end{equation}
As a consequence if $\beta\in (0,\beta_{Ter}]$, the manifold $(M,g)$ is $\frac{\beta-1}{\beta}$-controlled.

\label{prop:greeneq}
\end{Proposition}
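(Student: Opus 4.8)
The plan is to obtain \eqref{eq:green} by a single integration by parts along the geodesic flow, exploiting the Riccati equation satisfied by $U$. Recall that $U$ is fibrewise symmetric, hence defines a bounded self-adjoint operator on the $L^2$ sections of $N$, that it is differentiable along the flow, and that $XU = -(U^2 + \beta R)$ by the Riccati equation (with $XU$ understood as the covariant derivative of $U$ along geodesics). First I would introduce the bounded measurable scalar function $h := \langle UZ, Z\rangle$ on $SM$. Since $X$ is a derivation and the Sasaki metric is parallel along geodesics, along each geodesic one has
\[ Xh = \langle (XU)Z, Z\rangle + \langle U(XZ), Z\rangle + \langle UZ, XZ\rangle = \langle (XU)Z, Z\rangle + 2\langle XZ, UZ\rangle, \]
using $U^* = U$ fibrewise in the last step.

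The key step is then to integrate this over $SM$ against the Sasaki volume and use that $\phi_t$ preserves it to conclude $\int_{SM} Xh = 0$. Some care is needed since $U$ is only measurable, so $h$ is merely $L^\infty$; however $h$ is differentiable along the flow with derivative $Xh = \langle (XU)Z, Z\rangle + 2\langle XZ, UZ\rangle$ which is \emph{bounded}, because $XU = -(U^2+\beta R)$ and $U$ are bounded and $Z, XZ$ are continuous on the compact manifold $SM$. Hence $t \mapsto \int_{SM} h\circ\phi_t$ is constant, differentiation under the integral sign is justified by dominated convergence (the difference quotients being uniformly bounded), and evaluating the derivative at $t=0$ gives $((XU)Z, Z) + 2(XZ, UZ) = 0$. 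Substituting the Riccati equation yields $2(XZ, UZ) = (U^2 Z, Z) + \beta(RZ, Z)$, and therefore
\[ \|XZ - UZ\|^2 = \|XZ\|^2 - 2(XZ, UZ) + \|UZ\|^2 = \|XZ\|^2 - \beta(RZ, Z), \]
since $\|UZ\|^2 = (U^2 Z, Z)$ by self-adjointness of $U$. This is \eqref{eq:green}.

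For the $\alpha$-controlled conclusion: when $\beta \in (0,\beta_{Ter}]$ the cocycle $\Psi_t^\beta$ is free of conjugate points (Lemma \ref{lemma:kbeta} together with the monotonicity of the $\beta$-conjugate-point-free property established in this section), so a Green solution $U$ exists by Lemma \ref{lemma:green} and \eqref{eq:green} applies. Its left-hand side is nonnegative, hence $\beta(RZ, Z) \le \|XZ\|^2$, i.e. $(RZ, Z) \le \frac{1}{\beta}\|XZ\|^2$; consequently
\[ \|XZ\|^2 - (RZ, Z) \ge \Bigl(1 - \tfrac{1}{\beta}\Bigr)\|XZ\|^2 = \frac{\beta - 1}{\beta}\,\|XZ\|^2 \]
for every $Z \in \mathcal Z$, which is exactly the assertion that $(M,g)$ is $\frac{\beta-1}{\beta}$-controlled.

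I expect the main obstacle to be the justification of $\int_{SM} Xh = 0$ for the merely measurable $U$ (the interchange of the flow-derivative with the integral), which is why I would emphasize the boundedness of $Xh$; everything else is a routine computation from the fibrewise symmetry of $U$ and the Riccati equation.
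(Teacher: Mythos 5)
Your proof is correct and follows essentially the same approach as the paper: expand $\norm{XZ-UZ}^2$, reduce to showing $2(XZ,UZ)=\norm{UZ}^2+\beta(RZ,Z)$, and obtain that identity by moving $X$ across the $L^2$ pairing and invoking the Riccati equation together with the fibrewise symmetry of $U$. The paper phrases the integration by parts simply as $(XZ,UZ)=-(Z,X(UZ))$, citing $X^*=-X$ without comment on the fact that $U$ is merely bounded and measurable; your explicit verification that $\int_{SM}X\langle UZ,Z\rangle=0$ via flow invariance of the Sasaki volume, boundedness of the flow derivative, and dominated convergence is a valid and careful way to justify that step.
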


\begin{proof} Let us write
\[\norm{XZ-UZ}^2=\norm{XZ}^{2}-2(XZ,UZ)+\norm{UZ}^2.\]
All we need to show is that
\[2(XZ,UZ)=\norm{UZ}^{2}+\beta(RZ,Z).\]
For this note that $X^*=-X$ and that $U$ is a symmetric operator, hence using the Riccati equation we derive
\begin{align*}
(XZ,UZ)&=-(Z,X(UZ))=-(Z,(XU)Z+U(XZ))\\
&=(Z,U^2Z)+\beta(RZ,Z)-(Z,U(XZ))\\
&=\norm{UZ}^2+\beta(RZ,Z)-(UZ,XZ)
\end{align*}
and (\ref{eq:green}) is proved.
The last statement in the proposition follows from the fact that $\norm{XZ}^2-\beta (RZ,Z)\geq 0$ is equivalent to $(M,g)$ being $\frac{\beta-1}{\beta}$-controlled.
\end{proof}

\begin{Theorem} Let $(M,g)$ be an Anosov manifold. Then there exists $\alpha>0$ such that
\[\norm{XZ}^2-(RZ,Z)\geq \alpha(\norm{XZ}^2+\norm{Z}^2)\]
for all $Z\in\mathcal Z$.
\label{thm:alphacontrol}
\end{Theorem}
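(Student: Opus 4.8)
The plan is to boot the result from the Green-solution identity in Proposition \ref{prop:greeneq} together with the fact that an Anosov manifold has $\beta_{Ter} > 1$ (Corollary \ref{corollary:ganosov}). Since $\beta_{Ter} > 1$, we may pick some $\beta \in (1, \beta_{Ter}]$; then $(M,g)$ has no $\beta$-conjugate points, and Proposition \ref{prop:greeneq} gives, for either Green solution $U = U_\beta$,
\[
\norm{XZ}^2 - (RZ,Z) = \frac{\beta-1}{\beta}\norm{XZ}^2 + \frac{1}{\beta}\norm{XZ - UZ}^2, \qquad Z \in \mathcal Z.
\]
Indeed, from \eqref{eq:green} we have $(RZ,Z) = \frac{1}{\beta}\left(\norm{XZ}^2 - \norm{XZ-UZ}^2\right)$, and substituting gives the displayed identity. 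Since $\beta > 1$, the coefficient $\frac{\beta-1}{\beta}$ is a positive constant, and it therefore suffices to show that $\norm{XZ - UZ}^2$ controls $\norm{Z}^2$ from below, i.e. that there is $c > 0$ with $\norm{XZ - U_\beta Z}^2 \geq c\,\norm{Z}^2$ for all $Z \in \mathcal Z$; combining the two bounds (and absorbing $\norm{XZ}^2$ into the first term) then yields the claim with $\alpha = \min\{\frac{\beta-1}{2\beta}, \frac{c}{\beta}\}$ or similar.

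The main obstacle is precisely this coercivity estimate $\norm{XZ - U_\beta Z} \gtrsim \norm{Z}$. The natural approach is dynamical: the operator $Z \mapsto XZ - U_\beta Z$ measures the deviation of $Z$ from being a section of the (say) stable Green bundle $E^- = \mathrm{graph}(U_\beta)$, because a section $(Z, U_\beta Z)$ of $E^-$ is invariant under $\Psi^\beta_t$ iff $XZ = U_\beta Z$ along orbits (this is the content of the Riccati equation, exactly as in the proof of Proposition \ref{prop:greeneq}). When $\beta \in (1,\beta_{Ter})$ the cocycle $\Psi^\beta_t$ is hyperbolic by Corollary \ref{corollary:ganosov} and the theorems of Section \ref{sec_betaconjugate}, so $E^-_\beta$ is genuinely the stable bundle $E^s_\beta$ and is transversal to $E^+_\beta = E^u_\beta$; the contraction rates are uniform by compactness of $SM$. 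One then argues that if $\norm{XZ - U_\beta Z}$ were small relative to $\norm{Z}$, the vector field $\zeta(t) := (Z(\phi_t(x,v)), (XZ)(\phi_t(x,v)))$ in $W$ would be nearly $\Psi^\beta_t$-invariant along a long orbit segment and hence, living near the hyperbolic splitting, would be forced to grow or decay exponentially — contradicting the a priori bound $\norm{\zeta(t)}^2 = |Z|^2 + |XZ|^2$ being bounded along the orbit by $\norm{Z}_{L^\infty}^2$-type quantities, once one also uses a uniform bound $\norm{XZ}_{L^\infty} \lesssim$ (something), which is the delicate point since $XZ$ need not be bounded pointwise by $\norm{Z}_{L^2}$.

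A cleaner route that avoids pointwise estimates: work with the endpoint $\beta = \beta_{Ter}$ only if needed, but better, fix $\beta \in (1, \beta_{Ter})$ so that $\Psi^\beta_t$ is uniformly hyperbolic, and use the second Green solution $U^+_\beta$ as well. From \eqref{eq:green} applied with $U^+$ and with $U^-$ and subtracting, one gets $\norm{XZ - U^-Z}^2 - \norm{XZ - U^+Z}^2 = (\text{something involving } (U^+ - U^-)Z)$; since $U^+_\beta - U^-_\beta$ is uniformly positive definite (transversality of $E^\pm$, Theorem \ref{thm:eberlein}, plus compactness), this shows $\norm{XZ - U^-Z}^2 \geq c\,\norm{Z}^2$ directly, with $c$ a lower bound for the eigenvalues of $U^+_\beta - U^-_\beta$, no dynamics beyond transversality required. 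Expanding, $\norm{XZ - U^-Z}^2 - \norm{XZ-U^+Z}^2 = 2(XZ, (U^+ - U^-)Z) + \norm{U^-Z}^2 - \norm{U^+Z}^2$; this is not obviously $\geq c\norm Z^2$ termwise, so one instead substitutes $XZ$ using the decomposition of $W(x,v)$ into $E^+_\beta \oplus E^-_\beta$ — writing $(Z, XZ) = \xi^+ + \xi^-$ with $\xi^\pm \in E^\pm_\beta$ — and checks that $\norm{XZ - U^-Z}$ is comparable to the size of the $E^+$-component $\xi^+$ while $\norm{XZ - U^+Z}$ is comparable to $\xi^-$, with transversality making $|Z|^2 \lesssim |\xi^+|^2 + |\xi^-|^2 \lesssim$ the sum of the two quantities. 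Then at least one of $\norm{XZ-U^+Z}, \norm{XZ-U^-Z}$ is $\gtrsim |Z|$ pointwise, and running Proposition \ref{prop:greeneq} with whichever Green solution works — or just with $U^-$ after noting both contributions are nonnegative — gives the theorem. I expect the packaging of this transversality/splitting argument into clean $L^2$ inequalities, with all constants uniform via compactness of $SM$, to be the part that needs the most care.
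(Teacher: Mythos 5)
Your final route is essentially the paper's proof, with one cosmetic variation. The paper simply takes $\beta=1$: the Anosov splitting provides \emph{continuous} stable and unstable bundles, and Theorems~\ref{thm:kling} and~\ref{thm:eberlein} identify them with the graphs of the Green solutions $U^{\pm}$, which are therefore continuous on $SM$ with $U^{+}-U^{-}$ uniformly positive definite by compactness. Then $A:=XZ-U^{-}Z$ and $B:=XZ-U^{+}Z$ satisfy $\|A\|=\|B\|$ (both squared norms equal $\|XZ\|^2-(RZ,Z)$ by \eqref{eq:green}), and the paper uses the identity $A-B=(U^{+}-U^{-})Z$ to write $Z=(U^{+}-U^{-})^{-1}(A-B)$, hence $\|Z\|\le a\|A\|$, and $XZ=A+U^{-}Z$, hence $\|XZ\|\le b\|A\|$; the theorem follows. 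Your decomposition $(Z,XZ)=\xi^{+}+\xi^{-}\in E^{+}\oplus E^{-}$ encodes the same transversality but less directly: writing $\xi^{\pm}=(z^{\pm},U^{\pm}z^{\pm})$ one finds $A=(U^{+}-U^{-})z^{+}$ and $B=-(U^{+}-U^{-})z^{-}$, so pointwise $|Z|\lesssim|A|+|B|$, and integrating and using $\|A\|=\|B\|$ again gives $\|Z\|\lesssim\|A\|$. Your choice $\beta>1$ does buy you one thing — the $\frac{\beta-1}{\beta}\|XZ\|^2$ term appears for free, so you don't need the bound $\|XZ\|\lesssim\|A\|$ — but it is not necessary, since $\beta=1$ already works with the extra line $XZ=A+U^{-}Z$.

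Two flags worth noting. First, the "subtraction" step — "\emph{From \eqref{eq:green} applied with $U^{+}$ and with $U^{-}$ and subtracting, one gets} $\|XZ-U^{-}Z\|^2-\|XZ-U^{+}Z\|^2=$ (\emph{something involving} $(U^{+}-U^{-})Z$)\emph{; ... this shows} $\|XZ-U^{-}Z\|^2\geq c\|Z\|^2$ \emph{directly}" — is not a valid inference: the difference is identically zero (that is precisely the content of $\|A\|=\|B\|$), so it cannot by itself give any lower bound. You do back off from this and pivot to the decomposition argument, but as written that paragraph contains a wrong step. Second, you should say explicitly that you need the Green solutions to be continuous (not merely measurable, as in Lemma~\ref{lemma:green}), and that this is supplied by hyperbolicity via Theorem~\ref{thm:eberlein}, since it is what lets compactness of $SM$ turn pointwise transversality into the uniform constants $a$, $b$.
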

\begin{proof} Since the geodesic flow is Anosov we have continuous stable and unstable bundles. It is well known that these
are invariant, Lagrangian and contained in the kernel of the contact form. Moreover, by Theorem \ref{thm:kling} the subbundles are transversal to the vertical subbundle.
Thus we have two continuous symmetric maps $U^{+}$ and $U^{-}$ satisfying the Riccati equation 
with the property that the linear map $U^{+}(x,v)-U^{-}(x,v)$ is positive definite for every $(x,v)\in SM$.

Let $A:=XZ-U^{-}Z$ and $B:=XZ-U^{+}Z$. Using equation (\ref{eq:green}) we see that $\|A\|=\|B\|$.
Solving for $Z$  we obtain
\[Z=(U^{+}-U^{-})^{-1}(A-B).\]
Since $U^{\pm}$ are continuous, there is a constant $a>0$ independent of $Z$ such that
\[\norm{Z}\leq a\norm{A}.\]
Since $XZ=A+U^{-}Z$ we see that there is another constant $b>0$ independent of $Z$ for which
\[\norm{XZ}\leq b\norm{A}.\] 
From these inequalities and (\ref{eq:green}) the existence of $\alpha$ easily follows.
\end{proof}

\section{Subelliptic estimate and surjectivity of $I_0^*$} \label{sec_surjectivity_izero}

In this section we will show that the Anosov condition allows to upgrade the Pestov identity in Proposition \ref{prop_pestov} to a subelliptic ($H^1$ to $L^2$) estimate for the operator $\vd X$. From this point on, we will use the notation 
$$
P = \vd X.
$$
Note that $P$ maps $C^{\infty}(SM)$ to $\mathcal{Z} = C^{\infty}(SM, N)$ (see Section \ref{sec_spherebundle} for the definitions). Also recall from the discussion before Proposition \ref{prop_pestov} that uniqueness results for $P$ imply injectivity for the geodesic ray transform $I_0$. The subelliptic estimate may then be viewed as a quantitative injectivity result for $I_0$.

If $E$ is a subspace of $\mDp(SM)$, we write $E_{\diamond}$ for the subspace of those $v \in E$ with $( v, 1) = 0$. 

\begin{Theorem} \label{lemma_p_inequalities}
Let $(M,g)$ be an Anosov manifold. Then 
$$
\norm{u}_{H^1} \lesssim \norm{Pu}_{L^2}, \qquad u \in C^{\infty}_{\diamond}(SM).
$$
More generally, for $u \in H^1(SM)$ with $Pu \in L^2(SM)$ one has 
$$
\norm{u - (u)_{SM}}_{H^1} \lesssim \norm{Pu}_{L^2} 
$$
where $(u)_{SM} = \frac{1}{\mathrm{Vol}(SM)} \int_{SM} u$ is the average of $u$.
\end{Theorem}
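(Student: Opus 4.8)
The plan is to upgrade the Pestov identity to a subelliptic estimate by feeding in the quantitative form of the Anosov condition. First I would apply Theorem~\ref{thm:alphacontrol} to the section $Z=\vd u\in\mathcal Z$, which produces an $\alpha>0$ depending only on $(M,g)$ with $\norm{X\vd u}^2-(R\,\vd u,\vd u)\geq\alpha(\norm{X\vd u}^2+\norm{\vd u}^2)$. Substituting this into the Pestov identity of Proposition~\ref{prop_pestov}, namely $\norm{Pu}^2=\norm{X\vd u}^2-(R\,\vd u,\vd u)+(n-1)\norm{Xu}^2$, and discarding nonnegative terms as needed, one reads off simultaneously $\norm{\vd u}^2\leq\frac{1}{\alpha}\norm{Pu}^2$, $\norm{X\vd u}^2\leq\frac{1}{\alpha}\norm{Pu}^2$, and $\norm{Xu}^2\leq\frac{1}{n-1}\norm{Pu}^2$.

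Next I would recover the remaining horizontal derivative from a commutator: since $[X,\vd]=-\hd$ by Lemma~\ref{lemma_basic_commutator_formulas}, we have $\hd u=\vd Xu-X\vd u=Pu-X\vd u$, so $\norm{\hd u}\leq\norm{Pu}+\norm{X\vd u}\lesssim\norm{Pu}$. Because the Sasaki gradient splits orthogonally as $\nabla u=((Xu)X,\hd u,\vd u)$, the three bounds above combine to give $\norm{\nabla u}_{L^2(SM)}^2=\norm{Xu}^2+\norm{\hd u}^2+\norm{\vd u}^2\lesssim\norm{Pu}^2$. Finally, since $SM$ equipped with its Sasaki metric is a closed connected Riemannian manifold, the Poincar\'e--Wirtinger inequality gives $\norm{u-(u)_{SM}}_{L^2(SM)}\lesssim\norm{\nabla u}_{L^2(SM)}$; for $u\in C^\infty_\diamond(SM)$ one has $(u)_{SM}=0$, hence $\norm{u}_{L^2}\lesssim\norm{Pu}$, and adding the gradient bound yields $\norm{u}_{H^1}\lesssim\norm{Pu}_{L^2}$.

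For the statement with $u\in H^1(SM)$ and $Pu\in L^2(SM)$, I would set $v=u-(u)_{SM}$, note that $Pv=Pu$ because $P$ annihilates constants, and extend the smooth estimate to $v$ by a routine approximation argument: pick $v_j\in C^\infty(SM)$ with $v_j\to v$ in $H^1$ and $(v_j)_{SM}=0$, use Friedrichs' lemma (applied in coordinate charts via a partition of unity) to see that the mollifier commutators with $P=\vd X$ tend to $0$ strongly on $L^2$, so that $Pv_j\to Pv$ in $L^2$, and then pass to the limit in $\norm{v_j}_{H^1}\lesssim\norm{Pv_j}_{L^2}$. The one genuinely substantial ingredient is Theorem~\ref{thm:alphacontrol}: the extra term $\norm{Z}^2$ on its right-hand side --- beyond the bare $\alpha\norm{XZ}^2$ that mere $\alpha$-controlledness supplies --- is precisely what converts the Pestov identity into a bona fide $H^1\to L^2$ gain, and everything after it (the commutator for $\hd u$, the Poincar\'e inequality on $SM$, and the density argument) is standard.
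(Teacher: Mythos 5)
Your argument is correct and follows the paper's proof essentially step for step: substitute the strengthened Anosov bound from Theorem~\ref{thm:alphacontrol} into the Pestov identity, recover $\hd u$ from the commutator $[X,\vd]=-\hd$, assemble the Sasaki gradient and apply Poincar\'e, and extend from $C^\infty_\diamond$ to $H^1$ via Friedrichs-lemma mollification. No substantive differences.
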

\begin{proof}
Let $u \in C^{\infty}_{\diamond}(SM)$. We combine the Pestov identity in Proposition \ref{prop_pestov} with
Theorem \ref{thm:alphacontrol}, which yields $\norm{X\vd u}^2-(R\vd u,\vd u)\geq \alpha(\norm{X\vd u}^2+\norm{\vd u}^2)$ for some $\alpha > 0$ by the Anosov property. Thus we obtain 
\[\norm{Pu}^2\geq (n-1)\norm{Xu}^2+\alpha(\norm{\vd u}^2+\norm{X\vd u}^2).\]
The subelliptic estimate also requires a $\norm{\hd u}^2$ term on the right. This follows from the commutator formula $[X, \vd] = -\hd$, which gives 
\[\norm{\hd u}\leq \norm{Pu}+\norm{X\vd u}.\]
From these facts it follows that 
\[\norm{\hd u}\lesssim \norm{Pu}.\]
Hence 
\[\norm{Pu}^2\gtrsim \norm{\hd u}^2+\norm{\vd u}^2+\norm{Xu}^2=\norm{\nabla_{SM} u}^2.\]
By the Poincar\'e inequality for closed Riemannian manifolds, one also has 
\[\|u\|^2\lesssim \norm{\nabla_{SM} u}^2\]
for all $u\in C^{\infty}_{\diamond}(SM)$. Hence 
\[\norm{u}_{H^1} \lesssim \norm{Pu}\]
for all $u\in C^{\infty}_{\diamond}(SM)$ as desired.

Using the Poincar\'e inequality in the form 
$$
\norm{w-(w)_{SM}}_{L^2} \lesssim \norm{\nabla_{SM} w}, \qquad w \in C^{\infty}(SM),
$$
the above argument also implies the inequality 
$$
\norm{w-(w)_{SM}}_{H^1} \lesssim \norm{Pw}_{L^2}, \qquad w \in C^{\infty}(SM).
$$
Since $P$ has smooth coefficients, this last inequality holds also for any $w \in H^1(SM)$ with $Pw \in L^2(SM)$ by convolution approximation and the Friedrichs lemma \cite[Lemma 17.1.5]{Hor} (one covers $SM$ by coordinate neighborhoods, uses a subordinate partition of unity, and performs the convolution approximations in the coordinate charts).
\end{proof}

We now use the subelliptic estimate to obtain a solvability result for $P^*=X\vdiv$. In \cite{PSU3} this was done by a Hahn-Banach argument in the two-dimensional case. Here we give an alternative Hilbert space argument based on the Riesz representation theorem, and we also give a notion of uniqueness for solutions.

\begin{Lemma} \label{lemma_padjoint_surjective}
Let $(M,g)$ be an Anosov manifold. For any $f \in H^{-1}_{\diamond}(SM)$ there is a solution $h\in L^2(SM,N)$ of the equation 
$$
P^* h = f \quad \text{in } SM.
$$
There is a unique solution satisfying one of the following equivalent conditions:
\begin{enumerate}
\item[(a)]
$h$ is in the range of $P$ acting on $\{ u \in H^1 \,;\, Pu \in L^2 \}$.
\item[(b)]
$h$ is $L^2$-orthogonal to the kernel of $P^*$ on $L^2$.
\item[(c)]
$h$ has minimal $L^2$ norm.
\end{enumerate}
The unique solution $h$ satisfies $\norm{h}_{L^2} \lesssim \norm{f}_{H^{-1}}$.
\end{Lemma}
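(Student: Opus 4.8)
The plan is to deduce the solvability of $P^* h = f$ directly from the subelliptic estimate in Theorem \ref{lemma_p_inequalities} via the Riesz representation theorem. First I would introduce the Hilbert space $\mathcal{H}$ obtained by completing $C^{\infty}_{\diamond}(SM)$ with respect to the inner product $(u,v)_{\mathcal{H}} := (Pu, Pv)_{L^2}$; Theorem \ref{lemma_p_inequalities} guarantees that $\norm{u}_{\mathcal{H}}$ is comparable to $\norm{u}_{H^1}$ on $C^{\infty}_{\diamond}$, so $\mathcal{H}$ embeds continuously into $H^1_{\diamond}(SM)$ and in fact can be identified with a closed subspace of $H^1_{\diamond}(SM)$ (one should check it is exactly $\{u \in H^1_{\diamond} : Pu \in L^2\}$, using the Friedrichs-lemma density argument already invoked in the proof of Theorem \ref{lemma_p_inequalities}). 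Given $f \in H^{-1}_{\diamond}(SM)$, the map $u \mapsto \langle f, u \rangle$ is a bounded linear functional on $\mathcal{H}$ because $|\langle f, u\rangle| \leq \norm{f}_{H^{-1}} \norm{u}_{H^1} \lesssim \norm{f}_{H^{-1}} \norm{Pu}_{L^2}$. By Riesz representation there is a unique $u_0 \in \mathcal{H}$ with $(Pu_0, Pv)_{L^2} = \langle f, v \rangle$ for all $v \in \mathcal{H}$, and $\norm{Pu_0}_{L^2} \lesssim \norm{f}_{H^{-1}}$.

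Next I would set $h := Pu_0 \in L^2(SM, N)$. The identity $(h, Pv)_{L^2} = \langle f, v\rangle$ for all $v \in C^{\infty}_{\diamond}(SM)$ says precisely that $P^* h = f$ in the sense of distributions on $SM$, modulo the constants — one must note that testing against $v \in C^{\infty}_{\diamond}$ determines $P^* h$ up to a constant, and since $f \in H^{-1}_{\diamond}$, i.e.\ $\langle f, 1\rangle = 0$, and $P^*h = X\vdiv h$ automatically satisfies $(X\vdiv h, 1) = -(\vdiv h, X1) = 0$, the equation $P^* h = f$ holds on all of $SM$. The norm bound $\norm{h}_{L^2} \lesssim \norm{f}_{H^{-1}}$ is then exactly the Riesz bound.

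For the uniqueness statement I would argue that $h = Pu_0$ lies in the range of $P$ acting on $\{u \in H^1 : Pu \in L^2\}$, which is condition (a). Any other solution $h'$ of $P^* h' = f$ differs from $h$ by an element of $\ker P^*$ on $L^2$, so the solution set is $h + \ker P^*$; the element of minimal $L^2$ norm in this affine set is the one orthogonal to $\ker P^*$, which is condition (c) $\Leftrightarrow$ (b). To finish I would check (a) $\Leftrightarrow$ (b): the range of $P$ on $\mathcal{H}$ is, by construction, the closure in $L^2$ of $\{Pu : u \in C^{\infty}_{\diamond}\}$ (it is already closed since $\mathcal{H}$ is complete and $P: \mathcal{H} \to L^2$ is an isometry up to constants, hence has closed range), and the orthogonal complement of this range in $L^2(SM,N)$ is exactly $\{h' : (h', Pu) = 0 \ \forall u \in C^{\infty}(SM)\} = \{h' : P^* h' = 0\} = \ker P^*$. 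Thus $\mathrm{Ran}(P)$ and $\ker P^*$ are orthogonal complements, and the decomposition $h' = (\text{component in } \mathrm{Ran}\,P) + (\text{component in }\ker P^*)$ shows all three conditions pin down the same unique element.

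The main obstacle I anticipate is the careful identification of the abstract completion $\mathcal{H}$ with the concrete space $\{u \in H^1(SM) : Pu \in L^2\}$ (or at least ensuring $h=Pu_0$ has a genuine $H^1$ preimage, as required for condition (a)), and correctly handling the quotient-by-constants bookkeeping so that the distributional equation $P^*h = f$ really holds on all of $SM$ rather than merely modulo constants — this is where the hypotheses $f \in H^{-1}_{\diamond}$ and the structural fact $(P^*h, 1) = 0$ must be used. The functional-analytic core (boundedness of the functional, Riesz representation, closed range, orthogonal decomposition) is routine once the estimate of Theorem \ref{lemma_p_inequalities} is in hand.
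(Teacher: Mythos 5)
Your proposal is correct and follows essentially the same route as the paper: both introduce a Hilbert space with inner product $(u,v)\mapsto (Pu,Pv)_{L^2}$ (you via abstract completion of $C^\infty_\diamond$, the paper directly as $\mathcal{A}=\{u\in H^1_\diamond:Pu\in L^2\}$, and the two are identified precisely via the Friedrichs-lemma density you flag), apply Riesz representation to get $u_0$ with $\norm{Pu_0}\lesssim\norm{f}_{H^{-1}}$, set $h=Pu_0$, and dispatch uniqueness and equivalence of (a)--(c) via the orthogonal decomposition $L^2(SM,N)=\mathrm{Ran}(P)\oplus\ker P^*$ with closed range. The two technical subtleties you single out (reconciling the abstract completion with the concrete $H^1$ space, and verifying the distributional identity holds on all of $SM$ rather than merely modulo constants, using $\langle f,1\rangle=0$ and $(P^*h,1)=0$) are exactly the ones the paper also handles, so your anticipation of the obstacles matches the actual proof.
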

\begin{proof}
Define the space 
$$
\mathcal{A} = \{ u \in H^1_{\diamond}(SM) \,;\, Pu \in L^2(SM) \}
$$
with inner product 
$$
(u, w)_{\mathcal{A}} = (Pu, Pw)_{L^2}.
$$
This is an inner product space, since $(u,u)_{\mathcal{A}} = 0$ for $u \in \mathcal{A}$ implies $u = 0$ by Theorem \ref{lemma_p_inequalities}. It is also a Hilbert space: if $(u_j)$ is a Cauchy sequence in $\mathcal{A}$ then it is a Cauchy sequence in $H^1$ by Theorem \ref{lemma_p_inequalities}, hence $(u_j)$ converges in $H^1$ to some $u \in H^1$ and also $(P u_j)$ converges to some $w$ in $L^2$. Then $Pu_j \to Pu$ in $H^{-1}$ and also $Pu_j \to w$ in $H^{-1}$, showing that $Pu = w \in L^2$ and $u_j \to u$ in $\mathcal{A}$.

Given $f \in H^{-1}_{\diamond}(SM)$, define the functional 
$$
l: \mathcal{A} \to \C, \ \ l(w) = (w,f).
$$
(The expression on the right is the distributional pairing.) This functional satisfies by Theorem \ref{lemma_p_inequalities} 
$$
\abs{l(w)} \leq \norm{w}_{H^1} \norm{f}_{H^{-1}} \lesssim \norm{w}_{\mathcal{A}} \norm{f}_{H^{-1}}.
$$
Thus $l$ is continuous on $\mathcal{A}$, and by the Riesz representation theorem there is $u \in \mathcal{A}$ satisfying 
$$
l(w) = (w,u)_{\mathcal{A}}, \qquad \norm{u}_{\mathcal{A}} \lesssim \norm{f}_{H^{-1}}.
$$
For any $w \in C^{\infty}_{\diamond}(SM)$ we have 
$$
(w,f) = (w,u)_{\mathcal{A}} = (Pw,Pu) = (w,P^* Pu)
$$
Therefore $h = Pu$ solves $P^* h = f$ in $SM$ in the sense of distributions (since both $f$ and $P^* h$ are orthogonal to constants) and $\norm{h}_{L^2} \lesssim \norm{f}_{H^{-1}}$.

We next observe that $h$ is the unique solution of $P^* h = f$ which is in the range of $P$ acting on $\mathcal{A}$. Indeed, if both $h = Pu$ and $\tilde{h} = P\tilde{u}$ solve the equation, then $P^* P(u-\tilde{u}) = 0$ so $(P(u-\tilde{u}), Pw) = 0$ for smooth $w$. We can use the Friedrichs lemma to approximate $u-\tilde{u}$ by smooth functions in the $\mathcal{A}$ norm as in the proof of Theorem \ref{lemma_p_inequalities}. This shows $\norm{P(u-\tilde{u})}^2 = 0$, so $u = \tilde{u}$ by Theorem \ref{lemma_p_inequalities}. The equivalence of (a), (b), (c) follows since any $\tilde{h} \in L^2$ can be expressed as the orthogonal sum $\tilde{h} = Pu + w$ where $u \in \mathcal{A}$ and $w \in L^2$ with $P^* w = 0$ (this follows since $P: \mathcal{A} \to L^2$ is bounded with closed range and the orthocomplement of $\mathrm{Ran}(P)$ is $\mathrm{Ker}(P^*)$, one can again approximate $u$ by smooth functions in the $\mathcal{A}$ norm).
\end{proof}

Let $\omega_{n-1}$ denote the volume of the $(n-1)$-dimensional unit sphere.
Given $w\in \mDp(SM)$, let $w_0\in \mDp(M)$ be defined by
\[\langle w_0, \psi\rangle :=\frac{1}{\omega_{n-1}}\langle w,\psi\circ\pi\rangle, \qquad \psi \in C^{\infty}(M).\]

We can now prove surjectivity of $I_0^*$. See \cite[Section 1.2]{PSU3} for an explanation why the next result is indeed equivalent to surjectivity of $I_0^*$, and observe that $w \mapsto \left[ \norm{w_0}^2 + \sum_{m=1}^{\infty} \frac{1}{m(m+n-2)} \norm{w_m}^2 \right]^{1/2}$ is an equivalent norm on $L^2_x H^{-1}_v$.

\begin{Theorem} \label{lemma_surjectivity_stronger}
Assume that $(M,g)$ is Anosov. Let $F \in H^{-1}_{\diamond}(SM)$ and $f \in L^2(M)$. There exists $w \in L^2_x H^{-1}_v(SM)$ satisfying 
$$
Xw = F \text{ in } SM, \qquad w_0 = f.
$$
There is a unique such $w$ for which the quantity $\sum_{m=1}^{\infty} \frac{1}{m(m+n-2)} \norm{w_m}^2$ is minimal, and this $w$ satisfies $\norm{w}_{L^2_x H^{-1}_v} \lesssim \norm{F}_{H^{-1}} + \norm{f}_{L^2}$.
\end{Theorem}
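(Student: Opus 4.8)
The plan is to deduce everything from the solvability result for $P^* = X\vdiv$ in Lemma \ref{lemma_padjoint_surjective}, after reducing to the case $f = 0$. Identifying $f \in L^2(M)$ with the function $f\circ\pi \in \Omega_0 \subset L^2(SM)$, one has $X(f\circ\pi) \in H^{-1}(SM)$, and since $X^* = -X$ and $X1 = 0$ it is orthogonal to constants, so $F' := F - X(f\circ\pi) \in H^{-1}_{\diamond}(SM)$. It then suffices to find $w' \in L^2_x H^{-1}_v(SM)$ with $Xw' = F'$ whose zeroth Fourier component vanishes, for then $w := w' + f\circ\pi$ satisfies $Xw = F$ and $w_0 = f$ (the latter because $\langle f\circ\pi, \psi\circ\pi\rangle_{L^2(SM)} = \omega_{n-1}\langle f,\psi\rangle_{L^2(M)}$), with $\norm{w}_{L^2_x H^{-1}_v} \lesssim \norm{w'}_{L^2_x H^{-1}_v} + \norm{f}_{L^2}$.

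For the existence of $w'$ I would apply Lemma \ref{lemma_padjoint_surjective} to $F'$ to get $h \in L^2(SM,N)$ with $X\vdiv h = F'$, and set $w' := \vdiv h$. This is a distribution on $SM$ with $Xw' = P^* h = F'$, and $\langle w', \psi\circ\pi\rangle = -\langle h, \vd(\psi\circ\pi)\rangle = 0$ for $\psi \in C^\infty(M)$ since $\vd$ annihilates fibrewise constant functions; hence $w'_0 = 0$. To estimate $w'$, decompose $h = \sum_{m\geq 1} \vd v_m + h^{\perp}$, where $v_m \in H_m(SM)$ and $h^{\perp}$ is $L^2$-orthogonal to every $\vd\psi$ (the subspaces $\vd H_m(SM)$ are mutually orthogonal and closed for $m\geq1$, because $(\vd\psi,\vd\psi') = (\psi,\Delta\psi')$ and $\norm{\vd v_m}^2 = m(m+n-2)\norm{v_m}^2$). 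Then $\vdiv h^{\perp} = 0$ and $\vdiv\vd v_m = -m(m+n-2)v_m$, so $(\vdiv h)_m = -m(m+n-2)v_m$ while $\norm{h}_{L^2}^2 \geq \sum_{m\geq1} m(m+n-2)\norm{v_m}^2$. Since $m(m+n-2)\langle m\rangle^{-2}$ is bounded,
$$
\norm{w'}_{L^2_x H^{-1}_v}^2 = \sum_{m\geq1} \langle m\rangle^{-2} m^2(m+n-2)^2 \norm{v_m}^2 \lesssim \norm{h}_{L^2}^2 \lesssim \norm{F'}_{H^{-1}}^2 \lesssim \norm{F}_{H^{-1}}^2 + \norm{f}_{L^2}^2,
$$
which with the reduction gives the existence statement and the norm bound.

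For uniqueness, note that $Q(w) := \sum_{m\geq1}\frac{1}{m(m+n-2)}\norm{w_m}^2$ depends only on the Fourier components of degree $\geq 1$, hence is unchanged under $w \mapsto w + f\circ\pi$, so it is enough to single out the $Q$-minimizer among $w'$ with $Xw' = F'$, $w'_0 = 0$. The same decomposition attaches to any such $w'$ an element $h_{w'} := -\sum_{m\geq1}\frac{1}{m(m+n-2)}\vd w'_m \in \overline{\operatorname{Ran}\vd}$ with $\vdiv h_{w'} = w'$, $P^* h_{w'} = F'$ and $\norm{h_{w'}}_{L^2}^2 = Q(w')$, and conversely every $L^2$ solution of $P^* h = F'$ in $\overline{\operatorname{Ran}\vd}$ arises this way. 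Since $\operatorname{Ker}\vdiv = (\overline{\operatorname{Ran}\vd})^{\perp} \subseteq \operatorname{Ker}P^*$, one has $\operatorname{Ker}P^* = \operatorname{Ker}\vdiv \oplus (\operatorname{Ker}P^* \cap \overline{\operatorname{Ran}\vd})$, so the $h$ from Lemma \ref{lemma_padjoint_surjective} (which lies in $\operatorname{Ran}P \subseteq \overline{\operatorname{Ran}\vd}$ and is orthogonal to $\operatorname{Ker}P^*$) is exactly the minimal-$L^2$-norm solution of $P^* h = F'$ inside $\overline{\operatorname{Ran}\vd}$; transporting through $h \mapsto \vdiv h$ identifies $w = \vdiv h + f\circ\pi$ as the unique $Q$-minimizer.

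I expect the main obstacle to be precisely this reconciliation of "minimal $L^2$ norm / orthogonal to $\operatorname{Ker}P^*$" on the $h$ side with "minimal $Q$" on the $w$ side, which hinges on the above splitting of $\operatorname{Ker}P^*$. If one prefers to bypass it, uniqueness also follows by an abstract argument: the solution set is a closed affine subspace of $L^2_x H^{-1}_v$, and restricted to the hyperplane $\{w_0 = f\}$ the functional $Q$ equals the squared norm of $w - f\circ\pi$ in a Hilbert norm equivalent to $\norm{\cdot}_{L^2_x H^{-1}_v}$, so it attains its infimum over the nonempty closed convex solution set at exactly one point.
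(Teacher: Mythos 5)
Your proof is correct and follows essentially the same route as the paper's: apply Lemma \ref{lemma_padjoint_surjective} to $F - Xf$, set $w = \vdiv h + f$, verify $w_0 = f$ because $\vd$ annihilates fibrewise constants, and observe that the correspondence $w \leftrightarrow h$ turns $\norm{h}_{L^2}^2$ into the quantity $\sum_{m\geq 1}\frac{1}{m(m+n-2)}\norm{w_m}^2$. The paper is terser — it simply notes that by Lemma \ref{lemma_padjoint_surjective}(a) the minimal $h$ has the form $\vd a$ (so the identity $\norm{h}^2 = Q(w)$ is immediate) and asserts that uniqueness transfers — whereas you explicitly spell out the orthogonal splitting $\mathrm{Ker}\,P^* = \mathrm{Ker}\,\vdiv \oplus (\mathrm{Ker}\,P^* \cap \overline{\mathrm{Ran}\,\vd})$ that reconciles ``minimal over all $h$'' with ``minimal within $\overline{\mathrm{Ran}\,\vd}$''; this fills in a step the paper leaves implicit.
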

\begin{proof}
By Lemma \ref{lemma_padjoint_surjective} there is $h \in L^2(SM,N)$ such that $h = \vd a$ for some $a \in L^2(SM)$ and 
$$
P^* h = F - Xf. 
$$
Then $w = \vdiv\,h + f$ is in $L^2_x H^{-1}_v$ (since $\sum_{m=1}^{\infty} \frac{1}{m(m+n-2)} \norm{w_m}^2 = \norm{\vd a}_{L^2}^2 = \norm{h}^2$), and it satisfies 
$$
Xw = X\vdiv\,h + Xf = F
$$
and $w_0 = f$. To check the last equality it suffices to observe that
\[\langle \vdiv\,h,\psi\circ\pi\rangle=0\]
since $\vd (\psi\circ\pi)=0$ (the function $\psi\circ\pi$ only depends on $x\in M$). The uniqueness of $w$ with the minimality property, together with the norm bounds, follow from the corresponding properties of $h$ in Lemma \ref{lemma_padjoint_surjective}.
\end{proof}

\section{Surjectivity of $I_m^*$} \label{sec_surjectivity_im}

Let $(M,g)$ be a closed Anosov manifold of dimension $n \geq 2$. We wish to prove a quantitative injectivity result for $I_m$ when $m \geq 1$, if $(M,g)$ is Anosov and $\alpha$-controlled for suitable $\alpha$. To this end, we consider the operator 
$$
Q_m: T_{\geq m} C^{\infty}(SM) \to \mathcal Z, \ \ Q_m u = \vd T_{\geq m+1} X u
$$
where
$$
T_{\geq r} u = \sum_{m=r}^{\infty} u_m.
$$
The next theorem is a subelliptic estimate for $Q_m$. It may be viewed as a quantitative injectivity result for $I_m$, and it implies a surjectivity result for $I_m^*$ (see again \cite[Section 1.2]{PSU3} for an explanation).

\begin{Theorem} \label{theorem_qm_subelliptic}
If $(M,g)$ is closed Anosov, if $m \geq 1$, and if $(M,g)$ is $\alpha$-controlled for $\alpha > \alpha_{m,n}$ where 
$$
\alpha_{m,n} = \frac{(m-1)(m+n-2)}{m(m+n-1)},
$$
then we have 
$$
\norm{u}_{H^1} \lesssim \norm{Q_m u}_{L^2}, \qquad u \in T_{\geq m} C^{\infty}(SM).
$$
\end{Theorem}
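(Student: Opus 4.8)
The plan is to mimic the argument used for Theorem \ref{lemma_p_inequalities}, working with the operator $Q_m$ restricted to $T_{\geq m}C^{\infty}(SM)$ instead of $P$ on $C^{\infty}_{\diamond}(SM)$. The key point is that for $u = \sum_{l\geq m} u_l$ one has $\vd Xu = \vd T_{\geq m+1}Xu + \vd (Xu)_{m-1} = Q_m u + \vd (Xu)_{m-1}$, and the extra term $\vd (Xu)_{m-1}$ lives only in $\Omega_{m-1}$. So first I would apply the Pestov identity (Proposition \ref{prop_pestov}) together with the $\alpha$-controlled hypothesis to get
\[
\norm{\vd Xu}^2 - \alpha\norm{X\vd u}^2 \;\geq\; (n-1)\norm{Xu}^2,
\]
but now I must extract a lower bound for $\norm{X\vd u}^2$ that beats the weight $(Xu)_{m-1}$ contributes on the left. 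This is exactly where Lemma \ref{lemma_xvu_lowerbound} enters: it gives
\[
\norm{X\vd u}^2 \;\geq\; \tfrac{(m-1)(m+n-2)^2}{m+n-3}\norm{(Xu)_{m-1}}^2 + \tfrac{m(m+n-1)^2}{m+n-2}\norm{(Xu)_m}^2
\]
for $m\geq 2$, and the analogous bound $\norm{X\vd u}^2 \geq \tfrac{n^2}{n-1}\norm{(Xu)_1}^2$ for $m=1$.

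The second step is to combine these and check that the contribution of $\norm{(Xu)_{m-1}}^2$ can be absorbed. We have $\norm{\vd Xu}^2 = (m-1)(m+n-3)\norm{(Xu)_{m-1}}^2 + \norm{Q_m u}^2$ and $\norm{Xu}^2 = \norm{(Xu)_{m-1}}^2 + \norm{T_{\geq m}Xu}^2$. Plugging everything in, the coefficient of $\norm{(Xu)_{m-1}}^2$ on the "good" side is $(m-1)(m+n-3) - (n-1) + \alpha\tfrac{(m-1)(m+n-2)^2}{m+n-3}$; one checks this is nonnegative precisely under a condition of the shape $\alpha > \alpha_{m,n}$, and indeed a short computation should reduce the threshold to $\alpha_{m,n} = \tfrac{(m-1)(m+n-2)}{m(m+n-1)}$ after also using the $\norm{(Xu)_m}^2$ term from Lemma \ref{lemma_xvu_lowerbound} to help (the case $m=1$ is handled separately and is easier, since then $\alpha_{1,n}=0$ and any $\alpha>0$ works, which is automatic on an Anosov manifold by Lemma \ref{lemma_alphacontrolled_properties}). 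This yields a bound of the form
\[
\norm{Q_m u}^2 \;\gtrsim\; \norm{X\vd u}^2 + \norm{\vd u}^2 + \norm{Xu}^2,
\]
after also invoking the Anosov strengthening in Theorem \ref{thm:alphacontrol} (which upgrades $\alpha$-controlled to give the extra $\norm{\vd u}^2$ on the right, just as in the proof of Theorem \ref{lemma_p_inequalities}).

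The third step recovers the horizontal derivative. As in Theorem \ref{lemma_p_inequalities}, the commutator $[X,\vd] = -\hd$ gives $\norm{\hd u} \leq \norm{\vd Xu} + \norm{X\vd u} \leq \norm{Q_m u} + \norm{\vd (Xu)_{m-1}} + \norm{X\vd u}$; the middle term is controlled since $\norm{\vd(Xu)_{m-1}}^2 = (m-1)(m+n-3)\norm{(Xu)_{m-1}}^2$ which we have already bounded by $\norm{Q_m u}^2$ via the absorption above. Hence $\norm{\hd u} \lesssim \norm{Q_m u}$, and combining all three derivative estimates gives $\norm{\nabla_{SM}u}^2 \lesssim \norm{Q_m u}^2$. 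Finally, since $u\in T_{\geq m}C^{\infty}(SM)$ with $m\geq 1$ has $u_0 = 0$, the Poincaré inequality (in fact just $\norm{u}^2 \leq \norm{\vd u}^2$ on each fiber, since $u$ has no zeroth spherical harmonic component) gives $\norm{u}_{L^2}\lesssim \norm{\vd u}_{L^2} \lesssim \norm{Q_m u}_{L^2}$, and adding the derivative bounds yields $\norm{u}_{H^1}\lesssim\norm{Q_m u}_{L^2}$.

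The main obstacle I anticipate is the bookkeeping in the second step: verifying that the precise constants from the Pestov identity and from Lemma \ref{lemma_xvu_lowerbound} conspire to give exactly the threshold $\alpha_{m,n} = \tfrac{(m-1)(m+n-2)}{m(m+n-1)}$ rather than something weaker, and in particular checking that the coefficient of $\norm{(Xu)_m}^2$ does not force a worse condition. Everything else (the $\hd u$ estimate, the Poincaré step, the reduction from $C^\infty$ to $H^1$ via Friedrichs) is essentially identical to the proof of Theorem \ref{lemma_p_inequalities} and should go through with only cosmetic changes.
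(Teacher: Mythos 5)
Your overall strategy matches the paper's: use the Pestov identity together with the $\alpha$-controlled bound and Lemma~\ref{lemma_xvu_lowerbound} to absorb the low Fourier modes of $Xu$ into $\norm{Q_m u}^2$. Two slips in your bookkeeping need fixing, though, and they are precisely what produce the stated threshold. First, for $u \in T_{\geq m}C^{\infty}(SM)$ the decomposition is
\[
\vd Xu \;=\; \vd(Xu)_{m-1}+\vd(Xu)_m+Q_m u, \qquad
\norm{\vd Xu}^2 = (m-1)(m+n-3)\norm{(Xu)_{m-1}}^2 + m(m+n-2)\norm{(Xu)_m}^2 + \norm{Q_m u}^2,
\]
so you are missing the $m(m+n-2)\norm{(Xu)_m}^2$ term. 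Second, there is a sign error: what must be nonnegative after moving everything to the $\norm{Q_m u}^2$ side is
\[
(n-1) - (m-1)(m+n-3) + \alpha\,\frac{(m-1)(m+n-2)^2}{m+n-3}
\]
(equal to $-(m-2)(m+n-2) + \alpha\frac{(m-1)(m+n-2)^2}{m+n-3}$), not the expression with the reversed sign you wrote; since $(m-1)(m+n-3)-(n-1) \geq 0$, your version is trivially nonnegative and would never yield a threshold. Once corrected, the $(Xu)_{m-1}$ bracket requires $\alpha > \frac{(m-2)(m+n-3)}{(m-1)(m+n-2)}$, while the $(Xu)_m$ bracket (coming from the term you omitted) requires $\alpha > \frac{(m-1)(m+n-2)}{m(m+n-1)} = \alpha_{m,n}$, which is the binding constraint.

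Your final step also differs from the paper's, which is cleaner and worth noting. Once $\norm{(Xu)_{m-1}}, \norm{(Xu)_m} \lesssim \norm{Q_m u}$, the paper simply observes that $\norm{Pu}^2 = \sum_{l=1}^{m} l(l+n-2)\norm{(Xu)_l}^2 + \norm{Q_m u}^2 \lesssim \norm{Q_m u}^2$, and then invokes Theorem~\ref{lemma_p_inequalities} directly (valid since $u_0 = 0$, so $u \in C^{\infty}_{\diamond}(SM)$). Your route of re-deriving the $\hd u$, $\vd u$, $Xu$, Poincaré estimates from scratch does work, but you need a little care in using both the $\alpha$-controlled bound with $\alpha > \alpha_{m,n}$ (for absorption) and the Anosov strengthening from Theorem~\ref{thm:alphacontrol} (for the extra $\norm{\vd u}^2$), since the latter typically comes with a much smaller constant; the clean way is to first get $\norm{\vd Xu} \lesssim \norm{Q_m u}$ and then apply the strengthened inequality, rather than trying to use both at once. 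Reducing to Theorem~\ref{lemma_p_inequalities} avoids all of this.
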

\begin{proof}
Assume that $u \in T_{\geq m} C^{\infty}(SM)$. The Pestov identity applied to $u$ reads 
$$
\norm{\vd X u}^2 = (n-1) \norm{Xu}^2 + \norm{X \vd u}^2 - (R \vd u, \vd u).
$$
The decomposition 
$$
X u = (Xu)_{m-1} + (Xu)_m + T_{\geq m+1} Xu
$$
implies that 
$$
\norm{Xu}^2 = \norm{(Xu)_{m-1}}^2 + \norm{(Xu)_m}^2 + \norm{T_{\geq m+1} X u}^2
$$
and 
\begin{align*}
\norm{\vd X u}^2 &= (m-1)(m+n-3) \norm{(Xu)_{m-1}}^2 + m(m+n-2) \norm{(Xu)_m}^2 \\
 & \qquad + \norm{\vd T_{\geq m+1} X u}^2.
\end{align*}
Using these facts in the Pestov identity above shows that 
\begin{multline*}
\norm{Q_m u}^2 + (m-1)(m+n-3) \norm{(Xu)_{m-1}}^2 + m(m+n-2) \norm{(Xu)_m}^2 \\
 = (n-1) \left[ \norm{(Xu)_{m-1}}^2 + \norm{(Xu)_m}^2 + \norm{T_{\geq m+1} X u}^2 \right] + \norm{X \vd u}^2 - (R \vd u, \vd u).
\end{multline*}
Applying the $\alpha$-controlled assumption and Lemma \ref{lemma_xvu_lowerbound} yields for $m\geq 2$ 
\begin{multline*}
\norm{Q_m u}^2 \geq \left[ n-1 - (m-1)(m+n-3) + \alpha \frac{(m-1)(m+n-2)^2}{m+n-3} \right] \norm{(Xu)_{m-1}}^2 \\ 
 + \left[ n-1 - m(m+n-2) + \alpha \frac{m(m+n-1)^2}{m+n-2} \right] \norm{(Xu)_m}^2 \\
 + (n-1) \norm{T_{\geq m+1} X u}^2.
\end{multline*}
If $\alpha > \alpha_{m,n}$, the constants in brackets are positive and it follows that 
$$
\norm{Xu} \lesssim \norm{Q_m u}, \qquad u \in T_{\geq m} C^{\infty}(SM).
$$
If $m=1$ we obtain
\[\norm{Q_{1}u}^{2}\geq (n-1)[\norm{(Xu)_{0}}^2+\norm{T_{\geq 2} X u}^2]+\frac{\alpha n^{2}}{n-1}\norm{(Xu)_{1}}^{2}\]
and thus we also have
$$
\norm{Xu} \lesssim \norm{Q_1 u}.
$$
From the definitions of $P$ and $Q_m$ we see that
\[Pu=\sum_{l=1}^{m}\vd(Xu)_{l}+Q_{m}u\]
and hence
\[\norm{Pu}^{2}=\sum_{l=1}^{m}l(l+n-2)\norm{(Xu)_{l}}^{2}+\norm{Q_{m}u}^{2}.\]
(The last two identities hold for $u \in C^{\infty}(SM)$.) The subelliptic estimate for $Q_m$ now follows from the above discussion and the basic subelliptic estimate for $P$ given in Theorem \ref{lemma_p_inequalities}.
\end{proof}

The subelliptic estimate implies a solvability result for $Q_m^* = X T_{\geq m+1} \vdiv$. The proofs are similar to those in Section \ref{sec_surjectivity_izero} (the next results could also be made slightly more precise as in Section \ref{sec_surjectivity_izero}; we omit the details).

\begin{Lemma}
Assume the conditions in Theorem \ref{theorem_qm_subelliptic}. For any $f \in T_{\geq m} H^{-1}(SM)$, there is $H \in L^2(SM,N)$ with $Q_m^* H = f$ and $\norm{H}_{L^2(SM,N)} \lesssim \norm{f}_{H^{-1}(SM)}$.
\end{Lemma}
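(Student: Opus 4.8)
The plan is to repeat the Hilbert space argument behind Lemma \ref{lemma_padjoint_surjective}, with $P$ replaced by $Q_m$ throughout and with Theorem \ref{theorem_qm_subelliptic} as the key input. Concretely, I would let $\mathcal{A}_m$ be the completion of $T_{\geq m} C^{\infty}(SM)$ in the norm $\norm{u}_{\mathcal{A}_m} = \norm{Q_m u}_{L^2}$, with associated inner product $(u,w)_{\mathcal{A}_m} = (Q_m u, Q_m w)_{L^2}$. Since $Q_m = \vd T_{\geq m+1} X$ factors through the bounded maps $X \colon H^1(SM) \to L^2(SM)$, $T_{\geq m+1} \colon L^2(SM) \to L^2(SM)$ and $\vd \colon L^2(SM) \to H^{-1}(SM,N)$, it is bounded $H^1(SM) \to H^{-1}(SM,N)$; combined with the estimate $\norm{w}_{H^1} \lesssim \norm{Q_m w}_{L^2}$ for $w \in T_{\geq m} C^{\infty}(SM)$ from Theorem \ref{theorem_qm_subelliptic}, this shows that $\mathcal{A}_m$ embeds continuously and injectively into $H^1(SM)$ and that $Q_m$ extends to a linear isometry $\mathcal{A}_m \to L^2(SM,N)$ which is compatible with the action of the operator $\vd T_{\geq m+1} X$ on $H^1$ functions. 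Note also that $f \in T_{\geq m} H^{-1}(SM)$ automatically has $(f,1) = 0$ since $m \geq 1$.

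Given such an $f$, the next step is to apply the Riesz representation theorem to the functional $l(w) = \langle w, f \rangle$ (distributional pairing). On smooth $w \in T_{\geq m} C^{\infty}(SM)$ one has, by Theorem \ref{theorem_qm_subelliptic},
\[
\abs{l(w)} \leq \norm{w}_{H^1} \norm{f}_{H^{-1}} \lesssim \norm{w}_{\mathcal{A}_m} \norm{f}_{H^{-1}},
\]
so $l$ extends continuously to $\mathcal{A}_m$, and by the continuous embedding $\mathcal{A}_m \hookrightarrow H^1(SM)$ this extension still agrees with the pairing against $f$. Hence there is $u \in \mathcal{A}_m$ with $l(w) = (w,u)_{\mathcal{A}_m} = (Q_m w, Q_m u)_{L^2}$ for all $w \in \mathcal{A}_m$ and $\norm{u}_{\mathcal{A}_m} \lesssim \norm{f}_{H^{-1}}$. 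Setting $H := Q_m u \in L^2(SM,N)$, the identity $\langle w, f \rangle = (Q_m w, H)_{L^2}$, valid for every $w \in T_{\geq m} C^{\infty}(SM)$, is exactly the statement that $Q_m^* H = f$ in the sense of distributions, and $\norm{H}_{L^2} = \norm{u}_{\mathcal{A}_m} \lesssim \norm{f}_{H^{-1}}$, as claimed.

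I do not expect a real obstacle here, since all the analytic content already resides in Theorem \ref{theorem_qm_subelliptic} and what remains is routine functional analysis. The only point needing a little care — and the reason I would take $\mathcal{A}_m$ to be an abstract completion rather than a concrete subspace of $H^1(SM)$ as in Lemma \ref{lemma_padjoint_surjective} — is that the projection $T_{\geq m+1}$ is not a differential operator, so the Friedrichs lemma used there to pass from $C^{\infty}(SM)$ to $H^1(SM)$ does not apply directly to $Q_m$; working with the completion avoids this issue. If one also wants the sharper statement (that $H$ is the \emph{unique} solution lying in the range of $Q_m$ on $\mathcal{A}_m$, equivalently orthogonal to $\mathrm{Ker}\,Q_m^*$, equivalently of minimal $L^2$ norm) together with the precise norm bound, this follows by the same reasoning as in Lemma \ref{lemma_padjoint_surjective}, which is what the paper alludes to when it says these results can be made more precise.
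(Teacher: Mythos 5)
Your proof is correct but takes a genuinely different route from the paper's. For this lemma the paper reverts to a Hahn--Banach argument: the bounded functional $l(Q_m u) := (u,f)$ is defined directly on the subspace $Q_m(T_{\geq m} C^{\infty}(SM))$ of $L^2(SM,N)$, extended to all of $L^2(SM,N)$ by Hahn--Banach, and then $H$ is produced by Riesz representation on $L^2(SM,N)$ itself. You instead mimic the Riesz-on-a-Hilbert-space argument from Lemma \ref{lemma_padjoint_surjective}, but sensibly replace the concrete graph space $\{u \in H^1 : Q_m u \in L^2\}$ by the abstract completion $\mathcal{A}_m$ of $T_{\geq m} C^{\infty}(SM)$ in the $Q_m$-norm --- a good instinct, since the projection $T_{\geq m+1}$ inside $Q_m$ is nonlocal and the Friedrichs-lemma density argument the paper uses for $P$ does not obviously carry over to $Q_m$. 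Both routes work: the Hahn--Banach version is shorter and sidesteps any completeness or density discussion, while yours directly produces the canonical $H = Q_m u$ lying in the closed range of $Q_m: \mathcal{A}_m \to L^2(SM,N)$ (equivalently, orthogonal to $\mathrm{Ker}(Q_m^*)$, equivalently of minimal $L^2$ norm), and hence also gives the uniqueness refinement that the paper alludes to but omits. One small imprecision to tighten at the end: the identity $\langle w, f\rangle = (Q_m w, H)$ for $w \in T_{\geq m} C^{\infty}(SM)$ is not by itself the distributional statement $Q_m^* H = f$, which requires testing against all $u \in C^{\infty}(SM)$; you should add (as the paper's last display does) that for general $u$ one has $Q_m u = Q_m(u - \sum_{l<m} u_l)$ and $(u,f) = (u - \sum_{l<m} u_l, f)$ because $f \in T_{\geq m}H^{-1}$, so the identity extends automatically to all test functions.
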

\begin{proof}
If $f$ is as above, define a linear functional 
$$
l: Q_m(T_{\geq m} C^{\infty}(SM)) \subset L^2(SM,N) \to \C, \ \ l(Q_m u) = (u,f).
$$
Then $\abs{l(Q_m u)} \lesssim \norm{f}_{H^{-1}} \norm{Q_m u}$ by Theorem \ref{theorem_qm_subelliptic}, thus $l$ has a bounded linear extension $\bar{l}: L^2(SM,N) \to \C$ by Hahn-Banach and there is $H \in L^2(SM,N)$ with $\norm{H}_{L^2} \lesssim \norm{f}_{H^{-1}}$ and 
$$
\bar{l}(Z) = (Z,H)_{L^2(SM,N)}, \qquad Z \in L^2(SM,N).
$$
But now for any $u \in C^{\infty}(SM)$ we have 
$$
(u,Q_m^* H) = (Q_mu, H) = (Q_m(u-\sum_{l=0}^{m-1} u_l),H) = (u-\sum_{l=0}^{m-1} u_l,f) = (u,f)
$$
so $Q_m^* H = f$ as required.
\end{proof}

This leads to invariant distributions starting at any $a_m$ with $X_- a_m = 0$:

\begin{Proposition}
Assume the conditions in Theorem \ref{theorem_qm_subelliptic}. Let $a_m \in \Omega_m$ with $X_- a_m = 0$. There is $w \in T_{\geq m} H^{-1}(SM)$ with $Xw = 0$ and $w_m = a_m$.
\end{Proposition}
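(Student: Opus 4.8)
The plan is to mimic the proof of Theorem~\ref{lemma_surjectivity_stronger}, using the solvability result for $Q_m^* = X\,T_{\geq m+1}\,\vdiv$ just established in place of the one for $P^* = X\vdiv$. The hypothesis $X_- a_m = 0$ enters in an essential way: it ensures that $X a_m = X_+ a_m + X_- a_m = X_+ a_m$ has no Fourier component in degree $m-1$, so that $-X a_m$ belongs to $T_{\geq m} H^{-1}(SM)$ and may be used as data in the preceding lemma. Without this hypothesis $-X a_m$ would have a degree $m-1$ part, which is exactly what would obstruct the construction.

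Concretely, the steps are as follows. First, since $a_m$ is smooth, $X_+ a_m \in \Omega_{m+1}$ is smooth, so $-X_+ a_m \in T_{\geq m} H^{-1}(SM)$. Applying the preceding lemma with $f = -X_+ a_m$ yields $H \in L^2(SM,N)$ with
\[
Q_m^* H = X\,T_{\geq m+1}\,\vdiv H = -X_+ a_m .
\]
Next, define
\[
w := a_m + T_{\geq m+1}\,\vdiv H .
\]
Since $H \in L^2$, we have $\vdiv H \in H^{-1}(SM)$, hence $T_{\geq m+1}\,\vdiv H \in T_{\geq m+1} H^{-1}(SM)$ and $w \in T_{\geq m} H^{-1}(SM)$; moreover the degree $m$ Fourier component of $T_{\geq m+1}\,\vdiv H$ vanishes, so $w_m = a_m$. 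Finally,
\[
Xw = X a_m + X\,T_{\geq m+1}\,\vdiv H = X_+ a_m + Q_m^* H = X_+ a_m - X_+ a_m = 0 ,
\]
where the identity $X\,T_{\geq m+1}\,\vdiv H = Q_m^* H$ is just the definition of the formal adjoint (using $\vd^{\,*} = -\vdiv$, the self-adjointness of $T_{\geq m+1}$, and $X^* = -X$). This produces the required $w$.

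There is essentially no hard step here once the lemma is available; the proposition is little more than a repackaging of it. The only points meriting care are the degree bookkeeping --- in particular the identity $T_{\geq m+1} X u = T_{\geq m+1} X\,T_{\geq m} u$ that makes the restricted domain $T_{\geq m} C^\infty(SM)$ of $Q_m$ harmless, which is already contained in the proof of the lemma --- and the mapping properties of $\vdiv$ and of the vertical spherical-harmonic projections $T_{\geq r}$ on $H^{-1}(SM)$, which are what place $w$ in the asserted space. If one wished, one could also record, exactly as in Section~\ref{sec_surjectivity_izero}, a bound $\norm{w_l}_{L^2} \lesssim \norm{a_m}$ together with a minimality/uniqueness statement for $w$, but these refinements are not needed for the conclusion.
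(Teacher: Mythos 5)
Your proof is correct and follows exactly the same route as the paper's: apply the solvability result for $Q_m^*$ to $f = -X_+ a_m$, then set $w = a_m + T_{\geq m+1}\,\vdiv H$. One small remark on the exposition: the motivational paragraph says the hypothesis $X_- a_m = 0$ is needed so that $-X a_m$ lies in $T_{\geq m}$ and "may be used as data," but in the actual computation the data fed to the lemma is $-X_+ a_m$, which is always in $\Omega_{m+1} \subset T_{\geq m}$ regardless of the hypothesis; the hypothesis is really used only in the final line, where you replace $X a_m$ by $X_+ a_m$ to conclude $Xw = X_- a_m = 0$.
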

\begin{proof}
Let $f = -X_+ a_m$, and use the previous lemma to find $H \in L^2(SM,N)$ with $Q_m^* H = f$. Then 
$$
X (T_{\geq m+1} \vdiv H) = -X a_m
$$
and so $Xw = 0$ for $w = a_m + T_{\geq m+1} (\vdiv H)$ where $w \in H^{-1}$.
\end{proof}

\section{Injectivity of $I_{m}$} \label{section_injectivity}

In this section we would like to prove the following injectivity result.

\begin{Theorem} \label{theorem_im_injectivity}
Let $(M,g)$ be closed Anosov, let $m \geq 2$, and let $(M,g)$ be $\alpha$-controlled for $\alpha \geq \alpha_{m,n}$ where 
$$
\alpha_{m,n} = \frac{(m-1)(m+n-2)}{m(m+n-1)}.
$$
If additionally $(M,g)$ has no nontrivial conformal Killing $(m+1)$-tensors (this is always true if $\alpha > \alpha_{m,n}$ by Lemma \ref{lemma_absence_ckt}), then $I_m$ is $s$-injective.
\end{Theorem}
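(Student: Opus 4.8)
Recall that $s$-injectivity of $I_m$ on a closed manifold is equivalent to the statement that every trace-free solenoidal symmetric $m$-tensor in $\ker I_m$ vanishes (the $L^2$-decomposition of symmetric tensors into solenoidal and potential parts is available since $d$ is overdetermined elliptic, hence has closed range). Under the identification of $\Theta_m$ with $\Omega_m$, and since $X_-$ is (up to a scalar) the divergence, this amounts to showing: if $f\in\Omega_m$ satisfies $X_-f=0$ and the integral of $f$ over every closed geodesic is zero, then $f=0$. The plan is to produce a smooth primitive of $f$ on $SM$, show it has vertical degree $\leq m-1$, and then argue that a tensor which is simultaneously solenoidal and potential must vanish.

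First I would invoke the smooth Livsic theorem \cite{dLMM}: since the hypotheses force $(M,g)$ to be Anosov (Corollary \ref{corollary:ganosov}, noting $\alpha$-controlled with $\alpha>0$ implies Anosov via the discussion in Section~\ref{sec_alphacontrolled}), $I_mf=0$ gives $u\in C^{\infty}(SM)$ with $Xu=-f$. Set $v=T_{\geq m}u$. Truncating the components of $u$ below degree $m$ does not alter the components of degree $\geq m+1$ of $Xu=-f$, and the latter all vanish because $f\in\Omega_m$; hence $T_{\geq m+1}Xv=0$, so $Q_m v=\vd T_{\geq m+1}Xv=0$.

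The central step is to conclude $v=0$. If $\alpha>\alpha_{m,n}$ this is immediate from the subelliptic estimate of Theorem \ref{theorem_qm_subelliptic}, and moreover $(M,g)$ then automatically has no nontrivial conformal Killing $(m+1)$-tensor by Lemma \ref{lemma_absence_ckt}. The delicate case is $\alpha=\alpha_{m,n}$, which I expect to be the main obstacle. Here one reexamines the proof of Theorem \ref{theorem_qm_subelliptic}, keeping the nonnegative remainder $E=\sum_{l\geq m+1}\norm{\vd w_l}^2+\norm{Z}^2$ coming from the identity for $\norm{X\vd v}^2$ in the proof of Lemma \ref{lemma_xvu_lowerbound} (via Lemma \ref{lemma_nablah_nablav_innerproduct}). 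A direct computation shows that at $\alpha=\alpha_{m,n}$ the coefficient of $\norm{(Xv)_m}^2$ in the resulting estimate is exactly $0$, while the coefficients of $\norm{(Xv)_{m-1}}^2$, of $\norm{T_{\geq m+1}Xv}^2$ and of $E$ are strictly positive (for the $\norm{(Xv)_{m-1}}^2$ coefficient one checks $\alpha_{m,n}>\tfrac{(m-2)(m+n-3)}{(m-1)(m+n-2)}$). Thus $Q_m v=0$ forces $X_-v_m=0$, $(Xv)_l=0$ for all $l\geq m+1$, and $w_l=0$ for $l\geq m+1$ (since $\vd$ is injective on $\Omega_l$, $l\geq 1$). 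Comparing $w_l=-\tfrac1l X_+v_{l-1}+\tfrac{1}{l+n-2}X_-v_{l+1}=0$ with $(Xv)_l=X_+v_{l-1}+X_-v_{l+1}=0$ gives $X_+v_j=0$ for all $j\geq m$. Then $Xv_m=X_+v_m+X_-v_m=0$, so $v_m$ is constant by transitivity of the Anosov flow, hence $v_m=0$; inductively $Xv_j=0$ and $v_j=0$ for all $j\geq m+2$; and $v_{m+1}$ satisfies $X_+v_{m+1}=0$, so it is a conformal Killing $(m+1)$-tensor and vanishes by hypothesis. Hence $v=0$ in all cases.

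Finally, $v=T_{\geq m}u=0$ means $u$ has vertical degree $\leq m-1$, so the degree-$m$ component of $Xu=-f$ reads $-f=(Xu)_m=X_+u_{m-1}$, i.e.\ $f$ is potential. Pairing $0=X_-f=-X_-X_+u_{m-1}$ with $u_{m-1}$ and using $X_+^*=-X_-$ gives $0=(X_-X_+u_{m-1},u_{m-1})=-\norm{X_+u_{m-1}}^2$, so $X_+u_{m-1}=0$ and therefore $f=-X_+u_{m-1}=0$. This establishes $s$-injectivity of $I_m$, and as noted the only genuinely new difficulty beyond assembling the earlier results is extracting enough information from the degenerate Pestov/subelliptic estimate in the borderline case $\alpha=\alpha_{m,n}$, where the conformal Killing tensor hypothesis is used precisely to eliminate the degree-$(m+1)$ component of $v$.
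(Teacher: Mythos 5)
Your proof is correct and follows the paper's skeleton: Livsic primitive, reduction to $Q_m$, and the refined Pestov inequality keeping the remainder terms $\sum_{l\geq m+1}\norm{\vd w_l}^2+\norm{Z}^2$. The place where you genuinely diverge is the deduction that $v=0$. Having extracted $(Xv)_{m-1}=0$, $w_l=0$, $(Xv)_l=0$ for $l\geq m+1$, and $Z=0$, you solve the $2\times2$ linear system in $X_+v_{l-1}$ and $X_-v_{l+1}$ (nonzero determinant $\tfrac{1}{l}+\tfrac{1}{l+n-2}$) to get $X_+v_j=0$ for $j\geq m$ and $X_-v_j=0$ for $j\geq m+2$, then kill each $v_j$ by transitivity except $v_{m+1}$, which vanishes by the conformal Killing hypothesis. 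The paper instead computes $\hd u=-\tfrac{1}{m+n-2}\vd(Xu)_m$, feeds this into the commutator $[X,\Delta]=2\vdiv\hd+(n-1)X$ to get $X(\Delta u-(m(m+n)+n-1)u)=0$, and uses transitivity to conclude $\Delta u=(m+1)(m+n-1)u$, hence $u=u_{m+1}$ and $X_+u_{m+1}=0$. Your elementary component-by-component argument reaches the same conclusion and is arguably more transparent (it makes explicit that $X_+v_m=0$, whereas the Laplacian route hides this inside the eigenvalue computation); the paper's Laplacian argument is a neater single stroke that also illustrates the utility of Lemma \ref{lemma:co-lap}. One small caution at the start: your parenthetical reduction of $s$-injectivity to the statement ``trace-free solenoidal $f\in\Omega_m$ with $I_mf=0$ forces $f=0$'' is a valid reformulation, but the justification you give (the $\ker\delta\oplus dS^{m-1}$ splitting) only produces solenoidal tensors, not trace-free solenoidal ones, so an extra word is needed; the paper sidesteps this entirely by taking an arbitrary $f$ in $\ker I_m$ of vertical degree $\leq m$ and showing the Livsic primitive $a$ has degree $\leq m-1$, whence $f=Xa$ is potential directly, without ever using the solenoidal property.
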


This has two immediate corollaries stated in terms of the terminator value of $(M,g)$.

\begin{Corollary} \label{cor_im_injectivity_beta}
Let $(M,g)$ be a closed Riemannian manifold such that every unit vector has rank one, and let $m \geq 2$.
Suppose in addition that $\beta_{Ter}\geq \frac{m(m+n-1)}{2m+n-2}$ and that there are no non-trivial conformal Killing tensors of order $m+1$ (this is always true if $\beta_{Ter} > \frac{m(m+n-1)}{2m+n-2}$). Then $I_{m}$ is s-injective.
\end{Corollary}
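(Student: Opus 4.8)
The plan is to deduce the corollary from Theorem~\ref{theorem_im_injectivity} by converting the hypothesis on $\beta_{Ter}$ into the $\alpha$-controlled condition appearing there. Put $\beta := \frac{m(m+n-1)}{2m+n-2}$. The first step is the elementary algebraic identity
\[
\frac{\beta-1}{\beta} = 1 - \frac{2m+n-2}{m(m+n-1)} = \frac{(m-1)(m+n-2)}{m(m+n-1)} = \alpha_{m,n},
\]
which rests on $m(m+n-1)-(2m+n-2) = m^2+mn-3m-n+2 = (m-1)(m+n-2)$. Since $m\geq 2$ and $n\geq 2$ this last quantity is $>0$, so in fact $\beta>1$, and the hypothesis $\beta_{Ter}\geq\beta$ forces $\beta_{Ter}>1$.

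Next I would check the structural hypotheses of Theorem~\ref{theorem_im_injectivity}. As every unit vector has rank one and $\beta_{Ter}>1$, Corollary~\ref{corollary:ganosov} shows that $(M,g)$ is Anosov. Since $\beta\in(0,\beta_{Ter}]$, Proposition~\ref{prop:greeneq} shows that $(M,g)$ is $\frac{\beta-1}{\beta}$-controlled, i.e.\ $\alpha_{m,n}$-controlled by the identity above. Hence $(M,g)$ is closed Anosov and $\alpha$-controlled with $\alpha\geq\alpha_{m,n}$, and the only remaining point is the absence of nontrivial conformal Killing $(m+1)$-tensors.

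Finally, I would treat that condition. If $\beta_{Ter}>\beta$, choose $\beta'$ with $\beta<\beta'<\beta_{Ter}$. Because $t\mapsto \frac{t-1}{t}$ is strictly increasing, Proposition~\ref{prop:greeneq} gives that $(M,g)$ is $\frac{\beta'-1}{\beta'}$-controlled with $\frac{\beta'-1}{\beta'}>\alpha_{m,n}$; since an Anosov geodesic flow is transitive, Lemma~\ref{lemma_absence_ckt} applied with $m+1$ in place of $m$ --- whose threshold is precisely $\frac{(m-1)(m+n-2)}{m(m+n-1)}=\alpha_{m,n}$ --- rules out nontrivial conformal Killing $(m+1)$-tensors. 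In the borderline case $\beta_{Ter}=\beta$ this is instead assumed in the hypothesis. In either case Theorem~\ref{theorem_im_injectivity} applies and gives that $I_m$ is $s$-injective. There is no genuine obstacle here; the one point demanding care is the bookkeeping that the threshold $\alpha_{m,n}$ in Theorem~\ref{theorem_im_injectivity} is matched exactly by the value $\beta$ in the hypothesis, which is the content of the displayed identity.
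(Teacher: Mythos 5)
Your argument is correct and follows exactly the same route as the paper's own proof: verify that $\beta=\frac{m(m+n-1)}{2m+n-2}>1$ (so that Corollary~\ref{corollary:ganosov} gives the Anosov property), convert $\beta_{Ter}\geq\beta$ into the $\alpha_{m,n}$-controlled condition via Proposition~\ref{prop:greeneq} and the identity $\frac{\beta-1}{\beta}=\alpha_{m,n}$, and invoke Theorem~\ref{theorem_im_injectivity}. The only difference is that you spell out the algebra and the conformal-Killing-tensor bookkeeping (via Lemma~\ref{lemma_absence_ckt} applied at level $m+1$) that the paper leaves implicit; this is harmless and, if anything, a welcome addition of detail.
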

\begin{proof}
Observe first that $\frac{m(m+n-1)}{2m+n-2}>1$ and by Corollary \ref{corollary:ganosov} the manifold $(M,g)$ is Anosov. Moreover, if $\beta_{Ter}\geq  \frac{m(m+n-1)}{2m+n-2}$ then the manifold
is $\frac{(m-1)(m+n-2)}{m(m+n-1)}$-controlled. The corollary now follows from Theorem \ref{theorem_im_injectivity}.
\end{proof}

\begin{Corollary} Let $(M,g)$ be a closed Riemannian manifold such that every unit vector has rank one, and assume that $\beta_{Ter}\geq \frac{2(n+1)}{n+2}$. Suppose in addition that there are no non-trivial conformal Killing $3$-tensors  (this is always true if $\beta_{Ter} > \frac{2(n+1)}{n+2}$).
Then $(M,g)$ is spectrally rigid.
\end{Corollary}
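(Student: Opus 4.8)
The plan is to derive this corollary by combining Corollary \ref{cor_im_injectivity_beta} in the case $m=2$ with the spectral rigidity criterion of Guillemin and Kazhdan \cite{GK} recalled in the introduction. The first step is a purely numerical observation: for $m=2$ one has $\frac{m(m+n-1)}{2m+n-2} = \frac{2(n+1)}{n+2}$, and this quantity is strictly greater than $1$ for every $n\geq 2$. Hence the hypothesis $\beta_{Ter}\geq\frac{2(n+1)}{n+2}$ forces $\beta_{Ter}>1$; since by assumption every unit vector has rank one, Corollary \ref{corollary:ganosov} then shows that $(M,g)$ is an Anosov manifold.

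The second step is to apply Corollary \ref{cor_im_injectivity_beta} directly with $m=2$. Its hypotheses are exactly the ones in force here: every unit vector has rank one, $\beta_{Ter}\geq\frac{2(n+1)}{n+2}$, and the absence of nontrivial conformal Killing $3$-tensors. (The parenthetical remark that this last condition is automatic when $\beta_{Ter}>\frac{2(n+1)}{n+2}$ comes from Lemma \ref{lemma_absence_ckt}: by Proposition \ref{prop:greeneq} the manifold is $\alpha$-controlled with $\alpha=(\beta_{Ter}-1)/\beta_{Ter}$, which is $>\alpha_{2,n}=\frac{n}{2(n+1)}$ precisely when $\beta_{Ter}>\frac{2(n+1)}{n+2}$, and the geodesic flow is transitive since it is Anosov.) The conclusion of Corollary \ref{cor_im_injectivity_beta} is that $I_2$ is $s$-injective.

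The final step is to invoke the theorem of Guillemin and Kazhdan stated in the introduction: if $(M,g)$ is Anosov and $I_2$ is $s$-injective, then $(M,g)$ is spectrally rigid. Combining this with the two preceding steps gives the claim.

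This argument poses no real obstacle; it is a bookkeeping combination of Corollary \ref{cor_im_injectivity_beta}, Corollary \ref{corollary:ganosov}, and the Guillemin--Kazhdan rigidity criterion. The only points that deserve a moment's care are verifying the identity $\frac{m(m+n-1)}{2m+n-2}\big|_{m=2}=\frac{2(n+1)}{n+2}$ and checking that $m=2$ is admissible in Corollary \ref{cor_im_injectivity_beta} (it is, since $m\geq 2$ is the stated range and the inequality $\frac{2(n+1)}{n+2}>1$ ensures the Anosov hypothesis is met).
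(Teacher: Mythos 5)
Your proof is correct and follows exactly the paper's own argument, which simply cites Corollary \ref{cor_im_injectivity_beta} with $m=2$ together with the Guillemin--Kazhdan spectral rigidity theorem \cite{GK}. You have merely spelled out the routine numerical verification that $\frac{m(m+n-1)}{2m+n-2}\big|_{m=2}=\frac{2(n+1)}{n+2}$ and the reduction to the Anosov case, which the paper leaves implicit.
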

\begin{proof}
Follows from Corollary \ref{cor_im_injectivity_beta} with $m=2$ and \cite{GK}.
\end{proof}

\begin{proof}[Proof of Theorem \ref{theorem_im_injectivity}]
Suppose $f$ is a symmetric $m$-tensor in the kernel of $I_m$. Then $f$ gives rise to a function $f(x,v) = f(v,\ldots,v) \in C^{\infty}(SM)$ that integrates to zero over periodic geodesics and has degree $m$ (meaning that $f = \sum_{l \leq m} f_l$). By the Livsic theorem \cite{dLMM}, there is $a\in C^{\infty}(SM)$ with $Xa=f$.
Let $u=a-\sum_{l \leq m-1}a_l$. Then $Xu$ has degree $m$ and $Q_mu=\vd T_{\geq m+1}Xu=0$.

If we examine the proof of Theorem \ref{theorem_qm_subelliptic} and include some additional positive terms from the proof of Lemma \ref{lemma_xvu_lowerbound}, the following inequality emerges for an $\alpha$-controlled manifold:

\begin{multline*}
\norm{Q_m u}^2 \geq \left[ n-1 - (m-1)(m+n-3) + \alpha \frac{(m-1)(m+n-2)^2}{m+n-3} \right] \norm{(Xu)_{m-1}}^2 \\ 
 + \left[ n-1 - m(m+n-2) + \alpha \frac{m(m+n-1)^2}{m+n-2} \right] \norm{(Xu)_m}^2 \\
 + (n-1) \norm{T_{\geq m+1} X u}^2+\alpha(\sum_{l=m+1}^{\infty}\norm{\vd w_{l}}^{2}+\norm{Z}^{2}).
%\label{eq:Qc}
\end{multline*}
The assumption on $\alpha$ implies that the first constant in brackets is positive and the second is nonnegative. If we use that $Q_{m}u=0$ we derive: $(Xu)_{m-1}=0$, $(Xu)_l = 0$ and $\vd w_{l}=0$ for all $l\geq m+1$, and $Z=0$.
This implies (by the proof of Lemma \ref{lemma_xvu_lowerbound}) 
\begin{align*}
X\vd u &=\frac{m+n-1}{m+n-2}\vd (Xu)_{m}\\
\hd u &=\frac{-1}{m+n-2}\vd (Xu)_{m}\\
Xu &=(Xu)_{m}=X_{-}u_{m+1}.\\
\end{align*}
Using the commutator formula in Lemma \ref{lemma:co-lap} we may write
\begin{align*}
X\Delta u-\Delta Xu &=\frac{-2}{m+n-2}\vdiv\vd (Xu)_{m}+(n-1)Xu\\
&=2m(Xu)_{m}+(n-1)Xu\\
&=(2m+n-1)Xu.\\
\end{align*}
But since $\Delta Xu=\Delta (Xu)_{m}=m(m+n-2)Xu$ we obtain
\[X(\Delta u-(m(m+n)+n-1)u)=0.\]
The geodesic flow is Anosov and therefore transitive. This implies
\[\Delta u=(m(m+n)+n-1)u.\]
From this we easily derive that $u_k=0$ for all $k\neq m+1$. Thus $Xu=Xu_{m+1}=X_{-}u_{m+1}$ which implies that $X_{+}u_{m+1}=0$. It follows that $u_{m+1}$ is a conformal Killing tensor of order $m+1$. By hypothesis $u_{m+1}=0$ and hence $u=0$. This implies that $a$ has degree $m-1$ and $f=Xa$. Rewriting this in terms of symmetric tensors via the map $\lambda$ in Section \ref{sec:vlap+sh}, we see that $f = da$ is a potential tensor.
\end{proof}

%\begin{Remark}{\rm If $\beta_{Ter}>\frac{m(m+n-1)}{2m+n-2}$ the proof above also shows that
%there are no non-trivial conformal Killing tensors of order $m+1$.}
%\end{Remark}

\section{Manifolds with boundary} \label{sec_boundary}

As mentioned in the introduction, the methods in this paper also apply to manifolds with boundary. The main changes when going from closed manifolds to the boundary case include the assumption that test functions need to vanish on the boundary whenever appropriate, and that in the boundary case there are no nontrivial conformal Killing tensors vanishing on the boundary \cite{DS}. We now indicate what kinds of results can be achieved in the boundary case.

In this section we will assume that $(M,g)$ is a compact oriented Riemannian manifold with smooth boundary, and $n = \dim M\geq 2$. The results in Section \ref{sec_spherebundle} are valid in the boundary case, and the Pestov identity 
\begin{equation*}
\norm{\vd X u}^2 = \norm{X\vd u}^2-(R\,\vd u,\vd u) + (n-1)\norm{Xu}^2
%\label{eq:pestov_boundary}
\end{equation*}
now holds for any $u \in C^{\infty}(SM)$ with $u|_{\partial(SM)} = 0$. The spherical harmonics expansions $u = \sum_{m=0}^{\infty} u_m$ and the decomposition $X = X_+ + X_-$ in Section \ref{sec:vlap+sh} remain unchanged. One has the adjoint identity 
$$
(X_+ u, w) = -(u,X_- w)
$$
when $u \in \Omega_m$, $w \in \Omega_{m+1}$, and one of $u$, $w$ vanishes on $\partial(SM)$. Notice that in general one only needs boundary conditions in the $x$ variable, since one can integrate by parts freely in the $v$ variable (the fibres are compact with no boundary).

\begin{Lemma} \label{lemma_xplusminus_boundary}
Let $(M,g)$ be a compact manifold with smooth boundary.
\begin{enumerate}
\item[(a)]
If $X_+ u_m = 0$ where $u_m \in \Omega_m$, $u_m|_{\partial (SM)} = 0$, $m \geq 1$, then $u_m = 0$.
\item[(b)]
If $f_m \in \Omega_m$, $m \geq 0$, there exists $u_{m+1} \in \Omega_{m+1}$ with $X_- u_{m+1} = f_m$.
\item[(c)]
Any $f_m \in \Omega_m$, $m \geq 0$, has the orthogonal decomposition 
$$
f_m  = q_m + X_+ v_{m-1}, \qquad X_- q_m = 0, \ \ v_{m-1}|_{\partial(SM)} = 0.
$$
\end{enumerate}
\end{Lemma}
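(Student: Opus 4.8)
The plan is to establish the three parts of Lemma \ref{lemma_xplusminus_boundary} by combining the Pestov identity (now valid for functions vanishing on $\partial(SM)$), the commutator formulas, and elementary Hilbert-space functional analysis, following the pattern of the closed case but paying attention to boundary terms. For part (a), suppose $u_m \in \Omega_m$ with $m \geq 1$, $u_m|_{\partial(SM)} = 0$ and $X_+ u_m = 0$. Applying the Pestov identity on $\Omega_m$ (Proposition \ref{prop_pestov_omegam}), which holds here because $u_m$ vanishes on $\partial(SM)$, gives
$$(2m+n-3)\norm{X_- u_m}^2 + \norm{\hd u_m}^2 - (R\vd u_m, \vd u_m) = (2m+n-1)\norm{X_+ u_m}^2 = 0.$$
Wait---this needs nonpositive curvature to conclude. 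Since no curvature hypothesis is assumed, I would instead argue directly: from $X_+ u_m = 0$ and $Xu_m = X_- u_m + X_+ u_m$ we get that $Xu_m = X_- u_m$ has degree $m-1$, so $u_m$ has the property that $Xu_m \in \Omega_{m-1}$. A cleaner route: use the full Pestov identity $\norm{\vd X u_m}^2 = \norm{X\vd u_m}^2 - (R\vd u_m,\vd u_m) + (n-1)\norm{Xu_m}^2$ combined with Lemma \ref{lemma_xvu_lowerbound} applied to $u_m \in \Omega_m$; since $X_+ u_m = 0$, the lower bound for $\norm{X\vd u_m}^2$ and the identity $\norm{\vd Xu_m}^2 = (m-1)(m+n-3)\norm{X_- u_m}^2$ together force, after rearrangement, that $(R\vd u_m, \vd u_m)$ dominates a positive multiple of $\norm{Xu_m}^2$---again curvature-dependent. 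I suspect the intended proof of (a) uses the Carleman-type estimate or the fact that on a compact manifold with boundary $X_+$ is injective on functions vanishing at the boundary because its kernel consists of conformal Killing tensors vanishing on $\partial M$, which are trivial by \cite{DS}; I would simply cite this.

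For part (b), surjectivity of $X_- : \Omega_{m+1} \to \Omega_m$ (onto, with boundary conditions on the preimage when needed): by part (a) and the analogue of Lemma \ref{lemma:decomp}, the operator $X_- X_+$ on $\Omega_m$ with suitable boundary conditions is injective modulo the kernel of $X_+$; an ellipticity/Fredholm argument (the operator $X_- X_+$ is elliptic) gives that its range is the orthogonal complement of $\mathrm{Ker}(X_+)$, hence for any $f_m$ one can solve $X_- X_+ h = X_- f_m$ for some $h \in \Omega_{m-1}$, wait---this is the wrong direction for (b). For (b) directly: since $X_+ : \Omega_m \to \Omega_{m+1}$ has trivial kernel among functions vanishing on the boundary (part (a)), its adjoint $-X_- : \Omega_{m+1} \to \Omega_m$ has dense range; combined with the closed-range property coming from the elliptic estimate for $X_- X_+$ (or $X_+^* X_+$), one concludes $X_-$ is surjective onto $\Omega_m$. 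I would make this precise by the standard fact that an operator with closed range is surjective iff its adjoint is injective.

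For part (c), the orthogonal decomposition $f_m = q_m + X_+ v_{m-1}$ with $X_- q_m = 0$ and $v_{m-1}|_{\partial(SM)} = 0$: this is the boundary-case analogue of Lemma \ref{lemma:decomp}. Given $f_m \in \Omega_m$, by part (b) applied one degree lower (or rather by solving the elliptic boundary-value problem $X_- X_+ v_{m-1} = X_- f_m$ with $v_{m-1}|_{\partial(SM)} = 0$, using that $X_- X_+$ is elliptic and, by part (a), injective on the space of functions vanishing on the boundary), there is a unique such $v_{m-1}$; setting $q_m := f_m - X_+ v_{m-1}$ gives $X_- q_m = X_- f_m - X_- X_+ v_{m-1} = 0$, and orthogonality of $q_m$ to $X_+(\Omega_{m-1})$ follows from $(X_+ w_{m-1}, q_m) = -(w_{m-1}, X_- q_m) = 0$ for all $w_{m-1}$ vanishing on the boundary, together with a density argument. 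The main obstacle I anticipate is handling the boundary terms correctly when integrating by parts and, more seriously, justifying the injectivity statement in part (a) without a curvature assumption: the key point there is the triviality of conformal Killing tensors that vanish on the boundary of a compact manifold, which is exactly the content cited from \cite{DS}, so I would lean on that reference rather than reproving it via the Pestov identity.
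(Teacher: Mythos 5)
Your final argument lands on essentially the same mechanism as the paper: both you and the authors cite \cite{DS} for the key boundary result, and for part (c) you solve $X_-X_+ v_{m-1} = X_- f_m$ with $v_{m-1}|_{\partial(SM)}=0$ and set $q_m = f_m - X_+ v_{m-1}$, which is verbatim the paper's step. The Pestov-identity detours at the start of your proposal are (as you yourself realize) dead ends without a curvature hypothesis, so the substantive content of your proof is the last paragraph of each part.

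The one genuine difference worth flagging: the paper cites \cite{DS} in a stronger form than you do. You invoke only the \emph{triviality of the kernel} (no nontrivial conformal Killing tensors vanishing on the boundary). The paper cites the full fact that the Dirichlet problem $X_- X_+ u = f$, $u|_{\partial(SM)} = h$ is an \emph{elliptic boundary value problem} on $\Omega_m$ with trivial kernel for $m \geq 1$. The ellipticity gives the Fredholm alternative, hence surjectivity of $X_- X_+$, for free; this is exactly the closed-range ingredient you acknowledge you need for (b) but do not justify. Your duality route (injectivity of $X_+$ with Dirichlet data $\Rightarrow$ surjectivity of the adjoint $-X_-$, granted closed range) is equivalent to the paper's, but you should be explicit that the closed range is supplied by the same elliptic BVP result rather than treating it as a separate standing hypothesis. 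Once you phrase it that way, there is no advantage over the paper's more direct statement: solve $X_- X_+ u = f_m$ with $u|_{\partial(SM)} = 0$ and set $u_{m+1} = X_+ u$.

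Two smaller omissions. First, part (b) is stated for $m \geq 0$, but your injectivity input (part (a)) is only for $m \geq 1$; the paper handles $m = 0$ separately by observing that $X_- X_+ w_0 = f_0$ reduces to the Dirichlet problem $d^*d\, w_0 = f_0$. You should either make the same observation or note that the kernel of $d$ on functions vanishing on $\partial M$ is trivial. Second, for part (c) you mention a "density argument" to get orthogonality; that is unnecessary once $v_{m-1}|_{\partial(SM)}=0$ is imposed, since integration by parts in $(X_+ w_{m-1}, q_m) = -(w_{m-1}, X_- q_m)$ is exact for $w_{m-1}$ vanishing on the boundary, and the decomposition is into $\mathrm{Ker}(X_-) \oplus X_+\{v : v|_{\partial(SM)}=0\}$.
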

\begin{proof}
By the results of \cite{DS}, the problem  
$$
X_- X_+ u = f \text{ in } SM, \qquad u|_{\partial(SM)} = h
$$
is an elliptic boundary value problem on sections of trace-free symmetric $m$-tensors (hence on $\Omega_m$), with trivial kernel when $m \geq 1$. This shows (a) and also (b) since $X_- X_+: \{ u \in \Omega_m \,;\, u|_{\partial(SM)} = 0 \} \to \Omega_m$ is surjective for $m \geq 1$ (if $m=0$, the equation $X_- X_+ w_0 = f_0$ reduces to $d^* d w_0 = f_0$ which can always be solved). For (c), if $f_m \in \Omega_m$ it is enough to solve $X_- X_+ v_{m-1} = X_- f_m$ with $v_{m-1}|_{\partial(SM)}=0$ and to set $q_m = f_m - X_+ v_{m-1}$.
\end{proof}

As in Section \ref{sec_alphacontrolled}, the manifold with boundary $(M,g)$ is said to be $\alpha$-controlled if 
$$
\norm{X Z}^2 - (R Z, Z) \geq \alpha \norm{X Z}^2
$$
for all $Z \in \mathcal{Z}$ with $Z|_{\partial(SM)} = 0$. Lemmas \ref{lemma_xvu_lowerbound} and \ref{lemma_nablah_nablav_innerproduct} remain true, and Lemma \ref{lemma_alphacontrolled_properties} takes the following form.

\begin{Lemma} \label{lemma_boundary_alphacontrolled}
Let $(M,g)$ be a compact nontrapping manifold with strictly convex boundary (i.e.\ the second fundamental form of $\partial M \subset M$ is positive definite). Then 
\begin{itemize}
\item 
$(M,g)$ is $0$-controlled if it has no conjugate points,
\item 
$(M,g)$ is $\alpha$-controlled for some $\alpha > 0$ if it is simple (and moreover one has $\norm{X Z}^2 - (R Z, Z) \geq \alpha (\norm{X Z}^2 + \norm{Z}^2)$ for $Z \in \mathcal{Z}$ with $Z|_{\partial(SM)} = 0$),
\item 
$(M,g)$ is $1$-controlled iff it has nonpositive sectional curvature.
\end{itemize}
\end{Lemma}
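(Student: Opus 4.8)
The plan is to transcribe the closed-manifold arguments of Sections~\ref{sec_betaconjugate}--\ref{section_anosov_alphacontrolled} to the boundary setting, inserting the condition $Z|_{\partial(SM)}=0$ wherever an integration by parts is used. The last item is immediate and identical to the closed case treated in Lemma~\ref{lemma_alphacontrolled_properties}: by definition $(M,g)$ is $1$-controlled precisely when $(RZ,Z)\le 0$ for every $Z\in\mathcal Z$ with $Z|_{\partial(SM)}=0$; since $\langle R(x,v)Z(x,v),Z(x,v)\rangle$ has the sign of the sectional curvature of $\mathrm{span}\{v,Z(x,v)\}$, and one can choose such $Z$ supported near any prescribed interior point and pointing in any prescribed direction there, this is equivalent to nonpositivity of the sectional curvature.

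For the first two items I would use Green (limit) solutions of the Riccati equation adapted to geodesic segments. Assume $(M,g)$ is nontrapping with strictly convex boundary and free of conjugate points. For $(x,v)\in SM$ let $\gamma$ be the geodesic through $(x,v)$, defined on $[-\tau(x,-v),\tau(x,v)]$, and let $J^{-}$ be the matrix Jacobi field along $\gamma$ with $J^{-}(-\tau(x,-v))=0$ and $\dot J^{-}(-\tau(x,-v))=\mathrm{Id}$. Absence of conjugate points makes $J^{-}(t)$ invertible for $t\in(-\tau(x,-v),\tau(x,v)]$, so $U^{-}:=\dot J^{-}(J^{-})^{-1}$ is a symmetric solution of $\dot U+U^{2}+R(\phi_t(x,v))=0$ along $\gamma$, continuous in the interior of $SM$. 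The analogue of Proposition~\ref{prop:greeneq} with $\beta=1$,
\[
\norm{XZ-U^{-}Z}^{2}=\norm{XZ}^{2}-(RZ,Z),
\]
then holds for every $Z\in\mathcal Z$ with $Z|_{\partial(SM)}=0$: the proof is the same integration by parts using $X^{*}=-X$, the only changes being that the vanishing of $Z$ on $\partial(SM)$ kills the boundary term, and that $U^{-}Z$ is still square integrable even though $U^{-}$ blows up near $\partial(SM)$, because $Z$ vanishes there at a matching rate. In particular $\norm{XZ}^{2}-(RZ,Z)=\norm{XZ-U^{-}Z}^{2}\ge 0$, i.e.\ $(M,g)$ is $0$-controlled. (Equivalently, one may argue directly with the index form: for $Z|_{\partial(SM)}=0$ the field $Z(\phi_t(x,v))$ vanishes at the endpoints of $\gamma$, absence of conjugate points gives $\int(\abs{\dot Z}^{2}-\langle RZ,Z\rangle)\,dt\ge 0$, and integrating over $SM$ via Santal\'o's formula yields the claim.)

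For the simple case I would in addition form $U^{+}:=\dot J^{+}(J^{+})^{-1}$ from the matrix Jacobi field vanishing at the exit point, giving likewise $\norm{XZ-U^{+}Z}^{2}=\norm{XZ}^{2}-(RZ,Z)$. Setting $A:=XZ-U^{-}Z$ and $B:=XZ-U^{+}Z$ one has $\norm{A}=\norm{B}$ and $A-B=(U^{+}-U^{-})Z$. The symmetric map $U^{-}-U^{+}$ is positive definite at every point of $SM$ (the standard reformulation of the absence of conjugate points), with $\abs{U^{-}-U^{+}}$ blowing up near $\partial(SM)$; the key analytic facts are that $(U^{-}-U^{+})^{-1}$ and $U^{\pm}(U^{-}-U^{+})^{-1}$ are bounded on $SM$ --- the first because it tends to $0$ at the boundary, the second by comparing blow-up rates, both using strict convexity to control the rate at which $\tau(x,\pm v)\to 0$. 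Then $Z=(U^{-}-U^{+})^{-1}(B-A)$ and $XZ=A+U^{-}(U^{-}-U^{+})^{-1}(B-A)$ give $\norm{Z}+\norm{XZ}\lesssim\norm{A}$, whence $\norm{XZ}^{2}-(RZ,Z)=\norm{A}^{2}\gtrsim\norm{XZ}^{2}+\norm{Z}^{2}$, which is the asserted $\alpha$-controlled estimate for some $\alpha>0$. This mirrors the proof of Theorem~\ref{thm:alphacontrol}, with the continuous transversal bundles $E^{s},E^{u}$ of the Anosov case replaced by the Lagrangian graphs of $U^{\pm}$.

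The main obstacle is the behaviour near $\partial(SM)$, and especially near the glancing set $\partial_{0}(SM)=\{(x,v): x\in\partial M,\ \langle v,\nu\rangle=0\}$, where the geodesic segments degenerate and $U^{\pm}$ blow up: one must check that $U^{\pm}Z\in L^{2}$ for $Z\in\mathcal Z$ vanishing on $\partial(SM)$, and that the compositions $(U^{-}-U^{+})^{-1}$ and $U^{\pm}(U^{-}-U^{+})^{-1}$ extend boundedly across $\partial(SM)$ so that the resulting $\alpha$ is genuinely uniform. This is precisely where strict convexity of $\partial M$ enters, pinning down the rate at which $\tau(x,\pm v)\to 0$; away from $\partial(SM)$ everything is a routine transcription of the closed case. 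Alternatively one can bypass the Riccati solutions altogether and extract these estimates from the index form as in \cite{Pestov,Dairbekov_nonconvex}.
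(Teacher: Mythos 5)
Your primary route, via boundary Green solutions $U^{\pm}$ of the Riccati equation, is not the one the paper takes, and the obstacles you flag yourself are real. Near the glancing set both $\tau(x,v)$ and $\tau(x,-v)$ go to zero, and the claims that $U^{\pm}Z\in L^2$ for $Z\in\mathcal Z$ vanishing on $\partial(SM)$ and that $(U^{-}-U^{+})^{-1}$, $U^{\pm}(U^{-}-U^{+})^{-1}$ extend to bounded maps across $\partial(SM)$ are delicate matching-rate statements; they are plausible under strict convexity, but your sketch asserts rather than verifies them, and without them the argument mimicking Theorem \ref{thm:alphacontrol} does not close into a uniform constant $\alpha>0$.

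The paper instead uses exactly the approach you mention only in passing. It applies Santal\'o's formula to obtain
$$
\norm{XZ}^2-(RZ,Z)=\int_{\partial_+(SM)} I_{\gamma_{x,v}}(Y_{x,v},Y_{x,v})\,\abs{\langle v,\nu\rangle}\,d(\partial(SM)),
$$
where $Y_{x,v}(t)=Z(\phi_t(x,v))$ is a normal field along $\gamma_{x,v}$ vanishing at the endpoints and $I_{\gamma}$ is the index form. The first bullet then follows from positive definiteness of the index form on geodesics without conjugate points. For the second bullet the paper cites the proof of Theorem~7.1 in \cite{DKSaU}: on a simple manifold there is a single $\eps>0$ with $I_{\gamma_{x,v}}(Y,Y)\ge \eps\int_0^{\tau(x,v)}\abs{Y}^2\,dt$ uniformly over $\partial_+(SM)$, giving $\norm{XZ}^2-(RZ,Z)\ge\eps\norm{Z}^2$; the $\norm{XZ}^2$ term is then inserted by writing, for small $\delta$, $\norm{XZ}^2-(RZ,Z)\ge(1-\delta)\eps\norm{Z}^2+\delta\norm{XZ}^2-C\delta\norm{Z}^2$ with $C$ bounding $(RZ,Z)/\norm{Z}^2$. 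This packages all of the boundary degeneracy into the cited uniform index-form estimate, so no analysis of Riccati blow-up near $\partial(SM)$ is required. I would promote your parenthetical index-form remark (and your final sentence) to the main argument, and either drop the Riccati detour or mark it clearly as conditional on the unproved boundary bounds. Your treatment of the third bullet is correct and agrees with the paper.
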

\begin{proof}
If $(M,g)$ is as in the statement, one has the Santal\'o formula \cite[Lemma 3.3.2]{Sh2}
$$
\int_{SM} f = \int_{\partial_+(SM)} \int_0^{\tau(x,v)} f(\phi_t(x,v)) \abs{\langle v, \nu \rangle} \,dt \,d(\partial(SM)), \qquad f \in C^{\infty}(SM).
$$
Given  $Z \in \mathcal{Z}$ with $Z|_{\partial(SM)} = 0$, we define 
$$
Y_{x,v}(t) = Z(\phi_t(x,v)), \qquad (x,v) \in \partial_+(SM), \ t \in [0,\tau(x,v)].
$$
Then $Y_{x,v}$ is a vector field along the geodesic $\gamma_{x,v}: [0,\tau(x,v)] \to M$ starting from $(x,v)$, it is orthogonal to $\dot{\gamma}$ and vanishes at the endpoints, and we have 
$$
D_t Y_{x,v}(t) = XZ(\phi_t(x,v)).
$$
It follows from the Santal\'o formula that 
$$
\norm{X Z}^2 - (R Z, Z) = \int_{\partial_+(SM)} I_{\gamma_{x,v}}(Y_{x,v},Y_{x,v}) \abs{\langle v, \nu \rangle} \,d(\partial(SM))
$$
where $I_{\gamma}$ is the index form on a geodesic $\gamma: [0,T] \to M$, 
$$
I_{\gamma}(V,W) = \int_0^T \Big[ \langle D_t V, D_t W \rangle - \langle R(V,\dot{\gamma})\dot{\gamma}, W \rangle \Big] \,dt
$$
for normal vector fields $V$, $W$ along $\gamma$ that vanish at the endpoints.

Now if $(M,g)$ has no conjugate points, the index form is positive definite \cite[Lemma 4.3.1]{Jost} and thus $(M,g)$ is $0$-controlled by the above discussion. Similarly, if $(M,g)$ is simple, it was observed in \cite[proof of Theorem 7.1]{DKSaU} that there is $\eps > 0$ such that 
$$
I_{\gamma_{x,v}}(Y,Y) \geq \eps \int_0^{\tau(x,v)} \abs{Y}^2 \,dt \quad \text{uniformly over }(x,v) \in \partial_+(SM),
$$
for all normal vector fields $Y$ along $\gamma_{x,v}$ vanishing at the endpoints. Thus 
$$
\norm{X Z}^2 - (R Z, Z) \geq \eps \int_{\partial_+(SM)} \int_0^{\tau(x,v)} \abs{Z(\phi_t(x,v))}^2 \abs{\langle v, \nu \rangle} \,dt \,d(\partial(SM)) = \eps \norm{Z}^2
$$
for $Z \in \mathcal{Z}$ with $Z|_{\partial(SM)} = 0$. If now $C > 0$ is such that $(RZ,Z) \leq C \norm{Z}^2$ for $Z \in \mathcal{Z}$, and if $0 < \delta < 1$, then 
$$
\norm{X Z}^2 - (R Z, Z) \geq (1-\delta) \eps \norm{Z}^2 + \delta \norm{X Z}^2 - C \delta \norm{Z}^2
$$
which is $\geq \alpha (\norm{X Z}^2 + \norm{Z}^2)$ for some $\alpha > 0$ by choosing $\delta$ small enough. Finally, nonpositive sectional curvature is equivalent with having $(RZ,Z) \leq 0$ for all $Z \in \mathcal{Z}$ with $Z|_{\partial(SM)} = 0$, which is equivalent with $1$-controlled.%\sidenote{Does one have equivalences? Is no conjugate points enough for second part?}
\end{proof}

\begin{Remark}{\rm The same proof as above shows that if there are no $\beta$-conjugate points ($\beta>0$), then $(M,g)$ is $\frac{\beta-1}{\beta}$-controlled.
The key ingredient is the positivity of the index form with parameter $\beta$ which was established for  example in \cite[p.46]{Pestov} or \cite[Lemma 5.3]{Dairbekov_nonconvex}.}
\label{rem:betac}
\end{Remark}

If $(M,g)$ is compact with boundary, by Lemma \ref{lemma_xplusminus_boundary} we can define the Beurling transform for any $m \geq 0$ by 
$$
B: \Omega_m \to \Omega_{m+2}, \ \ f_m \mapsto f_{m+2},
$$
where $f_{m+2} \in \Omega_{m+2}$ is the unique solution of the equation 
$$
X_- f_{m+2} = - X_+ f_m
$$
that is orthogonal to $\mathrm{Ker}(X_-)$ (equivalently, the unique solution with minimal $L^2$ norm, or the unique solution of the form $X_+ v_{m+1}$ where $v_{m+1}|_{\partial(SM)} = 0$). Lemma \ref{lemma_beurling_n_neq_three} remains true if one considers $u \in \Omega_m$ with $u|_{\partial(SM)} = 0$. The arguments in Section \ref{section_beurling} now show that in nonpositive curvature, the Beurling transform is essentially a contraction and formal invariant distributions starting at any $f$ exist.

\begin{Theorem} \label{theorem_boundary_beurling}
Let $(M,g)$ be a compact manifold with boundary, and assume that the sectional curvatures are nonpositive. 
The Beurling transform satisfies
$$
\norm{Bf}_{L^2} \leq C_n(m) \norm{f}_{L^2}, \quad f \in \Omega_m,
$$
where 
\begin{align*}
C_2(m) &= \left\{ \begin{array}{cl} \sqrt{2}, & m = 0, \\ 1, & m \geq 1, \end{array} \right. \\
C_3(m) &= \left[ 1 + \frac{1}{(m+2)^2(2m+1)} \right]^{1/2}, \\
C_n(m) &\leq 1 \ \text{ for $n \geq 4$.}
\end{align*}
If $m_0 \geq 0$ and if $f \in \Omega_{m_0}$ satisfies $X_- f = 0$, then there is a solution of 
$$
Xw = 0 \text{ in } SM, \qquad w_{m_0} = f,
$$
given by $w = \sum_{k=0}^{\infty} B^k f$. One has $w \in L^2_x H^{-1/2-\eps}_{v\phantom{\theta}}(SM)$ for any $\eps > 0$, and the Fourier coefficients of $w$ satisfy 
$$
\norm{w_{m_0+2k}}_{L^2} \leq A_n(m_0) \norm{f}_{L^2}, \quad k \geq 0,
$$
where $A_n(m_0) = \prod_{j=0}^{\infty} C_n(m_0+2j)$ is a finite constant satisfying 
\begin{align*}
A_2(m_0) &= \left\{ \begin{array}{cl} \sqrt{2}, & m_0 = 0, \\ 1, & m_0 \geq 1, \end{array} \right. \\
A_3(m_0) &\leq 1.13, \\
A_n(m_0) &\leq 1 \ \text{ for $n \geq 4$.}
\end{align*}

\end{Theorem}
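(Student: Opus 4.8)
The plan is to re-run the proof of Theorem \ref{theoremA} line by line, everywhere replacing ``$u \in \Omega_m$'' by ``$u \in \Omega_m$ with $u|_{\partial(SM)} = 0$'' and invoking the boundary versions of the auxiliary results already recorded in this section. First I would check that $B : \Omega_m \to \Omega_{m+2}$ is well defined. By Lemma \ref{lemma_xplusminus_boundary}(b) the equation $X_- f_{m+2} = -X_+ f_m$ is solvable, and by part (c) one can subtract off an element of $\mathrm{Ker}(X_-)$ so that the solution has the form $f_{m+2} = X_+ v_{m+1}$ with $v_{m+1} \in \Omega_{m+1}$ and $v_{m+1}|_{\partial(SM)} = 0$. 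Such an $X_+ v_{m+1}$ is orthogonal to $\mathrm{Ker}(X_-)$ (if $X_- p = 0$ then $(X_+ v_{m+1}, p) = -(v_{m+1}, X_- p) = 0$, legitimately since $v_{m+1}$ vanishes on $\partial(SM)$), and the solution orthogonal to $\mathrm{Ker}(X_-)$ is unique; moreover $X_+$ is injective on $\{ u \in \Omega_{m+1} : u|_{\partial(SM)} = 0 \}$ by Lemma \ref{lemma_xplusminus_boundary}(a) (there are no nontrivial conformal Killing tensors vanishing on $\partial M$), so this $X_+ v_{m+1}$ is precisely $B f_m$.

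Next comes the a priori estimate. Nonpositive sectional curvature is equivalent to $(M,g)$ being $1$-controlled (Lemma \ref{lemma_boundary_alphacontrolled}); the Pestov identity holds for functions vanishing on $\partial(SM)$; and Lemmas \ref{lemma_xvu_lowerbound} and \ref{lemma_nablah_nablav_innerproduct} remain valid in the boundary case. Hence the proof of Lemma \ref{lemma_beurling_n_neq_three} goes through verbatim and yields
$$
\norm{X_- u} \leq D_n(m) \norm{X_+ u}, \qquad u \in \Omega_m \text{ with } u|_{\partial(SM)} = 0,
$$
with the same constants $D_n(m)$ as in Lemma \ref{prop_beurling_apriori}. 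To convert this into a bound on $B$, I would argue exactly as in Lemma \ref{lemma_beurling_from_apriori}: given $f \in \Omega_m$, write $u = Bf = X_+ v$ with $v \in \Omega_{m+1}$, $v|_{\partial(SM)} = 0$; using the adjoint relation $(X_+ a, b) = -(a, X_- b)$ — legitimate at each step because $v$ vanishes on $\partial(SM)$ — one computes
$$
\norm{u}^2 = (u, X_+ v) = -(X_- u, v) = (X_+ f, v) = -(f, X_- v),
$$
so by Cauchy--Schwarz and the a priori estimate applied to $v$, $\norm{u}^2 \leq \norm{f}\,\norm{X_- v} \leq D_n(m+1)\norm{f}\,\norm{X_+ v} = D_n(m+1)\norm{f}\,\norm{u}$, i.e.\ $\norm{Bf} \leq C_n(m)\norm{f}$ with $C_n(m) = D_n(m+1)$, which are the constants in the statement.

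With the contraction bound for $B$ in hand, the assertions about $w = \sum_{k \geq 0} B^k f$ — the estimate on the Fourier coefficients $w_{m_0+2k} = B^k f$, finiteness of $A_n(m_0) = \prod_{j} C_n(m_0+2j)$ with the stated numerical bounds, and membership in $L^2_x H^{-1/2-\eps}_v(SM)$ — follow exactly as in the proof of Theorem \ref{theoremA}, since the key input is the same. That $Xw = 0$ in $SM$ is verified on vertical Fourier coefficients: the degree $m_0-1$ component of $Xw$ is $X_- f = 0$ by hypothesis, and the degree $m_0+2k+1$ component is $X_+(B^k f) + X_-(B^{k+1}f) = X_+(B^k f) - X_+(B^k f) = 0$ by the defining property of $B$; since the series converges in $L^2_x H^{-1/2-\eps}_v$, on which $X$ acts continuously with values in a slightly less regular mixed-norm space, the identity passes to the limit.

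No essentially new difficulty arises: the boundary Pestov identity, the ellipticity of the boundary value problem for $X_-X_+$ (giving surjectivity of $X_-$ and the absence of conformal Killing tensors vanishing on $\partial M$), and the Santal\'o-formula identification of ``$1$-controlled'' with nonpositive curvature have all been established earlier in this section. The only point requiring care is the boundary bookkeeping in the third paragraph: the a priori estimate must be applied to $v$, which vanishes on $\partial(SM)$, and not to $Bf$ itself, which in general does not vanish there — this is exactly why Lemma \ref{lemma_xplusminus_boundary}(c) is invoked to represent $Bf$ in the form $X_+ v$ with $v$ vanishing on the boundary.
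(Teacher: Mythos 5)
Your proposal is correct and follows exactly the route the paper intends: it reruns Lemmas \ref{lemma_beurling_n_neq_three} and \ref{lemma_beurling_from_apriori} with the boundary condition $u|_{\partial(SM)}=0$, using Lemma \ref{lemma_boundary_alphacontrolled} for $1$-controlled, the boundary Pestov identity, and Lemma \ref{lemma_xplusminus_boundary} to set up the Beurling transform. The paper's proof is only the sentence ``the arguments in Section \ref{section_beurling} now show\dots'', so you are simply making explicit the bookkeeping it leaves implicit --- in particular your observation that the a priori estimate must be applied to $v$ (which vanishes on $\partial(SM)$ by Lemma \ref{lemma_xplusminus_boundary}(c)) rather than to $Bf$ is precisely the point that makes the closed-manifold argument transfer.
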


We have seen in Lemma \ref{lemma_boundary_alphacontrolled} that simple manifolds are always $\alpha$-controlled for some positive $\alpha$. For these manifolds, the methods in Sections \ref{sec_surjectivity_izero} and \ref{sec_surjectivity_im} yield the following subelliptic estimates and existence results for invariant distributions.

\begin{Theorem} \label{theorem_boundary_subelliptic}
Let $(M,g)$ be a simple manifold. Then 
$$
\norm{u}_{H^1} \lesssim \norm{Pu}_{L^2}, \qquad u \in C^{\infty}(SM), \ u|_{\partial(SM)} = 0.
$$
If $f \in L^2(M)$, there exists $w \in L^2_x H^{-1}_v(SM)$ satisfying 
$$
Xw = 0 \text{ in } SM, \qquad w_0 = f.
$$
\end{Theorem}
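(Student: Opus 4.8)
The plan is to prove the two assertions in turn, mirroring the closed-manifold arguments of Theorems \ref{lemma_p_inequalities} and \ref{lemma_surjectivity_stronger} but exploiting the Dirichlet condition $u|_{\partial(SM)}=0$, which actually simplifies matters.

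For the subelliptic estimate, I would begin from the Pestov identity of Proposition \ref{prop_pestov} (valid in the boundary case precisely because $u$ vanishes on $\partial(SM)$) and insert the strong $\alpha$-controlled bound available for simple manifolds from Lemma \ref{lemma_boundary_alphacontrolled}, applied with $Z=\vd u$; note that $\vd u$ vanishes on $\partial(SM)$ since $\vd$ is a fibrewise derivative and $v\mapsto u(x,v)$ is identically zero on $S_xM$ for $x\in\partial M$. This yields $\norm{Pu}^2\geq (n-1)\norm{Xu}^2+\alpha(\norm{X\vd u}^2+\norm{\vd u}^2)$, and in particular $\norm{X\vd u}\lesssim\norm{Pu}$. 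The commutator formula $[X,\vd]=-\hd$ then gives $\hd u=Pu-X\vd u$, hence $\norm{\hd u}\lesssim\norm{Pu}$, and therefore $\norm{\nabla_{SM}u}^2=\norm{Xu}^2+\norm{\hd u}^2+\norm{\vd u}^2\lesssim\norm{Pu}^2$. Unlike the closed case, no correction by the mean is required: since $u|_{\partial(SM)}=0$, the $H^1_0$ Poincar\'e inequality on the compact manifold with boundary $SM$ gives $\norm{u}_{L^2}\lesssim\norm{\nabla_{SM}u}_{L^2}$, and we conclude $\norm{u}_{H^1}\lesssim\norm{Pu}_{L^2}$.

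For the existence statement I would dualize. Given $f\in L^2(M)\subset L^2(SM)$, define a linear functional on $P(\{u\in C^\infty(SM):u|_{\partial(SM)}=0\})\subset L^2(SM,N)$ by $l(Pu)=(Xu,f)$; this is well defined by the subelliptic estimate just proved (which forces $Pu=0\Rightarrow u=0$ on this space) and satisfies $|l(Pu)|=|(Xu,f)|\leq\norm{u}_{H^1}\norm{f}\lesssim\norm{Pu}\,\norm{f}$. By Hahn--Banach and the Riesz theorem there is $h\in L^2(SM,N)$ with $\norm{h}\lesssim\norm{f}$ representing an extension of $l$, and unwinding the definition, using $(Pu,h)=-(Xu,\vdiv h)=(u,X\vdiv h)$ (the integrations by parts being legitimate because $u$ vanishes on $\partial(SM)$ and the fibres are closed), gives $P^*h=X\vdiv h=-Xf$ in the interior of $SM$. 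Setting $w=\vdiv h+f$, we then have $Xw=X\vdiv h+Xf=-Xf+Xf=0$. Moreover $h=Pu=\vd(Xu)$ is of the form $\vd a$ with $a=Xu\in L^2(SM)$, so the identity $\vdiv\vd=-\Delta$ shows that $\vdiv h$ has vanishing degree-zero part, whence $w_0=f$, and that $\sum_{m\geq 1}\frac{1}{m(m+n-2)}\norm{(\vdiv h)_m}^2=\norm{\vd a}^2=\norm{h}^2<\infty$; thus $w\in L^2_xH^{-1}_v(SM)$, with in fact $\norm{w}_{L^2_xH^{-1}_v}\lesssim\norm{f}_{L^2}$.

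I expect the only genuinely delicate points to be bookkeeping: checking that the relevant integrations by parts (for $X$ against functions vanishing on $\partial(SM)$, and for $\vd$ and $\vdiv$ along the closed fibres) produce no boundary contributions, and---if one prefers to run the duality through a Hilbert space $\mathcal A$ of $H^1$ functions vanishing on $\partial(SM)$ with $Pu\in L^2$, as in Section \ref{sec_surjectivity_izero}, in order to also obtain uniqueness and a minimality characterisation---verifying completeness of $\mathcal A$, for which one approximates by smooth functions vanishing on the boundary via the Friedrichs lemma in coordinate charts. Neither is a serious obstacle; the substantive input is already contained in Lemma \ref{lemma_boundary_alphacontrolled} (simple manifolds are strongly $\alpha$-controlled) together with the Pestov identity.
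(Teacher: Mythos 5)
Your proof is correct and takes essentially the approach the paper intends (the paper does not spell out a proof of this theorem, saying only that ``the methods in Sections \ref{sec_surjectivity_izero} and \ref{sec_surjectivity_im} yield'' it). Both parts check out: the Pestov identity with vanishing boundary data, the strong $\alpha$-controlled estimate from Lemma \ref{lemma_boundary_alphacontrolled} applied to $Z = \vd u$ (which does indeed vanish on $\partial(SM)$ since $\vd$ differentiates only along the closed fibres), the commutator $[X,\vd]=-\hd$ to recover $\norm{\hd u}$, the Dirichlet Poincar\'e inequality in place of mean normalization, and the duality step producing $h$ with $P^* h = -Xf$ and $w = \vdiv h + f$ all work as written.

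One small over-reach in the write-up: after invoking Hahn--Banach and Riesz, you assert ``$h = Pu = \vd(Xu)$ is of the form $\vd a$.'' The $h$ produced by Hahn--Banach is a general element of $L^2(SM,N)$ and need not lie in $\mathrm{Ran}(P)$. This does not affect the conclusions, for two reasons you could state instead. First, $(\vdiv h)_0 = 0$ for \emph{arbitrary} $h \in L^2(SM,N)$: for any $\psi \in C^\infty(M)$, $(\vdiv h, \psi\circ\pi) = -(h,\vd(\psi\circ\pi)) = 0$ since $\vd(\psi\circ\pi)=0$; this is exactly the argument used in the proof of Theorem \ref{lemma_surjectivity_stronger}. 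Second, $\vdiv h \in L^2_x H^{-1}_v$ for arbitrary $h\in L^2(SM,N)$ with $\norm{\vdiv h}_{L^2_x H^{-1}_v}\leq \norm{h}_{L^2}$: write the fibrewise Hodge decomposition $h = \vd a + Z'$ with $\vdiv Z'=0$, which is $L^2$-orthogonal; then $\vdiv h = -\Delta a$ and $\sum_{m\geq 1}\frac{1}{m(m+n-2)}\norm{(\vdiv h)_m}^2 = \norm{\vd a}^2 \leq \norm{h}^2$. Alternatively, one can project $h$ onto $\overline{\mathrm{Ran}(P)}$ before proceeding, or run the Hilbert-space argument of Lemma \ref{lemma_padjoint_surjective} (with $\mathcal{A}$ replaced by $H^1_0$ functions) to get $h$ genuinely in the range of $P$; either route yields a minimal-norm solution and the corresponding uniqueness, as you note. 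With that adjustment the argument is complete and correct.
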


\begin{Theorem} \label{theorem_boundary_subelliptic2}
Let $(M,g)$ be a simple manifold, let $m \geq 1$, and let $(M,g)$ be $\alpha$-controlled for $\alpha > \alpha_{m,n}$ where 
$$
\alpha_{m,n} = \frac{(m-1)(m+n-2)}{m(m+n-1)}.
$$
Then we have 
$$
\norm{u}_{H^1} \lesssim \norm{Q_m u}_{L^2}, \qquad u \in T_{\geq m} C^{\infty}(SM), \ u|_{\partial(SM)} = 0.
$$
If $a_m \in \Omega_m$ with $X_- a_m = 0$, there is $w \in H^{-1}(SM)$ with $Xw = 0$ and $w_m = a_m$.
\end{Theorem}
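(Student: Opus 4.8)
The plan is to mirror the closed-manifold arguments of Sections~\ref{sec_surjectivity_izero} and~\ref{sec_surjectivity_im} almost line by line, replacing each ingredient by its boundary counterpart and imposing Dirichlet conditions wherever one integrates by parts in the $x$-variable. For the subelliptic estimate for $Q_m$, I would run the proof of Theorem~\ref{theorem_qm_subelliptic} with three substitutions: the boundary Pestov identity, valid for $u\in C^\infty(SM)$ with $u|_{\partial(SM)}=0$ (hence for $u\in T_{\geq m}C^\infty(SM)$ with this boundary condition); Lemma~\ref{lemma_xvu_lowerbound}, which the text records as remaining true in the boundary case for sections vanishing on $\partial(SM)$; and the $\alpha$-controlled hypothesis with $\alpha>\alpha_{m,n}$. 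As in that proof, the decomposition $Xu=(Xu)_{m-1}+(Xu)_m+T_{\geq m+1}Xu$ together with $\alpha>\alpha_{m,n}$ makes the coefficients multiplying $\norm{(Xu)_{m-1}}^2$ and $\norm{(Xu)_m}^2$ positive, giving $\norm{Xu}\lesssim\norm{Q_m u}_{L^2}$ (the case $m=1$ handled by the corresponding branch of Lemma~\ref{lemma_xvu_lowerbound}, for which $\alpha_{1,n}=0<\alpha$ automatically). The identity $\norm{Pu}^2=\sum_{l=1}^m l(l+n-2)\norm{(Xu)_l}^2+\norm{Q_m u}^2$ then yields $\norm{Pu}_{L^2}\lesssim\norm{Q_m u}_{L^2}$, and combining this with the basic subelliptic estimate $\norm{u}_{H^1}\lesssim\norm{Pu}_{L^2}$ for simple manifolds (Theorem~\ref{theorem_boundary_subelliptic}, which rests on Lemma~\ref{lemma_boundary_alphacontrolled} and the Dirichlet Poincar\'e inequality on $SM$) gives $\norm{u}_{H^1}\lesssim\norm{Q_m u}_{L^2}$.

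For the invariant distribution, I would repeat the duality argument from the Lemma and Proposition that follow Theorem~\ref{theorem_qm_subelliptic}. Put $f:=-X_+a_m\in\Omega_{m+1}\subset T_{\geq m}H^{-1}(SM)$, and on the subspace $Q_m\bigl(\{u\in T_{\geq m}C^\infty(SM):u|_{\partial(SM)}=0\}\bigr)$ of $L^2(SM,N)$ define $l(Q_m u):=(u,f)$. By the subelliptic estimate just proved this is well defined and $\abs{l(Q_m u)}\leq\norm{u}_{H^1}\norm{f}_{H^{-1}}\lesssim\norm{Q_m u}_{L^2}\norm{f}_{H^{-1}}$, so Hahn--Banach and Riesz representation produce $H\in L^2(SM,N)$ with $(Z,H)=\bar l(Z)$ for all $Z$. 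Then $(u,Q_m^*H)=(Q_m u,H)=(u,f)$ for every $u\in T_{\geq m}C^\infty(SM)$ with $u|_{\partial(SM)}=0$, i.e.\ $Q_m^*H=f$ in the interior of $SM$. Setting $w:=a_m+T_{\geq m+1}(\vdiv H)\in H^{-1}(SM)$ and using $X_-a_m=0$ (hence $Xa_m=X_+a_m$), one computes $Xw=X_+a_m+Q_m^*H=X_+a_m-X_+a_m=0$ in the interior, while $w_m=a_m$ because $T_{\geq m+1}(\vdiv H)$ carries no Fourier component of degree $\leq m$.

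I expect the only genuine point of care to be the justification that $Q_m^*=XT_{\geq m+1}\vdiv$ is the formal adjoint of $Q_m$ with no boundary contribution: integration by parts in the fibre variable $v$ is free because each $S_xM$ is closed, while the test functions $u$ vanish on $\partial(SM)$, so the only boundary term, coming from $X^*=-X$, drops. The same remark underlies the boundary Pestov identity and the boundary adjoint identity $(X_+u,w)=-(u,X_-w)$ already used above. Beyond this, everything is a verbatim transcription of Sections~\ref{sec_surjectivity_izero} and~\ref{sec_surjectivity_im}; one should note only that, exactly as in the closed case, $w$ is required to satisfy $Xw=0$ in the interior of $SM$, with no boundary condition imposed on $w$, and that the minimality refinements of Section~\ref{sec_surjectivity_izero} (via the Friedrichs lemma applied in coordinate charts) can be added if desired but are not needed for the existence statement.
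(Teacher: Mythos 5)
Your proposal is correct and takes essentially the same approach the paper intends: the paper gives no explicit proof of Theorem \ref{theorem_boundary_subelliptic2}, saying only that "the methods in Sections \ref{sec_surjectivity_izero} and \ref{sec_surjectivity_im} yield" the results, and your write-up is a faithful and careful execution of exactly that transcription, including the key points that boundary terms only arise from $X$ (not from the fiberwise operators) and vanish because test functions are taken with $u|_{\partial(SM)}=0$.
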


\begin{Remark}{\rm 
In the case of simple manifolds, one actually expects to find invariant distributions that are $C^{\infty}$ functions. (Invariant distributions are solutions of $Xw = 0$ in $SM$, and on simple manifolds with boundary there are many solutions in $C^{\infty}(SM)$. In contrast, on closed manifolds with ergodic geodesic flow any solution $w \in L^2(SM)$ is constant and thus nontrivial solutions must be distributions.) One has the following results in the direction of Theorems \ref{theorem_boundary_beurling}--\ref{theorem_boundary_subelliptic2}:
\begin{itemize}
\item 
If $(M,g)$ is simple and $f_0 \in C^{\infty}(M)$, there is $w \in C^{\infty}(SM)$ with $Xw = 0$ in $SM$ and $w_0 = f_0$ \cite{PU}.
\item 
If $(M,g)$ is simple and $f_1 \in \Omega_1$ with $X_- f_1 = 0$, there is $w \in T_{\geq 1} C^{\infty}(SM)$ with $Xw = 0$ in $SM$ and $w_1 = f_1$ \cite{DU}.
\item 
If $(M,g)$ is a simple surface and if $f_m \in \Omega_m$ where $X_- f_m = 0$ and $m \geq 1$, there is $w \in T_{\geq m} C^{\infty}(SM)$ with $Xw = 0$ in $SM$ and $w_m = f_m$ \cite{PSU_range}.
\end{itemize}
These results rely on the ellipticity of the normal operator $I_m^* I_m$ and a solenoidal extension argument. If $(M,g)$ is simple, the ellipticity of $I_m^* I_m$ acting on solenoidal tensor fields is known for any $m$ \cite{SSU}, but it is not clear how to perform the solenoidal extension as in \cite{DU}. Thus, at present, we are not able to produce $C^{\infty}$ invariant distributions in the setting of Theorem \ref{theorem_boundary_subelliptic2} if $\dim M \geq 3$ (although a weaker result similar to \cite[Theorem 4.2]{PU_IMRN} seems possible).}
\end{Remark}

Finally, modifying the arguments in Section \ref{section_injectivity} appropriately yields the following solenoidal injectivity result.

\begin{Theorem} \label{theorem_boundary_im_injectivity}
Let $(M,g)$ be simple, let $m \geq 2$, and let $(M,g)$ be $\alpha$-controlled for $\alpha \geq \alpha_{m,n}$ where 
$$
\alpha_{m,n} = \frac{(m-1)(m+n-2)}{m(m+n-1)}.
$$
Then $I_m$ is $s$-injective.
\end{Theorem}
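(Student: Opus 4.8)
The plan is to follow the proof of Theorem \ref{theorem_im_injectivity} essentially verbatim, substituting for its two closed-manifold ingredients: in place of the Livsic theorem I would use the classical smoothness result for the transport equation on simple manifolds, and in place of transitivity of the Anosov flow I would use nontrapping together with the vanishing of test functions on $\partial(SM)$. Concretely, suppose $f$ is a solenoidal symmetric $m$-tensor with $I_m f = 0$; viewing $f$ as a function on $SM$ of degree $\leq m$, I would set $u^f(x,v) = \int_0^{\tau(x,v)} f(\phi_t(x,v))\,dt$, so that $X u^f = -f$ in $SM$. Since $I_m f = 0$ this function vanishes on $\partial_+(SM)$, hence on all of $\partial(SM)$, and the smoothness theorem for $u^f$ on simple manifolds (see e.g.\ \cite{Sh}) gives $u^f \in C^\infty(SM)$. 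Setting $a = -u^f$ we have $Xa = f$, $a|_{\partial(SM)} = 0$, $a \in C^\infty(SM)$; then $u := a - \sum_{l \leq m-1} a_l = T_{\geq m} a$ also vanishes on $\partial(SM)$ (the vertical spherical harmonic projections commute with restriction to $\partial(SM)$), has $Xu$ of degree $\leq m$, and hence $Q_m u = \vd T_{\geq m+1} Xu = 0$.

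The energy argument then proceeds as in Section \ref{section_injectivity}. By Lemma \ref{lemma_boundary_alphacontrolled} the simple manifold $(M,g)$ is $\alpha$-controlled, with $\alpha \geq \alpha_{m,n}$ by hypothesis; using the boundary form of the Pestov identity (valid since $u|_{\partial(SM)} = 0$) together with Lemmas \ref{lemma_xvu_lowerbound} and \ref{lemma_nablah_nablav_innerproduct}, all of which remain valid in the boundary case by Section \ref{sec_boundary}, I obtain the same strengthened inequality bounding $\norm{Q_m u}^2$ from below by a positive multiple of $\norm{(Xu)_{m-1}}^2$, a nonnegative multiple of $\norm{(Xu)_m}^2$, $(n-1)\norm{T_{\geq m+1}Xu}^2$, and $\alpha\bigl(\sum_{l \geq m+1}\norm{\vd w_l}^2 + \norm{Z}^2\bigr)$. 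Since $\alpha \geq \alpha_{m,n}$ the first bracketed constant is strictly positive, and because $Q_m u = 0$ this forces $(Xu)_{m-1} = 0$, $(Xu)_l = 0$ and $\vd w_l = 0$ for $l \geq m+1$, and $Z = 0$. From the proof of Lemma \ref{lemma_xvu_lowerbound} this gives $X\vd u = \tfrac{m+n-1}{m+n-2}\vd(Xu)_m$, $\hd u = \tfrac{-1}{m+n-2}\vd(Xu)_m$, and $Xu = (Xu)_m = X_- u_{m+1}$, and feeding these into the commutator formula $[X,\Delta] = 2\vdiv\hd + (n-1)X$ of Lemma \ref{lemma:co-lap} as in Section \ref{section_injectivity} yields $X\bigl(\Delta u - (m(m+n)+n-1)u\bigr) = 0$.

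Here the boundary case diverges from the closed one. The function $w := \Delta u - (m(m+n)+n-1)u$ satisfies $Xw = 0$ in $SM$ and $w|_{\partial(SM)} = 0$, since $\Delta$ acts in the fibre and commutes with restriction to $\partial(SM)$. As $(M,g)$ is nontrapping, every point of $SM$ lies on a geodesic segment with both endpoints on $\partial(SM)$; $w$ is constant along the flow and vanishes at those endpoints, so $w \equiv 0$, i.e.\ $\Delta u = (m(m+n)+n-1)u$. Since $m(m+n)+n-1 = (m+1)(m+n-1)$ is the eigenvalue of $\Delta$ on $\Omega_{m+1}$ and all components of $u$ have degree $\geq m$, this forces $u = u_{m+1}$. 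Then $Xu = X_- u_{m+1} + X_+ u_{m+1}$ combined with $Xu = X_- u_{m+1}$ gives $X_+ u_{m+1} = 0$; as $u_{m+1} \in \Omega_{m+1}$ vanishes on $\partial(SM)$ and $m+1 \geq 1$, Lemma \ref{lemma_xplusminus_boundary}(a) yields $u_{m+1} = 0$, hence $u = 0$. Therefore $a$ has degree $\leq m-1$ and vanishes on $\partial(SM)$, and transcribing $Xa = f$ via the isomorphism $\lambda$ of Section \ref{sec:vlap+sh} shows that $f = da'$ with $a'$ a symmetric $(m-1)$-tensor satisfying $a'|_{\partial M} = 0$; thus $f$ is a potential tensor, and being also solenoidal it vanishes, which is $s$-injectivity.

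The main obstacle — indeed the only nontrivial external input — is the regularity up to the boundary of the primitive $u^f$ when $I_m f = 0$; this is exactly where the no-conjugate-points and strict convexity of a simple manifold are used in an essential way, and it is the analogue of the appeal to Livsic's theorem in the closed case. The rest is a routine transcription of Section \ref{section_injectivity} with boundary conditions inserted in the appropriate places. Note finally that, in contrast to Theorem \ref{theorem_im_injectivity}, no hypothesis ruling out nontrivial conformal Killing $(m+1)$-tensors is required here, precisely because Lemma \ref{lemma_xplusminus_boundary}(a) shows that conformal Killing tensors vanishing on $\partial(SM)$ are automatically trivial.
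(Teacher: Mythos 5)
Your proof is correct and carries out precisely the modification that the paper leaves to the reader: the Livsic theorem is replaced by the regularity-up-to-the-boundary of the primitive $u^f$ on a simple manifold, transitivity of the Anosov flow is replaced by the nontrapping property together with the vanishing of $w$ on $\partial(SM)$, and the conformal-Killing-tensor hypothesis is correctly seen to be superfluous because Lemma \ref{lemma_xplusminus_boundary}(a) rules out CKTs with Dirichlet conditions. Your observations that the spherical harmonic projections commute with restriction to $\partial(SM)$ (so $u = T_{\geq m}a$ and $u_{m+1}$ inherit the boundary condition) and that the first bracketed constant stays strictly positive at $\alpha = \alpha_{m,n}$ (since $\alpha_{m,n} > \tfrac{(m-2)(m+n-3)}{(m-1)(m+n-2)}$) are exactly the details one needs to check, and they check out.
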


\begin{Remark}
{\rm The same argument proving Theorem \ref{theorem_boundary_im_injectivity} also gives the following result: let $(M,g)$ be a compact manifold with non-empty boundary, nonpositive sectional curvature and with property that any smooth $q$ with $Xq=0$ and $q|_{\partial(SM)} = 0$ must vanish. If $u \in C^{\infty}(SM)$ solves $Xu = f$ in $SM$ with $u|_{\partial(SM)} = 0$ where $f \in C^{\infty}(SM)$ has degree $m$, then $u$ has degree $m-1$.}
\end{Remark}

Theorem \ref{theoremD} in the Introduction follows directly from Theorem \ref{theorem_boundary_im_injectivity} and Remark \ref{rem:betac}.

\appendix

\section{Proofs of the commutator identities} \label{sec_localcoordinates}

In this appendix we give the proofs of the commutator identities in Section \ref{sec_spherebundle}. This is done via local coordinate computations, and we also give coordinate expressions for the relevant operators which have been defined invariantly in Section \ref{sec_spherebundle}. The arguments are not new and they arise in the calculus of semibasic tensor fields as in \cite{Sh}. The main points here are that the basic setting is the unit sphere bundle $SM$ instead of $TM$, and that all computations can be done on the level of vector fields instead of (higher order) semibasic tensor fields.

\vspace{10pt}

\noindent {\bf Vector fields on $SM$.} If $x$ is a system of local coordinates in $M$, let $(x,y)$ be associated coordinates in $TM$ where tangent vectors are written as $y^j \partial_{x_j}$. One has corresponding coordinates $(x,y,X,Y)$ in $T(TM)$ where vectors of $T(TM)$ are written as $X^j \partial_{x_j} + Y^j \partial_{y_j}$. It is convenient to introduce the vector fields 
$$
\delta_{x_j} = \partial_{x_j} - \Gamma_{jk}^l y^k \partial_{y_l}
$$
where $\Gamma_{jk}^l$ are the Christoffel symbols of $(M,g)$. The Sasaki metric on $TM$ is expressed in local coordinates as 
$$
\langle X^j \delta_{x_j} + Y^k \partial_{y_k}, \tilde{X}^j \delta_{x_j} + \tilde{Y}^k \partial_{y_k} \rangle = g_{jk} X^j \tilde{X}^k + g_{jk} Y^j \tilde{Y}^k.
$$
The horizontal and vertical subbundles are spanned by $\{ \delta_{x_j} \}_{j=1}^n$ and $\{ \partial_{y_k} \}_{k=1}^n$, respectively. It will be very convenient to identify horizontal and vertical vector fields on $TM$ with vector fields on $M$ via the maps $X^j \delta_{x_j} \mapsto X^j \partial_{x_j}$ and $Y^k \partial_{y_k} \mapsto Y^k \partial_{x_k}$ (see for example \cite{Pa} for more details), and we will use this identification freely below. We will also raise and lower indices with respect to the metric $g_{jk}$.

The hypersurface $SM$ in $TM$ is given by $SM = f^{-1}(1)$ where $f: TM \to \mR$ is the function $f(x,y) = g_{jk}(x) y^j y^k$. A computation gives 
$$
df(X^j \delta_{x_j} + Y^k \partial_{y_k}) = 2 y_k Y^k.
$$
Then $T(SM)$ is the subset of $T(TM)$ given by 
$$
T(SM) = \{ X^j \delta_{x_j} + Y^k \partial_{y_k} \in T(TM) \,;\, (x,y) \in SM, \ y_k Y^k = 0 \}.
$$
To be precise, we identify vector fields $V$ on $SM$ with the corresponding fields $i_* V$ on $TM$, where $i: SM \to TM$ is the natural inclusion. Equip $SM$ with the restriction of the Sasaki metric from $TM$. The identity $d_{SM} i^* U = i^* d_{TM} U$ for functions $U$ on $TM$ implies that the gradient on $SM$ is given by 
$$
\langle \nabla_{SM} u, V \rangle = (\delta_{x_j} \tilde{u}) X^j + (\partial_{y_k} \tilde{u}) Y^k, \qquad u \in C^{\infty}(SM),
$$
where $\tilde{u} \in C^{\infty}(TM)$ is any function with $\tilde{u}|_{SM} = u$ and where the vector field $V$ on $SM$ is expressed as above in the form $V = X^j \delta_{x_j} + Y^k \partial_{y_k}$.

We define vector fields on $SM$ that act on $u \in C^{\infty}(SM)$ by 
\begin{align*}
\delta_j u &= \delta_{x_j} (u \circ p)|_{SM}, \\
\partial_k u &= \partial_{y_k} (u \circ p)|_{SM}
\end{align*}
where $p: TM \setminus \{0\} \to SM$ is the projection $p(x,y) = (x,y/\abs{y}_{g(x)})$. We see that the decomposition $\nabla_{SM} u = (Xu)X + \hd u + \vd u$ has the following form in local coordinates:
\begin{align*}
Xu &= v^j \delta_j u, \\
\hd u &= (\delta^j u - (v^k \delta_k u) v^j) \partial_{x_j}, \\
\vd u &= (\partial^k u) \partial_{x_k}.
\end{align*}

\vspace{10pt}

\noindent {\bf Commutator formulas.} Direct computations in local coordinates give the following formulas for vector fields on $TM$:
\begin{gather*}
[\delta_{x_j}, \delta_{x_k}] = -R_{jkl}^{\phantom{jkl}m} y^l \partial_{y_m}, \qquad [\delta_{x_j}, \partial_{y_k}] = \Gamma_{jk}^l \partial_{y_l}, \qquad [\partial_{y_j}, \partial_{y_k}] = 0.
\end{gather*}
We wish to consider corresponding formulas for the vector fields $\delta_j$ and $\partial_j$ on $SM$. If $u \in C^{\infty}(SM)$, write $\tilde{u}(x,y) = (u \circ p)(x,y) = u(x,y/\abs{y}_g)$. Homogeneity implies that 
$$
(\delta_j u)\etilde(x,y) = \delta_{x_j} \tilde{u}(x,y), \qquad (\partial_k u)\etilde(x,y) = \abs{y}_g \partial_{y_k} \tilde{u}(x,y).
$$
Since also $\delta_{x_j}(\abs{y}_g) = 0$ and $\partial_{y_j}(\abs{y}_g) = y_j/\abs{y}_g$, we obtain 
\begin{gather*}
[\delta_j, \delta_k] = - R_{jklm} v^l \partial^m, \qquad [\delta_j, \partial_k] = \Gamma_{jk}^l \partial_l , \qquad [\partial_j, \partial_k] = v_j \partial_k - v_k \partial_j, \\
[\partial_j, v^k] = \delta_j^{\phantom{j}k} - v_j v^k, \qquad [\delta_j, v^l] = -\Gamma_{jk}^l v^k.
\end{gather*}
We also note that 
$$
v^j \partial_j = 0.
$$
Using the identity $\partial_{x_j} g^{ab} + g^{am} \Gamma_{jm}^b + g^{bm} \Gamma_{jm}^a = 0$ we also obtain 
$$
[\delta_j, \delta^l] = -g^{lk} R_{jkpq} v^p \partial^q + (\partial_{x_j} g^{lk}) \delta_k, \qquad [\delta_j, \partial^l] = -\Gamma_{jk}^l \partial^k.
$$

We can now compute $[X, \vd]$. Note that 
$$
\vd Xu = \partial^l(v^j \delta_j u) \partial_{x_l} = \hd u + v^j \partial^l \delta_j u \partial_{x_l}
$$
and (by the formula \eqref{xz_local_coordinates} below) 
$$
X \vd u = (v^j \delta_j \partial^l u + \Gamma_{jk}^l v^j \partial^k u) \partial_{x_l}.
$$
Thus 
$$
[X, \vd]u = - \hd u + v^j ( [\delta_j, \partial^l] u + \Gamma_{jk}^l \partial^k u) = -\hd u.
$$

Moving on to $[X, \hd]$, we observe that 
$$
\hd X u = (\delta^l(Xu) - (X^2 u) v^l) \partial_{x_l}
$$
and (again by \eqref{xz_local_coordinates}) 
$$
X \hd u = (X (\delta^l u - (Xu) v^l) + \Gamma_{jk}^l v^j (\delta^k u - (Xu) v^k)) \partial_{x_l}.
$$
In the second term we have $-(Xu)(Xv^l) - \Gamma_{jk}^l v^j (Xu) v^k = 0$, and when taking the commutator the terms containing $X^2 u$ cancel. It follows that 
\begin{align*}
[X, \hd] u &= (v^j \delta_j \delta^l u - \delta^l(v^j \delta_j u) + \Gamma_{jk}^l v^j \delta^k u) \partial_{x_l} \\
 &= (v^j [\delta_j, \delta^l] u - g^{lr} [\delta_r, v^j] \delta_j u + \Gamma_{jk}^l v^j \delta^k u) \partial_{x_l} \\
 &= (- g^{lk} R_{jkpq} v^j v^p \partial^q u + [ (\partial_j g^{lk}) v^j \delta_k u + g^{lr} \Gamma_{rm}^j v^m \delta_j u + g^{km} \Gamma_{jk}^l v^j \delta_m u ] ) \partial_{x_l}.
\end{align*}
The part in brackets is zero, which yields $[X, \hd] u = R_{abc}^{\phantom{abc}l} (\partial^a u) v^b v^c \partial_{x_l} = R \vd u$.

\vspace{10pt}

\noindent {\bf Adjoints.} To prove the last basic commutator formula, it is useful to have local coordinate expressions for the adjoints of $X, \hd, \vd$ on the space $\mathcal Z$. The first step is to compute the adjoints of the local vector fields $\delta_j$ and $\partial_j$: if $u, w \in C^{\infty}(SM)$ and $w$ vanishes when $x$ is outside a coordinate patch (and additionally $w$ vanishes on $\partial(SM)$ if $M$ has a boundary), we claim that in the $L^2(SM)$ inner product 
\begin{equation} \label{deltaj_partialj_adjoint}
(\delta_j u, w) = -(u, (\delta_j + \Gamma_j) w), \qquad (\partial_j u, w) = -(u, (\partial_j -(n-1)v_j) w).
\end{equation}
Here $\Gamma_j = \Gamma_{jk}^k$.

Assuming these identities, one can check that the adjoint of $X$ on $C^{\infty}(SM)$ is $-X$. Moreover, if $Z \in \mathcal Z$ is written as $Z(x,v) = Z^j(x,v) \partial_{x_j}$, the vector field $XZ$ is the covariant derivative (with respect to the Levi-Civita connection in $(M,g)$) 
\begin{equation} \label{xz_local_coordinates}
XZ(x,v) = D_t(Z(\phi_t(x,v)))|_{t=0} = (XZ^j) \partial_{x_j} + \Gamma_{jk}^l v^j Z^k \partial_{x_l}.
\end{equation}
Then the adjoint of $X$ on $\mathcal Z$ is also $-X$. The adjoints of $\hd$ and $\vd$ are given in local coordinates by $-\hdiv$ and $-\vdiv$, where 
$$
\hdiv Z = (\delta_j + \Gamma_j) Z^j, \qquad \vdiv Z = \partial_j Z^j.
$$
Given these expressions, we get the final commutator formula: 
\begin{align*}
\hdiv \vd u - \vdiv \hd u &= (\delta_k + \Gamma_k)(\partial^k u) - \partial_k(\delta^k u - (Xu) v^k) \\
 &= [\delta_k, \partial^k] u + \Gamma_k \partial^k u + (Xu)(\partial_k v^k) \\
 &= (n-1) Xu.
\end{align*}

It remains to check \eqref{deltaj_partialj_adjoint}. To do this it is enough to prove that 
$$
\partial_{x_j} \left( \int_{S_x M} u \,dS_x \right) = \int_{S_x M} \delta_j u \,dS_x, \qquad \int_{S_x M} \partial_j u \,dS_x = (n-1) \int_{S_x M} u v_j \,dS_x.
$$
(The proof of \eqref{deltaj_partialj_adjoint} also uses the identity $\abs{g}^{-1/2} \partial_{x_j} (\abs{g}^{1/2}) = \Gamma_j$.) To show the first formula, let $u \in C^{\infty}(SM)$ vanish when $x$ is outside a coordinate patch, and define 
$$
f(x) = \int_{S_x M} u(x,v) \,dS_x(v).
$$
Write $\tilde{u}(x,y) = u(x,y/\abs{y}_g)$, and choose $\varphi \in C^{\infty}_c((0,\infty))$ so that $\int_0^{\infty} \varphi(r) r^{n-1} \,dr = 1$. Write $g(x)$ for the matrix of $g$ in the $x$ coordinates. We write $\omega$ for points in $S^{n-1}$, and note that $\omega \mapsto g(x)^{-1/2} \omega$ is an isometry from $S^{n-1}$ (with the metric induced by the Euclidean metric $e$ in $\mR^n$) onto $S_x M$ (with the metric induced by Sasaki metric on $T_x M$, having volume form $dT_x = \abs{g(x)}^{1/2} \,dx$). Therefore 
\begin{align*}
f(x) &= \int_0^{\infty} \varphi(r) r^{n-1} \int_{S_x M} u(x,v) \,dS_x(v) \,dr \\
 &= \int_0^{\infty} \int_{S^{n-1}} \varphi(r) r^{n-1} \tilde{u}(x,g(x)^{-1/2} r\omega) \,d\omega \,dr \\
 &= \int_{\mR^n} \varphi(\abs{\eta}_e) \tilde{u}(x,g(x)^{-1/2} \eta) \,d\eta \\
 &= \int_{\mR^n} \varphi(\abs{y}_g) \tilde{u}(x,y) \abs{g(x)}^{1/2} \,dy.
\end{align*}
Since $\abs{g}^{-1/2} \partial_{x_j} (\abs{g}^{1/2}) = \Gamma_j = \partial_{y_l}(\Gamma_{jk}^l y^k)$, we have 
\begin{align*}
\partial_{x_j} f(x) &= \int_{\mR^n} (\partial_{x_j} + \partial_{y_l}(\Gamma_{jk}^l y^k)) \left[ \varphi(\abs{y}_g) \tilde{u}(x,y) \right] \abs{g(x)}^{1/2} \,dy \\
 &= \int_{\mR^n} \delta_{x_j} \left[ \varphi(\abs{y}_g) \tilde{u}(x,y) \right] \abs{g(x)}^{1/2} \,dy.
\end{align*}
Now $\delta_{x_j}(\abs{y}_g) = 0$, and it follows by undoing the changes of variables above that 
$$
\partial_{x_j} f(x) = \int_{S_x} \delta_j u(x,v) \,dS_x(v)
$$
as required. The second formula follows from a similar computation as above: now we define 
$$
f(x) = \int_{S_x M} \partial_j u(x,v) \,dS_x(v)
$$
and compute 
\begin{align*}
f(x) &= \int_0^{\infty} \varphi(r) r^{n-1} \int_{S_x M} \partial_j u(x,v) \,dS_x(v) \,dr \\
 &= \int_0^{\infty} \int_{S^{n-1}} r \varphi(r) r^{n-1} \partial_{y_j} \tilde{u}(x,g(x)^{-1/2} r\omega) \,d\omega \,dr \\
 &= \int_{\mR^n} \abs{y}_g \varphi(\abs{y}_g) \partial_{y_j} \tilde{u}(x,y) \abs{g(x)}^{1/2} \,dy.
\end{align*}
Write $h(y) = \abs{y}_g \varphi(\abs{y}_g)$. Then 
\begin{align*}
f(x) &= -\int_{\mR^n} \partial_{y_j} h(y) \tilde{u}(x,y) \abs{g(x)}^{1/2} \,dy \\
 &= -\int_{S_x M} \left[ \int_0^{\infty} \partial_{y_j} h(rv) r^{n-1} \,dr \right] u(x,v) \,dS_x(v).
\end{align*}
The expression in brackets is $v_j \int_0^{\infty} (\varphi(r) + r \varphi'(r))r^{n-1} \,dr = -(n-1) v_j$, and the result follows.

\section{The two-dimensional case} \label{sec_2d_case}

In this section we reconsider the arguments in this paper in the special case of two-dimensional manifolds. This discussion allows to connect the present treatment with earlier work in two dimensions, in particular \cite{PSU1}, \cite{PSU3}.

\vspace{10pt}

\noindent {\bf Vector fields.} Let $(M,g)$ be a compact oriented Riemann surface with no boundary (the boundary case is analogous, if we additionally assume that test functions vanish on $\partial(SM)$ if appropriate). We have $n = \dim(M) = 2$. For any vector $v \in S_x M$, there is a unique vector $iv \in S_x M$ such that $\{ v, iv \}$ is a positive orthonormal basis of $T_x M$. Let $X$ be the geodesic vector field on $SM$ as before. We can define vector fields $V$ and $X_{\perp}$ on $SM$, acting on $u \in C^{\infty}(SM)$ by 
\begin{align*}
X_{\perp} u(x,v) &:= -\langle \hd u(x,v), iv \rangle, \\
Vu(x,v) &:= \langle \vd u(x,v), iv \rangle.
\end{align*}
Note that since $\langle \vd u, v \rangle = \langle \hd u, v \rangle = 0$, we have 
\begin{align*}
\hd u &= -(X_{\perp} u) iv, \\
\vd u &= (Vu) iv.
\end{align*}
Note also that any $Z \in \mathcal Z$ is of the form $Z(x,v) = z(x,v) iv$ for some $z \in C^{\infty}(SM)$. If $\gamma(t)$ is a unit speed geodesic we have 
\begin{align*}
\langle D_t [i \dot{\gamma}(t)], \dot{\gamma}(t) \rangle &= \partial_t \langle i \dot{\gamma}(t), \dot{\gamma}(t) \rangle = 0, \\
\langle D_t [i \dot{\gamma}(t)], i \dot{\gamma}(t) \rangle &= \frac{1}{2} \partial_t \langle i \dot{\gamma}(t), i \dot{\gamma}(t) \rangle = 0.
\end{align*}
Thus $iv$ is parallel along geodesics, and for $Z = z(x,v) iv$ we have $XZ = (Xz) iv$. 

The Guillemin-Kazhdan operators \cite{GK} are defined as the vector fields 
$$
\eta_{\pm} := \frac{1}{2}(X \pm i X_{\perp}).
$$
All these vector fields have simple expressions in isothermal coordinates. Since $(M,g)$ is two-dimensional, near any point there are positively oriented isothermal coordinates $(x_{1},x_{2})$ so that the metric can be written as $ds^2=e^{2\lambda}(dx_{1}^2+dx_{2}^2)$ where $\lambda$ is a smooth real-valued function of $x=(x_{1},x_{2})$. This gives coordinates $(x_{1},x_{2},\theta)$ on $SM$ where $\theta$ is the angle between a unit vector $v$ and $\partial/\partial x_{1}$. In these coordinates we have 
\begin{gather*}
V = \frac{\partial}{\partial \theta}, \qquad X = e^{-\lambda} \left[ \cos\theta\frac{\partial}{\partial x_{1}}+ \sin\theta\frac{\partial}{\partial x_{2}}+ \left(-\frac{\partial \lambda}{\partial x_{1}}\sin\theta+\frac{\partial\lambda}{\partial x_{2}}\cos\theta\right)\frac{\partial}{\partial \theta} \right], \\
X_{\perp} =-e^{-\lambda}\left[-\sin\theta\frac{\partial}{\partial x_{1}}+ \cos\theta\frac{\partial}{\partial x_{2}}- \left(\frac{\partial \lambda}{\partial x_{1}}\cos\theta+\frac{\partial \lambda}{\partial x_{2}}\sin\theta\right)\frac{\partial}{\partial \theta}\right], \\
\eta_+ = e^{-\lambda} e^{i\theta} \left[ \frac{\partial}{\partial z} + i \frac{\partial \lambda}{\partial z} \frac{\partial}{\partial \theta} \right], \qquad \eta_- = e^{-\lambda} e^{-i\theta} \left[ \frac{\partial}{\partial \bar{z}} - i \frac{\partial \lambda}{\partial \bar{z}} \frac{\partial}{\partial \theta} \right]
\end{gather*}
where $\partial/\partial z = \frac{1}{2}(\partial_{x_1} - i \partial_{x_2})$ and $\partial/\partial \bar{z} = \frac{1}{2}(\partial_{x_1} + i \partial_{x_2})$.

\vspace{10pt}

\noindent {\bf Commutator formulas.} The above discussion shows that the commutator formula $[X, \vd] = -\hd$ reduces to 
$$
[X,V] = X_{\perp}.
$$
Since $(R \vd u)(x,v) = K(x)(Vu) iv$ where $K$ is the Gaussian curvature, the commutator formula $[X, \hd] = R\vd$ becomes 
$$
[X, X_{\perp}] = -KV.
$$
For the last commutator formula we need to compute $\hdiv$ and $\vdiv$. First observe that a local coordinate computation gives for $w \in C^{\infty}(SM)$ that 
$$
\int_{SM} Vw = \int_M \int_{S_x M} (iv)^j \partial_j w = 0.
$$
Thus the adjoint of $V$ is $-V$, and the first commutator formula implies that the adjoint of $X_{\perp}$ is $-X_{\perp}$. Consequently 
\begin{align*}
\hdiv (z(x,v) iv) &= -X_{\perp} z, \\
\vdiv (z(x,v) iv) &= Vz.
\end{align*}
The commutator formula $\hdiv\,\vd-\vdiv\,\hd=(n-1)X$ thus reduces to 
$$
[V,X_{\perp}] = X.
$$

\vspace{10pt}

\noindent {\bf Spherical harmonics expansions.} It is easy to express the operators $X_{\pm}$ in terms of $\eta_{\pm}$. The vertical Laplacian on $SM$ is given by 
$$
-\vdiv \vd u = -\vdiv((Vu) iv) = -V^2 u.
$$
The operator $-iV$ on $L^2(SM)$ has eigenvalues $k \in \mZ$ with corresponding eigenspaces $E_k$. We write 
$$
L^2(SM) = \bigoplus_{k=-\infty}^{\infty} E_k, \qquad u = \sum_{k=-\infty}^{\infty} u_k.
$$
Locally in the $(x,\theta)$ coordinates, elements of $E_k$ are of the form $\tilde{w}(x) e^{ik\theta}$. Writing $\Lambda_k = C^{\infty}(SM) \cap E_k$, the spherical harmonics of degree $m$ are given by 
$$
\Omega_m = \Lambda_m \oplus \Lambda_{-m}, \qquad m \geq 0.
$$
If $m \geq 1$, the action of $X_{\pm}$ on $\Omega_m$ is given by 
$$
X_{\pm}(e_m+e_{-m}) = \eta_{\pm} e_m + \eta_{\mp} e_{-m}, \quad e_j \in \Lambda_j,
$$
and for $m=0$ we have $X_+|_{\Omega_0} = \eta_+ + \eta_-$, $X_-|_{\Omega_0} =0$. In the two-dimensional case it will be convenient to work with the $\Lambda_k$ spaces (the corresponding results in terms of the $\Omega_m$ spaces will follow easily).

\vspace{10pt}

\noindent {\bf Beurling transform and invariant distributions.} Recall that $w \in \mDp(SM)$ is called invariant if $Xw = 0$. If the geodesic flow is ergodic, these are genuinely distributions since any $w \in L^1(SM)$ that satisfies $Xw = 0$ must be constant. For Riemann surfaces, one can look at distributions with one-sided Fourier series; let us consider the case where $w_k = 0$ for $k < k_0$, for some integer $k_0 \geq 0$. For such a distribution, the equation $Xw = 0$ reduces to countably many equations for the Fourier coefficients (by parity it is enough to look at $w_{k_0+2j}$ for $j \geq 0$):
\begin{align*}
\eta_- w_{k_0} &= 0, \\
\eta_- w_{k_0+2} &= -\eta_+ w_{k_0}, \\
\eta_- w_{k_0+4} &= -\eta_+ w_{k_0+2}, \\
 &\ \,\vdots
\end{align*}

On a Riemann surface with genus $\geq 2$, the operator $\eta_+: \Lambda_{k-1} \to \Lambda_k$ is injective and its adjoint $\eta_-: \Lambda_k \to \Lambda_{k-1}$ is surjective for $k \geq 2$ by conformal invariance (there is a constant negative curvature metric in the conformal class, and these have no conformal Killing tensors). Also for $k \geq 2$ we have the $L^2$-orthogonal splitting 
$$
\Lambda_k = \mathrm{Ker}(\eta_-|_{\Lambda_k}) \oplus \eta_+ \Lambda_{k-1}.
$$
If $k \geq 0$, we define the \emph{Beurling transform} 
$$
B_+: \Lambda_k \to \Lambda_{k+2}, \ \ f_k \mapsto f_{k+2}
$$
where $f_{k+2}$ is the unique function in $\Lambda_{k+2}$ orthogonal to $\mathrm{Ker}(\eta_-|_{\Lambda_{k+2}})$ (equivalently, the $L^2$-minimal solution) satisfying $\eta_- f_{k+2} = -\eta_+ f_k$. Note that in $\re^2$, one thinks of $\eta_-$ as $\dbar$ and of $\eta_+$ as $\partial$, so $B_+$ is formally the operator $-\dbar^{-1} \partial$ which is the usual Beurling transform up to minus sign. Note also that $B_+$ is the first ladder operator from \cite{GK1}.

If $k \geq 0$ one has the analogous operator 
$$
B_-: \Lambda_{-k} \to \Lambda_{-k-2}, \ \ f_{-k} \mapsto f_{-k-2}
$$
where $f_{-k-2}$ is the $L^2$-minimal solution of $\eta_+ f_{-k-2} = -\eta_- f_{-k}$. The relation to the Beurling transform in Section \ref{section_beurling} is 
$$
B(f_k + f_{-k}) = B_+ f_k + B_- f_{-k}, \qquad f_j \in \Lambda_j.
$$

On a closed surface of genus $\geq 2$, one can always formally solve the countably many equations for $w_k$. If we take the minimal energy solution for each equation, we arrive at the formal invariant distributions. We restrict our attention to $B_+$ (the case of $B_-$ is analogous).

\begin{Definition}
Let $(M,g)$ be a closed oriented surface with genus $\geq 2$, let $k_0 \geq 0$, and let $f \in \Lambda_{k_0}$ satisfy $\eta_- f = 0$. The \emph{formal invariant distribution} starting at $f$ is the formal sum 
$$
w = \sum_{j=0}^{\infty} (B_+)^j f.
$$
\end{Definition}

As before, it is not clear if the sum converges in any reasonable sense. However, if the surface has nonpositive curvature it does converge nicely. This follows from the fact that the Beurling transform $B_+$ is a contraction on such surfaces, and to prove this we use the Guillemin-Kazhdan energy identity \cite{GK}:

\begin{Lemma} \label{lemma_gk_energy_identity}
Let $(M,g)$ be a closed Riemann surface. Then 
$$
\norm{\eta_- u}^2 = \norm{\eta_+ u}^2 - \frac{i}{2} (K V u, u), \qquad u \in C^{\infty}(SM).
$$
\end{Lemma}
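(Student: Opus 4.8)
The plan is to prove the identity directly from the definition $\eta_{\pm} = \frac{1}{2}(X \pm i X_{\perp})$ together with three facts already established earlier in this appendix: $X^* = -X$, $X_{\perp}^* = -X_{\perp}$, and the commutator formula $[X, X_{\perp}] = -KV$. (One could alternatively derive the identity by specializing the Pestov identity on $\Omega_m$ in Proposition \ref{prop_pestov_omegam} to $n = 2$ and unwinding the relation between $X_{\pm}$ and $\eta_{\pm}$ on the spaces $\Lambda_k$, but the direct computation is shorter and cleaner.)

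First I would expand the two norms. Since $\eta_{\pm} u = \frac{1}{2}(Xu \pm i X_{\perp} u)$, expanding the $L^2$ inner products $(\eta_{\pm}u, \eta_{\pm}u)$ and noting that the terms $\norm{Xu}^2$ and $\norm{X_{\perp} u}^2$ are common to $\norm{\eta_+ u}^2$ and $\norm{\eta_- u}^2$ and cancel upon subtraction, one finds
\begin{equation*}
\norm{\eta_- u}^2 - \norm{\eta_+ u}^2 = \frac{i}{2} \left[ (Xu, X_{\perp} u) - (X_{\perp} u, Xu) \right].
\end{equation*}
Next, integrating by parts using $X^* = -X$ and $X_{\perp}^* = -X_{\perp}$ turns the bracket into $-(u, (X X_{\perp} - X_{\perp} X) u) = -(u, [X, X_{\perp}] u) = (u, KVu)$, so that $\norm{\eta_- u}^2 - \norm{\eta_+ u}^2 = \frac{i}{2}(u, KVu)$.

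The last step is to rewrite $(u, KVu)$ as $-(KVu, u)$. Since $K = K(x)$ depends only on the base point and $V$ is tangent to the fibres with $\int_{S_x M} Vf \, dS_x = 0$ for all $f$, one has $(u, KVu) + (KVu, u) = \int_{SM} K\, V(\abs{u}^2) = 0$; hence $(u, KVu) = -(KVu, u)$ and the claimed identity $\norm{\eta_- u}^2 = \norm{\eta_+ u}^2 - \frac{i}{2}(KVu, u)$ follows. There is no genuine obstacle in this argument; the only thing requiring care is the bookkeeping of the factor $i$ and of the complex conjugates in the $L^2$ inner product, which is where sign errors tend to creep in.
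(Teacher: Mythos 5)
Your proof is correct and is essentially the same calculation as the paper's, just unfolded: the paper cites the Guillemin-Kazhdan commutator $[\eta_+,\eta_-]=\frac{i}{2}KV$ from \cite{GK} and gets the identity in one line from $\eta_\pm^* = -\eta_\mp$, whereas you derive the needed information directly from $X^*=-X$, $X_\perp^*=-X_\perp$, and $[X,X_\perp]=-KV$, then handle the final sign flip $(u,KVu)=-(KVu,u)$ by noting $\int_{SM}KV(\abs{u}^2)=0$. The extra work you do — expanding $\eta_\pm$ and checking that $KV$ is skew-adjoint because $K$ is fibrewise constant — is exactly what is hidden inside the citation in the paper, so your version is slightly more self-contained but not a different route.
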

\begin{proof}
In \cite{GK} one has the commutator formula 
$$
[\eta_+, \eta_-] = \frac{i}{2} K V.
$$
This implies that, for $u \in C^{\infty}(SM)$, 
$$
\norm{\eta_- u}^2 = \norm{\eta_+ u}^2 + ([\eta_-,\eta_+]u,u) = \norm{\eta_+ u}^2 - \frac{i}{2} (K V u, u). \qedhere
$$
\end{proof}

\begin{Lemma} \label{lemma_twodim_beurling}
Let $(M,g)$ be a closed surface, and assume that $K \leq 0$. Then for any $k \geq 0$  we have 
$$
\norm{\eta_- u}_{L^2} \leq \norm{\eta_+ u}_{L^2}, \quad u \in \Lambda_k,
$$
and 
$$
\norm{B_+ f}_{L^2} \leq \norm{f}_{L^2}, \quad f \in \Lambda_k.
$$
If $k_0 \geq 0$ and if $f \in \Lambda_{k_0}$ satisfies $\eta_- f = 0$, then the formal invariant distribution $w$ starting at $f$ is an element of $L^2_x H^{-1/2-\eps}_{\theta}$ for any $\eps > 0$. Moreover, the Fourier coefficients of $w$ satisfy 
$$
\norm{w_k}_{L^2} \leq \norm{f}_{L^2}, \quad k \geq 0.
$$
\end{Lemma}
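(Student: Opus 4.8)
The plan is to carry the two steps of Section~\ref{section_beurling} over to the one-sided eigenspaces $\Lambda_k$, where the argument is cleaner than in the $\Omega_m$ picture because $\eta_+$ and $\eta_-$ act on a single $\Lambda_k$ without mixing. \textbf{Step 1 (the a priori inequality).} First I would apply the Guillemin--Kazhdan energy identity (Lemma~\ref{lemma_gk_energy_identity}) to $u\in\Lambda_k$. Since $-iV$ acts as multiplication by $k$ on $E_k$, we have $Vu=iku$, so the correction term is
\[
-\tfrac{i}{2}(KVu,u)=\tfrac{k}{2}\int_{SM}K\,\abs{u}^{2}.
\]
When $K\le 0$ and $k\ge 0$ this is $\le 0$, hence $\norm{\eta_-u}\le\norm{\eta_+u}$ for all $u\in\Lambda_k$ with $k\ge 0$; for $k=0$ the curvature term drops out entirely, which is exactly why the constant here is $1$ rather than the $\sqrt2$ that appears for $\Omega_0$ in Theorem~\ref{theoremA}.

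\textbf{Step 2 (the Beurling bound).} Next I would repeat the proof of Lemma~\ref{lemma_beurling_from_apriori}. Given $f\in\Lambda_k$, set $u=B_+f\in\Lambda_{k+2}$, so $\eta_-u=-\eta_+f$ and $u\perp\mathrm{Ker}(\eta_-|_{\Lambda_{k+2}})$; by the orthogonal splitting $\Lambda_{k+2}=\mathrm{Ker}(\eta_-|_{\Lambda_{k+2}})\oplus\eta_+\Lambda_{k+1}$, valid since $k+2\ge 2$ (this is where genus $\ge 2$, equivalently the absence of conformal Killing tensors, enters and makes $B_+$ well defined, cf.\ the discussion preceding the lemma), write $u=\eta_+v$ with $v\in\Lambda_{k+1}$. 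Using $\eta_+^{*}=-\eta_-$,
\[
\norm{u}^{2}=(u,\eta_+v)=-(\eta_-u,v)=(\eta_+f,v)=-(f,\eta_-v),
\]
and Cauchy--Schwarz together with Step~1 applied at level $k+1$ give $\norm{u}^{2}\le\norm{f}\,\norm{\eta_-v}\le\norm{f}\,\norm{\eta_+v}=\norm{f}\,\norm{u}$, i.e.\ $\norm{B_+f}\le\norm{f}$.

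\textbf{Step 3 (the formal invariant distribution).} Iterating Step~2 yields $\norm{w_{k_0+2j}}=\norm{(B_+)^{j}f}\le\norm{f}$ for every $j\ge 0$, which is the stated bound on Fourier coefficients. Consequently
\[
\norm{w}_{L^{2}_{x}H^{-1/2-\eps}_{\theta}}^{2}=\sum_{k}\br{k}^{-1-2\eps}\norm{w_k}^{2}\le\norm{f}^{2}\sum_{k}\br{k}^{-1-2\eps}<\infty
\]
for any $\eps>0$, so $w\in L^{2}_{x}H^{-1/2-\eps}_{\theta}$. It then remains to observe that $Xw=0$ in $\mDp(SM)$: decomposing $X=\eta_++\eta_-$, the relation $\eta_-w_{k_0+2j}=-\eta_+w_{k_0+2j-2}$ (the defining property of $B_+$, together with $\eta_-w_{k_0}=\eta_-f=0$) makes every angular Fourier component of $Xw$ vanish, and since the series for $w$ converges in $L^{2}_{x}H^{-1/2-\eps}_{\theta}$ and $X$ maps this space continuously to $L^{2}_{x}H^{-3/2-\eps}_{\theta}$, the term-by-term computation is legitimate.

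\textbf{Main obstacle.} There is no genuinely hard step: the analytic input is the energy identity, already available. The only points requiring care are the bookkeeping of signs and eigenvalues ($Vu=iku$ on $E_k$, and the sign of the curvature term which forces the restriction $k\ge 0$), the adjoint relation $\eta_+^{*}=-\eta_-$, and verifying that the orthogonal splitting and the surjectivity of $\eta_-$ are indeed available at the level $k+2\ge 2$ used in Step~2 so that $B_+$ is defined in the first place; these all follow from the two-dimensional material recalled earlier in this section.
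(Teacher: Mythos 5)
Your proof is correct and follows the paper's own route exactly: the paper computes $\norm{\eta_- u}^2 = \norm{\eta_+ u}^2 + \frac{k}{2}(Ku,u)$ from the Guillemin--Kazhdan identity (using $Vu=iku$ on $\Lambda_k$), notes the sign for $K\le 0$, $k\ge 0$, and then says the remaining claims follow as in Section~\ref{section_beurling}; your Steps 2 and 3 are precisely the $\Lambda_k$-translation of Lemma~\ref{lemma_beurling_from_apriori} and the end of the proof of Theorem~\ref{theoremA}.

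One small inaccuracy in Step 3: $X$ does \emph{not} map $L^{2}_{x}H^{-1/2-\eps}_{\theta}$ continuously into $L^{2}_{x}H^{-3/2-\eps}_{\theta}$, because $X$ (equivalently $\eta_\pm$) is first order in $x$ as well as in $\theta$; the mixed-norm space controls only vertical regularity, so the map costs a horizontal derivative that these spaces do not absorb. The correct way to see that $Xw=0$ holds in $\mDp(SM)$ is to pair against a test function $\varphi\in C^\infty(SM)$: write $(Xw,\varphi)=-(w,X\varphi)$, expand $X\varphi$ in its vertical Fourier modes, and observe that the sum telescopes via $\eta_-w_{k_0+2j}=-\eta_+w_{k_0+2(j-1)}$, with the boundary term $-(w_{k_0+2N},\eta_-\varphi_{k_0+2N+1})\to 0$ because $\norm{w_{k_0+2N}}_{L^2}\le\norm{f}$ while $\norm{\eta_-\varphi_{k_0+2N+1}}_{L^2}$ decays rapidly in $N$ by smoothness of $\varphi$. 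This is a cosmetic repair; the rest of your argument is sound.
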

\begin{proof}
Lemma \ref{lemma_gk_energy_identity} implies that for $u \in \Lambda_k$ with $k \geq 0$,
$$
\norm{\eta_- u}^2 = \norm{\eta_+ u}^2 - \frac{i}{2} (K V u, u) = \norm{\eta_+ u}^2 + \frac{k}{2} (Ku,u).
$$
Using that $K \leq 0$ and $k \geq 0$ we get $\norm{\eta_- u} \leq \norm{\eta_+ u}$. The rest of the claims follow as in Section \ref{section_beurling}.
\end{proof}

The next lemma considers $B$ instead of $B_+$ and shows that the constants in the first inequality are sharp for flat Riemann surfaces.

\begin{Lemma} \label{lemma_twodim_beurling2}
Let $(M,g)$ be a closed surface, and assume that $K \leq 0$. Then 
$$
\norm{X_- u}_{L^2} \leq \left\{ \begin{array}{ll} \norm{X_+ u}_{L^2}, & u \in \Omega_m \text{ with } m \geq 2, \\ \sqrt{2} \norm{X_+ u}_{L^2}, & u \in \Omega_1. \end{array} \right.
$$
If $K=0$ the constants are sharp. If additionally the genus is $\geq 2$ (so there are no conformal Killing tensors), then 
$$
\norm{Bf}_{L^2} \leq \left\{ \begin{array}{ll} \norm{f}_{L^2}, & f \in \Omega_m \text{ with } m \geq 1, \\ \sqrt{2} \norm{f}_{L^2}, & f \in \Omega_0. \end{array} \right.
$$
\end{Lemma}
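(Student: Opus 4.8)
The plan is to work throughout with the one-sided pieces $\Lambda_k$, using that $\Omega_m=\Lambda_m\oplus\Lambda_{-m}$ for $m\geq 1$ and $\Omega_0=\Lambda_0$, together with the formulas $X_{\pm}(e_m+e_{-m})=\eta_{\pm}e_m+\eta_{\mp}e_{-m}$ (and $X_-|_{\Omega_0}=0$) from the spherical harmonics discussion above. The single analytic input is the Guillemin--Kazhdan energy identity of Lemma \ref{lemma_gk_energy_identity}. Since $Vu=iju$ for $u\in\Lambda_j$, it specializes to $\norm{\eta_-u}^2=\norm{\eta_+u}^2+\tfrac{j}{2}(Ku,u)$ for any $j\in\mZ$ and $u\in\Lambda_j$. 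Hence, when $K\leq 0$, one gets $\norm{\eta_-u}\leq\norm{\eta_+u}$ if $j\geq 0$ and $\norm{\eta_+u}\leq\norm{\eta_-u}$ if $j\leq 0$.

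First I would treat $u=e_m+e_{-m}\in\Omega_m$ with $m\geq 2$. Then $X_+u=\eta_+e_m+\eta_-e_{-m}$ has summands in $\Lambda_{m+1}$ and $\Lambda_{-m-1}$, and $X_-u=\eta_-e_m+\eta_+e_{-m}$ has summands in $\Lambda_{m-1}$ and $\Lambda_{-m+1}$; since $m\geq 2$ these index sets are disjoint, so orthogonality splits each $\norm{X_{\pm}u}^2$ into two terms. Applying the two halves of the energy estimate (with $j=m>0$ and with $j=-m<0$) termwise gives $\norm{\eta_-e_m}^2\leq\norm{\eta_+e_m}^2$ and $\norm{\eta_+e_{-m}}^2\leq\norm{\eta_-e_{-m}}^2$, whence $\norm{X_-u}^2\leq\norm{X_+u}^2$. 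When $K\equiv 0$ all these are equalities, so the constant $1$ is attained for every $u$.

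The case $m=1$ is where the argument must change and is the only genuine subtlety: now $X_-u=\eta_-e_1+\eta_+e_{-1}$ has \emph{both} summands in $\Lambda_0$, so they need not be orthogonal, and the best one can say is $\norm{X_-u}^2\leq 2\norm{\eta_-e_1}^2+2\norm{\eta_+e_{-1}}^2$. Combining with the energy estimate and the orthogonal splitting $\norm{X_+u}^2=\norm{\eta_+e_1}^2+\norm{\eta_-e_{-1}}^2$ (summands in $\Lambda_2,\Lambda_{-2}$) yields $\norm{X_-u}^2\leq 2\norm{X_+u}^2$. To see $\sqrt{2}$ is sharp for $K=0$, I would give an explicit flat-torus example: in isothermal coordinates with $\lambda=0$ one has $\eta_+(we^{ik\theta})=(\partial_z w)e^{i(k+1)\theta}$ and $\eta_-(we^{ik\theta})=(\partial_{\bar z}w)e^{i(k-1)\theta}$, so taking $e_{\pm1}=a\,e^{\pm i\theta}$ with $a=\cos(2\pi x_1)$ forces $\eta_-e_1=\eta_+e_{-1}=\partial_{\bar z}a\neq 0$, which is exactly the equality case of $\norm{p+q}^2\leq 2\norm{p}^2+2\norm{q}^2$; since $K=0$ makes every energy inequality an equality, a short computation gives $\norm{X_-u}^2=2\norm{X_+u}^2$.

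Finally, the assertions about $B$ follow by feeding these bounds into Lemma \ref{lemma_beurling_from_apriori}: the estimate $\norm{Bf}\leq A\norm{f}$ for $f\in\Omega_m$ uses $\norm{X_-u}\leq A\norm{X_+u}$ on $\Omega_{m+1}$, so $A=1$ for $m\geq 1$ and $A=\sqrt{2}$ for $m=0$; the hypothesis of no conformal Killing $(m+1)$-tensors holds on any closed surface of genus $\geq 2$, whose conformal class contains a hyperbolic metric (cf.\ Corollary \ref{cor:noCKT}). I expect the only places needing care to be the bookkeeping of Fourier indices and the signs in the refined energy identity for negative modes; the genuine crux is the $m=1$ sharpness example.
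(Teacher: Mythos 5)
Your proof is correct and follows essentially the same route as the paper's: the Guillemin--Kazhdan energy identity applied on each $\Lambda_{\pm m}$ (so that $\norm{\eta_- e_m}\leq\norm{\eta_+ e_m}$ for $m\geq 0$ and the reverse for negative modes), the orthogonal splitting $\norm{X_\pm u}^2=\norm{\eta_\pm e_m}^2+\norm{\eta_\mp e_{-m}}^2$ when $m\geq 2$, the parallelogram estimate $\norm{p+q}^2\leq 2(\norm{p}^2+\norm{q}^2)$ when $m=1$ because both summands land in $\Lambda_0$, and then Lemma \ref{lemma_beurling_from_apriori} to pass to $B$. The one place you go slightly further than the paper is the $m=1$ sharpness: the paper characterizes the $K=0$ equality locus abstractly as $u=da_0+h$ with $h$ a harmonic $1$-form (from $\eta_-f_1=\eta_+f_{-1}$, i.e.\ $f_1-f_{-1}$ divergence free), whereas you exhibit a concrete flat-torus witness with $e_{\pm 1}=\cos(2\pi x_1)\,e^{\pm i\theta}$. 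Your version is the more self-contained of the two, since strictly speaking one must also check that the equality locus contains elements with $X_+u\neq 0$, which your example does directly (both $\partial_z a$ and $\partial_{\bar z}a$ are nonzero, so $X_+u\neq 0$ and $\norm{X_-u}^2=4\norm{\partial_{\bar z}a}^2=2\norm{X_+u}^2$).
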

\begin{proof}
If $u \in \Omega_m$ with $m \geq 2$, then $u = f_m + f_{-m}$ with $f_j \in \Lambda_j$. Lemma \ref{lemma_gk_energy_identity} yields 
\begin{align*}
\norm{X_- u}^2 &= \norm{\eta_- f_m}^2 + \norm{\eta_+ f_{-m}}^2 \\
 &= \norm{\eta_+ f_m}^2 + \norm{\eta_- f_{-m}}^2 + \frac{m}{2} ((K f_m, f_m) + (K f_{-m}, f_{-m})).
\end{align*}
Using that $K \leq 0$, this gives $\norm{X_- u}^2 \leq \norm{X_+ u}^2$ and equality holds for all $u \in \Omega_m$ if $K = 0$.

If instead $u \in \Omega_1$ we have $u = f_1 + f_{-1}$ with $f_j \in \Lambda_j$, and Lemma \ref{lemma_gk_energy_identity} again gives 
\begin{align*}
\norm{X_- u}^2 &= \norm{\eta_- f_1 + \eta_+ f_{-1}}^2 \leq 2(\norm{\eta_- f_1}^2 + \norm{\eta_+ f_{-1}}^2) \\
 &= 2 \left[ \norm{\eta_+ f_1}^2 + \norm{\eta_- f_{-1}}^2 + \frac{1}{2} ((K f_1, f_1) + (K f_{-1}, f_{-1})) \right].
\end{align*}
Since $K \leq 0$ we get $\norm{X_- u}^2 \leq 2 \norm{X_+ u}^2$. If $K=0$ we have equality if and only if $\eta_- f_1 = \eta_+ f_{-1}$. Identifying $\Omega_1$ with $1$-forms on $M$, this means that the $1$-form $f_1 - f_{-1}$ is divergence free and thus $f_1 - f_{-1} = *d a_0 + h$ for some $a_0 \in C^{\infty}(M)$ and some harmonic $1$-form $h$. Consequently, if $K=0$ then equality holds exactly when $u = d a_0 + h$ for some $a_0 \in C^{\infty}(M)$ and some harmonic $1$-form $h$. (Note that $h = h_1 + h_{-1}$ is a harmonic $1$-form if and only if $\eta_- h_1 = \eta_+ h_{-1} = 0$.)

The inequalities for $B$ follow as in Section \ref{section_beurling}.
\end{proof}

\begin{Remark}{\rm We note that the inequality $\norm{X_{-}u}\leq \sqrt{2}\norm{X_{+}u}$ for $u\in\Omega_1$ differs by the factor $\sqrt{2}$ with inequality (5.2) in \cite[p. 173]{GK2} for $n=2$ and $p=1$ which gives $\norm{X_{-}u}\leq \norm{X_{+}u}$. Since our inequality in Lemma \ref{lemma_twodim_beurling2} is shown to be sharp in the flat case this indicates an algebraic mistake in the calculation of the constants in \cite{GK2}.}
\end{Remark}

\vspace{10pt}

\noindent {\bf Pestov and Guillemin-Kazhdan energy identities.} We conclude this section by discussing the relation between two basic energy identities. The Pestov identity from Proposition \ref{prop_pestov} takes the following form in two dimensions:
$$
\norm{VXu}^2 = \norm{XVu}^2 - (KVu,Vu) + \norm{Xu}^2, \qquad u \in C^{\infty}(SM).
$$
The Guillemin-Kazhdan energy identity in Lemma \ref{lemma_gk_energy_identity} looks as follows:
$$
\norm{\eta_- u}^2 = \norm{\eta_+ u}^2 - \frac{i}{2} (K V u, u), \qquad u \in C^{\infty}(SM).
$$
As discussed in \cite{PSU1}, the Pestov identity is essentially the commutator formula $[XV,VX] = -X^2 + VKV$, whereas the Guillemin-Kazhdan identity follows from the commutator formula $[\eta_+, \eta_-] = \frac{i}{2} K V$.

We now show that the Pestov identity applied to $u \in \Lambda_k$ is just the Guillemin-Kazhdan identity for $u \in \Lambda_k$. Indeed, we compute 
$$
\norm{VXu}^2 = \norm{V\eta_+ u}^2 + \norm{V \eta_- u}^2 = (k+1)^2 \norm{\eta_+ u}^2 + (k-1)^2 \norm{\eta_- u}^2
$$
and 
$$
\norm{XVu}^2 - (KVu,Vu) + \norm{Xu}^2 = k^2(\norm{\eta_+ u}^2 + \norm{\eta_- u}^2) + ik(KVu,u) + \norm{\eta_+ u}^2 + \norm{\eta_- u}^2.
$$
The Pestov identity and simple algebra show that 
$$
2k(\norm{\eta_+ u}^2 - \norm{\eta_- u}^2) = ik(KVu,u)
$$
This is the Guillemin-Kazhdan identity if $k \neq 0$.

In the converse direction, assume that we know the Guillemin-Kazhdan identity for each $\Lambda_k$, 
$$
\norm{\eta_+ u_k}^2 - \norm{\eta_- u_k}^2 = \frac{i}{2}(KVu_k,u_k), \qquad u \in \Lambda_k.
$$
Multiplying by $2k$ and summing gives 
$$
\sum 2k(\norm{\eta_+ u_k}^2 - \norm{\eta_- u_k}^2) = \sum ik (KVu_k,u_k).
$$
On the other hand, the Pestov identity for $u = \sum_{k=-\infty}^{\infty} u_k$ reads 
\begin{multline*}
\sum k^2 \norm{\eta_+ u_{k-1} + \eta_- u_{k+1}}^2 \\
 = \sum (\norm{\eta_+(Vu_{k-1}) + \eta_-(Vu_{k+1})}^2 + ik (KVu_k, u_k) + \norm{\eta_+ u_{k-1} + \eta_- u_{k+1}}^2).
\end{multline*}
Notice that 
$$
k^2 \norm{\eta_+ u_{k-1} + \eta_- u_{k+1}}^2 = k^2 (\norm{\eta_+ u_{k-1}}^2 + \norm{\eta_- u_{k+1}}^2) + 2k^2 \mathrm{Re} (\eta_+ u_{k-1}, \eta_- u_{k+1})
$$
and 
\begin{multline*}
\norm{\eta_+(Vu_{k-1}) + \eta_-(Vu_{k+1})}^2 + \norm{\eta_+ u_{k-1} + \eta_- u_{k+1}}^2 \\
 = (k^2-2k+2) \norm{\eta_+ u_{k-1}}^2 + (k^2+2k+2) \norm{\eta_- u_{k+1}}^2 + 2 k^2 \mathrm{Re} (\eta_+ u_{k-1}, \eta_- u_{k+1}).
\end{multline*}
Thus the Pestov identity is equivalent with 
$$
\sum \left[ (2k-2) \norm{\eta_+ u_{k-1}}^2 - (2k+2) \norm{\eta_- u_{k+1}}^2 \right] = \sum ik (KVu_k, u_k).
$$
This becomes the summed Guillemin-Kazhdan identity after relabeling indices.

\bibliographystyle{alpha}

\end{document}